\setlist[enumerate,1]{label=(\arabic*), ref=(\arabic*)}
\setlist[enumerate,3]{label=(\roman*), ref=(\roman*)}
\theoremstyle{plain}
\newtheorem{theorem}{Theorem}[section]
\newtheorem{lemma}[theorem]{Lemma}
\newtheorem{corollary}[theorem]{Corollary}
\newtheorem{proposition}[theorem]{Proposition}
\newtheorem{observation}[theorem]{Observation}
\newtheorem{claim}{Claim}[theorem]
\newtheorem*{claim*}{Claim}
\newenvironment{claimproof}[1][Proof]{\par
	\pushQED{\qed}%
	
	\normalfont \topsep6\p@\@plus6\p@\relax
	\trivlist
	\item[\hskip\labelsep
	\textit{#1}\@addpunct{.}~]\ignorespaces
}{%
	\popQED\endtrivlist\@endpefalse
}
\newlist{Cases}{enumerate}{3}
\setlist[Cases]{parsep=0pt plus 1pt}
\setlist[Cases,1]{wide=0pt, listparindent=\parindent,
    label = \textbf{Case~\arabic*:}, ref = \arabic*}
\setlist[Cases,2]{wide=\parindent, listparindent=\parindent,
    label = \textbf{Case~\arabic{Casesi}-\arabic{Casesii}:}}
\crefname{Casesi}{case}{cases}
\newcounter{case}
\crefname{case}{case}{cases}
\theoremstyle{definition}
\newtheorem{definition}[theorem]{Definition}
\newtheorem{remark}[theorem]{Remark}
\newcommand{\calG}{\mathcal{G}}
\newcommand{\calP}{\mathcal{P}}
\newcommand{\ve}{\varepsilon}
\NewDocumentCommand{\xsideset}{mmme{_^}}{%
  \mathop{%
    \settowidth{\dimen0}{$\m@th\displaystyle#3$}%
    \dimen0=.5\dimen0
    \settowidth{\dimen2}{$%
      \m@th\displaystyle#3%
      \IfValueT{#4}{_{#4}}%
      \IfValueT{#5}{^{#5}}%
    $}%
    \dimen2=.5\dimen2
    \advance\dimen2 -\dimen0
    \sbox6{\scriptspace\z@$\displaystyle{\vphantom{#3}}#1$}
    \sbox8{\scriptspace\z@$\displaystyle{\vphantom{#3}}#2$}
    \ifdim\wd6>\dimen2 \kern\dimexpr\wd6-\dimen2\relax\fi
    {%
     \mathop{\llap{\copy6}{\displaystyle#3}\rlap{\copy8}}\limits
     \IfValueT{#4}{_{#4}}%
     \IfValueT{#5}{^{#5}}%
    }%
    \ifdim\wd8>\dimen2 \kern\dimexpr\wd8-\dimen2\relax\fi
  }%
}
\let\originalleft\left
\let\originalright\right
\renewcommand{\left}{\mathopen{}\mathclose\bgroup\originalleft}
\renewcommand{\right}{\aftergroup\egroup\originalright}
\title{On rainbow Tur\'{a}n Densities of Trees}
\author{Seonghyuk Im\thanks{Department of Mathematical Sciences, KAIST, South Korea Email:{\tt $\{$seonghyuk, jaehoon.kim, hyunwoo.lee, hss21$\}$@kaist.ac.kr}}~\thanks{Extremal Combinatorics and Probability Group (ECOPRO), Institute for Basic Science (IBS).}
\and Jaehoon Kim\footnotemark[1] \and Hyunwoo Lee\footnotemark[1]~\footnotemark[2]\and Haesong Seo\footnotemark[1]}
\begin{document}

\maketitle

\begin{abstract}

For a given collection $\calG = (G_1,\dots, G_k)$ of graphs on a common vertex set $V$, which we call a \emph{graph system}, a graph $H$ on a vertex set $V(H) \subseteq V$ is called a \emph{rainbow subgraph} of $\calG$ if there exists an injective function $\psi:E(H) \rightarrow [k]$ such that $e \in G_{\psi(e)}$ for each $e\in E(H)$. The maximum value of $\min_{i}\{|E(G_i)|\}$ over $n$-vertex graph systems $\calG$ having no rainbow subgraph isomorphic to $H$ is called the rainbow Tur\'{a}n number $\mathrm{ex}_k^{\ast}(n, H)$ of $H$. 

In this article, we study the rainbow Tur\'{a}n density $\pi_k^{\ast}(T) = \lim_{n \rightarrow \infty} \frac{\mathrm{ex}_k^{\ast}(n, T)}{\binom{n}{2}}$ of a tree $T$. 
While the classical Tur\'{a}n density $\pi(H)$ of a graph $H$ lies in the set $\{1-\frac{1}{t} : t\in \mathbb{N}\}$, the rainbow Tur\'{a}n density exhibits different behaviors as it can even be an irrational number.
Nevertheless, we conjecture that the rainbow Tur\'{a}n density is always an algebraic number.
We provide evidence for this conjecture by proving that the rainbow Tur\'{a}n density of a tree is an algebraic number.
To show this, we identify the structure of extremal graphs for rainbow trees.
Moreover, we further determine all tuples $(\alpha_1,\dots, \alpha_k)$ such that every graph system $(G_1,\dots,G_k)$ satisfying $|E(G_i)|>(\alpha_i+o(1))\binom{n}{2}$ contains all rainbow $k$-edge trees.
In the course of proving these results, we also develop the theory on the limit of graph systems.
\end{abstract}


\section{Introduction}\label{sec:intro}
Throughout this paper, every graph is simple and a multigraph refers to a multigraph with no loops unless we state otherwise.

A collection of graphs $\calG = (G_1, \dots, G_k)$ on a common vertex set $V$ is called a \emph{graph system} on $V$ with $k$ colors, and we call the number $k$ as the order of the graph system $\calG$. We denote by $V(\calG)$ the common vertex set $V$.
A multigraph $H$ satisfying $V(H) \subseteq V$ is called a \emph{rainbow subgraph} of $\calG$ if there exists an injective function $\psi:E(H) \rightarrow [k]$ such that $e \in G_{\psi(e)}$ holds for each $e\in E(H)$.

One of the central topics in extremal combinatorics is to determine the maximum number of edges of a graph on $n$ vertices that does not contain a copy of a given graph $H$ as a subgraph. 
Such a maximum is the \emph{extremal number} of $H$ on $n$ vertices and is denoted by $\mathrm{ex}(n, H)$. 
Tur\'{a}n~\cite{Turan41} (see also~\cite{Erdos70}) in 1941 first determined the extremal number of the complete graph $K_r$.
Erd\H{o}s, Stone and Simonovits~\cite{Erdos66, Erdos46} found a surprising connection between $\mathrm{ex}(n,H)$ and the chromatic number $\chi(H)$ of $H$ by proving that for every graph $H$, its extremal number is $\mathrm{ex}(n, H) = \left(1 - \frac{1}{\chi(H)-1} + o(1) \right)\binom{n}{2}$.
Since then, the extremal numbers and their variations have been broadly studied, providing elaborate theories and a deep understanding of the relations between various global statistics and local structures of graphs. 
See~\cite{Furedi2013} for a general survey. 


One very popular variation of the extremal number is the rainbow generalization.
In other words, we want to determine the ``largest'' $n$-vertex graph systems that do not contain a copy of a multigraph $H$ as a rainbow subgraph.
Depending on the contexts, different measures for the ``size'' have been introduced: 
the sum $\sum_{i} |E(G_i)|$~\cite{Chakraborti22, Keevash04}, the minimum $\min_{i} |E(G_i)|$~\cite{Aharoni20, Babinski22}, the product $\prod_{i} |E(G_i)|$~\cite{Frankl22, Frankl22a}, the minimum of the spectral radius $\min_i \lambda_1(G_i)$~\cite{Guo22} and other measures \cite {falgasravry2023a, falgasravry2023}.
Surprisingly, this problem highly depends on the choice of measures.

To maximize the sum $\sum_{i=1}^k |E(G_i)|$ of the number of edges of a graph system without a rainbow copy of $H$, there are two following constructions for natural lower bounds.
One is that all the $G_i$'s are the same Tur\'an graph; the other one is that there are $|E(H)|-1$ copies of the complete graph and all the other graphs are empty.
Keevash, Saks, Sudakov and Verstra\"{e}te~\cite{Keevash04} proved that one of the previous two constructions is extremal (i.e., a maximizer) when $H = K_r$, generalizing the Tur\'{a}n theorem.
They also proved that if $H$ is $3$-color-critical, i.e., $\chi(H)=3$ and $H$ has an edge $e$ with $\chi(H-e)=2$, then the same conclusion holds. 
This extends the result of Simonovits~\cite{Simonovits68} stating that the Tur\'{a}n graph is extremal when $H$ is $k$-color-critical for some $k \geq 3$.
Recently, Chakraborti, Liu, Seo, the second and the third authors~\cite{Chakraborti22} generalized the previous result to every $4$-color-critical graph $H$ and almost all $k$-color-critical graphs $H$ for $k \geq 5$.
Furthermore, they proved that if $H$ is not $k$-color-critical for some $k \geq 3$ and $n$ is sufficiently large, both of the constructions are not extremal.

By enforcing the graphs in $\mathcal{G}$ to be pairwise edge-disjoint matchings and letting $k$ be large, the maximum of $\sum_{i=1}^k |E(G_i)|$ is the maximum number of edges in a properly edge-colored graph with no rainbow subgraph $H$.
Such a variation has been extensively studied in~\cite{alon2023, Das2013, Ergemlidze2019, Janzer2023, janzer2022, Keevash2007, kim2022, tomon2022}

Regarding the minimum $\min_{i} |E(G_i)|$, define the \emph{rainbow extremal number} $\mathrm{ex}_k^{\ast}(n, H)$ of a multigraph $H$ to be the maximum of $\min_{1 \leq i \leq k} |E(G_i)|$ among all rainbow $H$-free graph systems $\calG=(G_1, \ldots, G_k)$ on $n$ vertices. 
Aharoni, Devos, de la Maza, Montejano and \v{S}\'{a}mal~\cite{Aharoni20} proved that $\mathrm{ex}_3^{\ast}(n, K_3) = \lfloor \frac{26-2\sqrt{7}}{81}n^2\rfloor $.
The fact that $\frac{26-2\sqrt{7}}{81}$ is an irrational number larger than the classical Tur\'an density $\frac{1}{4}$
signifies the difference between the rainbow setting and the classical setting, and renders the rainbow extremal number more intriguing.

In general cases, the problem gets more complicated and there are only few known results regarding the rainbow extremal number $\mathrm{ex}_k^{\ast}(n,H)$. 
As a corollary of Keevash, Saks, Sudakov and Verstra\"{e}te~\cite{Keevash04}, we have $\mathrm{ex}_k^{\ast}(n, K_3) \leq \frac{n^2}{4}+o(n^2)$ when $k \geq 4$. 
Babi\'{n}ski and Grzesik~\cite{Babinski22} recently determined the value of the rainbow extremal number $\mathrm{ex}_k^{\ast}(n, P_3)$ of the $3$-edge path up to additive $o(n^2)$ error term for every $k \geq 3$.
However, many cases including the complete graph on at least four vertices are still wide open.

\subsection{Main results}\label{subsec:1.1}
The \emph{rainbow Tur\'{a}n density} of a multigraph $H$ is defined as follows.
\begin{align}\label{eq: def rainbow Turan density}
    \pi_k^{\ast}(H) := \lim_{n \rightarrow \infty} \frac{\mathrm{ex}_k^{\ast}(n, H)}{{n \choose 2}}.
\end{align}
The first natural question is whether the limit exists.
Unlike the case of maximizing $\sum_{i=1}^k |E(G_i)|$, the standard proof (cf. the survey of Keevash~\cite{Keevash11}) does not directly work.
So we first prove the existence of this limit.
\begin{theorem}\label{thm:existence}
    For every multigraph $H$ and $k \geq |E(H)|$, the limit $\displaystyle \pi^{\ast}_k(H)=\lim_{n \rightarrow \infty} \frac{\mathrm{ex}_k^{\ast}(n, H)}{{n \choose 2}}$ 
    exists.
\end{theorem}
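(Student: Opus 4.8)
The obstacle is that the usual averaging/supersaturation argument for ordinary Turán densities does not transfer: deleting a vertex from a graph system does not obviously preserve the ``min over colors'' normalization, and a vertex with small degree in one color may have large degree in another, so one cannot simply pass to induced subsystems and average. The plan is instead to set $f(n) \defeq \mathrm{ex}_k^{\ast}(n,H)/\binom n2$ and show that $\liminf_{n} f(n) \geq \limsup_n f(n)$, i.e.\ that $f$ is essentially monotone in the limit, by a blow-up (tensoring) construction rather than a deletion argument.

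First I would record the trivial bound $0 \le f(n) \le 1$, so it suffices to show the limit exists. The heart is a \emph{supermultiplicativity} estimate: given an $n$-vertex rainbow-$H$-free graph system $\calG = (G_1,\dots,G_k)$ with $\min_i |E(G_i)| = m$, I would build, for any $t$, an $(nt)$-vertex rainbow-$H$-free system $\calG'$ by replacing each vertex $v$ of $V(\calG)$ by an independent set $B_v$ of size $t$ and, for each color $i$ and each edge $uv \in G_i$, putting the complete bipartite graph between $B_u$ and $B_v$ into $G_i'$; the classes $B_v$ themselves receive no edges in any color. A rainbow copy of $H$ in $\calG'$ would project (vertex class to vertex) to a rainbow homomorphic image of $H$ in $\calG$, and since $H$ is a (multi)graph with $k \ge |E(H)|$ available colors any such image that is not an embedding would force two edges of $H$ to use the same color or would collapse an edge to a single blown-up class (impossible, as those classes are independent), so in fact $\calG'$ is rainbow-$H$-free. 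This gives $|E(G_i')| = t^2 |E(G_i)|$, whence $\mathrm{ex}_k^{\ast}(nt, H) \ge t^2\, \mathrm{ex}_k^{\ast}(n,H)$ and therefore $f(nt) \ge \tfrac{(nt)^2}{n^2 t^2}\cdot\tfrac{n^2}{(nt)^2}\,$-corrected to $f(nt) \ge \bigl(1-O(1/n)\bigr) f(n)$ after accounting for the $\binom{\cdot}{2}$ versus square discrepancy.

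Next, to upgrade this ``along multiples of $n$'' statement to a genuine limit, I would combine the blow-up with a padding step: from an extremal $n$-vertex system one reaches any $N \ge n$ by first blowing up by $t = \lfloor N/n\rfloor$ and then adding $N - nt < n$ isolated vertices to every $G_i$ (which cannot create a rainbow $H$ and changes each $|E(G_i)|$ by $0$). Combining these yields $\mathrm{ex}_k^{\ast}(N,H) \ge \lfloor N/n\rfloor^2\,\mathrm{ex}_k^{\ast}(n,H)$ for all $N \ge n$, hence $\liminf_{N\to\infty} f(N) \ge (1-o_n(1)) f(n)$ for every fixed $n$; taking $n$ along a sequence with $f(n) \to \limsup f$ gives $\liminf f \ge \limsup f$, so the limit exists. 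The step I expect to require the most care is verifying rigorously that the blow-up creates no new rainbow copy of $H$: one must check that every rainbow subgraph of $\calG'$ isomorphic to $H$ meets each class $B_v$ in at most one vertex (using that the $B_v$ are independent in all colors and that $H$ has no isolated vertices / every edge of $H$ joins two of its vertices), and then that contracting the classes yields a genuine rainbow copy of $H$ in $\calG$ with the \emph{same} injective color map $\psi$ — for multigraphs $H$ this needs a line of justification since parallel edges of $H$ must be checked to land on distinct colors, which they do because $\psi$ was injective to begin with.
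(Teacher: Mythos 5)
Your blow-up step contains a genuine gap: the blow-up of a rainbow-$H$-free graph system need \emph{not} be rainbow-$H$-free, so the supermultiplicativity $\mathrm{ex}_k^{\ast}(nt,H) \geq t^2\,\mathrm{ex}_k^{\ast}(n,H)$ you are aiming for fails. Concretely, take $H=C_4$, $k=4$, and let $\calG=(G_1,\dots,G_4)$ on vertex set $\{u,v\}$ with $G_i=\{uv\}$ for every $i$. This $\calG$ is vacuously rainbow $C_4$-free (there are only two vertices), but blowing up by $t=2$ produces classes $B_u=\{u_1,u_2\}$, $B_v=\{v_1,v_2\}$ with every cross-edge present in all four colors, and $u_1v_1u_2v_2u_1$ is a rainbow $C_4$. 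What actually happens when you contract the classes of a rainbow copy in $\calG'$ is that you obtain a rainbow copy of a \emph{color homomorphic image} of $H$ in $\calG$: non-adjacent vertices of $H$ may collapse, producing a multigraph with parallel edges in distinct colors. Your claim that a non-embedding ``would force two edges of $H$ to use the same color'' is not correct — in the example the colors remain distinct, it is only the vertices that get identified. The blow-up preserves freeness from \emph{all color homomorphic images of $H$}, but that is a strictly stronger hypothesis than rainbow $H$-freeness.

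To repair the argument you would need to show $\pi_k^{\ast}(H)=\pi_k^{\ast}(\{\text{all color homomorphic images of }H\})$ \emph{before} you know either limit exists, which is essentially Theorem~\ref{thm:density_hom_image} and relies on a rainbow removal lemma. The paper avoids this chicken-and-egg problem entirely: it develops the theory of graphon systems, shows the space of admissible graphon systems $(\widetilde{\mathcal{W}}_0^{(k)},\delta_\square)$ is compact (Theorem~\ref{thm:compact_more}) and that graph systems are dense in it (Theorem~\ref{thm:classical}), and then, together with a counting lemma and the rainbow $H$-removal lemma (Theorem~\ref{thm:H_removal}), identifies the limit as an extremal value of a continuous function over a compact space (Theorem~\ref{thm:stronger_existence}). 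That is a much heavier framework than your proposed elementary argument, but the elementary route as written does not close.
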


The classical Tur\'{a}n density $\pi(H)= \lim_{n \rightarrow \infty} \frac{\mathrm{ex}(n, H)}{{n \choose 2}}$ is always in the set $\{1-\frac{1}{t}:t\in \mathbb{N}\}$. This is no longer true for the rainbow Tur\'{a}n density as $\pi_{3}^{*}(K_3) = \frac{26-2\sqrt{7}}{81}$.
On the other hand, such an irrational number $\frac{26-2\sqrt{7}}{81}$ was obtained in~\cite{Aharoni20} from a polynomial optimization problem regarding the size of certain vertex sets.
Based on this example, it is reasonable to conjecture that all rainbow Tur\'{a}n densities $\pi_{k}^{*}(H)$ can be attained from such an optimization problem, and they are algebraic numbers.
We prove this conjecture for all trees $T$ by analyzing the structure of extremal graph systems.
Roughly speaking, for a given tree $T$, the vertex set $V(\calG)$ of an extremal graph system $\calG=(G_1, \ldots, G_k)$ can be partitioned into a finite number of parts, where each $G_i$ is a union of cliques and complete bipartite graphs on and between those parts.
In addition, the number of parts only depends on the tree $T$.
See Theorem~\ref{thm:structure} for the full statement.
Hence the rainbow Tur\'{a}n density $\pi^{\ast}_k(T)$ is a solution to a polynomial optimization problem.
As a consequence, it can be proven to be an algebraic number using basic tools from real algebraic geometry.
In summary, we prove the following.
\begin{theorem}\label{thm:algebraic_number}
    For a tree $T$, the rainbow Tur\'{a}n density $\pi_k^{\ast}(T)$ can be computed by solving finitely many polynomial optimization problems.
    Moreover, $\pi_k^{\ast}(T)$ is an algebraic number.
\end{theorem}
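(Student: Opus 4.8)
The plan is to use the structure theorem (Theorem~\ref{thm:structure}) to express $\pi_k^{\ast}(T)$ as the optimum of finitely many polynomial optimization problems with rational coefficients, and then to deduce algebraicity from the Tarski--Seidenberg theorem. By Theorem~\ref{thm:structure} there is an integer $m=m(T)$ and a finite family $\mathcal{P}$ of \emph{patterns}: a pattern $P\in\mathcal{P}$ specifies a number $m_P\le m$ of parts together with, for each color $i\in[k]$, a set $S_i^P\subseteq [m_P]\cup\binom{[m_P]}{2}$ recording which parts carry a clique and which pairs of parts are joined completely in $G_i$; every extremal rainbow-$T$-free graph system on $n$ vertices is, up to $o(n^2)$ edges, the blow-up of some $P\in\mathcal{P}$ along a partition of its vertex set; and, conversely, every blow-up of a pattern in $\mathcal{P}$ is rainbow-$T$-free. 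For fixed $k$ the family $\mathcal{P}$ is genuinely finite, since there are at most $m\cdot\bigl(2^{\,m+\binom{m}{2}}\bigr)^{k}$ pattern-tuples on at most $m$ parts.

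Next I would set up the associated optimization. For $P\in\mathcal{P}$ and a probability vector $x=(x_1,\dots,x_{m_P})$ in the simplex $\Delta_{m_P}\defeq\{x\ge 0:\sum_{j}x_j=1\}$, take the blow-up of $P$ with part $j$ of size $\lfloor x_j n\rfloor$. A clique on a part of size $\approx x_j n$ contributes $x_j^2\binom{n}{2}+O(n)$ edges and a complete bipartite graph between parts of sizes $\approx x_j n$ and $\approx x_{j'}n$ contributes $2x_j x_{j'}\binom{n}{2}+O(n)$ edges, so $|E(G_i)|=e_i^P(x)\binom{n}{2}+o(n^2)$ with
\[
    e_i^P(x)\defeq\sum_{j\in S_i^P}x_j^2+\sum_{\{j,j'\}\in S_i^P}2x_j x_{j'},
\]
a quadratic polynomial with coefficients in $\{0,1,2\}$. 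Put $g_P(x)\defeq\min_{i\in[k]}e_i^P(x)$. Blowing up patterns and rounding part sizes shows $\pi_k^{\ast}(T)\ge\max_{P}\max_{x\in\Delta_{m_P}}g_P(x)$, and applying Theorem~\ref{thm:structure} to an extremal system for each $n$, dividing by $\binom{n}{2}$ and letting $n\to\infty$ (the limit existing by Theorem~\ref{thm:existence}) yields the reverse inequality, whence
\[
    \pi_k^{\ast}(T)=\max_{P\in\mathcal{P}}\;\max_{x\in\Delta_{m_P}}\;\min_{i\in[k]}e_i^P(x).
\]
Introducing an auxiliary variable $t$, each inner maximum equals $\max\{\,t: x\in\Delta_{m_P},\ t\le e_i^P(x)\ \text{for all }i\in[k]\,\}$, i.e.\ the maximum of a linear functional over a compact basic closed semialgebraic set defined over $\mathbb{Q}$; hence $\pi_k^{\ast}(T)$ is the largest among finitely many values of polynomial optimization problems, which proves the first assertion.

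For algebraicity, fix $P\in\mathcal{P}$ and let $A_P\defeq\{\,t\in\mathbb{R}:\exists x\ge 0,\ \sum_j x_j=1,\ t\le e_i^P(x)\ \text{for all }i\in[k]\,\}$. This is a coordinate projection of a semialgebraic set defined over $\mathbb{Q}$, so by the Tarski--Seidenberg theorem $A_P$ is a semialgebraic subset of $\mathbb{R}$ defined over $\mathbb{Q}$, that is, a finite union of points and intervals whose endpoints are real algebraic numbers. Since $\Delta_{m_P}$ is compact and $g_P=\min_i e_i^P$ is continuous, the maximum $\max_{x}g_P(x)$ is attained and equals $\sup A_P$, which is therefore an endpoint of one of these intervals (or an isolated point), hence algebraic. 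Finally $\pi_k^{\ast}(T)=\max_{P\in\mathcal{P}}\sup A_P$ is the maximum of finitely many algebraic numbers, and so is algebraic.

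The main obstacle is the faithful translation of the combinatorial structure theorem into the optimization problem above: one has to check that the admissible patterns form a finite list — which rests on the number of parts $m(T)$ being bounded independently of $n$ — and that the two inequalities relating $\pi_k^{\ast}(T)$ to $\max_P\max_x g_P(x)$ genuinely follow once the $o(n^2)$ error terms are absorbed after normalizing by $\binom{n}{2}$. With Theorem~\ref{thm:structure} in hand, the real algebraic geometry input is routine; the substantive content of Theorem~\ref{thm:algebraic_number} is Theorem~\ref{thm:structure} itself.
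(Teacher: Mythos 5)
Your proof is essentially the paper's argument: both reduce to finitely many polynomial optimization problems via Theorem~\ref{thm:structure}, with your ``patterns'' being exactly the paper's finite graph systems $\mathbb{G}_T$ on at most $m$ vertices (with loops), and both then invoke standard real algebraic geometry to conclude algebraicity. The only differences are cosmetic: (i) for the algebraicity step the paper uses the Tarski transfer principle between $\mathbb{R}$ and $\mathbb{R}_{\mathrm{alg}}$ plus a density argument, while you apply Tarski--Seidenberg projection directly to see that $A_P$ is semialgebraic over $\mathbb{Q}$ --- an equivalent and arguably slightly cleaner route; and (ii) your phrasing ``applying Theorem~\ref{thm:structure} to an extremal system for each $n$'' is imprecise, since that theorem is about graphon systems, not finite graph systems --- the correct justification for the reverse inequality is to combine Theorem~\ref{thm:stronger_existence} (which expresses $\pi_k^{\ast}(T)$ as a supremum over admissible graphon systems with zero rainbow $T$-density) with Theorem~\ref{thm:structure} (which says this supremum is attained by a $0$--$1$ step graphon system with at most $m$ steps), exactly as the paper does.
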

Such a polynomial optimization may have exponentially many variables in terms of $|T|$, so it is not practical to compute $\pi_k^{\ast}(T)$ via this optimization even if the tree $T$ is small. 
Nevertheless, we use the structure of extremal graph systems to verify that the rainbow Tur\'{a}n density of a tree $T$ is maximized when $T$ is a star.
\begin{theorem}\label{thm:tree_less_star}
    If $T$ is a $k$-edge tree, then $\pi_k^{\ast}(T) \leq \pi_k^{\ast}(K_{1, k}) = \left(\frac{k-1}{k}\right)^2$.
\end{theorem}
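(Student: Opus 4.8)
The plan is to determine $\pi_k^{\ast}(K_{1,k})$ exactly and then to show that no other $k$-edge tree reaches this value. For the lower bound on $\pi_k^{\ast}(K_{1,k})$, partition $V$ into parts $U_1,\dots,U_k$ of size $\lfloor n/k\rfloor$ or $\lceil n/k\rceil$ and let $G_i$ be the clique on $V\setminus U_i$. A vertex of $U_j$ has neighbours only in the $k-1$ colours $i\neq j$, so there is no rainbow $K_{1,k}$, while $\min_i|E(G_i)|\ge\binom{\lfloor(k-1)n/k\rfloor}{2}$; hence $\pi_k^{\ast}(K_{1,k})\ge\big(\tfrac{k-1}{k}\big)^2$ by Theorem~\ref{thm:existence}. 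For the matching upper bound, let $\calG$ be rainbow-$K_{1,k}$-free. At each vertex $v$ the bipartite graph on $(V\setminus\{v\})\cup[k]$ with $u\sim i$ iff $uv\in E(G_i)$ has no matching saturating $[k]$, so by K\"onig's theorem it has a vertex cover $I_v\cup S_v$ with $I_v\subseteq[k]$, $S_v\subseteq V\setminus\{v\}$ and $|I_v|+|S_v|\le k-1$; thus $N_{G_i}(v)\subseteq S_v$ whenever $i\notin I_v$. Put $T_i=\{v:i\in I_v\}$, so $\sum_i|T_i|=\sum_v|I_v|\le(k-1)n$. Since every vertex outside $T_i$ has $G_i$-degree at most $|S_v|\le k-1$, counting edges of $G_i$ inside $T_i$ and those meeting $V\setminus T_i$ gives $|E(G_i)|\le\binom{|T_i|}{2}+(k-1)n$, whence $\min_i|E(G_i)|\le\binom{\min_i|T_i|}{2}+(k-1)n\le\binom{\lfloor(k-1)n/k\rfloor}{2}+(k-1)n$. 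Letting $n\to\infty$ yields $\pi_k^{\ast}(K_{1,k})=\big(\tfrac{k-1}{k}\big)^2$.

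It remains to prove $\pi_k^{\ast}(T)\le\big(\tfrac{k-1}{k}\big)^2$ for an arbitrary $k$-edge tree $T$. Suppose not: then for arbitrarily large $n$ there are rainbow-$T$-free systems with $\min_i|E(G_i)|\ge\big((\tfrac{k-1}{k})^2+\ve\big)\binom n2$. By Theorem~\ref{thm:structure}, such an almost-extremal system is, up to $o(n^2)$ edges per colour, a blow-up of a reduced system $(A_1,\dots,A_k)$ on $m=m(T)$ parts with relative sizes $\mathbf{x}=(x_1,\dots,x_m)$, where each $A_i$ is a symmetric $0/1$ matrix with $0/1$ diagonal (a diagonal $1$ encoding a clique inside a part, an off-diagonal $1$ a complete bipartite pair of parts); restricting to the support we may assume all $x_j>0$. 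Because the parts have size $\Theta(n)$ and $T$ is of bounded size, the blow-up (which, being a limit of rainbow-$T$-free systems, contains no rainbow $T$) admits no \emph{rainbow homomorphism} of $T$: no map $f\colon V(T)\to[m]$ with an injection $\psi\colon E(T)\to[k]$ such that $\{f(t),f(t')\}$ is an edge or loop of $A_{\psi(tt')}$ for each $tt'\in E(T)$ (a loop allowed exactly when $f(t)=f(t')$). So everything reduces to: $\min_i\mathbf{x}^{\top}A_i\mathbf{x}>\big(\tfrac{k-1}{k}\big)^2$ forces such a homomorphism. A basic estimate is $\mathbf{x}^{\top}A_i\mathbf{x}\le W_i^2$, where $W_i$ is the $\mathbf{x}$-weight of the parts incident in $A_i$; hence $W_i>\tfrac{k-1}{k}$, i.e. the parts not incident in $A_i$ carry weight $<\tfrac1k$ — the reduced-level analogue of the K\"onig bound above.

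To produce the homomorphism I would embed $T$ greedily along a BFS order $t_1,\dots,t_{k+1}$: having placed $t_1,\dots,t_j$ using $j-1\le k-1$ colours, adding the edge $t_pt_{j+1}$ only requires that the part $f(t_p)$ be incident, through an edge or loop of some $A_c$ with $c$ unused, after which we set $\psi(t_pt_{j+1})=c$ and take $f(t_{j+1})$ to be the other endpoint (or $f(t_p)$ for a loop); no injectivity on $f$ is needed. Thus it suffices to work inside a \emph{core} $C\subseteq[m]$ of parts each incident, within $C$, in all $k$ colours, for then the greedy never stalls. If iteratively deleting parts that fail to be incident in some colour within the surviving set terminates with $C\neq\emptyset$, we are done; otherwise it empties $[m]$, and one checks that in any such deletion order the set $D_i$ of parts removed ``because of colour $i$'' spans no edge or loop of $A_i$, so $\mathbf{x}^{\top}A_i\mathbf{x}\le1-\mathbf{x}(D_i)^2$ with $\sum_i\mathbf{x}(D_i)=1$. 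This already gives $\pi_k^{\ast}(T)\le1-\tfrac1{k^2}$ for every $k$-edge tree, but not yet the sharp value. Closing this gap — improving $1-\max_i\mathbf{x}(D_i)^2\le1-\tfrac1{k^2}$ to $\le\big(\tfrac{k-1}{k}\big)^2$, equivalently forcing $\max_i\mathbf{x}(D_i)\ge\tfrac{\sqrt{2k-1}}{k}$ — is the main obstacle, and the point at which I expect the finer content of Theorem~\ref{thm:structure} to be essential: one must either convert the deletion order (after rerooting $T$ at a leaf and reordering the greedy) into an actual rainbow homomorphism of $T$, or use the structure of rainbow-$T$-free reduced systems to rule out the $D_i$ being simultaneously balanced. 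Once this is done, combining $\mathbf{x}^{\top}A_i\mathbf{x}\le\min\{W_i^2,\,1-\mathbf{x}(D_i)^2\}$ with the first paragraph gives $\pi_k^{\ast}(T)\le\pi_k^{\ast}(K_{1,k})=\big(\tfrac{k-1}{k}\big)^2$.
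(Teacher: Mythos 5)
Your argument splits into two halves with very different status.

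The first half, computing $\pi_k^{\ast}(K_{1,k})$, is correct and takes a genuinely different and more elementary route than the paper. You use K\"onig's theorem at each vertex to extract a cover $I_v\cup S_v$, sum $|T_i|$ over colours, and average; the paper instead works entirely at the graphon level through the quantity $\gamma(W)=1-\sqrt{t_{K_2}(W)}$ and \Cref{obs:fraction-isolated} (its \Cref{lem:star-isolated}). Your version is self-contained and avoids the limit theory; the paper's version feeds naturally into its machinery for general trees.

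The second half has a genuine gap, which you yourself flag. Reducing to a weighted reduced system $(A_1,\dots,A_k;\mathbf{x})$ and running the core-deletion/greedy-embedding argument, you obtain $\mathbf{x}^{\top}A_i\mathbf{x}\le 1-\mathbf{x}(D_i)^2$ with $\sum_i\mathbf{x}(D_i)=1$, which by averaging yields only $\min_i\mathbf{x}^{\top}A_i\mathbf{x}\le 1-\tfrac{1}{k^2}$, not the required $\left(\tfrac{k-1}{k}\right)^2=1-\tfrac{2}{k}+\tfrac{1}{k^2}$. The discrepancy is exactly the loss in passing from $\gamma(W_i)=1-\sqrt{\mathbf{x}^{\top}A_i\mathbf{x}}$ to the weaker $1-\sqrt{1-\mathbf{x}(D_i)^2}$: the constraint one actually needs is $\sum_i\gamma(W_i)\ge 1$, and your $D_i$ do not directly furnish it. This is not a minor refinement but the whole content of the paper's \Cref{thm:tree_less_star_2}, which it proves by \Cref{lem:induction-leafstar} — an induction on $k$ that peels off a leaf-star $S'$, uses the pre-colored one-edge-subdivided star case (\Cref{lem:P3-second-leaf-star}, resting on \Cref{lem:P_3-first-leaf-star}) as a base case, applies the structural \Cref{lem:induction-structure} to produce a cleaner graphon $W_k'\ge W_k$, and establishes a careful measure bound (Claim~5 inside that proof) to recurse on the contracted tree $T-(S'-v)$ with one fewer colour. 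None of this is recoverable from your core-deletion bookkeeping: the $D_i$ you produce encode a single greedy failure, whereas the inductive argument extracts a $\gamma$-accounting that is sharp at every step. So your second paragraph would not close the theorem as written; you would essentially need to reconstruct \Cref{lem:induction-leafstar} and its supporting lemmas.
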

We further prove that one can completely determine the tuples $(\alpha_1, \ldots, \alpha_k) \in [0, 1]^k$ such that for every $\varepsilon>0$, a graph system $\mathcal{G}$ on $n$ vertices has a rainbow copy of all $k$-edge trees whenever $\frac{1}{n}\ll \varepsilon$ and $|E(G_i)| > (\alpha_i+\varepsilon) \binom{n}{2}$ holds for all $i\in [k]$.
Note that the determination of all such tuples were questioned in~\cite{Aharoni20} for the case of rainbow triangles.

\begin{theorem}\label{thm:tree_less_star_2}
    Let $\alpha_1, \ldots, \alpha_k$ be nonnegative reals such that $\sum_{i=1}^k (1-\sqrt{\alpha_i}) < 1$.
    For every $\varepsilon>0$, there exists an integer $n_0$ such that for every $n \geq n_0$ and every $k$-edge tree $T$, a graph system $\mathcal{G}=(G_1, \ldots, G_k)$ on $n$ vertices with $|E(G_i)| \geq (\alpha_i + \varepsilon){n \choose 2}$ contains a rainbow copy of $T$.
\end{theorem}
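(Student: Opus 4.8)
The plan is to prove the contrapositive and to reduce it to a finite combinatorial statement about reduced graph systems. Fix a $k$-edge tree $T$ and suppose, towards a contradiction, that for some $\varepsilon>0$ and arbitrarily large $n$ there is a rainbow-$T$-free graph system $\mathcal G=(G_1,\dots,G_k)$ on $n$ vertices with $|E(G_i)|\ge(\alpha_i+\varepsilon)\binom n2$ for all $i$, while $\sum_{i=1}^k(1-\sqrt{\alpha_i})<1$; the goal is to derive $\sum_{i=1}^k(1-\sqrt{\alpha_i})\ge 1$.

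The first step is to pass from $\mathcal G$ to a clean ``reduced'' description, using the structural analysis of rainbow-$T$-free systems underlying Theorem~\ref{thm:structure} (equivalently, passing to a convergent subsequence in the theory of limits of graph systems). This produces a bounded integer $m=m(T)$, a vertex partition $V(\mathcal G)=V_1\cup\dots\cup V_m$ with $|V_a|=(x_a+o(1))n$ and $\sum_a x_a=1$, and graphs $R_1,\dots,R_k$ on vertex set $[m]$, allowing loops, so that up to $o(n^2)$ edits each $G_i$ is the blow-up of $R_i$ along this partition. Since a rainbow $T$-homomorphism of the reduced system (an assignment of the $k$ edges of $T$ to the $k$ colours together with a map of $V(T)$ to positive-weight parts sending every edge of $T$, in its colour $i$, to an edge or loop of $R_i$) would lift, via the degeneracy of $T$, to a rainbow copy of $T$ inside $\mathcal G$, the reduced system $(R_1,\dots,R_k)$ has \emph{no} rainbow $T$-homomorphism. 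Writing $d_i$ for the edge density of the blow-up of $R_i$ with weights $(x_a)$, we get $d_i\ge\alpha_i$ for $n$ large (using the $\varepsilon$ slack). It therefore suffices to prove the combinatorial claim: \emph{if a reduced system $(R_1,\dots,R_k)$ with part weights $(x_a)$ has no rainbow $T$-homomorphism, then $\sum_{i=1}^k(1-\sqrt{d_i})\ge 1$}; this contradicts $\sum(1-\sqrt{\alpha_i})<1$ because $1-\sqrt{d_i}\le 1-\sqrt{\alpha_i}$.

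For the combinatorial claim I would begin with the ``saturated'' case, in which each $R_i$ is the complete graph with all loops on some set $S_i\subseteq[m]$ of parts, so that $G_i$ is a clique on $\bigcup_{a\in S_i}V_a$. Then $d_i=\big(\sum_{a\in S_i}x_a\big)^2$, hence $1-\sqrt{d_i}=\sum_{a\notin S_i}x_a$. If some part $a^*$ with $x_{a^*}>0$ belonged to every $S_i$, then $V_{a^*}$ would span a complete graph in each colour, and since $|V_{a^*}|$ is linear in $n$ while $T$ is of constant size, one could embed $T$ into $V_{a^*}$ and colour its $k$ edges by an arbitrary bijection with $[k]$, producing a rainbow copy of $T$ --- a contradiction. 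Hence every positive-weight part lies outside some $S_i$, so
\[
  \sum_{i=1}^k\big(1-\sqrt{d_i}\big)=\sum_{i=1}^k\sum_{a\notin S_i}x_a=\sum_a x_a\,\big|\{\,i:a\notin S_i\,\}\big|\ \ge\ \sum_a x_a=1 .
\]

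The main obstacle is the general reduced system, where a part may be joined to other parts in colour $i$ without spanning a clique there, so $R_i$ need not be complete-with-loops on its support; replacing $R_i$ by its complete-with-loops closure only raises $d_i$ but may create a rainbow $T$-homomorphism, so the saturated bound does not transfer for free. I expect to handle this by a leaf-peeling induction on $|E(T)|$: delete a leaf $\ell$ of $T$ together with the colour $c$ assigned to its edge, apply the induction to $T-\ell$ with the remaining colours to obtain a rainbow homomorphism, and analyse in which parts the neighbour of $\ell$ can be placed. Rainbow-$T$-freeness then pins down, for each colour $i$, a set $X_i$ of ``forbidden'' parts with $\sum_{a\in X_i}x_a\le 1-\sqrt{d_i}$, and one shows these $X_i$ cover all positive-weight parts, which again yields $\sum_i(1-\sqrt{d_i})\ge 1$. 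Making this induction work --- keeping the density bookkeeping tight when moving between $T$ and its subtrees, and correctly accounting for the complete-bipartite blocks of the $R_i$ --- is the crux. Note that for $T=K_{1,k}$ the argument collapses to the saturated computation above, which is precisely why the extremal threshold is the same for every tree and is controlled by the star.
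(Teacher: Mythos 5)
Your overall strategy is sound and parallels the paper's at a high level: pass to the graphon (or ``reduced'') picture, reduce to a combinatorial claim, and induct on the tree. The saturated case you work out is essentially the paper's star lemma (Lemma~\ref{lem:star-isolated}): if each $W_i$ is $\mathbbm{1}_{A_i\times A_i}$ then $1-\sqrt{t_{K_2}(W_i)}\ge\mu([0,1]\setminus A_i)$, and if $\sum_i(1-\sqrt{t_{K_2}(W_i)})<1$ the sets $[0,1]\setminus A_i$ cannot cover $[0,1]$, which forces a rainbow star. That part is correct, and the reduction via the degeneracy of trees (a rainbow homomorphism lifts to a rainbow embedding) is also fine. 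One small caveat: to justify passing to a $0$--$1$ step graphon for an \emph{arbitrary} tuple $(\alpha_1,\dots,\alpha_k)$ (not just for the objective $\min_i t_{K_2}(W_i)$), you need to apply Theorem~\ref{thm:structure} with the monotone simple function $h(\mathbf W)=\min_i\bigl(t_{K_2}(W_i)-\alpha_i\bigr)$; this works, but it should be said.

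The genuine gap is precisely what you flag as ``the crux'': the leaf-peeling induction is not carried out, and as sketched it is unlikely to go through with a single-leaf deletion. In the paper the induction (Lemma~\ref{lem:induction-leafstar}) peels off an entire leaf-\emph{star} $S'$, not one leaf, and even then the density bookkeeping does not ``transfer for free'' as you hope. Three nontrivial ingredients are needed that your sketch omits. First, the induction must be strengthened to pre-coloring tuples: it proves that a rainbow copy of $T$ exists even when the edges at leaves outside one chosen leaf-star are pre-assigned colours; without this strengthening the inductive hypothesis is too weak to plug back in. Second, there is a separate base case beyond the star, namely the one-edge-subdivided star (Lemma~\ref{lem:P3-second-leaf-star}), which in turn rests on a surprisingly delicate analysis of $(P_3,\psi)$ with a pre-coloured leaf edge (Lemma~\ref{lem:P_3-first-leaf-star}): one must partition $[0,1]$ into the $8$ sets indexed by $\{0,1\}^3$ according to which of $d_{W_1},d_{W_2},d_{W_3}$ vanish, and then show by an explicit optimisation that $f_1+f_2+f_3\ge 1$, handling cases such as $a_6=0$, $a_2,a_4>0$, $a_7<1/3$. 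Third, after peeling $S'$ one does not simply land in a smaller system with controlled densities: the paper's Lemma~\ref{lem:induction-structure} produces a four-part partition $A,B,C,D$ of $[0,1]$ and a replacement graphon $W_k'\ge W_k$ supported on $(B\cup D)^2\cup(C\cup D)^2\cup(A\times D)\cup(D\times A)$, and Claim~\ref{clm:b+d} ($b+d\ge1-\alpha-\beta$) is a genuine analytic inequality needed to certify that the auxiliary graphon $W_{k+1}=\mathbbm{1}_{(B\cup D)^2}$ has $\gamma(W_{k+1})\le\alpha+\beta$, so that the total $\gamma$-budget stays below $1$ when you pass to the smaller tree. None of these steps is a formality, and your ``sets $X_i$ of forbidden parts'' heuristic does not account for the fact that a colour $i$ may be realised on complete bipartite blocks between parts, where $1-\sqrt{d_i}$ is strictly less than the measure of the parts that $W_i$ misses. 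Until those gaps are filled, the argument proves the star case but not the theorem for general trees.
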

We note that the condition $\sum_{i \in [k]} (1-\sqrt{\alpha_i}) < 1$ is tight.
Let $\alpha_1, \ldots, \alpha_k$ be nonnegative reals such that $\sum_{i\in [k]} (1-\sqrt{\alpha_i}) \geq 1$.
One can choose subsets $V_1, \dots, V_k \subseteq [n]$ such that $|V_i| = \lceil (1-\sqrt{\alpha_i})n \rceil$ and $\bigcup_{i \in [k]} V_i = [n]$.
Let $G_i^{n}$ be a graph on $[n]$ defined by a complete graph on $[n] \setminus V_i$.
Then the graph system $\mathcal{G}^{n} = (G_1^{n}, \ldots, G_k^{n})$ does not contain a rainbow copy of $K_{1, k}$ and $|E(G_i^{n})| \cdot {n \choose 2}^{-1} \rightarrow \alpha_i$ as $n$ tends to infinity.

Finally, we show the following three basic properties of the rainbow Tur\'{a}n density.
\begin{proposition}\label{thm:bipartite_max}
    For every bipartite multigraph $H$ and $k \geq |E(H)|=\ell$, we have $\pi_k^{\ast}(H) \leq \frac{\ell-1}{k}.$
\end{proposition}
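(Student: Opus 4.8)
The plan is to bound $\mathrm{ex}_k^{\ast}(n,H)$ directly: writing $\ell:=|E(H)|$, I will fix $\varepsilon>0$ and show that for all sufficiently large $n$, every graph system $\calG=(G_1,\dots,G_k)$ on $[n]$ with $|E(G_i)|>\bigl(\tfrac{\ell-1}{k}+\varepsilon\bigr)\binom n2$ for all $i\in[k]$ contains a rainbow copy of $H$. Since $\pi_k^{\ast}(H)$ exists by Theorem~\ref{thm:existence}, letting $\varepsilon\to 0$ then yields $\pi_k^{\ast}(H)\le\frac{\ell-1}{k}$.

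For a pair $e\in\binom{[n]}2$ let $\mu(e):=|\{i\in[k]:e\in E(G_i)\}|$ denote its multiplicity in $\calG$; note $\mu(e)\le k$. Summing over the colours, $\sum_{e}\mu(e)=\sum_{i\in[k]}|E(G_i)|>(\ell-1+k\varepsilon)\binom n2$. The key point is that this density must be concentrated on \emph{heavy} pairs: set $F:=\{e\in\binom{[n]}2:\mu(e)\ge\ell\}$. Bounding $\mu$ by $k$ on $F$ and by $\ell-1$ off $F$ gives $(\ell-1+k\varepsilon)\binom n2<k|F|+(\ell-1)\bigl(\binom n2-|F|\bigr)=(\ell-1)\binom n2+(k-\ell+1)|F|$, hence $|F|>\frac{k\varepsilon}{k-\ell+1}\binom n2\ge\varepsilon\binom n2$, using $k-\ell+1\le k$.

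Now let $\underline H$ be the simple graph obtained from $H$ by deleting all multiplicities; since $H$ is bipartite, so is $\underline H$, so $\chi(\underline H)=2$ and $\mathrm{ex}(n,\underline H)=o(n^2)$ by the Erd\H{o}s--Stone--Simonovits theorem (equivalently, $\underline H\subseteq K_{t,t}$ for $t=|V(H)|$ and one applies K\H{o}v\'ari--S\'os--Tur\'an). As the graph $([n],F)$ has more than $\varepsilon\binom n2$ edges, for $n$ large it contains a copy of $\underline H$, i.e.\ an injection $\phi:V(H)\to[n]$ with $\mu(\phi(u)\phi(v))\ge\ell$ for every edge $uv\in E(H)$. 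It remains to choose the colours: consider the bipartite graph between $E(H)$ and $[k]$ in which $uv\in E(H)$ is joined to every colour $i$ with $\phi(u)\phi(v)\in E(G_i)$. Each vertex $uv$ of $E(H)$ has at least $\ell=|E(H)|$ neighbours, so for every $S\subseteq E(H)$ its neighbourhood has size at least $\ell\ge|S|$; Hall's theorem provides an injection $\psi:E(H)\to[k]$ with $\phi(u)\phi(v)\in E(G_{\psi(uv)})$ for all $uv$, i.e.\ $\phi$ realizes a rainbow copy of $H$.

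There is no deep obstacle here; the points that need care are the choice of the threshold $\ell$ in the definition of $F$ (a larger threshold would not force $|F|=\Omega(n^2)$, a smaller one would not leave enough colours for Hall), the observation that passing to the underlying simple graph $\underline H$ loses nothing because a single heavy pair already carries enough colours to absorb any multiplicity of $H$, and the invocation of $\mathrm{ex}(n,\underline H)=o(n^2)$, which is precisely where the hypothesis that $H$ is bipartite is used.
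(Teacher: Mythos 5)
Your proof is correct, and it takes a genuinely different and more elementary route than the paper. You argue entirely at the finite level: a double-counting of edge multiplicities $\mu(e)$ shows that the set $F$ of ``$\ell$-heavy'' pairs has density at least $\varepsilon$, the Erd\H{o}s--Stone--Simonovits (or K\H{o}v\'ari--S\'os--Tur\'an) theorem embeds the simplification $\underline H$ into $F$, and then Hall's theorem assigns distinct colours because every edge of $H$ maps to a pair covered by at least $\ell=|E(H)|$ colours. The Hall step is tight: each $S\subseteq E(H)$ has $|N(S)|\ge\ell\ge|S|$, and this is exactly why the threshold $\ell$ (not a smaller or larger one) is the right cutoff for $F$. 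By contrast, the paper derives the proposition as a corollary of two pieces of heavier machinery developed for other purposes: it first proves (Theorem~\ref{thm:density_hom_image}, via the rainbow removal lemma) that $\pi_k^{\ast}(H,\psi)$ is unchanged if one allows all colour homomorphic images of $(H,\psi)$, so that a bipartite $H$ can be collapsed to a two-vertex multigraph with $\ell$ parallel edges; it then computes the density of that two-vertex multigraph exactly by an optimization over admissible graphon systems using the $\overline W_I$ decomposition. The paper's route buys compatibility with pre-coloured tuples and fits the global strategy of expressing rainbow Tur\'an densities as graphon optimization problems; your route is shorter, self-contained, avoids the limit theory and the removal lemma, and in fact gives a quantitative statement for finite $n$ that the paper's proof does not directly yield.
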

For a given multigraph $H$, denote by $\mathrm{sim}(H)$ the graph obtained by replacing each multiple edge with a single edge.
We call it the \emph{simplification} of $H$.

\begin{proposition}\label{thm:limit_of_turan_density}
    For every multigraph $H$, we have $\lim_{k \rightarrow \infty} \pi_k^{\ast}(H) = 1-\frac{1}{\chi(\mathrm{sim}(H))-1}.$ In other words, $\pi_k^{\ast}(H)$ converges to the Tur\'{a}n density of its simplification as $k$ tends to infinity.
\end{proposition}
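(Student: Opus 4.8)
The plan is to sandwich the limit between two matching bounds: $\pi_k^{\ast}(H) \geq 1 - \frac{1}{\chi(\mathrm{sim}(H))-1}$ for every admissible $k$, and, for every $\varepsilon > 0$, $\pi_k^{\ast}(H) \leq 1 - \frac{1}{\chi(\mathrm{sim}(H))-1} + \varepsilon$ once $k$ is large enough in terms of $\varepsilon$, $|E(H)|$ and $\chi(\mathrm{sim}(H))$. Together with Theorem~\ref{thm:existence} (existence of the limit), this pins the limit to the claimed value. Throughout write $F = \mathrm{sim}(H)$, $r = \chi(F)$ and $\ell = |E(H)|$; we may assume $H$ has an edge (so $r \geq 2$), and for the upper bound we may assume $\varepsilon < \frac{1}{r-1}$.

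For the lower bound I would take $G_i$ to be the Tur\'an graph $T_{r-1}(n)$ for every colour $i \in [k]$. In any rainbow copy of $H$ in this system, every edge of the copy is an edge of some $G_i$, hence an edge of $T_{r-1}(n)$, so the underlying simple copy of $F$ would embed into $T_{r-1}(n)$; this is impossible since $\chi(F) = r > r-1$. Thus $\calG$ is rainbow $H$-free while $\min_i |E(G_i)| = e(T_{r-1}(n)) = \left(1 - \frac{1}{r-1} + o(1)\right)\binom{n}{2}$, which gives $\pi_k^{\ast}(H) \geq 1 - \frac{1}{r-1}$ for all $k \geq \ell$.

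For the upper bound, suppose $\calG = (G_1,\dots,G_k)$ on $[n]$ has $|E(G_i)| \geq \left(1 - \frac{1}{r-1} + \varepsilon\right)\binom{n}{2}$ for all $i$. For a pair $uv$ put $d(uv) = |\{i : uv \in E(G_i)\}|$, so that $\sum_{uv} d(uv) = \sum_{i=1}^{k} |E(G_i)| \geq k\left(1 - \frac{1}{r-1} + \varepsilon\right)\binom{n}{2}$, the sum being over all $\binom{n}{2}$ pairs. Let $B$ be the set of pairs with $d(uv) < \ell$. The crude estimate $\sum_{uv} d(uv) \leq (\ell-1)|B| + k\left(\binom{n}{2} - |B|\right)$ then yields $(k-\ell+1)|B| \leq k\left(\frac{1}{r-1} - \varepsilon\right)\binom{n}{2}$, and since $\frac{k}{k-\ell+1} \to 1$ as $k \to \infty$ this forces $|B| \leq \left(\frac{1}{r-1} - \frac{\varepsilon}{2}\right)\binom{n}{2}$ once $k \geq k_0(\varepsilon,\ell,r)$. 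Consequently the graph $G^{\geq\ell}$ on $[n]$ with edge set $\{uv : d(uv) \geq \ell\}$ satisfies $e(G^{\geq\ell}) = \binom{n}{2} - |B| \geq \left(1 - \frac{1}{r-1} + \frac{\varepsilon}{2}\right)\binom{n}{2} > \mathrm{ex}(n,F)$ for $n$ large, so by the Erd\H{o}s--Stone--Simonovits theorem~\cite{Erdos66, Erdos46} it contains a copy of $F$ with vertex embedding $\phi$. Every edge of this copy lies in at least $\ell$ of the $G_i$'s, so one can colour the $\ell$ edges of $H$ greedily: listing them as $e_1,\dots,e_\ell$ and processing them in order, each $e_j$ has at least $\ell > j-1$ colours available (those $i$ with $\phi(e_j) \in E(G_i)$), so it can be assigned a colour avoiding the $j-1$ colours already used. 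This produces a rainbow copy of $H$, hence $\mathrm{ex}_k^{\ast}(n,H) < \left(1 - \frac{1}{r-1} + \varepsilon\right)\binom{n}{2}$ for $n$ large, i.e. $\pi_k^{\ast}(H) \leq 1 - \frac{1}{r-1} + \varepsilon$ for all $k \geq k_0$.

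The argument is essentially bookkeeping once the threshold graph $G^{\geq\ell}$ is isolated, and I do not anticipate a genuine obstacle. The one point that needs care is the order of quantifiers: $k$ is fixed large \emph{first}, so that density averaging makes $|B|$ small, and only afterwards does $n \to \infty$ enter, through the two applications of the Erd\H{o}s--Stone--Simonovits asymptotics (once to evaluate $e(T_{r-1}(n))$ in the construction, once to compare $e(G^{\geq\ell})$ with $\mathrm{ex}(n,F)$). The degenerate case $r = 2$ (bipartite $\mathrm{sim}(H)$, where $1-\frac{1}{r-1}=0$) is covered automatically, the relevant estimate $\mathrm{ex}(n,F) = o(n^2)$ being the K\H{o}v\'ari--S\'os--Tur\'an bound, consistent with Proposition~\ref{thm:bipartite_max}.
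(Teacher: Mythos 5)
Your proposal is correct and follows essentially the same route as the paper: the paper also defines the auxiliary graph $G'$ of pairs lying in at least $|E(H)|$ of the $G_i$ (your $G^{\geq\ell}$), derives the same averaging inequality $\sum_i |E(G_i)| \leq k|E(G')| + (|E(H)|-1)\binom{n}{2}$ to show $G'$ has density exceeding $\pi(\mathrm{sim}(H))$ once $k$ is large, invokes Erd\H{o}s--Stone--Simonovits, and greedily colours. The only cosmetic differences are that you parameterize by the complementary set $B$ and spell out the Tur\'an-graph lower bound and the quantifier order, both of which the paper leaves implicit.
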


\begin{proposition}[Supersaturation]\label{thm:supersaturation}
    For every multigraph $H$ and every $\varepsilon>0$, there exists $\delta>0$ such that if an $n$-vertex graph system $\mathcal{G}$ of order $k$ has at least $(\pi_k^{\ast}(H) + \varepsilon){n \choose 2}$ edges, then it has at least $\delta n^{|V(H)|}$ rainbow copies of $H$.
\end{proposition}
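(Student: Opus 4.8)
The plan is to adapt the classical Erd\H{o}s--Simonovits averaging argument for supersaturation, the one genuinely new ingredient being a tail bound needed to control all $k$ colours simultaneously. Throughout write $h:=|V(H)|$ and $\pi:=\pi_k^{\ast}(H)$, and read ``$\mathcal{G}$ has at least $q\binom{n}{2}$ edges'' as $\min_{i\in[k]}|E(G_i)|\ge q\binom{n}{2}$. We may assume $\pi+\varepsilon\le 1$, since otherwise the hypothesis fails for all large $n$ and there is nothing to prove; and it suffices to establish the bound for $n$ at least some constant $m=m(H,k,\varepsilon)$ fixed below, the finitely many smaller cases being absorbed by shrinking $\delta$ (there $\delta n^{h}<1$, so one only needs a single rainbow copy, which exists once $n$ exceeds the threshold below which the hypothesis is unsatisfiable).

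First I would fix $m=m(H,k,\varepsilon)$ large enough that both: (i) $\mathrm{ex}_k^{\ast}(m,H)<\left(\pi+\tfrac{\varepsilon}{2}\right)\binom{m}{2}$, which is possible by Theorem~\ref{thm:existence}; and (ii) $k\,e^{-\varepsilon^{2}m/8}<\tfrac12$. Now let $\mathcal{G}=(G_1,\dots,G_k)$ be a graph system on $n\ge m$ vertices with $\min_i|E(G_i)|\ge(\pi+\varepsilon)\binom{n}{2}$, and let $S$ be a uniformly random $m$-subset of $V(\mathcal{G})$. Every edge of $G_i$ lies inside $S$ with probability $\binom{m}{2}/\binom{n}{2}$, so $\mathbb{E}\left[|E(G_i[S])|\right]=|E(G_i)|\cdot\binom{m}{2}/\binom{n}{2}\ge(\pi+\varepsilon)\binom{m}{2}$ for each $i$.

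The key step is concentration of $|E(G_i[S])|$ about its mean. Viewing $|E(G_i[S])|$ as a function of the $m$ vertices sampled without replacement, replacing one sampled vertex changes the value by at most $m-1$ (that vertex's degree inside $S$), so the bounded-differences inequality (valid also for sampling without replacement, e.g.\ via Azuma applied to the vertex-exposure martingale) gives $\Pr\left[|E(G_i[S])|\le\left(\pi+\tfrac{\varepsilon}{2}\right)\binom{m}{2}\right]\le\Pr\left[|E(G_i[S])|\le\mathbb{E}|E(G_i[S])|-\tfrac{\varepsilon}{2}\binom{m}{2}\right]\le e^{-\varepsilon^{2}m/8}<\tfrac{1}{2k}$. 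This is precisely where the argument departs from the classical one: a plain reverse-Markov estimate yields only a constant failure probability per colour, which is useless in a union bound over $k$ colours, so one must invest a genuine exponential tail bound, and this forces $m$ to be taken large in terms of $k$. A union bound over $i\in[k]$ then shows that with probability greater than $\tfrac12$ we have $|E(G_i[S])|>\left(\pi+\tfrac{\varepsilon}{2}\right)\binom{m}{2}>\mathrm{ex}_k^{\ast}(m,H)$ for every $i$ at once; by the definition of $\mathrm{ex}_k^{\ast}(m,H)$, any such $\mathcal{G}[S]$ contains a rainbow copy of $H$.

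Finally I would convert this abundance of good $m$-sets into a count of rainbow copies by double counting. At least $\tfrac12\binom{n}{m}$ of the $m$-subsets of $V(\mathcal{G})$ host a rainbow copy of $H$, whereas any fixed rainbow copy of $H$ spans exactly $h$ vertices and hence lies in exactly $\binom{n-h}{m-h}$ of the $m$-subsets; therefore the number of rainbow copies of $H$ in $\mathcal{G}$ is at least $\dfrac{\binom{n}{m}/2}{\binom{n-h}{m-h}}=\frac12\prod_{j=0}^{h-1}\frac{n-j}{m-j}\ge\frac{1}{2(2m)^{h}}\,n^{h}$ once $n\ge 2h$, so $\delta:=\left(2(2m)^{h}\right)^{-1}$ works. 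The main obstacle, and essentially the only place where work beyond the classical template is needed, is exactly this simultaneous control of all $k$ colour classes on one common $m$-set, which the concentration step handles; the remainder is the familiar sampling-and-double-counting bookkeeping.
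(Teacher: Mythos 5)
Your proof is correct, but it takes a genuinely different route from the paper's. The paper derives supersaturation in one line from the rainbow $H$-removal lemma (\Cref{thm:H_removal}): if $\mathcal{G}$ had fewer than $\delta n^{|V(H)|}$ rainbow copies, one could delete roughly $\tfrac{\varepsilon}{2}n^2$ edges to make it rainbow $H$-free, yet every colour class would still exceed $\mathrm{ex}_k^{\ast}(n,H)$, a contradiction. You instead adapt the classical Erd\H{o}s--Simonovits sampling argument directly, and you correctly identify the one real obstruction in the rainbow setting: to certify that a random $m$-subset $S$ hosts a rainbow copy you need all $k$ colour classes to clear the density threshold on the \emph{same} $S$, so a reverse-Markov estimate per colour (which stays bounded away from $0$ as $m\to\infty$) does not survive the union bound over $k$ colours; an exponential tail via Azuma/McDiarmid is what makes $m$ large enough to close the union bound, and then the usual double count finishes. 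What each approach buys: the paper's proof is shorter but leans on the removal-lemma infrastructure (itself built on the graphon-system machinery), whereas yours is self-contained modulo \Cref{thm:existence} and a standard concentration inequality for sampling without replacement, and produces an explicit $\delta=\left(2(2m)^{|V(H)|}\right)^{-1}$. Two minor caveats worth noting: the exact constant in $e^{-\varepsilon^{2}m/8}$ depends on which version of the bounded-differences inequality one invokes for sampling without replacement (Azuma on the vertex-exposure martingale yields a slightly worse exponent), but this only shifts the threshold for $m$ in your condition (ii); and your treatment of $n<m$ by shrinking $\delta$ tacitly assumes the hypothesis forces at least one rainbow copy for such $n$, which the limit alone does not guarantee without a monotonicity argument for $\mathrm{ex}_k^{\ast}(n,H)/\binom{n}{2}$ --- a caveat that equally applies to the paper's one-line proof and is conventionally suppressed by reading the proposition for large $n$.
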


\subsection{The limit of graph systems}
To prove our main theorems, we develop the theory on the limit of graph systems.
For two measurable functions $f, g:X \rightarrow \mathbb{R}$ from a measure space $X$, we write $f \equiv g$ if $f=g$ holds almost everywhere.
For a measurable subset $A \subset X$, we say $f \equiv g$ on $A$ if $f = g$ holds almost everywhere on $A$.
We use the standard Lebesgue measure on $\mathbb{R}^n$.
A symmetric measurable function $W:[0, 1]^2 \rightarrow [0, 1]$ is called a \emph{graphon}. 
Let $\mathcal{W}_0$ be the space of all graphons.
An $n$-vertex graph $G$ can be interpreted as a graphon by partitioning $[0, 1]$ into $n$ intervals $I_1=[0, \frac{1}{n}), I_2=[\frac{1}{n}, \frac{2}{n}), \ldots, I_n=[\frac{n-1}{n}, 1]$ and by setting its value to be constantly $1$ on $I_i \times I_j$ if $ij \in E(G)$ and constantly $0$ otherwise.
It is well-known that a graphon can be interpreted as the limit of a sequence of graphs.
Furthermore, there is a metric called the ``cut-distance'' between graphons.
By identifying the graphons with cut-distance zero, we obtain a compact metric space $\widetilde{\mathcal{W}}_0$.
Also, we have a family of continuous functions, called the ``subgraph density'' functions, which determines the metric topology of $\widetilde{\mathcal{W}}_0$.
We further explain the theory of graphons and related definitions in Section~\ref{subsec:graphon_def}.

One may expect that a natural generalization of graph systems of order $k$ is the set of $k$-tuples of graphons $\mathbf{W} = (W_1, \ldots, W_k) \in \mathcal{W}_0^{k}$. 
However, this definition is not appropriate for extending theorems of graphon to the graph system setting.
Consider two graph systems $\mathcal{G}_1^n = (G_1^n, G_2^n)$ and $\mathcal{G}_2^n = (G_3^n, G_4^n)$, where $G_1^n, G_3^n, G_4^n$ are mutually independent binomial random graphs $G(n, 1/2)$ and $G_2^n$ is the complement of $G_1^n$.
Let $H$ be a multigraph with two vertices and two parallel edges.
Then $\mathcal{G}_1^n$ has no rainbow copies of $H$, while $\mathcal{G}_2^n$ contains ${n \choose 2}/4$ rainbow copies of $H$ in expectation.
On the other hand, both $\mathcal{G}_1^n$ and $\mathcal{G}_2^n$ component-wise converge to $\mathbf{W} = (\frac{1}{2} \mathbbm{1}_{[0, 1]^2}, \frac{1}{2} \mathbbm{1}_{[0, 1]^2})$ with high probability. 
It follows that the ``rainbow $H$-density'' is not a continuous function on the product space $\mathcal{W}_0^{k}$.
Even if we avoid using multigraphs, the same problem appears when we count the number of, for instance, induced copies of $K_{1, 2}$. 
In this fashion, we also need to encode the information regarding the intersection of graphs as follows: 
\begin{definition}
    A \emph{graphon system of order $k$} is a tuple of graphons $\mathbf{W} = (W_I)_{I \subseteq [k]}$ with $W_\emptyset \equiv 1$.
    When $I = \{ i \} \subseteq [k]$ is a singleton, we simply write $W_i = W_{\{ i \}}$.
    For graphons $W_1, \ldots, W_k$, let $\mathrm{span}(W_1, \ldots, W_k)$ be the classical graphon system $\mathbf{W}$ of order $k$ defined by $W_I = \prod_{i \in I} W_i$.
\end{definition}
As we view a graph as a graphon, a graph system $\calG = (G_1, \ldots, G_k)$ can be regarded as a graphon system by letting $G_I = \bigcap_{i \in I} G_i$.
Especially, as a graph is a graphon taking values only in $\{ 0,1 \}$, the graphon system $\mathrm{span}(G_1, \ldots, G_k)$ corresponds to the graph system $\mathcal{G}$. 
Throughout this paper, we identify a graph system $\mathcal{G}$ with the corresponding graphon system $\mathrm{span}(G_1, \ldots, G_k)$ when there is no confusion.  
In the previous example, $\mathcal{G}_1^n$ converges to the graphon system $\mathbf{W}$ with ${W}_{1}, {W}_{2} \equiv \frac{1}{2}$ and $W_{\{1, 2\}} \equiv 0$, whereas $\mathcal{G}_2^n$ converges to the graphon system $\mathbf{W}$ with ${W}_{1}, {W}_{2} \equiv \frac{1}{2}$ and ${W}_{\{1,2\}} \equiv \frac{1}{4}$.
In \Cref{subsec:graphon_system_def} and \Cref{sec:induced_density}, we discuss the definition of homomorphism density and induced homomorphism density of a graphon system.



Graphon systems form a compact metric space with respect to a suitable metric, called the \emph{cut-distance} (see \Cref{thm:compact}).
The metric topology is determined by a family of continuous functions, called the \emph{(rainbow) homomorphism density} (see \Cref{thm:counting_lemma} and \Cref{thm:inverse_counting}).
However, the set of graph systems is not dense in this space.
For example, if $W_{\{ 1,2 \}} > W_{ 1 }$ on a set of positive measures in $[0,1]^2$, then we will never be able to find a sequence of graph systems $(G_1^n, G_2^n)$ whose limit is $(W_{\emptyset}, W_{ 1 },W_{ 2 },W_{\{ 1,2 \}})$ because we always have $G_{\{ 1,2 \}}^n \leq G_1^n$.
The next definition provides the complete characterization of graphon systems that are limits of graph systems.

\begin{definition}\label{def: barW}
    Let $\mathbf{W}=(W_I)_{I \subseteq [k]}$ be a graphon system of order $k$.    
    For each $I \subseteq [k]$, we define $\overline{W}_I$ inductively by:
    \begin{itemize}
        \item $\overline{W}_{[k]} = W_{[k]}$,
        \item $\overline{W}_I = W_I - \sum_{J \supsetneq I} \overline{W}_J$ if $I \neq [k]$.
    \end{itemize}
    The graphon system $\mathbf{W}$ is called an \emph{admissible graphon system} if $\overline{W}_I(x, y) \geq 0$ for almost every $(x,y)\in [0,1]^2$ and for any $I \subseteq [k]$. 
    A graphon system $\mathbf{W}=(W_I)_{I \subseteq [k]}$ is \emph{classical} if $W_I = \prod_{i \in I} W_{\{i\}}$ holds almost everywhere for each $I\subseteq [k]$.
\end{definition}
Observe that a classical graphon system $\mathbf{W} = (W_I)_{I \subseteq [k]}$ satisfies $\overline{W}_I = \prod_{i \in I}W_i \prod_{i \notin I} (1-W_i) \geq 0$
for all $I\subseteq [k]$, so a classical graphon system is an example of admissible graphon systems.
However, the space of admissible graphon systems is strictly larger than the space of classical graphon systems, and the set of classical graphon systems is not compact.
The previous example $\mathcal{G}_1^n = (G_1^n, G_2^n)$ gives a sequence of classical graphon systems whose limit is not classical.
In the following theorem, $\widetilde{\mathcal{W}}_0^{(k)}$ is the space of graphon systems of order $k$ equipped with the metric $\delta_{\square}$, cf. Section~\ref{subsec:graphon_system_def}.

\begin{theorem}\label{thm:classical}
    The space of admissible graphon systems is the closure of the set of graph systems in $(\widetilde{\mathcal{W}}_0^{(k)}, \delta_{\square})$. 
    In other words, every sequence of graph systems has a subsequence convergent to an admissible graphon system and every admissible graphon system is a limit of a sequence of graph systems.
\end{theorem}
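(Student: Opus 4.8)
The plan is to establish the two inclusions separately: first that every $\delta_\square$-limit of graph systems is admissible (equivalently, that the admissible graphon systems form a $\delta_\square$-closed set containing all graph systems), and second that every admissible graphon system is approximable in $\delta_\square$ by graph systems. The combinatorial core is the Möbius-type recursion defining $\overline{W}_I$: unwinding it gives $\overline{W}_I = \sum_{J \supseteq I}(-1)^{|J\setminus I|}W_J$, from which $\sum_{I\subseteq[k]}\overline{W}_I = W_\emptyset \equiv 1$, so for an admissible $\mathbf{W}$ the tuple $(\overline{W}_I(x,y))_{I\subseteq[k]}$ is, for a.e.\ $(x,y)$, a probability distribution on the subsets of $[k]$. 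For a genuine graph system $\calG = \mathrm{span}(G_1,\dots,G_k)$, writing $S(x,y) = \{i : xy \in E(G_i)\}$ we get $G_I(x,y) = \mathbbm{1}[S(x,y)\supseteq I]$ and hence, by inclusion--exclusion, $\overline{G}_I(x,y) = \mathbbm{1}[S(x,y)=I]\ge 0$; in particular every graph system is admissible, so the first inclusion reduces to showing that admissibility survives $\delta_\square$-limits.

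For this closedness step, note that admissibility is invariant under the measure-preserving rearrangements used to define $\delta_\square$, since $\overline{(W\circ\varphi)}_I = \overline{W}_I\circ\varphi$; hence it suffices to treat a sequence $\mathbf{W}^n$ with $\|W^n_I - W_I\|_\square \to 0$ for every $I$. As $\overline{W}_I$ is a fixed linear combination of the $W_J$'s, also $\|\overline{W}^n_I-\overline{W}_I\|_\square\to 0$. Each $\overline{W}^n_I$ is nonnegative a.e., so $\int_{A\times B}\overline{W}^n_I \ge 0$ for all measurable $A,B$; letting $n\to\infty$ yields $\int_{A\times B}\overline{W}_I\ge 0$ for all $A,B$, which forces $\overline{W}_I\ge 0$ a.e.\ by a Lebesgue-density (or martingale/stepping) argument. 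Thus the admissible graphon systems are $\delta_\square$-closed, and combined with the previous paragraph the closure of the graph systems lies inside the admissible systems; the assertion that every sequence of graph systems has a subsequence converging to an admissible system then follows from the compactness of $(\widetilde{\mathcal{W}}_0^{(k)},\delta_\square)$ in \Cref{thm:compact}.

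For the reverse inclusion I would use a random construction generalizing $\mathbb{G}(n,W)$. Fix an admissible $\mathbf{W}$ and Borel representatives of the $\overline{W}_I$ with $\overline{W}_I\ge 0$ and $\sum_I\overline{W}_I=1$ off a null set. Sample $x_1,\dots,x_n$ i.i.d.\ uniformly on $[0,1]$ and then, independently over pairs $\{i,j\}$, draw $S_{ij}\subseteq[k]$ from the distribution $(\overline{W}_I(x_i,x_j))_{I\subseteq[k]}$, and let $\mathbb{G}(n,\mathbf{W})$ be the graph system with $E(G^n_\ell)=\{ij: \ell\in S_{ij}\}$. Then $G^n_I=\{ij:S_{ij}\supseteq I\}$ is, conditionally on the points, exactly a $W_I$-random graph, since $\Pr[S_{ij}\supseteq I] = \sum_{J\supseteq I}\overline{W}_J(x_i,x_j)=W_I(x_i,x_j)$. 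Writing $\mathbf{W}[x_1,\dots,x_n]$ for the step graphon system sampled at these points, I would bound $\delta_\square(\mathbb{G}(n,\mathbf{W}),\mathbf{W})\le\delta_\square(\mathbb{G}(n,\mathbf{W}),\mathbf{W}[x_1,\dots,x_n])+\delta_\square(\mathbf{W}[x_1,\dots,x_n],\mathbf{W})$: the second term tends to $0$ in probability by the usual graphon sampling lemma applied to each of the finitely many coordinates, and the first by an Azuma/Chernoff estimate for the cut norm of a random $\{0,1\}$-deviation (union bounding over the $2^{2n}$ pairs of vertex subsets and then over the $2^k$ indices $I$), using the identity map as a common overlay for all coordinates. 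Passing to a subsequence we may assume these convergences hold a.s., so some realization yields graph systems $\mathbb{G}(n,\mathbf{W})$ converging to $\mathbf{W}$. (If a deterministic construction is preferred, one can first round each $\overline{W}_I$ to its averages over a fine grid — this preserves nonnegativity and the sum being $1$, hence admissibility, and is $L^1$-close — and then realize the resulting step system by complete bipartite colour patterns; but one still needs to distribute the colour patterns quasirandomly inside each block, which reintroduces a concentration argument.)

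I expect the main obstacle to be exactly this last concentration step: controlling $\delta_\square$ — rather than merely block densities or rainbow homomorphism densities — of the random construction, and doing so for all $2^k$ constituent graphons $G^n_I$ simultaneously under a single overlay. A more routine secondary obstacle is the ``all box-integrals nonnegative implies a.e.\ nonnegative'' step in the closedness argument, which should be handled via Lebesgue differentiation or martingale convergence rather than a pointwise limit.
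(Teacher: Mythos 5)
Your proposal is correct, and for the first inclusion (closure of the graph systems lies inside the admissible systems) it takes a genuinely different and arguably more elementary route than the paper. The paper establishes that the admissible graphon systems form a compact --- hence closed --- subset of $(\widetilde{\mathcal{W}}_0^{(k)},\delta_\square)$ by invoking the abstract criterion of \Cref{thm:compact_more}, whose proof in Appendix~A runs through weak regularity, a diagonalization, and the martingale convergence theorem; one then verifies that admissibility is preserved by stepping and by pointwise a.e.\ limits. You instead prove $\delta_\square$-closedness directly: pass to representatives achieving $d_\square$-convergence (using that $\overline{(\mathbf{W}^\varphi)}_I=(\overline{W}_I)^\varphi$, so admissibility survives measure-preserving rearrangements), observe that cut-norm convergence of the $W_I^n$ forces cut-norm convergence of the fixed linear combinations $\overline{W}_I^n$, take limits of the box integrals $\int_{S\times T}\overline{W}^n_I\ge 0$, and recover $\overline{W}_I\ge 0$ a.e.\ by Lebesgue differentiation. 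This bypasses the martingale/regularity machinery entirely and uses \Cref{thm:compact} only for compactness of the ambient space; the paper's detour through \Cref{thm:compact_more} is motivated by the fact that that theorem is wanted independently elsewhere. For the reverse inclusion your construction is exactly the paper's $\mathbb{G}(n,\mathbf{W})$: sample $x_1,\dots,x_n$ uniformly, draw independent $S_{ij}$ from the distribution $(\overline{W}_I(x_i,x_j))_{I\subseteq[k]}$, and set $E(G^n_\ell)=\{ij:\ell\in S_{ij}\}$, so that $\Pr[S_{ij}\supseteq I]=W_I(x_i,x_j)$. The obstacle you flag at the end --- getting a $\delta_\square$-bound for the whole tuple under a single overlay rather than coordinate-wise cut-norm bounds (which, as the paper notes, do not control $\delta_\square$ of a system) --- is precisely what the paper's \Cref{thm:W_random_graph} resolves, and it does so much as you anticipate, by first passing to an equitable common step partition via a regularity lemma (\Cref{lem:equitable_partition}) and then comparing the sampled and randomized step systems coordinate-by-coordinate under the identity overlay (\Cref{lem:first_sampling}, \Cref{lem:step_function2}, \Cref{lem:H_and_W}, \Cref{lem:concentration}).
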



Finally, we note some benefits for developing the theory on the limit of graph systems.
First, the computation in the proof of \Cref{thm:tree_less_star_2} becomes clearer with this theory. 
Second, as we note in Remark~\ref{rmk1}, the limit theory allows us to easily generalize Theorem~\ref{thm:existence} and the results on the structure of extremal graphon systems to the various settings including:
\begin{itemize}
    \item maximizing the product $\frac{\prod_{i=1}^k |E(G_i)|}{{n \choose 2}^k}$;
    \item maximizing $\min_{1 \leq i \leq k} \frac{\lambda_1(G_i)}{n}$; and
    \item generalized Tur\'{a}n problems: maximizing the rainbow density of $H_1$ in rainbow $H_2$-free graph systems.
\end{itemize}

\textbf{Organization.}
In Section~\ref{sec:prelim}, we introduce the theory of graphons and generalize it to the graphon systems.
In Section~\ref{sec:graphon_system}, we prove Theorem~\ref{thm:classical} determining the closure of the set of graph systems.
In Section~\ref{sec:applications}, we prove several properties of rainbow Tur\'{a}n density, including \Cref{thm:existence} and \Cref{thm:bipartite_max}--\ref{thm:supersaturation}. 
In Section~\ref{sec:structure} and~\ref{sec:proof-tree-star}, we investigate the structure of extremal graphon systems having no rainbow trees and prove \Cref{thm:algebraic_number} and~\ref{thm:tree_less_star}, respectively.
In Appendix~\ref{appendix:easy_proofs}$-$\ref{sec:induced_density}, we collect extensions of theorems on a graphon to a graphon system which can be easily proved by similar proof.


\section{Preliminaries and notations}\label{sec:prelim}
Throughout this paper, we assume that all the subsets of $\mathbb{R}^n$ and the functions between them that we deal with are measurable.
\subsection{Colored subgraphs}
In this section, we define some notions for edge-colored graphs.
We also define homomorphisms between two multigraphs that preserve pre-colorings, and extend the rainbow Tur\'{a}n problem to a more general pre-colored Tur\'{a}n problem.

\begin{definition}
    Let $H$ be a multigraph.
    For each $uv\in \binom{V(H)}{2}$, denote by $E(H)_{uv}$ the set of edges of $H$ between $u$ and $v$. In particular, $E(H)_{uv} = \emptyset$ when $u$ and $v$ are not adjacent in $H$.
    If $H$ is clear in the context, we simply write $E_{uv}$.
    
     For a set $X\subseteq E(H)$ of edges, a map $\psi: X\rightarrow [k]$ is called a \emph{pre-coloring} of $H$ if $\psi$ is injective on $E_{uv} \cap X$ for each $uv\in \binom{V(H)}{2}$. Denote by $\textbf{dom}(\psi)=X$ the domain of $\psi$.
    If $\psi$ is injective on its domian, $\psi$ is called a \emph{rainbow pre-coloring} of $H$. If $\textbf{dom}(\psi)=E(H)$, we call $\psi$ a \emph{coloring} of $H$.
    A tuple $(H, \psi)$ of a multigraph and its  pre-coloring is called a \emph{pre-coloring tuple}. If $\psi$ is a rainbow pre-coloring, $(H, \psi)$ is called a \emph{rainbow pre-coloring tuple}; if $\psi$ is a (resp. rainbow) coloring, $(H,\psi)$ is called a \emph{(resp. rainbow) coloring tuple}.
\end{definition}

As above, a pre-coloring tuple is a multigraph some of whose edges are already colored. 
Using the definition below, we can specify a certain subgraph of $G$ whose edges are colored consistently with a given pre-coloring tuple $(H,\psi)$.

\begin{definition}
For multigraphs $G$ and $H$, a \emph{multigraph homomorphism} $(f, \widetilde{f}):H \rightarrow G$ is a tuple of a graph homomorphism between vertex sets $f:V(H) \rightarrow V(G)$ and a map $\widetilde{f}:E(H) \rightarrow E(G)$ such that $\widetilde{f}(E(H)_{uv})\subseteq E(G)_{f(u)f(v)}$ for all $uv\in \binom{V(H)}{2}$.
A multigraph homomorphism is an \emph{edge-preserving homomorphism} if $\widetilde{f}$ is injective. If there exists a multigraph homomorphism $(f,\widetilde{f})$ with bijective $\widetilde{f}$, the graph $G$ is called an \emph{edge-preserved homomorphic image} of $H$.


For two pre-coloring tuples $(H, \psi_1)$ and $(G, \psi_2)$, a \emph{color homomorphism} is an edge-preserving homomorphism $(f,\widetilde{f}): H \rightarrow G$ such that $\textbf{dom}(\psi_2) = \widetilde{f}(\textbf{dom}(\psi_1))$ and $\psi_1(e) = \psi_2(\widetilde{f}(e))$ for every $e \in \textbf{dom}(\psi_1)$.
A tuple $(G, \psi_2)$ is called a \emph{color homomorphic image} of $(H, \psi_1)$ if there exists a color homomorphism $(f, \widetilde{f})$ between them with $\widetilde{f}$ bijective.
\end{definition}

Recall the definition of the rainbow Tur\'{a}n density in \eqref{eq: def rainbow Turan density}.
We wish to extend this definition to pre-coloring tuples.  

\begin{definition}
    For a pre-coloring tuple (resp., rainbow pre-coloring tuple) $(H, \psi)$, a graph system $\mathcal{G}=(G_1, \ldots, G_k)$ on $V$ has $(H, \psi)$ as a \emph{colored subgraph} (resp., \emph{rainbow subgraph}) if there exist a multigraph $H'$ on the vertex set $V(H')\subseteq V$ together with a multigraph isomorphism $f:H \rightarrow H'$ and a (resp., injective) function $\psi': E(H') \to [k]$ such that  $e\in E(G_{\psi'(e)})$ for each $e\in E(H')$ and $\psi(e) = \psi'(f(e))$ for all $e\in \textbf{dom}(\psi)$.
    
    For a family $\mathcal{F}$ of rainbow pre-coloring tuples,
    $\calG$ is said to be \emph{rainbow $\mathcal{F}$-free} if it does not have $(H, \psi)$ as a rainbow subgraph for every $(H, \psi) \in \mathcal{F}$.
    When $\mathcal{F}$ consists of only one element $(H,\psi)$, we say that $\calG$ is \emph{rainbow $(H, \psi)$-free}.
    If $\textbf{dom}(\psi)=\emptyset$, the graph system $\calG$ is said to be \textit{rainbow $H$-free}.
\end{definition}

\begin{definition}
    Let $\mathcal{F}$ be a family of rainbow pre-coloring tuples. We define the \emph{rainbow extremal number} of $\mathcal{F}$ by
    $$\mathrm{ex}_k^{\ast}(n, \mathcal{F}) : = \max \left\{ \min_{1 \leq i \leq k} |E(G_i)| \,\middle\mid\, (G_1, \ldots, G_k)\text{ is a rainbow $\mathcal{F}$-free graph system on } [n] \right\}.$$
    The \emph{rainbow Tur\'{a}n density} of $\mathcal{F}$ is defined by
     $$\pi_k^{\ast}(\mathcal{F}) = \lim_{n \rightarrow \infty} \frac{\mathrm{ex}_k^{\ast}(n, \mathcal{F})}{{n \choose 2}}.$$
    When $\mathcal{F}=\{(H, \psi)\}$, then we write $\mathrm{ex}_k^{\ast}(n, H, \psi)$ and $\pi_k^{\ast}(H, \psi)$, respectively. 
    We often omit $\psi$ when $\textbf{dom}(\psi) = \emptyset$.
\end{definition}

We will prove the existence of $\pi_k^{\ast}(H, \psi)$ for every pre-coloring tuple $(H,\psi)$ in Theorem~\ref{thm:stronger_existence}.

\subsection{Preliminaries on graphons}\label{subsec:graphon_def}
In this section, we describe the theory on graphons developed by Lov\'asz~\cite{Lovasz2012}.

A \emph{graphon} is a symmetric function $W:[0, 1]^2 \rightarrow [0, 1]$.
Denote by $\mathcal{W}_0$ the set of graphons in $L^2([0, 1]^2)$.
For a measure-preserving map $\varphi:[0, 1] \rightarrow [0, 1]$ and a graphon $W$, define $W^{\varphi}(x, y) := W(\varphi(x), \varphi(y))$.
For a bounded symmetric function $W:[0, 1]^2 \rightarrow \mathbb{R}$, its \emph{cut-norm} is defined by
$$\lVert W \rVert_{\square} := \sup_{S, T \subseteq [0, 1]} \left| \int_{S \times T} W \right|. $$
The \emph{cut-distance} $\delta_{\square}(W, U)$ between two graphons $W$ and $U$ is defined by
$$\delta_{\square}(W, U) = \inf_{\varphi} \lVert W - U^{\varphi} \rVert_{\square},$$
where the infimum is taken over all measure-preserving invertible maps $\varphi:[0, 1] \rightarrow [0, 1]$.
Let $\widetilde{\mathcal{W}}_0$ be the quotient space of $\mathcal{W}_0$ taken by identifying graphons $U$ with $W$ whenever $\delta_\square(U, W)=0$.
\begin{theorem}[\cite{Lovasz2012}, Theorem 9.23]
    $(\widetilde{\mathcal{W}}_0, \delta_\square)$ is a compact metric space.
\end{theorem}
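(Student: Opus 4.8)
The plan is to follow Lov\'asz and Szegedy: everything reduces to the Frieze--Kannan (weak) regularity lemma for graphons together with a diagonalization/martingale argument. The metric axioms are routine: $\delta_\square(W,W)=0$ via the identity map, symmetry holds because the maps $\varphi$ are invertible, and the triangle inequality follows from $\lVert U^\varphi - V^\varphi\rVert_\square = \lVert U - V\rVert_\square$ for measure-preserving $\varphi$ (so that $\lVert W_1 - W_3^{\psi\circ\varphi}\rVert_\square \le \lVert W_1 - W_2^\varphi\rVert_\square + \lVert W_2 - W_3^\psi\rVert_\square$) together with the fact that compositions of measure-preserving invertible maps are again such. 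Since $\widetilde{\mathcal W}_0$ is the quotient that identifies graphons at $\delta_\square$-distance $0$, this makes $\delta_\square$ a genuine metric, and the real content is (sequential) compactness: every sequence $(W_n)$ in $\mathcal W_0$ has a $\delta_\square$-convergent subsequence.

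First I would record the weak regularity lemma in refining form: for every $\varepsilon>0$ there is $q(\varepsilon)$ such that every graphon $W$ admits a partition $\mathcal P$ of $[0,1]$ into at most $q(\varepsilon)$ parts with $\lVert W - W_{\mathcal P}\rVert_\square \le \varepsilon$, where $W_{\mathcal P}$ is $W$ averaged over the blocks $P_i\times P_j$; iterating, each $W_n$ gives a refining chain $\mathcal P_{n,1}\preceq\mathcal P_{n,2}\preceq\cdots$ with $\lvert\mathcal P_{n,k}\rvert\le q_k$ and $\lVert W_n - (W_n)_{\mathcal P_{n,k}}\rVert_\square \le 1/k$. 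Applying a measure-preserving rearrangement to each $W_n$ separately --- harmless in $\widetilde{\mathcal W}_0$ --- I may assume every $\mathcal P_{n,k}$ consists of intervals with the chain realized by successive subdivision. Then $(W_n)_{\mathcal P_{n,k}}$ is encoded by a list of reals in $[0,1]$ (interval endpoints and block values) of length depending only on $k$; by compactness of finite-dimensional cubes and a diagonal argument over $k$, I pass to a subsequence of $(W_n)$ along which, for each fixed $k$, all these parameters converge, the limits assembling into step functions $U_k$ whose partitions $\mathcal Q_k$ still form a refining chain.

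The structural point is that $(U_k)_k$ is a martingale: since $(W_n)_{\mathcal P_{n,k}}$ is the conditional expectation of $(W_n)_{\mathcal P_{n,k+1}}$ onto $\mathcal P_{n,k}\times\mathcal P_{n,k}$ (an identity linear in the parameters), the limit satisfies $U_k=\mathbb E[U_{k+1}\mid \mathcal Q_k\times\mathcal Q_k]$. This is a $[0,1]$-valued martingale on $[0,1]^2$, so by the martingale convergence theorem it converges a.e.\ and in $L^1$ to a symmetric $[0,1]$-valued function $W$, i.e.\ a graphon. To finish, after the rearrangements the triangle inequality gives
\[
\lVert W_n - W\rVert_\square \le \lVert W_n - (W_n)_{\mathcal P_{n,k}}\rVert_\square + \lVert (W_n)_{\mathcal P_{n,k}} - U_k\rVert_1 + \lVert U_k - W\rVert_1 \le \tfrac1k + \lVert (W_n)_{\mathcal P_{n,k}} - U_k\rVert_1 + \lVert U_k - W\rVert_1.
\]
Given $\delta>0$, pick $k$ with $1/k<\delta/3$ and $\lVert U_k-W\rVert_1<\delta/3$, then $N$ with the middle term $<\delta/3$ for $n\ge N$ (it is a continuous function of finitely many converging parameters); hence $\delta_\square(W_n,W)\to 0$ along the subsequence, which gives compactness since $\widetilde{\mathcal W}_0$ is metric.

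I expect the main obstacle to be the bookkeeping around the measure-preserving rearrangements: one must check that applying a different $\varphi_n$ to each $W_n$ is harmless (it is, as $\delta_\square$ is the quotient metric), that the refining chain survives as a genuine subdivision into subintervals, and that convergence of the finitely many endpoint/value parameters actually yields $\lVert (W_n)_{\mathcal P_{n,k}} - U_k\rVert_1\to 0$ --- the subtlety being that both block values and block boundaries move, so one argues by dominated convergence on $[0,1]^2$. Pinning down (or cleanly citing) the weak regularity lemma in the refining form is the other ingredient to nail down; with that in hand the rest is the standard double-limit closing argument.
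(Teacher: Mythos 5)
Your proposal is correct and follows essentially the same strategy as the standard Lov\'asz--Szegedy argument, which is also exactly what the paper reproduces in Appendix~A when proving the graphon-system generalization (Theorem~\ref{thm:compact_more}): iterate the weak regularity lemma to get a refining chain of step approximations, rearrange so all parts are intervals, diagonalize using compactness of the finite-dimensional parameter cubes, identify the limiting step functions as a martingale, and invoke martingale convergence plus the triangle inequality to close. The bookkeeping points you flag (per-$n$ rearrangements are harmless in $\widetilde{\mathcal{W}}_0$, moving block boundaries handled by dominated/bounded convergence, allowing empty parts so the number of parts at level $k$ is fixed) are exactly the ones the paper addresses, and none of them is a gap.
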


After taking quotient, the following homomorphism density that generalizes the number of copies of a graph becomes a continuous function on the space $(\widetilde{\mathcal{W}}_0, \delta_\square)$.
\begin{definition}\label{def: hom density}
    For a graph $H$, the \emph{homomorphism density} of $H$ is defined by 
    $$t_H(W) = \int_{[0, 1]^{|V(H)|}} \prod_{e=uv \in E(H)} W(x_u, x_v) \prod_{v \in V(H)} dx_v.$$
\end{definition}
\begin{theorem}[\cite{Lovasz2012}, Counting Lemma]
    For graphons $W$ and $U$, we have
    $$|t_{H}(W) - t_H(U)| \leq |E(H)| \delta_{\square}(U, W).$$
    In particular, $t_H$ is a well-defined continuous function on $(\widetilde{\mathcal{W}}_0, \delta_{\square})$.
\end{theorem}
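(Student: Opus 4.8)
The plan is to follow the standard telescoping proof, reducing the statement to a single-edge swap that is controlled directly by the cut-norm. First I would observe that it suffices to prove $|t_H(W) - t_H(U)| \le |E(H)|\,\lVert W - U \rVert_{\square}$ for every pair of graphons $W, U$: for any invertible measure-preserving $\varphi \colon [0,1] \to [0,1]$, the substitution $x_v \mapsto \varphi(x_v)$ in the integral of Definition~\ref{def: hom density} shows $t_H(U^{\varphi}) = t_H(U)$, so applying the cut-norm bound to the pair $(W, U^{\varphi})$ and taking the infimum over $\varphi$ yields the claimed $\delta_{\square}$ estimate.

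For the cut-norm bound, fix an enumeration $E(H) = \{e_1, \dots, e_m\}$ with $m = |E(H)|$, and for $0 \le i \le m$ let $F_i \colon [0,1]^{V(H)} \to [0,1]$ be the product over $E(H)$ of graphon factors that uses $W$ on $e_1, \dots, e_i$ and $U$ on $e_{i+1}, \dots, e_m$, so that $\int F_0 = t_H(U)$ and $\int F_m = t_H(W)$. By the triangle inequality it is enough to show that each consecutive difference $\bigl|\int F_i - \int F_{i-1}\bigr|$ is at most $\lVert W - U \rVert_{\square}$; writing $e_i = uv$, this difference equals $\int_{[0,1]^{V(H)}} (W - U)(x_u, x_v)\, F(x)\, dx$, where $F$ is the product of the remaining $m-1$ graphon factors, each valued in $[0,1]$.

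The main point — really bookkeeping rather than a genuine obstacle — is that since $H$ is a simple graph, $e_i$ is the only edge between $u$ and $v$. Hence, integrating out by Fubini the variables $x_w$ with $w \notin \{u,v\}$ last, for each fixed assignment $\vec{a} = (x_w)_{w \ne u,v}$ the product $F$ splits as $f_{\vec{a}}(x_u)\, g_{\vec{a}}(x_v)\, c_{\vec{a}}$, where $f_{\vec{a}}$ gathers the factors of edges incident to $u$, $g_{\vec{a}}$ those incident to $v$, and $c_{\vec{a}}$ those incident to neither, all three quantities lying in $[0,1]$. The inner double integral is then $c_{\vec{a}} \int (W-U)(x_u, x_v) f_{\vec{a}}(x_u) g_{\vec{a}}(x_v)\, dx_u\, dx_v$, which is bounded by the ``weighted cut-norm'' estimate: writing $f_{\vec{a}}(x_u) = \int_0^1 \mathbbm{1}[f_{\vec{a}}(x_u) \ge s]\, ds$ and likewise for $g_{\vec{a}}$, Fubini reduces it to an average of integrals of $W-U$ over rectangles $S \times T$, each at most $\lVert W - U \rVert_{\square}$ in absolute value, so the inner integral is at most $c_{\vec{a}} \lVert W - U \rVert_{\square} \le \lVert W - U \rVert_{\square}$. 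Integrating over $\vec{a}$, which ranges over a probability space, preserves this bound, completing the telescoping step and hence the inequality.

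Finally, if $\delta_{\square}(U, W) = 0$ then the inequality forces $t_H(U) = t_H(W)$, so $t_H$ descends to a well-defined function on $\widetilde{\mathcal{W}}_0$; the same inequality shows this function is Lipschitz with constant $|E(H)|$, hence continuous, with respect to $\delta_{\square}$.
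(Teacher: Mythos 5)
Your proof is correct, and it is essentially the standard telescoping argument given in Lov\'asz's book (which the paper simply cites without reproducing): one reduces the $\delta_\square$ estimate to a cut-norm estimate via invariance under measure-preserving maps, telescopes over the edges, factors the remaining product at each step into functions of $x_u$ and $x_v$ alone (using simplicity of $H$), and bounds the resulting ``weighted'' cut-norm by the ordinary cut-norm via the layer-cake representation of $f_{\vec a}$ and $g_{\vec a}$. All steps are sound; no gaps.
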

Moreover, the following generalization of the above theorem will be used later when we discuss the graphon systems.
\begin{theorem}[\cite{Lovasz2012}, Counting Lemma for decorated graphs]\label{thm:counting_lemma_for_decorated}
    For a simple graph $H$ and two tuples of graphons $(W_{uv})_{uv \in E(H)}$ and $(U_{uv})_{uv \in E(H)}$, we have 
    $$\left|\int_{[0, 1]^{|V(H)|}} \left( \prod_{uv \in E(H)} W_{uv}(x_u, x_v)  - \prod_{uv \in E(H)} U_{uv}(x_u, x_v) \right) \prod_{v \in V(H)} dx_v\right| \leq \sum_{u  \in E(H)} \lVert W_{uv} - U_{uv} \rVert_\square.$$
\end{theorem}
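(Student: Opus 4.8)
The final statement to prove is Theorem~\ref{thm:counting_lemma_for_decorated}, the Counting Lemma for decorated graphs.

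\textbf{Plan of proof.} The plan is to prove the inequality by a telescoping (hybrid) argument, swapping one edge-decoration at a time from $U_{uv}$ to $W_{uv}$, and bounding each single swap by the cut-norm of the difference $W_{uv}-U_{uv}$. Fix an enumeration $e_1,\dots,e_m$ of $E(H)$, where $m=|E(H)|$. For $0\le j\le m$ define the hybrid product obtained by using $W_{e_i}$ for $i\le j$ and $U_{e_i}$ for $i>j$, and let $P_j$ denote the corresponding integral over $[0,1]^{|V(H)|}$. Then $P_0$ and $P_m$ are exactly the two integrals appearing in the statement, and by the triangle inequality it suffices to show $|P_j-P_{j-1}|\le \lVert W_{e_j}-U_{e_j}\rVert_\square$ for each $j$.

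\textbf{Key step: the single-edge swap.} Write $e_j = ab$. In the difference $P_j - P_{j-1}$, every factor except the one on the edge $e_j$ is identical, so
\[
P_j - P_{j-1} = \int_{[0,1]^{|V(H)|}} \bigl(W_{e_j}(x_a,x_b) - U_{e_j}(x_a,x_b)\bigr) \, R(x) \prod_{v\in V(H)} dx_v,
\]
where $R(x) = \prod_{i\ne j} Z_i(x_{u_i},x_{w_i})$ with $Z_i \in \{W_{e_i},U_{e_i}\}$ and $e_i = u_i w_i$; here $0 \le R(x) \le 1$ since each factor is a graphon. The plan is to first integrate out all variables $x_v$ for $v\notin\{a,b\}$: by Fubini, define
\[
g(x_a,x_b) = \int_{[0,1]^{V(H)\setminus\{a,b\}}} R(x) \prod_{v\ne a,b} dx_v,
\]
so that $0 \le g \le 1$ and
\[
P_j - P_{j-1} = \int_{[0,1]^2} \bigl(W_{e_j}(x_a,x_b) - U_{e_j}(x_a,x_b)\bigr)\, g(x_a,x_b)\, dx_a\, dx_b.
\]
Now I would bound an integral of the form $\int (W-U) g$ with $D := W-U$ symmetric bounded and $0\le g\le 1$ by $\lVert D\rVert_\square$. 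This is the standard fact that $\sup_{g:[0,1]^2\to[0,1]} \bigl|\int D\,g\bigr| = \lVert D\rVert_\square$: the lower bound is immediate by taking $g = \mathbbm{1}_{S\times T}$, and for the upper bound one uses that the extreme points of the convex set $\{g : 0\le g\le 1\}$ (in a suitable weak sense) are indicator functions, or more elementarily approximates $g$ by simple functions $\sum_\ell c_\ell \mathbbm{1}_{A_\ell}$ with $\sum c_\ell \le 1$, $c_\ell \ge 0$ on a common refinement and bounds each term; one also notes $\int_{S\times T} D = \int D\cdot \mathbbm{1}_{S\times T}$ and symmetrizes. Applying this with $D = W_{e_j}-U_{e_j}$ and $g$ as above yields $|P_j - P_{j-1}| \le \lVert W_{e_j}-U_{e_j}\rVert_\square$. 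Summing over $j=1,\dots,m$ and applying the triangle inequality gives the claim. The continuity of $t_H$ and the ordinary Counting Lemma then follow as special cases (taking all $W_{uv}$ equal to $W$, all $U_{uv}$ equal to $U$, and bounding $\lVert W-U\rVert_\square$ by $\delta_\square(U,W)$ after the minimizing relabelling, using that $t_H$ is invariant under measure-preserving maps).

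\textbf{Main obstacle.} The only genuinely substantive point is the inequality $\bigl|\int_{[0,1]^2} D\, g\bigr|\le \lVert D\rVert_\square$ for $0\le g\le 1$; the rest is bookkeeping. I expect to handle this by the simple-function approximation just described, which avoids any measure-theoretic subtlety about extreme points. A minor care point is ensuring measurability of $g$ (Fubini/Tonelli applies since all functions are bounded and measurable) and keeping track that the ``residual'' factors $Z_i$ may be drawn from either family — but since we only use $0\le Z_i\le 1$, the precise choice is irrelevant. Since this is a direct adaptation of the argument in Lov\'asz's book, I would likely present it compactly.
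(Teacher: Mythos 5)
The paper does not include its own proof of this result; it cites it directly from Lov\'asz's book. So the comparison below is against the standard proof rather than against a proof in the paper.

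Your overall plan --- the telescoping/hybrid argument, swapping one edge decoration at a time --- is the right one. However, the ``only genuinely substantive point'' is exactly where your argument breaks: the assertion that
$\sup_{g:[0,1]^2\to[0,1]} \left|\int_{[0,1]^2} D\,g\right| = \lVert D\rVert_\square$
is false. The extreme points of the convex set $\{g:[0,1]^2\to[0,1]\}$ are indicator functions $\mathbbm{1}_A$ of \emph{arbitrary} measurable sets $A\subseteq[0,1]^2$, not just product sets $S\times T$. Consequently, the supremum on the left equals $\max\!\left(\int D^+,\,\int D^-\right)$ (take $g=\mathbbm{1}_{\{D>0\}}$ or $g=\mathbbm{1}_{\{D<0\}}$), which is an $L^1$-type quantity and can be much larger than the cut norm. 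For a concrete gap: an $n\times n$ step function taking i.i.d.\ $\pm1$ values on the steps has $L^1$-norm $1$ but cut norm $O(n^{-1/2})$, so the inequality you need fails by an unbounded factor. Your fallback via simple-function approximation has the same defect, since the approximating sets $A_\ell$ are arbitrary measurable subsets of $[0,1]^2$, not product sets. The correct extremality fact is $\sup_{f,h:[0,1]\to[0,1]}\left|\int D(x,y)f(x)h(y)\,dx\,dy\right|=\lVert D\rVert_\square$; it is essential that the test function factor as a product $f(x)h(y)$.

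The fix is to \emph{not} integrate out the auxiliary variables before applying the cut-norm bound, because marginalizing destroys the product structure. Instead, fix all coordinates $x_v$ with $v\notin\{a,b\}$ (where $e_j=ab$). Then the residual $R(x)$ factors as $A(x_a)\,B(x_b)\,C$, where $A(x_a)=\prod_{w\neq b,\ aw\in E(H)} Z_{aw}(x_a,x_w)$, $B(x_b)=\prod_{w\neq a,\ bw\in E(H)} Z_{bw}(x_b,x_w)$, and $C$ is the product over edges disjoint from $\{a,b\}$; all three lie in $[0,1]$, and crucially $A$ depends only on $x_a$ and $B$ only on $x_b$ once the other coordinates are fixed. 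The inner integral over $(x_a,x_b)$ is then $\int \big(W_{e_j}-U_{e_j}\big)(x_a,x_b)\,A(x_a)B(x_b)\,dx_a\,dx_b$, which by the correct product-function extremality fact is at most $\lVert W_{e_j}-U_{e_j}\rVert_\square$ in absolute value. Multiplying by $0\le C\le 1$ and integrating the fixed coordinates over a probability space preserves this bound, giving $|P_j-P_{j-1}|\le\lVert W_{e_j}-U_{e_j}\rVert_\square$, and the telescoping sum then yields the theorem. The rest of your write-up (the bookkeeping, Fubini, and the deduction of the ordinary Counting Lemma) is fine once this step is repaired.
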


\subsection{Graphon systems}\label{subsec:graphon_system_def}
We now generalize the previous concepts to graphon systems.
We introduce the cut-distance between two graphon systems.

\begin{definition}\label{def:cut_norm}
    For a graphon system $\mathbf{W}=(W_I)_{I \subseteq [k]}$, set $\mathbf{W}^\varphi := (W_I^\varphi)_{I \subseteq [k]}$ for a measure-preserving map $\varphi:[0,1] \rightarrow [0,1]$.
    The \emph{cut-norm} of a tuple $\mathbf{W}=(W_1, \ldots, W_s)$ of bounded symmetric functions on $[0,1]^2$ is defined by 
    $$\lVert \mathbf{W}\rVert_{\square} = \sup_{S, T \subseteq [0, 1]}  \sum_{i=1}^s \left| \int_{S \times T} W_i~ dx dy\right|.$$
    For two graphon systems $\mathbf{W}=(W_I)_{I \subseteq [k]}$ and $\mathbf{U} = (U_I)_{I \subseteq [k]}$, let $d_{\square}(\mathbf{W}, \mathbf{U}) = \lVert \mathbf{W}-\mathbf{U} \rVert_\square$.
    The \emph{cut-distance} is defined by 
    $$\delta_{\square}(\mathbf{W}, \mathbf{U}) = \inf_{\varphi} d_{\square} (\mathbf{W}, \mathbf{U}^{\varphi})
    =  \inf_{\varphi} \sup_{S, T \subseteq [0, 1] }  \sum_{I \subseteq [k]} \left| \int_{S \times T} W_I - U_I^{\varphi} \right|,
    $$
    where the infimum is taken over all measure-preserving invertible maps $\varphi:[0,1] \rightarrow [0,1]$.
    Let $\widetilde{\mathcal{W}}_0^{(k)}$ be the quotient space of graphon systems of order $k$ taken by identifying $\mathbf{W}$ with $\mathbf{U}$ whenever $\delta_{\square}(\mathbf{W}, \mathbf{U})=0$.
\end{definition}

Note that $\widetilde{\mathcal{W}}_0^{(k)}$ is different from the product of $2^k-1$ copies of $\widetilde{\mathcal{W}}_0$ because the same measure-preserving bijection $\varphi$ is applied to all the $U_i$ simultaneously.
It is easy to see that $\frac{1}{2^k} \sum_{I \subseteq [k]} \lVert W_I \rVert_{\square} \leq \lVert \mathbf{W} \rVert_{\square} \leq  \sum_{I \subseteq [k]} \lVert W_I \rVert_{\square}$ holds for any tuple $\mathbf{W}=(W_I)_{I \subseteq [k]}$ of bounded symmetric functions.
Hence our definition of cut-norm may be replaced by $\sum_{I \subseteq [k]} \lVert W_I \rVert_{\square}$.
However, the cut-distance between graphon systems cannot be bounded by the sum of the cut-distances of components.

We now define the colored homomorphism density and the rainbow homomorphism density of pre-coloring tuples.
\begin{definition}
    For a (resp., rainbow) coloring tuple $(H, \psi)$, the \emph{(resp., rainbow) $(H, \psi)$-density} of a graphon system $\mathbf{W}=(W_I)_{I \subseteq [k]}$ is defined by 
    $$ t^{*}_{(H, \psi)}(\mathbf{W}) = \int_{[0,1]^{|V(H)|}} \prod_{uv \in \binom{V(H)}{2}} W_{\psi(E_{uv})}(x_u, x_v) \prod_{v \in V(H)} dx_v.   $$ 
    For a rainbow pre-coloring tuple $(H, \psi)$ with $\mathbf{dom}(\psi)\subsetneq E(H)$, we define
    $$ t^{*}_{(H, \psi)}(\mathbf{W}) = \sum_{\widetilde{\psi}} t^{*}_{(H, \widetilde{\psi})}(\mathbf{W}), $$ 
    where the sum is taken over all injective functions $\widetilde{\psi}:E(H) \rightarrow [k]$ extending $\psi$. 
    When $\mathbf{dom}(\psi) = \emptyset$, we simply write $t^{*}_H(\mathbf{W})$.
\end{definition}

We are now ready to state the following theorems.
As the proof of Theorem~\ref{thm:compact} is similar to the graphon case, we postpone it to Appendix~\ref{appendix:easy_proofs}.


\begin{theorem}\label{thm:compact}
    The space $(\widetilde{\mathcal{W}}_0^{(k)}, \delta_{\square})$ is a compact metric space.
\end{theorem}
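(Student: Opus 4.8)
The plan is to transcribe Lov\'asz's proof that $(\widetilde{\mathcal{W}}_0, \delta_{\square})$ is compact (\cite{Lovasz2012}, Theorem~9.23) to the graphon-system setting. That $\delta_{\square}$ descends to a genuine metric on $\widetilde{\mathcal{W}}_0^{(k)}$ is routine: $d_{\square}$ is visibly a pseudometric on tuples of bounded symmetric functions, the cut-norm is invariant under postcomposing both coordinates with a common measure-preserving invertible map (since $\lVert W_I^{\varphi}\rVert_{\square}=\lVert W_I\rVert_{\square}$ for such $\varphi$, as $\varphi^{-1}$ of a measurable set is again an arbitrary measurable set), and combining these with the group structure of measure-preserving invertible maps yields symmetry and the triangle inequality for $\delta_{\square}$; passing to the quotient by the relation $\delta_{\square}=0$ then gives a metric space. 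So the real content is to show this metric space is \emph{sequentially compact}, which for metric spaces is equivalent to compactness (and in particular entails completeness).

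The one genuinely new ingredient is a \emph{simultaneous weak regularity lemma}: for every $\varepsilon>0$ there is an $m=m(\varepsilon,k)$ such that for every graphon system $\mathbf{W}=(W_I)_{I\subseteq [k]}$ and every partition $\mathcal{P}_0$ of $[0,1]$ there is a partition $\mathcal{P}$ refining $\mathcal{P}_0$ with $|\mathcal{P}|\le |\mathcal{P}_0|\cdot m$ and $d_{\square}(\mathbf{W},\mathbf{W}_{\mathcal{P}})\le\varepsilon$, where $\mathbf{W}_{\mathcal{P}}=((W_I)_{\mathcal{P}})_{I\subseteq[k]}$ replaces each $W_I$ by its average over the blocks of $\mathcal{P}\times\mathcal{P}$. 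This follows from the standard energy-increment argument run on the total energy $\sum_{\emptyset\neq I\subseteq[k]}\lVert (W_I)_{\mathcal{P}}\rVert_2^2\le 2^k$: as long as $\lVert W_I-(W_I)_{\mathcal{P}}\rVert_{\square}>\varepsilon/2^k$ for some $I$, refine $\mathcal{P}$ along the (at most two) witnessing sets, which increases $\lVert (W_I)_{\mathcal{P}}\rVert_2^2$ by at least $(\varepsilon/2^k)^2$ while decreasing no other term, so the process halts after at most $2^{3k}/\varepsilon^2$ rounds, each multiplying the number of parts by at most $4$; when it halts, $d_{\square}(\mathbf{W},\mathbf{W}_{\mathcal{P}})\le\sum_{I\subseteq[k]}\lVert W_I-(W_I)_{\mathcal{P}}\rVert_{\square}\le\varepsilon$. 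Here we crucially use only that a single partition is needed for all $2^k-1$ coordinates at once.

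With this in hand I would run the martingale-diagonalization argument of \cite{Lovasz2012}. Given a sequence $(\mathbf{W}^n)$ of graphon systems, for each $n$ build (by applying the lemma repeatedly, starting each time from the previous partition) a nested sequence of partitions, each refining the previous one, with the $j$-th having at most $m_j$ parts for some $m_j$ depending only on $j$ and $k$, and with $d_{\square}(\mathbf{W}^n,\mathbf{W}^n_{\mathcal{P}_{n,j}})\le 1/j$; after applying measure-preserving invertible maps and, if necessary, subdividing parts at bounded cost, one may take each $\mathcal{P}_{n,j}$ to consist of the same $m_j$ consecutive intervals for all $n$, so that $\mathbf{W}^n_{\mathcal{P}_{n,j}}$ is encoded by a point of a fixed compact set $K_j\subseteq[0,1]^{(2^k-1)\binom{m_j+1}{2}}$. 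A diagonal extraction produces a subsequence along which, for every $j$, these points converge to a step graphon system $\mathbf{U}_j$; the $\mathbf{U}_j$ form a bounded martingale with respect to the nested partitions, hence converge almost everywhere, and therefore in $d_{\square}$, to a graphon system $\mathbf{U}$ (note $U_{\emptyset}\equiv 1$ is preserved at every stage, so $\mathbf{U}$ is a legitimate graphon system; admissibility is not required here, as that is the separate statement of \Cref{thm:classical}). A routine $3\varepsilon$-estimate then gives $\delta_{\square}(\mathbf{W}^n,\mathbf{U})\to 0$ along the subsequence, proving sequential compactness. The main obstacle is not conceptual but bookkeeping: arranging the partitions to be simultaneously nested in $j$, essentially equitable, and placed into a canonical interval form by measure-preserving maps so that the finite-dimensional representations land in $n$-independent compact sets --- exactly as in the graphon case, the only new point being that one partition serves all coordinates, which the simultaneous weak regularity lemma provides.
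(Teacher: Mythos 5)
Your proposal is correct and follows essentially the same route as the paper, which proves the slightly more general Theorem~\ref{thm:compact_more} in Appendix~\ref{appendix:easy_proofs} and specializes to $X=\widetilde{\mathcal{W}}_0^{(k)}$: a simultaneous weak regularity lemma producing a single partition good for all $2^k-1$ coordinates, diagonalization into canonical interval partitions, martingale convergence, and a $3\varepsilon$-estimate. The only (minor) difference is in how the simultaneous weak regularity lemma is obtained: you run one energy-increment argument on the total energy $\sum_{\emptyset\neq I\subseteq[k]}\lVert (W_I)_{\mathcal{P}}\rVert_2^2$, whereas the paper's Lemma~\ref{lem:graphon_regular} invokes Lov\'asz's one-graphon weak regularity lemma coordinate-by-coordinate and then passes to the common refinement of the $2^k-1$ resulting partitions; both yield a bound on the number of parts depending only on $\varepsilon$ and $k$, so the rest of the argument proceeds identically.
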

\begin{theorem}[Counting Lemma]\label{thm:counting_lemma}
    For two graphon systems $\mathbf{W}=(W_I)_{I\subseteq [k]}$ and $\mathbf{U}=(U_I)_{I\subseteq [k]}$ of order $k$ and a rainbow pre-coloring tuple $(H,\psi)$,
    $$|t^*_{(H, \psi)}(\mathbf{W}) - t^*_{(H, \psi)}(\mathbf{U})| \leq |E(H)| \delta_{\square}(\mathbf{W}, \mathbf{U}).$$
    In particular, $t_{(H, \psi)}$ is a well-defined continuous function on $(\widetilde{\mathcal{W}}_0^{(k)}, \delta_{\square})$.
\end{theorem}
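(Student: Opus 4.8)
The plan is to derive the estimate from the Counting Lemma for decorated graphs (Theorem~\ref{thm:counting_lemma_for_decorated}), using the relabeling-invariance of the density functional to trade $\delta_\square$ for the ordinary cut-norm $d_\square$. First I would record that $t^*_{(H,\psi)}(\mathbf{U}^\varphi)=t^*_{(H,\psi)}(\mathbf{U})$ for every invertible measure-preserving $\varphi\colon[0,1]\to[0,1]$: for a coloring tuple this is the substitution $x_v\mapsto\varphi(x_v)$ ($v\in V(H)$) in the defining integral, and for a general rainbow pre-coloring tuple it then follows because $t^*_{(H,\psi)}=\sum_{\widetilde\psi}t^*_{(H,\widetilde\psi)}$ is a finite sum over the injective extensions $\widetilde\psi$ of $\psi$. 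Consequently $|t^*_{(H,\psi)}(\mathbf{W})-t^*_{(H,\psi)}(\mathbf{U})|=|t^*_{(H,\psi)}(\mathbf{W})-t^*_{(H,\psi)}(\mathbf{U}^\varphi)|$ for all $\varphi$, so it is enough to prove $|t^*_{(H,\psi)}(\mathbf{W})-t^*_{(H,\psi)}(\mathbf{U})|\le|E(H)|\,d_\square(\mathbf{W},\mathbf{U})$ and then pass to the infimum over $\varphi$.

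The case of a rainbow \emph{coloring} tuple (i.e.\ $\textbf{dom}(\psi)=E(H)$) is the clean core of the argument. By definition $t^*_{(H,\psi)}(\mathbf{W})=\int_{[0,1]^{|V(H)|}}\prod_{uv\in\binom{V(H)}{2}}W_{\psi(E_{uv})}(x_u,x_v)\prod_{v\in V(H)}dx_v$, which is exactly the decorated homomorphism density of the \emph{complete} graph on $V(H)$ with the pair $uv$ decorated by $W_{\psi(E_{uv})}$ — a non-adjacent pair receiving the decoration $W_\emptyset\equiv1$. Applying Theorem~\ref{thm:counting_lemma_for_decorated} with this host graph yields
\[
\bigl|t^*_{(H,\psi)}(\mathbf{W})-t^*_{(H,\psi)}(\mathbf{U})\bigr|\le\sum_{uv\in\binom{V(H)}{2}}\bigl\lVert W_{\psi(E_{uv})}-U_{\psi(E_{uv})}\bigr\rVert_\square .
\]
Each non-adjacent pair contributes $\lVert W_\emptyset-U_\emptyset\rVert_\square=0$, so at most $|E(H)|$ summands survive, each of the form $\lVert W_I-U_I\rVert_\square$ for some $I\subseteq[k]$; and
\[
\lVert W_I-U_I\rVert_\square=\sup_{S,T\subseteq[0,1]}\Bigl|\int_{S\times T}(W_I-U_I)\Bigr|\le\sup_{S,T\subseteq[0,1]}\sum_{J\subseteq[k]}\Bigl|\int_{S\times T}(W_J-U_J)\Bigr|=d_\square(\mathbf{W},\mathbf{U}).
\]
Putting these together gives $|t^*_{(H,\psi)}(\mathbf{W})-t^*_{(H,\psi)}(\mathbf{U})|\le|E(H)|\,d_\square(\mathbf{W},\mathbf{U})$, and the reduction above upgrades this to the bound with $\delta_\square$.

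For a general rainbow pre-coloring tuple, writing $t^*_{(H,\psi)}=\sum_{\widetilde\psi}t^*_{(H,\widetilde\psi)}$ over injective extensions and applying the coloring-tuple case termwise already shows, via the triangle inequality, that $t^*_{(H,\psi)}$ is continuous on $(\widetilde{\mathcal{W}}_0^{(k)},\delta_\square)$, which is the substantive ``in particular'' claim. The delicate point — and the step I expect to be the main obstacle — is to keep the leading constant equal to $|E(H)|$ rather than $|E(H)|$ times the number of extensions. My approach would be to run the telescoping from the previous step only once, over the at most $|E(H)|$ pairs of $H$ but simultaneously for all extensions: switching a pair $uv$ from $\mathbf{W}$ to $\mathbf{U}^\varphi$ and grouping the extensions according to the color $c$ they assign to $uv$ leaves an expression $\sum_{c\in[k]}\int(W_c-U^\varphi_c)(x_u,x_v)\,\Phi_c(x_u,x_v)\,dx_u\,dx_v$, where $\Phi_c$ is the integral over the remaining vertices of the product of the remaining decorations. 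When $uv$ is a bridge of $H$ — automatic whenever $H$ is a tree, the case of interest in this paper — the weight $\Phi_c$ separates as a sum of products $p(x_u)\,q(x_v)$ with $p,q$ valued in $[0,1]$, so that each term is controlled via $|\int(W_c-U^\varphi_c)(x_u,x_v)p(x_u)q(x_v)\,dx_u\,dx_v|\le\lVert W_c-U^\varphi_c\rVert_\square$ together with $\sum_{c\in[k]}|\int_{S\times T}(W_c-U^\varphi_c)|\le d_\square(\mathbf{W},\mathbf{U}^\varphi)$; bounding the total multiplicity carried by the weights $\Phi_c$ so that the final constant collapses to exactly $|E(H)|$ (for the applications it is enough that it collapse to some finite constant depending only on $H$ and $k$) is where the bookkeeping must be done carefully.
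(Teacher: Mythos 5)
Your first two paragraphs reproduce the paper's proof faithfully: replace $\delta_\square$ by $d_\square$ via the relabeling-invariance $t^*_{(H,\psi)}(\mathbf{U}^\varphi)=t^*_{(H,\psi)}(\mathbf{U})$, then apply Theorem~\ref{thm:counting_lemma_for_decorated} to $\mathrm{sim}(H)$ with decorations $W_{\psi(E_{uv})}$ and use $\lVert W_I-U_I\rVert_\square\le d_\square(\mathbf{W},\mathbf{U})$ for each of the at most $|E(H)|$ decorated pairs. (The paper's proof writes $W_{\varphi(uv)}$, which is a typo for $W_{\psi(E_{uv})}$; the content is the same as yours.)

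Your concern about the constant in the pre-coloring case identifies a real gap that the paper glosses over. When $\textbf{dom}(\psi)\subsetneq E(H)$, summing the coloring-tuple bound over the injective extensions $\widetilde{\psi}$ gives a Lipschitz constant equal to $|E(H)|$ times the number of extensions, not $|E(H)|$, and the paper's one-line proof does not say anything more. Your proposed fix — telescoping once over the pairs of $\binom{V(H)}{2}$ while aggregating over all extensions — does not close the gap either: after grouping by the color set $C$ assigned to a pair $uv$, the weight $\Phi_C(x_u,x_v)$ is a \emph{sum} of products of $[0,1]$-valued functions whose number of terms again scales with the number of extensions, so the level-set estimate $\bigl|\int(W_C-U_C)\,p(x_u)q(x_v)\bigr|\le\lVert W_C-U_C\rVert_\square$ for a single product with $p,q\in[0,1]$ does not bound $\Phi_C$ directly. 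You also lean on $uv$ being a bridge of $H$, which is not available here: the theorem is stated for arbitrary rainbow pre-coloring tuples, not only trees. The honest conclusion is the one you reach yourself: the coloring-tuple case plus the triangle inequality yields that $t^*_{(H,\psi)}$ is Lipschitz in $\delta_\square$ with a constant depending on $H$ and $k$, which gives the continuity claim, and continuity is all the paper ever uses; the sharper constant $|E(H)|$ for pre-coloring tuples is not established by the paper's own argument, and your sketch leaves it open as well.
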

\begin{proof}
    For any measure-preserving invertible map $\varphi:[0, 1] \rightarrow [0, 1]$, we have $t^*_{(H, \psi)}(\mathbf{U}) = t^*_{(H, \psi)}(\mathbf{U}^{\varphi})$.
    By applying Theorem~\ref{thm:counting_lemma_for_decorated} to $(W_{\varphi(uv)})_{uv\in E(H)}$ and $(U_{\varphi(uv)})_{uv\in E(H)}$ and taking the infimum over all $\varphi$, we get the result.
\end{proof}

\subsection{Real algebraic geometry}\label{appendix:real_algebraic_geometry}
In this section, we state a theorem which will be useful for proving \Cref{thm:algebraic_number}.

\begin{definition}{\cite[Theorem~1.2.2]{BCR1998}}
    A \textit{real closed field} is a field $K$ satisfying the following equivalent conditions:
    \begin{enumerate}
        \item $K$ can be ordered, and there is no nontrivial algebraic extension that extends an order;
        \item $K$ is not algebraically closed, but $K[\sqrt{-1}]$ is algebraically closed;
        \item $K$ has a unique ordering such that every nonnegative element is the square and every polynomial of odd degree over $K$ has a root in $K$.
    \end{enumerate}
\end{definition}

The field $\mathbb{R}$ of real numbers and the field $\mathbb{R}_{\mathrm{alg}} = \overline{\mathbb{Q}} \cap \mathbb{R}$ of real algebraic numbers are examples of real closed fields. There is a meta-theorem for real closed fields analogous to the Lefschetz principle for algebraically closed fields of characteristic 0.

\begin{theorem}[\cite{BCR1998}, Proposition~5.2.3, Transfer Principle]\label{thm:transfer_principle}
    Let $K$ be a real closed extension of a real closed field $F$, i.e., a field extension that is real closed. Let $\Phi$ be a formula without any free variable, written with a finite number of conjunctions, disjunctions, negations and existential quantifiers on variables, where atomic formulas are formulas of the kind $f(x_1, \ldots, x_n) \leq 0$ for some polynomial $f$ over $F$. Then $\Phi$ holds in $F$ if and only if it holds in $K$.
\end{theorem}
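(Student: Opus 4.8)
The plan is to reduce the statement to Tarski's quantifier elimination for the theory $\mathrm{RCF}$ of real closed ordered fields, after which the transfer between $F$ and $K$ is essentially formal. First I would fix the first-order language of ordered rings $\{+,\cdot,-,0,1,\le\}$ enriched with a constant symbol for each element of $F$, and observe that both $F$ and $K$ are models of the theory $T$ consisting of the $\mathrm{RCF}$ axioms together with the (quantifier-free) atomic diagram of $F$: indeed $K$ is a real closed field by hypothesis, $F\subseteq K$ as a subfield, and the unique ordering of $K$ restricts to the ordering of $F$ because every element that is nonnegative in $F$ is a square there, hence remains a square, hence nonnegative, in $K$. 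Thus the atomic formulas over $F$ (the polynomial sign conditions $f(x_1,\dots,x_n)\le 0$ appearing in the statement) are interpreted compatibly in $F$ and in $K$.

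Next I would invoke quantifier elimination: every formula $\Phi$ in this language is $T$-equivalent to a quantifier-free formula $\Phi^{\mathrm{qe}}$ with the same free variables. When $\Phi$ is a sentence, $\Phi^{\mathrm{qe}}$ is a Boolean combination (built from conjunctions, disjunctions and negations, which are exactly the connectives allowed in the statement) of atomic sentences of the form $f\le 0$, $f=0$, $f<0$ where $f$ is a closed term, i.e.\ denotes a specific element of $F$; note $f=0$ and $f<0$ are themselves expressible through $\le$ in an ordered field. The truth value of each such atomic sentence depends only on the sign of that element of $F$, and since the orderings of $F$ and $K$ agree on $F$, this sign — and hence the truth value of the atomic sentence, of $\Phi^{\mathrm{qe}}$, and finally of $\Phi$ — is the same computed in $F$ or in $K$. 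This yields the claimed equivalence "$\Phi$ holds in $F$ iff $\Phi$ holds in $K$''; the existential quantifiers permitted in $\Phi$ cause no difficulty precisely because quantifier elimination removes them.

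The hard part is quantifier elimination for $\mathrm{RCF}$ itself (the Tarski–Seidenberg theorem), which is the genuine mathematical content and the part I would cite to \cite{BCR1998} rather than reprove. If pressed to sketch it, I would use the standard reduction: it suffices to eliminate a single existential quantifier $\exists x$ from a conjunction of polynomial conditions $p_i(x)\bowtie_i 0$ with $\bowtie_i\in\{<,=\}$ and the $p_i$ having coefficients that are terms in the remaining variables; this one-variable elimination is carried out by a finite case analysis on the vanishing and signs of the leading coefficients and the principal subresultant coefficients of the $p_i$ (equivalently, via Sturm-type sequences that count real roots of one polynomial in intervals cut out by the others), each case producing a quantifier-free condition on the parameters, and assembling all cases gives $\Phi^{\mathrm{qe}}$. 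An alternative route is to establish model completeness of $\mathrm{RCF}$ and note that it has a prime model, which gives completeness relative to the diagram of $F$ and hence the transfer principle directly; but the quantifier-elimination argument is the most self-contained and is the form recorded in the reference.
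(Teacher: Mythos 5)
The paper does not prove this statement at all; it is stated as a cited result from Bochnak--Coste--Roy (Proposition~5.2.3) and used as a black box, so there is no ``paper's own proof'' to compare against. Your argument is the standard and correct derivation of the transfer principle from Tarski--Seidenberg quantifier elimination for $\mathrm{RCF}$: you rightly note that the key compatibility step is that the unique ordering of $K$ restricts to that of $F$ (via ``nonnegative iff a square''), so that atomic sentences over $F$ have matching truth values in both fields, and that once $\Phi$ is replaced by a quantifier-free $T$-equivalent sentence the transfer is immediate. The only substantive ingredient you leave to the reference is quantifier elimination itself, which is exactly what the paper's citation covers. Your alternative remark about model completeness plus a prime model is also a valid route. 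In short, the proposal is correct and fills in a proof that the paper deliberately omits.
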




\section{Admissible graphon systems}\label{sec:graphon_system}
In this section, we prove \Cref{thm:classical}.
For a finite partition $\mathcal{P} = (I_1, \ldots, I_t)$ of $[0, 1]$ and a graphon $W$, let $W_{\mathcal{P}}$ be a step graphon defined by
$$W_{\calP}(x_0, y_0) = \frac{1}{\mu(I_{x_0})\mu(I_{y_0})} \int_{I_{x_0} \times I_{y_0}} W(x,y) dxdy, $$
if $\mu(I_{x_0})$, $\mu(I_{y_0})$ are both nonzero, where $\mu$ is the standard Lebesgue measure and $I_{x_0}$ (resp. $I_{y_0}$) is the part $I_i$ containing $x_0$ (resp. $y_0$);
if the measure of $I_{x_0}$ or $I_{y_0}$ is zero, we take $W_{\calP}(x_0,y_0)=0$.
For a graphon system $\mathbf{W} = (W_I)_{I \subseteq [k]}$, we let $\mathbf{W}_{\mathcal{P}} = ((W_I)_{\mathcal{P}})_{I \subseteq [k]}$.

In order to prove \Cref{thm:classical}, we first show that the set of all admissible graphon systems is a compact subset of the metric space $(\widetilde{\mathcal{W}}_0^k, \delta_{\square})$.
This is straightforward from the subsequent theorem because it is easy to check that the set of admissible graphon systems satisfies the two conditions of the theorem.

\begin{restatable}{theorem}{compact}\label{thm:compact_more}
    Let $X$ be a nonempty subset of $\widetilde{\mathcal{W}}_0^{(k)}$ satisfying:
    \begin{itemize}
        \item if $\mathbf{W} \in X$, then $\mathbf{W}_{\mathcal{P}} \in X$ for every finite partition $\mathcal{P}$ of $[0, 1]$;
        \item if a sequence of graphon systems $(\mathbf{W}^n)_{n\in \mathbb{N}}$ in $X$ component-wise converges to a graphon system $\mathbf{W}$ almost everywhere, then $\mathbf{W} \in X$.
    \end{itemize}
    Then $X$ is a compact subset of $(\widetilde{\mathcal{W}}_0^k, \delta_{\square})$.
\end{restatable}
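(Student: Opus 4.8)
The plan is to mimic the standard proof that the whole space $(\widetilde{\mathcal{W}}_0^{(k)}, \delta_{\square})$ is compact (Theorem~\ref{thm:compact}), but carry along the extra closure hypotheses on $X$ so that the limit object produced by the argument lands back in $X$. Since $(\widetilde{\mathcal{W}}_0^{(k)}, \delta_{\square})$ is already known to be a compact metric space, it suffices to prove that $X$ is \emph{closed} in it; then closedness of a subset of a compact metric space gives compactness. So the real content is: given a sequence $(\mathbf{W}^n)_{n \in \mathbb{N}}$ in $X$ with $\delta_{\square}(\mathbf{W}^n, \mathbf{W}) \to 0$ for some $\mathbf{W} \in \widetilde{\mathcal{W}}_0^{(k)}$, show $\mathbf{W} \in X$.

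The key steps, in order, are as follows. First, I would reduce to a sequence that converges in $\delta_{\square}$ and, after composing each $\mathbf{W}^n$ with a suitable measure-preserving bijection $\varphi_n$ (which does not change membership in $X$, provided one also knows $X$ is invariant under measure-preserving relabellings — this should follow from the partition-refinement closure together with an approximation argument, or can be included as an implicit part of working in the quotient space), assume $\lVert \mathbf{W}^n - \mathbf{W} \rVert_{\square} \to 0$ in the strong $\delta_\square$-sense along a subsequence. Second, I would run a martingale / weak-compactness argument componentwise: fix a refining sequence of finite partitions $\mathcal{P}_1 \prec \mathcal{P}_2 \prec \cdots$ of $[0,1]$ into dyadic intervals, and for each $n$ consider the step graphon systems $\mathbf{W}^n_{\mathcal{P}_m}$. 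For fixed $m$, each component $(W^n_I)_{\mathcal{P}_m}$ is a bounded step function with finitely many values, so by a diagonal extraction over $m$ and $I$ one obtains a subsequence along which $(W^n_I)_{\mathcal{P}_m} \to Z^m_I$ almost everywhere for every $m$ and every $I \subseteq [k]$. By the partition-closure hypothesis, $\mathbf{W}^n_{\mathcal{P}_m} \in X$, so the almost-everywhere-limit hypothesis gives $\mathbf{Z}^m := (Z^m_I)_{I \subseteq [k]} \in X$ for each fixed $m$. Third, I would check that $(\mathbf{Z}^m)_{m}$ is a martingale (in $m$) with respect to the filtration generated by the $\mathcal{P}_m$, hence by the martingale convergence theorem it converges almost everywhere (and in $L^1$) to some graphon system $\mathbf{Z}$; again the a.e.-limit hypothesis yields $\mathbf{Z} \in X$. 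Fourth, I would identify $\mathbf{Z}$ with $\mathbf{W}$ in $\widetilde{\mathcal{W}}_0^{(k)}$: since $\mathbf{W}^n_{\mathcal{P}_m} \to \mathbf{W}_{\mathcal{P}_m}$ in $\delta_\square$ as $n \to \infty$ (the averaging operator is $\delta_\square$-contractive, cf.\ Theorem~\ref{thm:counting_lemma_for_decorated} applied edgewise, or directly from the definition of the cut-norm) and $\mathbf{W}^n_{\mathcal{P}_m} \to \mathbf{Z}^m$ a.e.\ hence in cut-norm along our subsequence, we get $\mathbf{Z}^m$ represents $\mathbf{W}_{\mathcal{P}_m}$; letting $m \to \infty$ and using $\mathbf{W}_{\mathcal{P}_m} \to \mathbf{W}$ in $\delta_\square$ (martingale convergence again, now for $\mathbf{W}$ itself) identifies $\mathbf{Z}$ with $\mathbf{W}$. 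Therefore $\mathbf{W} \in X$, so $X$ is closed and hence compact.

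The main obstacle I expect is the bookkeeping in Step~4, namely making the two limits $n \to \infty$ and $m \to \infty$ commute cleanly in the quotient space: one must be careful that the measure-preserving bijections used to realize $\delta_\square$-convergence of $\mathbf{W}^n$ to $\mathbf{W}$ interact correctly with the fixed partitions $\mathcal{P}_m$, since conjugating by $\varphi_n$ does not preserve a fixed dyadic partition. The standard fix (as in Lov\'asz's book) is to first pass to the regularity-lemma / weak-Szemer\'edi setting: approximate each $\mathbf{W}^n$ in cut-norm by a step system on a bounded number of parts, relabel so these parts are intervals, and only then extract. Adapting this to graphon systems of order $k$ is routine because the cut-norm of a tuple is, up to the factor $2^k$, the sum of the componentwise cut-norms, so a weak regularity lemma for each component (applied on a common partition) does the job; the only mild subtlety is that the same relabelling $\varphi_n$ must be applied across all $2^k - 1$ components simultaneously, which is exactly the definition of $\delta_\square$ on $\widetilde{\mathcal{W}}_0^{(k)}$ and so causes no difficulty. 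A secondary, purely technical point is verifying that $X$ is closed under measure-preserving relabellings; this is forced by the hypotheses since any measure-preserving bijection can be approximated (in the sense needed) by partition operations, or alternatively one simply notes the claim only concerns the quotient space and the hypotheses should be read modulo this identification.
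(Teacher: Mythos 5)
Your proposed route is genuinely different from the paper's. The paper proves sequential compactness of $X$ from scratch: for each $n$ it applies the weak regularity lemma to get a nested chain of partitions $\mathcal{P}_t^n$ (of controlled size depending only on $t$, but otherwise depending on $n$) with $d_\square(\mathbf{W}^n, (\mathbf{W}^n)_{\mathcal{P}_t^n}) < 1/t$, relabels so the parts are intervals, and then does a diagonal extraction so that $(\mathbf{W}^n)_{\mathcal{P}_t^n} \to \mathbf{U}_t$ a.e.\ for every $t$; the hypotheses on $X$ put each $\mathbf{U}_t$ in $X$, martingale convergence gives $\mathbf{U}_t \to \mathbf{U}$ a.e., and a triangle inequality closes the gap. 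Crucially, \Cref{thm:compact} is then \emph{deduced} as the special case $X = \widetilde{\mathcal{W}}_0^{(k)}$. You instead invoke compactness of the whole space up front and reduce to showing $X$ is closed. That is a clean plan, but it inverts the paper's logical order: you would need an independent proof of \Cref{thm:compact} (which can indeed be done by transplanting Lovász's argument for $\widetilde{\mathcal{W}}_0$, but that proof \emph{is} essentially the argument the paper uses to prove \Cref{thm:compact_more} directly, so little work is actually saved).

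Within the closedness argument itself, you overcomplicate two things. First, once you have relabelled so that $\lVert \mathbf{W}^n - \mathbf{W}\rVert_\square \to 0$ (not merely $\delta_\square$), the contraction property (\Cref{lem:contraction}) gives $\lVert \mathbf{W}^n_{\mathcal{P}_m} - \mathbf{W}_{\mathcal{P}_m}\rVert_\square \to 0$ for a \emph{fixed} dyadic partition $\mathcal{P}_m$, and on step functions over a fixed finite partition with parts of positive measure, cut-norm convergence forces the step values to converge, hence a.e.\ convergence to $\mathbf{W}_{\mathcal{P}_m}$ along the \emph{full} sequence. No diagonal extraction is needed in your Step~2; the limit $Z^m_I$ is already forced to be $(W_I)_{\mathcal{P}_m}$. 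Second, the ``main obstacle'' you flag in Step~4 is illusory: the measure-preserving maps $\varphi_n$ were applied to the $\mathbf{W}^n$, not to $\mathbf{W}$, so $\mathcal{P}_m$ interacts with $\mathbf{W}$ in a fixed way and there is nothing to reconcile; your suggestion to fall back on the weak regularity lemma would just reproduce the paper's proof. Finally, a terminological slip: the averaging operator $\mathbf{W} \mapsto \mathbf{W}_{\mathcal{P}}$ for a fixed $\mathcal{P}$ is $d_\square$-contractive but \emph{not} $\delta_\square$-contractive; the quotient distance can blow up under a fixed partition since the optimal relabellings need not respect $\mathcal{P}$. This is exactly why the relabelling in your Step~1 must be done first, and why the hypotheses on $X$ have to be read as properties of a set of graphon systems closed under weak isomorphism.
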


We provide the proof of this theorem in Appendix~\ref{appendix:easy_proofs}.
The remaining part is to show that an admissible graphon system is the limit of a sequence of graph systems.
For an admissible graphon system $\mathbf{W}=(W_I)_{I \subseteq [k]}$ of order $k$ and an $n$-element tuple $S = (x_1, \ldots, x_n) \in  [0, 1]^n$, we define two tuples of weighted graphs on $n$ vertices as follows.
\begin{definition}~
 For each $I \subseteq [k]$, consider a weighted graph $W_I[S]$ on vertex set $[n]$ with the edge weight $W_I(x_i,x_j)$ on the edge $ij$. Let $\mathbf{W}[S]= (W_I[S])_{I\subseteq [k]}$.
 The tuple of weighted graphs obtained by deleting all the loops from each graph in $\mathbf{W}[S]$ is denoted by $\mathbf{H}_S(n, \mathbf{W})$.
\end{definition}
Note that $W_\emptyset[S]$ is always a complete graph with loops where all edge weights are $1$ as $W_{\varnothing} \equiv 1$.
Also, note that a weighted graph can be viewed as a system of step graphons whose values on each step are the corresponding edge weights.

Let $\mathbb{H}(n, \mathbf{W}) = \mathbf{H}_S(n,\mathbf{W})$ be a random variable obtained by choosing a tuple $S\in [0,1]^{n}$ uniformly at random. 
Then it is easy to see that the resulting random sample $\mathbf{H} =\mathbb{H}(n, \mathbf{W})$ is an admissible graphon system with probability $1$.


For an admissible system of weighted graphs $\mathbf{H}=(H_I)_{I \subseteq [k]}$, define a random graph system $\mathbb{G}(\mathbf{H})$ of order $k$ as follows. Considering each $H_I$ as a graphon, \Cref{def: barW} yields new graphons $\overline{H}_I$, which can be again considered to be a weighted graph.
For each $uv\in \binom{[n]}{2}$, consider an independent random variable $I_{uv}$ that becomes a set $I\subseteq [k]$ with probability $\overline{H}_I(uv)$, the edge weight of $uv$ in the weighted graph $\overline{H}_I$. 
This random variable is well-defined as $\mathbf{H}$ is admissible, i.e., $\overline{H}_I(uv) \in [0, 1]$ and $\sum_{I\subseteq [k]} \overline{H}_I(uv) = 1$ for each $uv\in \binom{[n]}{2}$.
We define a random graph system $\mathbb{G}(\mathbf{H})= (G_i)_{i\in [k]}$ as $uv\in E(G_i)$ if and only if $i\in I_{uv}$.
This yields a random graph system, where the probability that $uv\in \bigcap_{i\in I} E(G_i)$ is equal to $\sum_{J\supset I} \overline{H}_J(uv) = H_I(uv)$ for all $I\subseteq [k]$.
We denote by $\mathbb{G}(n,\mathbf{W})$ the random variable $\mathbb{G}(n,\mathbb{H}(n,\mathbf{W}))$.
Note that there are two levels of randomness, a choice of $S$ yielding $\mathbf{H}=\mathbb{H}(n,\mathbf{W})$ and choices of $I_{uv}$ yielding $\mathbb{G}(n,\mathbf{W})=\mathbb{G}(\mathbf{H})$.
When $k=1$, these definitions are identical to those of $\mathbb{H}$ and $\mathbb{G}$ for the graphons defined in~\cite{Lovasz2012}. 
Similarly as graphons, we can show the following theorem stating that our random graph system $\mathbb{G}(n, \mathbf{W})$ is very close to $\mathbf{W}$ in cut-distance with high probability.
We supply the proof of this theorem in Appendix~\ref{sec:W_random_graph}.

\begin{theorem}\label{thm:W_random_graph}
     With probability $1-o(1)$, $$\delta_{\square}(\mathbf{W}, \mathbb{G}(n, \mathbf{W})) \leq \frac{10^3 \times 8^k}{\sqrt{\log n}}.$$
\end{theorem}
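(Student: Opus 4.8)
The plan is to adapt the proof of the corresponding fact for a single graphon --- that the $W$-random graph lies within $O(1/\sqrt{\log n})$ of $W$ in cut-distance, see~\cite[Chapter~10]{Lovasz2012} --- by separating the two layers of randomness hidden in $\mathbb{G}(n,\mathbf{W})=\mathbb{G}\bigl(n,\mathbb{H}(n,\mathbf{W})\bigr)$. Writing $\mathbb{H}=\mathbb{H}(n,\mathbf{W})$ for the random weighted graph system produced by the sampled coordinates $S\in[0,1]^n$, the triangle inequality gives
\[
  \delta_{\square}\bigl(\mathbf{W},\,\mathbb{G}(n,\mathbf{W})\bigr)
  \;\le\; \delta_{\square}\bigl(\mathbf{W},\,\mathbb{H}\bigr)
  \;+\; \delta_{\square}\bigl(\mathbb{H},\,\mathbb{G}(\mathbb{H})\bigr).
\]
I will bound the first term by $O\bigl(2^{3k/2}/\sqrt{\log n}\bigr)$ and the second by $O\bigl(2^{k}/\sqrt{n}\bigr)$, each with probability $1-o(1)$; both then fit inside $10^{3}\cdot 8^{k}/\sqrt{\log n}$ for large $n$. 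The second term is the easy one; the first is the substance of the proof.

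For the first term I would establish a sampling lemma for graphon systems: for uniformly random $S\in[0,1]^n$, one has $\delta_{\square}(\mathbf{W},\mathbb{H})\le C\cdot 2^{3k/2}/\sqrt{\log n}$ with probability $1-o(1)$. The argument parallels the scalar one. First apply a weak (Frieze--Kannan type) regularity lemma to all $2^k$ components $W_I$ simultaneously, obtaining one partition $\mathcal{P}$ of $[0,1]$ with $\lVert W_I-(W_I)_{\mathcal{P}}\rVert_{\square}\le\varepsilon$ for every $I\subseteq[k]$; the usual energy-increment argument works because the energy $\sum_I\lVert (W_I)_{\mathcal{P}}\rVert_2^2$ is nondecreasing under refinement, bounded by $2^k$, and grows by at least $\varepsilon^2$ whenever some component still has cut-norm error above $\varepsilon$, so $|\mathcal{P}|=q$ may be taken with $\log q = O(2^k/\varepsilon^2)$. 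Second, for random $S$ a Chernoff bound controls the empirical part sizes, and after composing with a measure-preserving invertible map $\varphi$ that realigns the sampled blocks, standard multinomial concentration gives $\delta_{\square}\bigl(\mathbb{H},(\mathbf{W}_{\mathcal{P}})^{\varphi}\bigr)=O\bigl(2^{k}\sqrt{q/n}\bigr)$ with probability $1-o(1)$. Combining this with $\delta_{\square}(\mathbf{W},\mathbf{W}_{\mathcal{P}})\le\sum_I\lVert W_I-(W_I)_{\mathcal{P}}\rVert_{\square}\le 2^{k}\varepsilon$ and optimizing $\varepsilon$ --- balancing $2^{k}\varepsilon$ against $2^{k}\sqrt{q(\varepsilon,k)/n}$, which forces $1/\varepsilon^2$ to be of order $2^{-k}\log n$ --- yields the bound $O\bigl(2^{3k/2}/\sqrt{\log n}\bigr)$, well within the budget of \Cref{thm:W_random_graph}.

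For the second term, condition on $S$. As noted before the statement, $\mathbb{H}$ is an admissible weighted graph system with probability $1$, so $\mathbb{G}(\mathbb{H})=(G_1,\dots,G_k)$ is well defined; by its construction from the weights $\overline{H}_I$ of \Cref{def: barW}, the events $\{uv\in\bigcap_{i\in I}E(G_i)\}$, $uv\in\binom{[n]}{2}$, are independent with $\Pr[uv\in\bigcap_{i\in I}E(G_i)]=H_I(uv)$. Viewing $\mathbb{H}$ and $\mathbb{G}(\mathbb{H})=\mathrm{span}(G_1,\dots,G_k)$ as step-graphon systems on the partition of $[0,1]$ into $n$ equal intervals, the block values of $H_I-G_I$ are mean-zero, $[-1,1]$-valued, and independent across distinct pairs; since the cut norm of a step function is attained on unions of blocks, $\lVert H_I-G_I\rVert_{\square}=n^{-2}\max_{A,B\subseteq[n]}\bigl|\sum_{i\in A,\,j\in B}(H_I(ij)-\mathbbm{1}_{ij\in E(G_I)})\bigr|$. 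Hoeffding's inequality bounds each fixed $(A,B)$ by $\lambda$ except with probability $2e^{-\lambda^2/(4n^2)}$, so a union bound over the $\le 4^n$ pairs $(A,B)$ and the $2^k$ sets $I$ with $\lambda=3n^{3/2}$ gives $\lVert H_I-G_I\rVert_{\square}\le 3/\sqrt n$ for all $I$ with probability $1-o(1)$, uniformly over admissible realizations of $\mathbb{H}$. Hence $\delta_{\square}(\mathbb{H},\mathbb{G}(\mathbb{H}))\le\sum_I\lVert H_I-G_I\rVert_{\square}\le 3\cdot 2^{k}/\sqrt n = o\bigl(8^k/\sqrt{\log n}\bigr)$, and combining the two estimates finishes the proof.

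The main obstacle is the sampling lemma of the second paragraph; within it, the only genuinely new point beyond~\cite{Lovasz2012} is making the weak regularity partition good for all $2^k$ components at once while keeping $|\mathcal{P}|$ a function of $\varepsilon$ and $k$ only, and then propagating the $2^k$-dependence through the optimization so that the final constant stays below $8^k$. This is a routine but somewhat tedious adaptation of the regularity and sampling machinery of~\cite[Chapters~9--10]{Lovasz2012}, which is why we defer the details to the appendix.
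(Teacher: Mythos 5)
Your high-level plan is the same as the paper's: split the two layers of randomness, use a weak regularity lemma valid simultaneously for all $2^k$ components, control the sampling step and the randomization step separately, and carry the $2^k$ factors through the optimization. The paper organizes the argument slightly differently — it bounds the \emph{expectation} $\mathbb{E}\bigl[\delta_\square(\mathbf{W},\mathbb{G}(n,\mathbf{W}))\bigr]$ first (using the equitable-partition regularity lemma, the first sampling lemma of~\cite[Lemma~10.6]{Lovasz2012}, and~\cite[Lemma~10.11]{Lovasz2012}), and then invokes a vertex-exposure Azuma inequality (the paper's Lemma~\ref{lem:concentration}) to upgrade the expectation bound to a high-probability bound. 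Your plan instead pushes for a high-probability bound on each term directly; that is also viable and your Hoeffding-plus-union-bound treatment of $\delta_\square(\mathbb{H},\mathbb{G}(\mathbb{H}))$ is a perfectly good substitute for~\cite[Lemma~10.11]{Lovasz2012}, which is proved by essentially the same computation.

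There is, however, a real gap in your treatment of the sampling step. You write that after realigning blocks, ``standard multinomial concentration gives $\delta_{\square}\bigl(\mathbb{H},(\mathbf{W}_{\mathcal{P}})^{\varphi}\bigr)=O\bigl(2^{k}\sqrt{q/n}\bigr)$.'' This is false as stated. $\mathbb{H}$ is (up to loops) $\mathbf{W}[S]$ for the \emph{original} $\mathbf{W}$, not a step function on $\mathcal{P}$; the samples inherit the internal fluctuations of $W_I$ within each block. Take the extreme case $\mathcal{P}=\{[0,1]\}$ and $W$ a union of two half-sized cliques: then $\mathbf{W}_{\mathcal{P}}$ is the constant $\tfrac12$, $\mathbf{W}[S]$ is a union of two cliques of size roughly $n/2$, and $\delta_\square(\mathbf{W}[S],\mathbf{W}_\mathcal{P})=\Theta(1)$, not $O(n^{-1/2})$. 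What multinomial concentration actually controls is $\delta_\square(\mathbf{W}_\mathcal{P}[S],\mathbf{W}_\mathcal{P})$; the remaining piece $\delta_\square(\mathbf{W}[S],\mathbf{W}_\mathcal{P}[S])=\sum_I\lVert (W_I-(W_I)_\mathcal{P})[S]\rVert_\square$ must be controlled by a \emph{sampling lemma for cut norms} — precisely~\cite[Lemma~10.6]{Lovasz2012}, which says $\lVert U[S]\rVert_\square$ is within $O(n^{-1/4})$ of $\lVert U\rVert_\square$, so this piece is $\le 2^k\bigl(\varepsilon+O(n^{-1/4})\bigr)$ w.h.p.\ and dominates the $O(2^k\sqrt{q/n})$ term. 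Your final $O(2^{3k/2}/\sqrt{\log n})$ is still correct because this dominant error has exactly the same order as the $\delta_\square(\mathbf{W},\mathbf{W}_\mathcal{P})\le 2^k\varepsilon$ term you already carry, but you have misattributed where the sampling error lives, and the first sampling lemma is a genuinely necessary ingredient that your sketch does not invoke. Once it is inserted (with the attendant union bound over the $2^k$ index sets $I$), the argument closes.
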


\begin{proof}[Proof of \Cref{thm:classical}]
    By Theorem~\ref{thm:compact_more}, the set of admissible graphon systems is compact.
    For an admissible graphon system $\mathbf{W}$ of order $k$ and $\varepsilon>0$, there exists $n \in \mathbb{Z}_{>0}$ such that $\delta_{\square}(\mathbf{W}, \mathbb{G}(n, \mathbf{W}))<\varepsilon$ with high probability by \Cref{thm:W_random_graph}.
    Therefore, one can choose a sequence of graph systems of order $k$ that converges to $\mathbf{W}$.
\end{proof}

\section{Rainbow Tur\'{a}n density}\label{sec:applications}

We start this section by providing an elementary proof of \Cref{thm:limit_of_turan_density}
\begin{proof}[Proof of \Cref{thm:limit_of_turan_density}]
    Let $\pi =  1-\frac{1}{\chi(\mathrm{sim}(H))-1}$.
    Fix $\varepsilon>0$ and suppose that $k \geq \lceil 2\varepsilon^{-1} |E(H)| \rceil+1$.
    We claim that $\pi_k^{\ast}(H) \leq \pi+\varepsilon$.
    Since $\pi_k^{\ast}(H) \geq \pi$ for any $k$, it concludes the proof.
    Let $n$ be a sufficiently large integer and $(G_1, \ldots, G_k)$ be a graph system on $n$ vertices with $|E(G_i)| \geq (\pi + \varepsilon){n \choose 2}$ for every $i \in [k]$. Let $G'$ be a graph on the same vertex set such that $uv$ is an edge of $G'$ if and only if $uv \in E(G_i)$ for at least $|E(H)|$ indices $i \in [k]$.
    Then we have
    $$k \cdot (\pi + \varepsilon) {n \choose 2} \leq \sum_{i \in [k]} |E(G_i)| \leq k \cdot |E(G')| + (|E(H)|-1) \cdot {n \choose 2}.$$
    Thus we get $|E(G')| \geq (\pi + \frac{\varepsilon}{2}){n \choose 2}$. As $n$ is sufficiently large, the graph $G'$ contains $\mathrm{sim}(H)$.
    By choosing colors greedily, it ensures that $(G_1, \ldots, G_k)$ contains a rainbow copy of $H$, thereby satisfying the inequality $\pi_k^{\ast}(H) \leq \pi+\varepsilon$.
\end{proof}

We now use the theory on graphon systems to prove \Cref{thm:existence}, \Cref{thm:bipartite_max} and \Cref{thm:supersaturation}. 
For this purpose, we need a colored version of the graph removal lemma.
As noted in Fox~\cite{Fox2011}, it was suggested that his proof could be adapted to demonstrate the colored version.
For completeness, we provide a proof of this lemma in Appendix~\ref{sec:H-removal}, following the strategy outlined in~\cite{Lovasz2012}.

\begin{definition}
    Let $(H,\psi)$ be a rainbow pre-coloring tuple.
    For a graphon system $\mathcal{G}=(G_1, \ldots, G_k)$ on $V$, a \emph{rainbow copy} of $(H, \psi)$ in $\mathcal{G}$ is a subgraph $H'$ of complete graph on $V$ together with an injective map $\psi':E(H) \to [k]$ such that the following holds:
    there exists an isomorphism $\phi:H\rightarrow H'$ such that $\psi'\circ \phi: E(H)\rightarrow [k]$ is an extension of $\psi$ and $e \in E(G_{\psi'(e)})$ for every $e \in E(H)$.
\end{definition}

\begin{theorem}[Rainbow $H$-removal lemma]\label{thm:H_removal}
    Let $(H,\psi)$ be a rainbow pre-coloring tuple.
    For every $\varepsilon>0$, there exists $\delta>0$ such that if a graph system $\mathcal{G} = (G_1, \ldots, G_k)$ has at most $\delta |V(\mathcal{G})|^{|V(H)|}$ rainbow copies of $(H, \psi)$, then one can delete at most $\varepsilon n^2$ edges in total to make $\mathcal{G}$ rainbow $(H, \psi)$-free.
\end{theorem}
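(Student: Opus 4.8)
The plan is to follow the graph-limit route to the removal lemma outlined in~\cite{Lovasz2012}: pass to a limit graphon system, use the Counting Lemma to see that it has zero rainbow $(H,\psi)$-density, and then run a multicolour Ruzsa--Szemer\'edi cleaning argument whose counting step is supplied by the Counting Lemma for decorated graphs. Concretely, suppose the statement fails for some $\varepsilon_0>0$. Then for each $n$ there is a graph system $\mathcal{G}^n=(G^n_1,\dots,G^n_k)$ on $v_n$ vertices with at most $\tfrac1n v_n^{|V(H)|}$ rainbow copies of $(H,\psi)$, yet such that no deletion of at most $\varepsilon_0 v_n^2$ edges makes $\mathcal{G}^n$ rainbow $(H,\psi)$-free. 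A graph system with no rainbow copy of $(H,\psi)$ is already rainbow $(H,\psi)$-free, so $\mathcal{G}^n$ has at least one such copy and therefore $v_n\ge n^{1/|V(H)|}\to\infty$. Identifying $\mathcal{G}^n$ with $\mathrm{span}(\mathcal{G}^n)$ and using compactness of $(\widetilde{\mathcal{W}}_0^{(k)},\delta_\square)$ (Theorem~\ref{thm:compact}), I would pass to a subsequence with $\mathrm{span}(\mathcal{G}^n)\to\mathbf{W}$. Up to a bounded factor depending only on $H$ and an $O(1/v_n)$ error coming from non-injective vertex tuples, $t^*_{(H,\psi)}(\mathrm{span}(\mathcal{G}^n))$ is the density of rainbow copies of $(H,\psi)$ in $\mathcal{G}^n$ and hence tends to $0$; the Counting Lemma (Theorem~\ref{thm:counting_lemma}) gives $t^*_{(H,\psi)}(\mathbf{W})=0$, and as $t^*_{(H,\psi)}=\sum_{\widetilde\psi}t^*_{(H,\widetilde\psi)}$ is a sum of nonnegative terms over colorings $\widetilde\psi$ extending $\psi$, each $t^*_{(H,\widetilde\psi)}(\mathbf{W})=0$.

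The core step is to contradict non-repairability by showing that $\mathcal{G}^n$ can be cheaply repaired for large $n$. Here I would apply a multicolour Szemer\'edi regularity lemma to $\mathcal{G}^n$ --- regularize each $G^n_i$ and take a common refinement $V_1\cup\dots\cup V_M$ that is $\eta$-regular in every colour, with $\eta$ and $1/M$ chosen small in terms of $\varepsilon_0,k,H$ but independent of $n$; following~\cite{Lovasz2012} this regularity lemma can itself be extracted from the compactness statement Theorem~\ref{thm:compact}. From each $G^n_i$ delete all edges inside a part, all edges between $\eta$-irregular pairs of parts, and all edges between pairs of parts of density below $\eta$ in $G^n_i$; the number of edges deleted in total is at most $k(2\eta+\tfrac1M)v_n^2\le\varepsilon_0 v_n^2$. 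If the resulting system still contained a rainbow copy of $(H,\psi)$, then adjacent vertices of $H$ would occupy distinct parts and, for every edge $e$ of $H$, the colour class used on $e$ would be $\eta$-regular of density at least $\eta$ on the corresponding pair of parts. Summing the Counting Lemma for decorated graphs (Theorem~\ref{thm:counting_lemma_for_decorated}) over the finitely many admissible part-patterns and colorings $\psi'$ extending $\psi$, one obtains at least $c(\eta,H,M)\,v_n^{|V(H)|}$ rainbow copies of $(H,\psi)$ in $\mathcal{G}^n$ for a positive constant independent of $n$, which contradicts the hypothesis $\tfrac1n v_n^{|V(H)|}$ once $n>1/c(\eta,H,M)$.

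I expect the main obstacle to be the bookkeeping in this cleaning step: setting up the multicolour regularity partition and invoking the decorated Counting Lemma while simultaneously respecting the pre-coloring constraint $\psi$ and the global injectivity (rainbow) constraint on the colours, and accounting for parallel edges of $H$ through the graphons $W_{\psi(E_{uv})}$ rather than single graphons. A secondary, routine point is the translation between the number of rainbow copies of $(H,\psi)$ --- counted as (subgraph, colouring) pairs up to automorphisms --- and the homomorphism density $t^*_{(H,\psi)}$, including the passage from arbitrary to injective vertex tuples. I note that the cleaning can alternatively be carried out on the limit graphon system $\mathbf{W}$ itself (its zero rainbow density forces the high-density reduced system to be rainbow $(H,\psi)$-free via the two counting lemmas) and the resulting cheap step-function modification transferred back to $\mathcal{G}^n$ using $\delta_\square$-closeness; either organisation works, and both reduce the problem to the same regularity-plus-counting core.
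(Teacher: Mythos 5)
Your proposal is correct in outline but takes a genuinely different route from the paper, and as written it has a gap for multigraphs $H$ that needs to be closed.

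The paper does not use a regularity partition. After passing to the limit $\mathbf{W}$ with $t^*_{(H,\psi)}(\mathbf{W})=0$ and reordering via Lemma~\ref{lem:reordering} so that $d_{\square}(\mathcal{G}^i,\mathbf{W})\to 0$ with a fixed interval decomposition, the paper defines for each $I\subseteq[k]$ the support set $S_I=\{W_I>0\}$ and uses Lemma~\ref{lem:innerproduct_convergence} to get $\int(1-\mathbbm{1}_{S_I})\,G^i_I\to 0$. The deletion rule is local and analytic: for each $I$, if an edge $uv$ of $\bigcap_{j\in I}G^i_j$ has $\mu\bigl(S_I\cap(J_u\times J_v)\bigr)$ too small, remove $uv$ from one colour in $I$. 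The fact that $t^*_{(H,\psi)}(\mathbf{W})=0$ forces $\prod_{uv}\mathbbm{1}_{S_{\psi(E_{uv})}}=0$ a.e., and a surviving rainbow copy would make that product positive on a small box, giving the contradiction with no Szemer\'edi regularity machinery at all. You instead go through the classical Ruzsa--Szemer\'edi cleaning with a multicolour regularity partition and the decorated Counting Lemma. That is a legitimate alternative organisation of the argument, and it is what one finds in most combinatorial proofs of removal lemmas; the paper's version is shorter once the limit apparatus (reordering, inner-product convergence) is in place.

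The gap is in your cleaning step when $H$ has parallel edges. You regularize each $G^n_i$ and delete from $G^n_i$ the edges in sparse or irregular pairs \emph{for that colour}. But the rainbow $(H,\psi)$-density integrates the graphons $W_{\psi(E_{uv})}$ indexed by the \emph{set} of colours used on each multi-edge, i.e.\ by the intersections $G^n_I=\bigcap_{j\in I}G^n_j$. If $e_1,e_2$ are parallel edges of $H$ between $u,v$ coloured $c_1,c_2$, a surviving rainbow copy only tells you that $G^n_{c_1}$ and $G^n_{c_2}$ are each $\eta$-regular of density $\ge\eta$ on the relevant pair of parts; that says nothing about the density of $G^n_{c_1}\cap G^n_{c_2}$, which is what the decorated Counting Lemma for $(H,\psi)$ actually needs and which can be zero even when both individual densities are $1/2$. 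You flag ``accounting for parallel edges through $W_{\psi(E_{uv})}$'' as bookkeeping, but it is a genuine change to the algorithm: one must regularize all $2^k$ graphs $G^n_I$ jointly (equivalently, regularize the $2^{[k]}$-edge-colouring of $K_{v_n}$), and clean edges using the $G^n_I$ densities. Once you do this the contradiction goes through as you describe, and it then exactly mirrors what the paper does analytically with the sets $S_I$ for every $I\subseteq[k]$.
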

The same strategy can be employed to establish the next theorem.
\begin{theorem}\label{thm:homomorphic_H_removal}
    Let $(H,\psi)$ be a rainbow pre-coloring tuple.
    For every $\varepsilon>0$, there exists $\delta>0$ such that if a graph system $\mathcal{G} = (G_1, \ldots, G_k)$ has at most $\delta |V(\mathcal{G})|^{|V(H)|}$ rainbow copies of color homomorphic images of $(H, \psi)$, then we can delete at most $\varepsilon n^2$ edges in total to make $\mathcal{G} = (G_1, \ldots, G_k)$ have no rainbow copies of color homomorphic images of $(H, \psi)$.
\end{theorem}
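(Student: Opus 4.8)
The plan is to adapt the proof of the rainbow $H$-removal lemma (\Cref{thm:H_removal}) verbatim, replacing the single pre-coloring tuple $(H,\psi)$ by the (finite) family $\mathcal{F}_{(H,\psi)}$ of all color homomorphic images of $(H,\psi)$. The key observation is that $\mathcal{F}_{(H,\psi)}$ is finite: every color homomorphic image $(G,\psi_2)$ of $(H,\psi)$ is an edge-preserved homomorphic image of $H$, hence has at most $|V(H)|$ vertices and exactly $|E(H)|$ edges, and the number of such pre-colored multigraphs is bounded in terms of $|V(H)|$ and $k$ alone. So it suffices to prove a ``family'' version of removal: if a graph system has few rainbow copies of members of a finite family $\mathcal{F}$ of rainbow pre-coloring tuples, then few edges need to be deleted to destroy all of them simultaneously. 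Since ``$\mathcal{G}$ has a rainbow copy of a color homomorphic image of $(H,\psi)$'' is exactly ``$\mathcal{G}$ is not rainbow $\mathcal{F}_{(H,\psi)}$-free'', the statement of \Cref{thm:homomorphic_H_removal} is precisely this family version specialized to $\mathcal{F} = \mathcal{F}_{(H,\psi)}$.

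The steps, in order: First I would run the regularity-based argument of \Cref{thm:H_removal} (following the strategy in~\cite{Lovasz2012} alluded to in the excerpt) to produce a weak regular partition of $\mathcal{G}$, simultaneously regularizing all $k$ graphs $G_i$ and their common refinements. Second, I would delete all edges lying in ``bad'' pairs: pairs that are not $\varepsilon'$-regular, pairs of low density in the relevant color, and pairs incident to small parts. The standard counting in these lemmas shows this removes at most $\varepsilon n^2$ edges total (summing over the $k$ colors costs only a factor $k$, which is absorbed by choosing $\varepsilon'$ small depending on $k$ and $|V(H)|$). Third — the heart of the argument — I would show that in the cleaned graph system $\mathcal{G}'$, if some member $(G,\psi_2) \in \mathcal{F}_{(H,\psi)}$ still has a rainbow copy, then a counting/embedding lemma (the rainbow counting lemma, or \Cref{thm:counting_lemma_for_decorated} applied to the relevant decorated graph) forces $\gg n^{|V(H)|}$ rainbow copies of $(G,\psi_2)$, hence $\gg n^{|V(H)|}$ rainbow copies of color homomorphic images of $(H,\psi)$ in the original $\mathcal{G}$ — contradicting the hypothesis once $\delta$ is small enough. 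Crucially, the embedding lemma only needs each edge of the image $G$ to be embeddable into a dense regular pair of the appropriate color; since $G$ is a homomorphic image of $H$, non-adjacent images of adjacent vertices of $H$ are handled automatically (they impose no constraint), and the rainbow condition on $\psi_2$ is consistent because $\psi_2$ was obtained from the injective $\psi$ along the homomorphism.

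Concretely, I would set $\delta$ to be the minimum over all $(G,\psi_2) \in \mathcal{F}_{(H,\psi)}$ of the counting-lemma threshold, and $\varepsilon'$ (the regularity parameter) small in terms of $\varepsilon$, $k$, and $|V(H)|$; then the number of parts in the regular partition is bounded, and the cleaning removes $\le \varepsilon n^2$ edges. The conclusion ``$\mathcal{G}'$ has no rainbow copy of any color homomorphic image of $(H,\psi)$'' is then immediate from the contrapositive of the counting step. I would also remark that the only structural input about color homomorphic images used is that their vertex sets and edge counts are bounded, so the same proof in fact gives the removal statement for an arbitrary finite family of rainbow pre-coloring tuples.

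The main obstacle I anticipate is bookkeeping the rainbow/color constraints through the embedding lemma: one must verify that after cleaning, for \emph{each} color $i$ used by $\psi_2$, the pair between the parts hosting the endpoints of an $i$-colored edge of the image is $i$-regular and $i$-dense, and that these constraints across the $\le |E(H)|$ edges of the image can be satisfied simultaneously by a greedy vertex-by-vertex embedding (standard, since each new vertex has bounded degree in $G$). This is routine but notation-heavy, which is presumably why the excerpt defers it — remarking only that ``the same strategy can be employed''. There is no genuinely new idea beyond \Cref{thm:H_removal}; the finiteness of $\mathcal{F}_{(H,\psi)}$ is what lets one union-bound over images without changing the quantifiers.
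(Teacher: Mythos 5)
There is a genuine gap. Your reduction to a ``family version'' of the removal lemma --- \emph{if a graph system has few rainbow copies of members of a finite family $\mathcal{F}$, then few edge deletions destroy all of them} --- is false when ``few'' is normalized by $n^{\max_{(G,\psi_2)\in\mathcal{F}} |V(G)|}$, which is the normalization that the theorem's hypothesis uses (namely $n^{|V(H)|}$). For instance, take $\mathcal{F}=\{K_2,K_3\}$ (with empty pre-colorings), $k\geq 3$, and $G_1=G_2=G_3=K_{n/2,n/2}$: there are $0$ rainbow triangles and $\Theta(n^2)=o(n^3)$ rainbow $K_2$'s, so the hypothesis $\leq\delta n^3$ holds for large $n$, yet destroying all rainbow $K_2$'s requires $\Theta(n^2)$ deletions. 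The point is that for an image $(G,\psi_2)$ with $|V(G)|<|V(H)|$, the bound ``$\leq\delta n^{|V(H)|}$ rainbow copies'' is automatically true (since there are at most $O(n^{|V(G)|})$ of them) and carries no information. This is also why your key counting step is wrong as written: you claim the embedding lemma ``forces $\gg n^{|V(H)|}$ rainbow copies of $(G,\psi_2)$'', which is impossible when $|V(G)|<|V(H)|$. Your closing remark that ``the same proof in fact gives the removal statement for an arbitrary finite family'' is false for the same reason.

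What saves the theorem is precisely the thing a generic family argument cannot see: the members of $\mathcal{F}_{(H,\psi)}$ are color homomorphic \emph{images} of $(H,\psi)$. The correct supersaturation step is: a surviving rainbow copy of some $(G,\psi_2)$ in the cleaned system, together with the regularity/positivity of the relevant dense blocks, lets you ``uncollapse'' along the surjection $f:V(H)\to V(G)$ (adjacent vertices of $H$ have distinct images, so this is a legitimate cluster-graph embedding of $H$), and this produces $\gg n^{|V(H)|}$ rainbow copies of $(H,\psi)$ \emph{itself} --- contradicting the hypothesis via the $(G,\psi_2)=(H,\psi)$ term. Note also that the paper's proof of \Cref{thm:H_removal} (Appendix~\ref{sec:H-removal}) is not the Szemer\'edi-style clean-and-count argument you describe, but a graphon-compactness argument (extract a convergent subsequence $\mathcal{G}^i\to\mathbf{W}$, note $t^*_{(H,\psi)}(\mathbf{W})=0$, then delete edges whose corresponding boxes barely meet the supports $S_I=\mathrm{supp}(W_I)$). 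To run that strategy here, the analogous missing ingredient is the claim that for an \emph{admissible} graphon system $\mathbf{W}$, $t^*_{(H,\psi)}(\mathbf{W})=0$ forces $t^*_{(G,\psi_2)}(\mathbf{W})=0$ for every color homomorphic image $(G,\psi_2)$; this uses the monotonicity $W_I\leq W_J$ for $J\subseteq I$ built into admissibility and a blow-up argument, and is exactly the supersaturation content that your ``union-bound over images'' plan leaves out.
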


We are now ready to derive \Cref{thm:existence}.
Because the set of admissible graphon systems form a compact subset of $\widetilde{\mathcal{W}}^{(k)}_0$, the following theorem immediately implies \Cref{thm:existence}.
\begin{theorem}\label{thm:stronger_existence}
    We have 
     $$\pi_k^{\ast}(H, \psi) = \sup_{t^{*}_{(H, \psi)}(\mathbf{W})=0} \min_{1 \leq \ell \leq k} t_{K_2}(W_\ell),$$ 
    where the supremum is taken over all admissible graphon systems $\mathbf{W}$ of order $k$ with $t^{*}_{(H, \psi)}(\mathbf{W})=0$.
\end{theorem}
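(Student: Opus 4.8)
The plan is to establish the two inequalities separately, using the compactness of the space of admissible graphon systems together with the counting/removal lemmas developed above. Write $\pi = \pi_k^{\ast}(H,\psi)$ and $\sigma = \sup \{\min_{\ell} t_{K_2}(W_\ell) : \mathbf{W}$ admissible, $t^{*}_{(H,\psi)}(\mathbf{W})=0\}$; note the supremum is attained since the constraint set $\{t^{*}_{(H,\psi)}(\mathbf{W})=0\}$ is closed (by continuity of $t^{*}_{(H,\psi)}$, Theorem~\ref{thm:counting_lemma}) hence compact (Theorem~\ref{thm:classical}), and $\min_\ell t_{K_2}(W_\ell)$ is continuous.

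For $\pi \geq \sigma$: take an admissible $\mathbf{W}$ achieving $\sigma$ with $t^{*}_{(H,\psi)}(\mathbf{W})=0$. By Theorem~\ref{thm:classical} (via Theorem~\ref{thm:W_random_graph}) there is a sequence of graph systems $\mathcal{G}^n$ on $n$ vertices with $\delta_\square(\mathbf{W},\mathcal{G}^n)\to 0$. The counting lemma gives $t^{*}_{(H,\psi)}(\mathcal{G}^n)\to 0$, so $\mathcal{G}^n$ has $o(n^{|V(H)|})$ rainbow copies of every color homomorphic image of $(H,\psi)$; apply the homomorphic removal lemma (Theorem~\ref{thm:homomorphic_H_removal}) to delete $o(n^2)$ edges from each $G^n_i$ and obtain a genuinely rainbow-$(H,\psi)$-free graph system $\mathcal{G}'^n$ with $\min_i |E(G'^n_i)| \geq (\sigma - o(1))\binom{n}{2}$. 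Here I must be slightly careful: deleting edges to kill a rainbow copy of $(H,\psi)$ might create a rainbow copy of a different color homomorphic image (one obtained by identifying vertices), which is why the homomorphic version is the right tool — a rainbow-$(H,\psi)$-free system is what we ultimately need, and killing all homomorphic images certainly kills $(H,\psi)$ itself. This yields $\mathrm{ex}^{\ast}_k(n,H,\psi) \geq (\sigma - o(1))\binom{n}{2}$, hence $\pi \geq \sigma$.

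For $\pi \leq \sigma$: fix $\varepsilon>0$ and for each large $n$ take an extremal rainbow-$(H,\psi)$-free graph system $\mathcal{G}^n$ with $\min_i|E(G^n_i)| = \mathrm{ex}^{\ast}_k(n,H,\psi)$. View each $\mathcal{G}^n$ as an admissible graphon system; by compactness (Theorem~\ref{thm:classical}) pass to a subsequence converging in $\delta_\square$ to an admissible graphon system $\mathbf{W}$. Continuity of $t^{*}_{(H,\psi)}$ and $t_{K_2}$ under $\delta_\square$ gives $\min_\ell t_{K_2}(W_\ell) = \lim_n \min_i |E(G^n_i)|/\binom{n}{2} = \pi$. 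It remains to check $t^{*}_{(H,\psi)}(\mathbf{W}) = 0$: if not, then $t^{*}_{(H,\psi)}(\mathbf{W})>0$, so by the counting lemma $t^{*}_{(H,\psi)}(\mathcal{G}^n)$ is bounded below by a positive constant for large $n$, meaning $\mathcal{G}^n$ contains a positive density — in particular a nonzero number — of rainbow copies of $(H,\psi)$, contradicting that $\mathcal{G}^n$ is rainbow-$(H,\psi)$-free. Hence $\mathbf{W}$ is a feasible point and $\pi = \min_\ell t_{K_2}(W_\ell) \leq \sigma$.

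The main obstacle is the $\pi \geq \sigma$ direction, specifically cleanly arguing that the removal step produces a system that is rainbow-$(H,\psi)$-free rather than merely having few such copies, and that this costs only $o(n^2)$ edges in \emph{each} color class (so the minimum edge count drops by only $o(\binom{n}{2})$) — this is exactly where Theorem~\ref{thm:homomorphic_H_removal} rather than Theorem~\ref{thm:H_removal} is needed, since after deletions one could inadvertently create rainbow copies of contracted (homomorphic) images unless all of them were removed simultaneously. The other direction is essentially a soft compactness argument. Once both inequalities are in place, the supremum is attained by the compactness remark above, completing the proof.
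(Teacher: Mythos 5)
Your plan follows the same route as the paper's proof: compactness of admissible graphon systems (\Cref{thm:classical}, \Cref{thm:compact_more}), the counting lemma, and a removal lemma for the lower bound; compactness and a subsequential limit plus the counting lemma for the upper bound. Two points of reasoning need correction, though, even if the conclusion is still reachable.

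First, the justification you give for preferring \Cref{thm:homomorphic_H_removal} over \Cref{thm:H_removal} rests on a misconception: deleting edges can never \emph{create} a rainbow copy of any pre-coloring tuple, because a rainbow copy requires all of its edges to be present. The paper uses \Cref{thm:H_removal} directly: $t^{*}_{(H,\psi)}(\mathcal{G})\leq\delta$ already bounds the number of (injective) rainbow copies of $(H,\psi)$ by about $\delta n^{|V(H)|}$, since injective homomorphisms are a subset of those counted by $t^{*}$, so \Cref{thm:H_removal} yields a rainbow $(H,\psi)$-free system after deleting at most $\varepsilon n^2$ edges, and that is all that is needed. Your substitution of the stronger \Cref{thm:homomorphic_H_removal} also works, but the reason you cite for it is spurious; that lemma is genuinely needed in the proof of \Cref{thm:density_hom_image}, not here.

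Second, in the $\pi \leq \sigma$ direction you pass from ``$t^{*}_{(H,\psi)}(\mathcal{G}^n)$ is bounded below by a positive constant'' to ``$\mathcal{G}^n$ has a nonzero number of rainbow copies'' without addressing degenerate homomorphisms. A rainbow $(H,\psi)$-free system can still contribute positively to $t^{*}_{(H,\psi)}$ through non-injective rainbow homomorphisms (collapsed, i.e., color homomorphic, images). The paper closes this by observing that a rainbow $(H,\psi)$-free $\mathcal{G}^i$ has only degenerate copies, which contribute $O(1/n_i)$ to $t^{*}_{(H,\psi)}(\mathcal{G}^i)$; this then contradicts the uniform lower bound $\delta>0$. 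You should make that step explicit rather than conflating homomorphism density with a count of genuine copies.
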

\begin{proof}
    Let $\pi$ denote the right-hand side of the desired equality.
    As the functions $t^{*}_{(H, \psi)}(\mathbf{W})$ and $\min_{1 \leq \ell \leq k} t_{K_2}(W_\ell)$ are continuous, by compactness there exists a graphon system $\mathbf{W}$ of order $k$ that attains the supremum.

    Fix $\varepsilon>0$ and choose $\delta>0$ small enough. 
    By Theorem~\ref{thm:classical} and Theorem~\ref{thm:counting_lemma}, for sufficiently large $n$, there exists a graph system $\mathcal{G}$ such that $t^{*}_{(H, \psi)}(\mathcal{G}) \leq \delta$ and $\min_{1 \leq \ell \leq k} t_{K_2}(G_\ell) \geq \pi-\delta$.
    Then by Theorem~\ref{thm:H_removal}, one can delete at most $\varepsilon |V(\mathcal{G})|^2$ edges from $\mathcal{G}$ to make it rainbow $(H, \psi)$-free.
    Since $t_{K_2}(G_\ell) = \frac{2|E(G_\ell)|}{|V(\mathcal{G})|^2}$, we have $\mathrm{ex}_k^{\ast}(n, H, \psi) \cdot {n \choose 2}^{-1} \geq \pi-\delta-2\varepsilon$.
    
    To show the opposite inequality, again fix $\varepsilon>0$. Suppose that $\limsup_{n \rightarrow \infty} \mathrm{ex}_k^{\ast}(n, H, \psi) \cdot {n \choose 2}^{-1} \geq \pi + 2\varepsilon$.
    Then there exists an increasing sequence of integers $(n_i)$ such that for each $i$, there exists an $n_i$-vertex graph system $\mathcal{G}^i$ that is rainbow $(H, \psi)$-free with $\min_{1 \leq \ell \leq k} |E(G_{\ell}^i)| \cdot {n_i \choose 2}^{-1} \geq \pi+\varepsilon$.
    We may assume that $n_1$ is large enough that $\min_{1 \leq \ell \leq k} t_{K_2}(G_{\ell}^i) \geq \pi+\frac{\varepsilon}{2}$.
    
    We assert that there exists $\delta = \delta(\varepsilon)>0$  such that $t^{*}_{(H, \psi)}(\mathcal{G}^i) \geq \delta$ holds for all $i\in \mathbb{N}$.
    If not, then we can take a subsequence $(n_{s_i})$ of $(n_i)$ with $t^{*}_{(H, \psi)}(\mathcal{G}^{s_i}) \rightarrow 0$.
    By compactness, there exists a further subsequence that converges to an admissible graphon system $\mathbf{W}$.
    Then $\mathbf{W}$ satisfies that $t^{*}_{(H, \psi)}(\mathbf{W})=0$ but $\min_{1 \leq \ell \leq k} t_{K_2}(W_\ell) \geq \pi+\frac{\varepsilon}{2}$, which contradicts the definition of $\pi$.
    In conclusion, there exists $\delta>0$ such that $t^{*}_{(H, \psi)}(\mathcal{G}^i) \geq \delta$ for each $i\in \mathbb{N}$.
    
    On the other hand, as $\mathcal{G}^i$ is rainbow $(H, \psi)$-free, $\mathcal{G}^i$ can only contain degenerate copies of $(H, \psi)$.
    Therefore, $t^{*}_{(H, \psi)}(\mathcal{G}^i) = O(\frac{1}{n_i})$, which leads to a contradiction.
\end{proof}
The following corollary confirms \Cref{thm:supersaturation}. 
\begin{corollary}[Supersaturation]
    For every $\varepsilon>0$, there exists $\delta>0$ such that if an $n$-vertex graph system $\mathcal{G}$ has at least $(\pi_k^{\ast}(H, \psi) + \varepsilon){n \choose 2}$ edges, then it has at least $\delta n^{|V(H)|}$ copies of $(H, \psi)$.
\end{corollary}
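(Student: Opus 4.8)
The plan is to deduce the corollary from the rainbow $H$-removal lemma (\Cref{thm:H_removal}) together with the convergence $\mathrm{ex}_k^{\ast}(n,H,\psi)=\bigl(\pi_k^{\ast}(H,\psi)+o(1)\bigr)\binom{n}{2}$ supplied by \Cref{thm:stronger_existence}. The argument is a contrapositive counting argument: if an $n$-vertex graph system simultaneously exceeds the extremal edge threshold in every color and yet contains very few rainbow copies of $(H,\psi)$, then deleting the small set of ``bad'' edges guaranteed by the removal lemma yields a rainbow $(H,\psi)$-free system whose smallest color class still has more than $\mathrm{ex}_k^{\ast}(n,H,\psi)$ edges, which is absurd. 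All the substance lives in the removal lemma, so once that is granted the proof is short.

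Concretely, I would fix $\varepsilon>0$, set $\varepsilon':=\varepsilon/16$ so that $\varepsilon' n^2<\tfrac{\varepsilon}{2}\binom{n}{2}$ for all $n\ge 2$, and apply \Cref{thm:H_removal} to the rainbow pre-coloring tuple $(H,\psi)$ with parameter $\varepsilon'$; this produces $\delta>0$ such that any $n$-vertex graph system with at most $\delta n^{|V(H)|}$ rainbow copies of $(H,\psi)$ can be turned into a rainbow $(H,\psi)$-free system by deleting at most $\varepsilon' n^2$ edges in total. Using \Cref{thm:stronger_existence} I would also pick $n_0$ large enough that $\mathrm{ex}_k^{\ast}(n,H,\psi)<\bigl(\pi_k^{\ast}(H,\psi)+\tfrac{\varepsilon}{2}\bigr)\binom{n}{2}$ for every $n\ge n_0$. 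It suffices to establish the corollary for $n\ge n_0$, which is the regime of interest.

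Finally, suppose towards a contradiction that some $n$-vertex graph system $\mathcal{G}=(G_1,\dots,G_k)$ with $n\ge n_0$ has $|E(G_i)|\ge(\pi_k^{\ast}(H,\psi)+\varepsilon)\binom{n}{2}$ for every $i\in[k]$ but fewer than $\delta n^{|V(H)|}$ rainbow copies of $(H,\psi)$. By the choice of $\delta$ we may delete a set of at most $\varepsilon' n^2$ edges in total---hence at most $\varepsilon' n^2$ from each color class---to obtain a rainbow $(H,\psi)$-free system $\mathcal{G}'=(G_1',\dots,G_k')$, and then
$$\min_{1\le i\le k}|E(G_i')|\;\ge\;(\pi_k^{\ast}(H,\psi)+\varepsilon)\binom{n}{2}-\varepsilon' n^2\;>\;\Bigl(\pi_k^{\ast}(H,\psi)+\tfrac{\varepsilon}{2}\Bigr)\binom{n}{2}\;>\;\mathrm{ex}_k^{\ast}(n,H,\psi),$$
which contradicts the definition of $\mathrm{ex}_k^{\ast}(n,H,\psi)$ as the maximum of $\min_i|E(G_i')|$ over all rainbow $(H,\psi)$-free $n$-vertex graph systems. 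Hence $\mathcal{G}$ contains at least $\delta n^{|V(H)|}$ rainbow copies of $(H,\psi)$; taking $\mathbf{dom}(\psi)=\varnothing$ recovers \Cref{thm:supersaturation}. I do not anticipate any real obstacle beyond invoking the removal lemma correctly: the only care required is in matching the error parameters and in restricting to $n$ past the point where $\mathrm{ex}_k^{\ast}(n,H,\psi)/\binom{n}{2}$ has essentially reached $\pi_k^{\ast}(H,\psi)$.
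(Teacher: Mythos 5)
Your proof is correct and follows the same route as the paper's (one-line) argument: apply the rainbow $(H,\psi)$-removal lemma and observe that the post-deletion system would beat $\mathrm{ex}_k^{\ast}(n,H,\psi)$, contradicting the convergence from \Cref{thm:stronger_existence}. You are merely more explicit about the parameter bookkeeping and the restriction to $n\ge n_0$, which the paper glosses over.
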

\begin{proof}
    As $\mathcal{G}$ cannot be made rainbow $(H, \psi)$-free by deleting $\frac{\varepsilon n^2}{2}$ edges, the conclusion comes from Theorem~\ref{thm:H_removal}.
\end{proof}

\begin{remark}\label{rmk1}
    We note that \Cref{thm:stronger_existence} can be extended to an arbitrary continuous function $h:\widetilde{W}_0^{(k)} \rightarrow \mathbb{R}$.
More precisely, the limit of the minimum of $h(\mathcal{G})$ among all graph systems of order $k$ on $n$ vertices without $(H, \psi)$ is equal to the infimum of $h(\mathbf{W})$ among all graphon systems of order $k$ with $t_{(H, \psi)}^{\ast}(\mathbf{W})=0$.
The proof simply follows the proof of \Cref{thm:stronger_existence} together with the fact that by the uniform continuity of $h$, deleting $\varepsilon' |V(\mathcal{G})|^2$ edges changes the value of $h$ by at most $\varepsilon$ when $\varepsilon'\ll \varepsilon$. 
Furthermore, one can further show that the rainbow Tur\'an density of finite family of pre-coloring tuples likewise exists.
\end{remark}

\begin{theorem}\label{thm:density_hom_image}
    For every rainbow pre-coloring tuple $(H, \psi)$,
    $$\pi_k^{\ast}(H, \psi) = \pi_k^{\ast}(\{\text{all color homomorphic images of }(H, \psi)\}).$$
\end{theorem}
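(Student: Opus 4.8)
The plan is to prove the two inequalities $\pi_k^{\ast}(H,\psi) \leq \pi_k^{\ast}(\calF)$ and $\pi_k^{\ast}(H,\psi) \geq \pi_k^{\ast}(\calF)$, where $\calF$ denotes the family of all color homomorphic images of $(H,\psi)$. The easy direction is the first one: if a graph system $\calG$ contains a rainbow copy of $(H,\psi)$ as an (isomorphic) subgraph, then in particular it contains a rainbow copy of some member of $\calF$, since $(H,\psi)$ is itself a color homomorphic image of $(H,\psi)$ via the identity. Hence any rainbow $\calF$-free graph system is rainbow $(H,\psi)$-free, so $\ex_k^{\ast}(n,\calF) \leq \ex_k^{\ast}(n,H,\psi)$ for every $n$, and passing to the limit (using that both densities exist, by \Cref{thm:stronger_existence} and \Cref{rmk1}) gives $\pi_k^{\ast}(\calF) \leq \pi_k^{\ast}(H,\psi)$.

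For the reverse inequality, the natural route is through the variational characterization in \Cref{thm:stronger_existence} together with the homomorphic removal lemma \Cref{thm:homomorphic_H_removal}. First I would observe that a graphon system $\mathbf{W}$ has $t^{*}_{(H,\psi)}(\mathbf{W}) = 0$ if and only if $t^{*}_{(H',\psi')}(\mathbf{W}) = 0$ for every color homomorphic image $(H',\psi')$ of $(H,\psi)$. The key point is that if $(f,\widetilde f):H \to H'$ is a color homomorphism with $\widetilde f$ bijective, then a rainbow copy of $(H',\psi')$ in a graphon system ``pulls back'' to a (possibly degenerate) rainbow copy of $(H,\psi)$; concretely, in the integral expression for $t^{*}_{(H',\psi')}(\mathbf{W})$ one can substitute variables along $f$ to produce a nonzero contribution to $t^{*}_{(H,\psi)}(\mathbf{W})$ unless both vanish. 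More carefully: $t^{*}_{(H,\psi)}(\mathbf{W})=0$ forces the integrand $\prod_{uv} W_{\psi(E_{uv})}(x_u,x_v)$ to vanish a.e., and for any homomorphic image $(H',\psi')$ the corresponding product factors through this vanishing set under the identification of vertices given by $f$, so $t^{*}_{(H',\psi')}(\mathbf{W}) = 0$ as well. Therefore the supremum in \Cref{thm:stronger_existence} is taken over the same set of admissible graphon systems whether we impose $t^{*}_{(H,\psi)}(\mathbf{W})=0$ or $t^{*}_{(H',\psi')}(\mathbf{W})=0$ for all $(H',\psi') \in \calF$, which directly yields $\pi_k^{\ast}(H,\psi) = \pi_k^{\ast}(\calF)$.

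Alternatively, and perhaps more in the spirit of the removal lemma machinery, one can argue at the level of finite graph systems: given $\varepsilon>0$, take an $n$-vertex graph system $\calG$ with $\min_i |E(G_i)| \geq (\pi_k^{\ast}(H,\psi)+\varepsilon)\binom{n}{2}$; by the supersaturation corollary it has $\Omega(n^{|V(H)|})$ rainbow copies of $(H,\psi)$, hence $\Omega(n^{|V(H)|})$ rainbow copies of color homomorphic images of $(H,\psi)$, so $\calG$ is not rainbow $\calF$-free once $n$ is large. Conversely, if $\calG$ has few rainbow copies of color homomorphic images of $(H,\psi)$, then by \Cref{thm:homomorphic_H_removal} one deletes $o(n^2)$ edges to destroy all of them, and in particular the resulting system is rainbow $(H,\psi)$-free, which combined with \Cref{thm:stronger_existence}-style compactness gives the matching density bound.

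The main obstacle I anticipate is making rigorous the claim that $t^{*}_{(H,\psi)}(\mathbf{W})=0$ implies $t^{*}_{(H',\psi')}(\mathbf{W})=0$ for color homomorphic images, because a homomorphic image can identify two non-adjacent vertices of $H$, merge parallel edges, or create new adjacencies, so the bookkeeping of which graphon $W_I$ sits on which pair, and of how the pre-coloring restricts under $\widetilde f$, must be done with care. The cleanest way to handle this is to note that any color homomorphic image of $(H,\psi)$ is obtained from $(H,\psi)$ by a sequence of elementary vertex identifications (identifying two vertices whose identification does not violate injectivity of the coloring on a pair), and to verify the density-vanishing claim for a single such identification, where the substitution $x_{u} = x_{v}$ in the integral and Fubini make the vanishing transparent; the general case then follows by composition. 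Everything else is routine given the results already established in the excerpt.
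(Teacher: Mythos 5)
Your easy direction $\pi_k^{\ast}(\calF) \leq \pi_k^{\ast}(H,\psi)$ is fine. For the reverse inequality you offer two routes, and both have issues worth flagging.

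Your primary route goes through the variational characterization of \Cref{thm:stronger_existence} and hinges on the claim that $t^{*}_{(H,\psi)}(\mathbf{W})=0$ forces $t^{*}_{(H',\psi')}(\mathbf{W})=0$ for every color homomorphic image $(H',\psi')$. The justification you sketch --- that the integrand of $t^{*}_{(H,\psi)}$ vanishes a.e.\ and ``for any homomorphic image the corresponding product factors through this vanishing set under the identification of vertices given by $f$'' --- is a measure-theoretic fallacy: the diagonal $\{x_u = x_v\}$ has measure zero in $[0,1]^{|V(H)|}$, so a.e.\ vanishing on the full cube says nothing about its restriction to that diagonal, and moreover for non-classical admissible graphon systems the integrand for $(H',\psi')$ is \emph{not} the restriction of the integrand for $(H,\psi)$: a merged pair $\{w,z\}$ carries the factor $W_{\psi(E_{uz})\cup\psi(E_{vz})}$, not $W_{\psi(E_{uz})}\,W_{\psi(E_{vz})}$. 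That said, the claim \emph{is} true, and your instinct to reduce to a single elementary identification and use Fubini is the right one; the missing ingredients are the admissibility inequality $W_{I\cup J}\leq\min(W_I,W_J)$ and a nonnegativity step. Concretely, write $F(\mathbf{x}) = A(x_u,\mathbf{z})\,B(x_v,\mathbf{z})\,C(\mathbf{z})$ where $A$ collects the factors involving $x_u$, $B$ those involving $x_v$, and $\mathbf{z}$ denotes the remaining coordinates (possible since $u,v$ are non-adjacent). Then $t^{*}_{(H,\psi)}(\mathbf{W}) = \int a(\mathbf{z})\,b(\mathbf{z})\,C(\mathbf{z})\,d\mathbf{z}$ with $a=\int A\,dx_u$, $b=\int B\,dx_v$; the vanishing forces, for a.e.\ $\mathbf{z}$ with $C(\mathbf{z})>0$, that $a(\mathbf{z})=0$ or $b(\mathbf{z})=0$, and since $A,B\geq 0$ this means $A(\cdot,\mathbf{z})\equiv 0$ or $B(\cdot,\mathbf{z})\equiv 0$. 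Admissibility gives $F'(\mathbf{y}) \leq \min\bigl(A(y_w,\mathbf{z}),B(y_w,\mathbf{z})\bigr)\,C(\mathbf{z})$, so the inner integral over $y_w$ vanishes for a.e.\ such $\mathbf{z}$, whence $t^{*}_{(H',\psi')}(\mathbf{W})=0$. Filled in this way, your route is a clean graphon-level argument genuinely different from the paper's.

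Your alternative route is essentially the paper's proof, but it omits the one observation that makes the removal lemma applicable: if $\calG$ is rainbow $(H,\psi)$-free, then every rainbow copy of any color homomorphic image $(H',\psi')$ with $|V(H')|=|V(H)|$ would be a genuine copy of $(H,\psi)$ (since $f$ injective and $\widetilde f$ bijective force an isomorphism), so every such copy has $|V(H')|<|V(H)|$ vertices, and their total number is $O(n^{|V(H)|-1})=o(n^{|V(H)|})$. Only then does \Cref{thm:homomorphic_H_removal} let you delete $o(n^2)$ edges and reach the contradiction with the definition of $\pi_k^{\ast}(\calF)$. You state the removal step but not the reason a rainbow $(H,\psi)$-free system automatically falls within its hypothesis.
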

\begin{proof}
    Let $\pi$ denote the right-hand side of the desired equality.
    Considering $(H, \psi)$ is a color homomorphic image of itself,  $\pi_k^{\ast}(H, \psi) \geq \pi$ is clear.
    To show the opposite inequality, suppose that $\pi_k^{\ast}(H, \psi) \geq \pi+\varepsilon$ for fixed $\varepsilon > 0$.
    Then for a sufficiently large $n$, there exists a rainbow $(H, \psi)$-free graph system $\mathcal{G}^n$ on $n$ vertices such that $|E(G_{i}^n)| \geq (\pi + \frac{\varepsilon}{2}){n \choose 2}$ for each $i \in [k]$ and
    there are at most $o(n^{|V(H)|})$ distinct homomorphic images of $(H, \psi)$.
    By Theorem~\ref{thm:homomorphic_H_removal}, one can delete at most $\frac{\varepsilon n^2}{4}$ edges from $\mathcal{G}^n$ to make it have no color homomorphic images of $(H, \psi)$.
    Consequently the resulting graph system has at least $(\pi + \frac{\varepsilon}{4}) {n \choose 2}$ edges and does not contain color homomorphic images of $(H,\psi)$, which is a contradiction to the definition of $\pi$. 
\end{proof}

Consequently, this theorem yields the following corollary, which immediately implies \Cref{thm:bipartite_max}.

\begin{corollary}
    Let $H$ be a bipartite multigraph with $\ell$ edges equipped with a rainbow pre-coloring $\psi$. For $k \geq \ell$, we have $$\pi^{\ast}_k(H,\psi) \leq \pi^{\ast}_k(2 \text{ vertices with $\ell$ parallel edges}) = \frac{\ell-1}{k}.$$
\end{corollary}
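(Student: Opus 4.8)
The plan is to use \Cref{thm:density_hom_image} together with the bipartiteness of $H$ to replace $(H,\psi)$ by a single, extremely simple pre-coloring tuple, and then to evaluate the rainbow Tur\'an density of that tuple by hand.

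First I would carry out the reduction. Write $D$ for the multigraph on two vertices joined by $\ell$ parallel edges. Since $H$ is bipartite, fix a bipartition $V(H)=A\cup B$ with no edge inside $A$ or inside $B$; then the map $f$ collapsing $A$ to one vertex of $D$ and $B$ to the other is a graph homomorphism, and together with any bijection $\widetilde f\colon E(H)\to E(D)$ it is an edge-preserving homomorphism with $\widetilde f$ bijective (the condition $\widetilde f(E(H)_{uv})\subseteq E(D)_{f(u)f(v)}$ is vacuous for pairs inside $A$ or inside $B$ and trivial otherwise). Pushing $\psi$ forward along $\widetilde f$ yields a rainbow pre-coloring $\widetilde\psi$ of $D$ with $\textbf{dom}(\widetilde\psi)=\widetilde f(\textbf{dom}(\psi))$, still rainbow because $\widetilde f$ is a bijection, so $(D,\widetilde\psi)$ is a color homomorphic image of $(H,\psi)$. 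As enlarging the forbidden family can only decrease the rainbow Tur\'an density, \Cref{thm:density_hom_image} then gives
\[
\pi_k^{\ast}(H,\psi)=\pi_k^{\ast}\bigl(\{\text{all color homomorphic images of }(H,\psi)\}\bigr)\le \pi_k^{\ast}(D,\widetilde\psi),
\]
which in the case $\textbf{dom}(\psi)=\emptyset$ (the one needed to deduce \Cref{thm:bipartite_max}) reads $\pi_k^{\ast}(H)\le\pi_k^{\ast}(D)$.

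It then remains to show $\pi_k^{\ast}(D)=\frac{\ell-1}{k}$. A graph system $\mathcal{G}=(G_1,\dots,G_k)$ contains a rainbow copy of $D$ exactly when some pair $uv$ lies in at least $\ell$ of the graphs $G_i$ (take any $\ell$ of those colors), so if $\mathcal{G}$ is rainbow $D$-free then every pair lies in at most $\ell-1$ of them, and double counting pair--graph incidences gives
\[
k\cdot\min_{1\le i\le k}|E(G_i)|\ \le\ \sum_{i=1}^{k}|E(G_i)|\ =\ \sum_{uv}\bigl|\{\,i:uv\in E(G_i)\,\}\bigr|\ \le\ (\ell-1)\binom{n}{2},
\]
hence $\pi_k^{\ast}(D)\le\frac{\ell-1}{k}$. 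For the matching lower bound I would distribute pairs evenly over $(\ell-1)$-element color sets: enumerating $\binom{[n]}{2}$, put the $j$-th pair into $G_i$ for the $\ell-1$ indices $i\equiv j,j+1,\dots,j+\ell-2\pmod k$ (possible since $k\ge\ell$); then no pair lies in $\ell$ graphs, so the system is rainbow $D$-free, while $|E(G_i)|=(1+o(1))\tfrac{\ell-1}{k}\binom{n}{2}$ for every $i$, giving $\pi_k^{\ast}(D)\ge\frac{\ell-1}{k}$.

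I do not expect a real obstacle here: all the weight of the argument is carried by the clean application of \Cref{thm:density_hom_image} in the reduction step, and the evaluation of $\pi_k^{\ast}(D)$ is entirely elementary. The only point requiring a bit of care is checking that $(D,\widetilde\psi)$ literally satisfies the definition of a color homomorphic image and that $\widetilde\psi$ is a legitimate rainbow pre-coloring of $D$, both of which hold precisely because $\widetilde f$ is a bijection.
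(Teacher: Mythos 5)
Your reduction step matches the paper's: collapsing the bipartition of $H$ gives a color homomorphism onto $D$, and \Cref{thm:density_hom_image} yields $\pi_k^{\ast}(H,\psi)\le\pi_k^{\ast}(D,\widetilde\psi)$. The paper then evaluates the extremal density of $D$ via graphon systems: given an admissible $\mathbf{W}$ with zero $F$-density it derives $\overline W_I\equiv 0$ for $|I|\ge\ell$, bounds $\sum_i t_{K_2}(W_i)\le\ell-1$, and exhibits an explicit admissible step-graphon system for the lower bound. You instead work directly with finite graph systems, bounding $\sum_i|E(G_i)|$ by double counting and giving a deterministic near-uniform assignment of $(\ell-1)$-element color sets. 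These are essentially the same argument in two languages, and your elementary phrasing is perfectly adequate here.

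The one subtlety you rightly flag is that the chain gives $\pi_k^{\ast}(H,\psi)\le\pi_k^{\ast}(D,\widetilde\psi)$, not $\pi_k^{\ast}(H,\psi)\le\pi_k^{\ast}(D)$, and in general the former dominates the latter. For instance with $\ell=2$, $k=3$ and $\widetilde\psi$ fixing color $1$ on one parallel edge, putting $G_1$ on half of the pairs and $G_2=G_3$ on the complementary half is rainbow $(D,\widetilde\psi)$-free with $\min_i|E(G_i)|=\frac{1}{2}\binom{n}{2}$, so $\pi_3^{\ast}(D,\widetilde\psi)\ge\frac{1}{2}>\frac{1}{3}$. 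Thus the corollary for general $\psi$ does not follow from this argument and in fact appears false when $\textbf{dom}(\psi)\neq\emptyset$; your argument is complete and correct precisely in the case $\textbf{dom}(\psi)=\emptyset$, which is the case consumed by \Cref{thm:bipartite_max}. The paper's one-line appeal to \Cref{thm:density_hom_image} for the first inequality glosses over the same point, so your explicit caveat is an improvement rather than a defect.
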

\begin{proof}
    The first inequality comes from Theorem~\ref{thm:density_hom_image}.
    To show the second equality, we first show the lower bound. 
    Let $F$ be the graph with two vertices and $\ell$ parallel edges.
    Let $\mathbf{W}=(W_I)_{I \subseteq[k]}$ be the graphon system defined by 
    \begin{align*}
        W_I \equiv 
        \begin{cases}
            1, & \text{if } I = \emptyset, \\
            \frac{{k-|I| \choose \ell-1-|I|}}{{k \choose \ell-1}}, & \text{if } 0<|I| < \ell, \\
            0, & \text{otherwise}.
        \end{cases}
    \end{align*}
    In particular, if $|I|=1$, then $W_I \equiv  \frac{{k-1 \choose \ell-2}}{{k \choose \ell-1}} = \frac{\ell-1}{k}$.
    Since $W_I \equiv 0$ for every $I \subseteq [k]$ with $|I| \geq \ell$, we have $t^{*}_F(\mathbf{W})=0$.
    Finally, we can check that $\mathbf{W}$ is admissible by the direct computation: $\overline{W}_I \equiv \frac{1}{{k \choose \ell-1}}$ if $|I|=\ell-1$ and $\overline{W}_I \equiv 0$ otherwise.

    For the upper bound, let $\mathbf{W} = (W_I)_{I \subseteq [k]}$ be an admissible graphon system with $t^{*}_F(\mathbf{W})=0$.
    Then $W_I \equiv 0$ for all $I \subseteq [k]$ with $|I| = \ell$.
    As $W_I = \sum_{J \supseteq I} \overline{W}_J$ and $\overline{W}_J \geq 0$, we have $\overline{W}_I \equiv 0$ and thus $W_I \equiv 0$ for all $I \subseteq [k]$ with $|I| \geq \ell$.
    Because $\sum_{I \subseteq [k]} \overline{W}_I = W_{\emptyset} \equiv 1$, we obtain 
    \begin{align*}
        \sum_{i=1}^k t_{K_2}(W_i) & = \sum_{i=1}^k \sum_{I \subseteq [k], i \in I} t_{K_2}(\overline{W}_I)
         = \sum_{I \subseteq [k], |I| \leq \ell-1} |I|\cdot t_{K_2}(\overline{W}_I)
        \leq \sum_{I \subseteq [k]} (\ell-1) t_{K_2}(\overline{W}_I) \leq \ell-1.
    \end{align*}
    Therefore, there exists an index $i \in [k]$ such that $t_{K_2}(W_i) \leq \frac{\ell-1}{k}$.
\end{proof}
This implies that for any rational number $r \in (0,1) \cap \mathbb{Q}$, there exists a multi-graph $H$ and an integer $k$ such that $\pi_k^{*}(H) = r$.

\section{Extremal structures of graph systems having no rainbow tree}\label{sec:structure}
In this section, we show that an extremal graph system having no rainbow tree $T$ admits a certain rigid structure.
We first collect some observations and definitions.
\begin{observation}\label{lem:simple_graphs}
    For every graph $H$ equipped with a rainbow pre-coloring $\psi$ and graphon system $\mathbf{W}=(W_I)_{I \subseteq [k]}$, we have $t^*_{(H, \psi)}(\mathbf{W}) = t^*_{(H, \psi)}(\mathrm{span}(W_1, \ldots, W_k))$.
\end{observation}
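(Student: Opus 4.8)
The statement to prove is \Cref{lem:simple_graphs}: for every simple graph $H$ with rainbow pre-coloring $\psi$ and every graphon system $\mathbf{W}=(W_I)_{I\subseteq[k]}$, the rainbow density satisfies $t^*_{(H,\psi)}(\mathbf{W}) = t^*_{(H,\psi)}(\mathrm{span}(W_1,\ldots,W_k))$.

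\textbf{Plan.}
The plan is to reduce to the case of a full rainbow coloring and then observe that the formula defining $t^*_{(H,\psi)}$ only ever references the sets $W_I$ with $|I|\le 1$ when $H$ is simple. First I would recall that for a \emph{coloring} tuple $(H,\tvp)$ (so $\mathbf{dom}(\tvp)=E(H)$ and $\tvp$ is injective), the definition gives
\[
t^*_{(H,\tvp)}(\mathbf{W}) = \int_{[0,1]^{|V(H)|}} \prod_{uv\in \binom{V(H)}{2}} W_{\tvp(E_{uv})}(x_u,x_v)\,\prod_{v\in V(H)} dx_v .
\]
Since $H$ is a simple graph, for each pair $uv$ the edge set $E_{uv}$ is either empty or a single edge, so $\tvp(E_{uv})$ is either $\emptyset$ or a singleton $\{\tvp(uv)\}$. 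In the former case $W_{\tvp(E_{uv})} = W_\emptyset \equiv 1$, and in the latter case $W_{\tvp(E_{uv})} = W_{\tvp(uv)}$, which is exactly $W_{\{\tvp(uv)\}}$. Hence the integrand only involves the ``singleton'' graphons $W_1,\ldots,W_k$ (and the constant $W_\emptyset\equiv 1$), and replacing $\mathbf{W}$ by $\mathrm{span}(W_1,\ldots,W_k)$ changes none of these: by definition $\mathrm{span}(W_1,\ldots,W_k)$ has the same $W_I$ for $|I|\le 1$ (it has $W_I=\prod_{i\in I}W_i$, which for $I=\emptyset$ is the empty product $\equiv 1$ and for $I=\{i\}$ is $W_i$). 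Therefore $t^*_{(H,\tvp)}(\mathbf{W}) = t^*_{(H,\tvp)}(\mathrm{span}(W_1,\ldots,W_k))$ for every rainbow coloring tuple $(H,\tvp)$.

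Next I would handle a general rainbow pre-coloring $\psi$ with $\mathbf{dom}(\psi)\subsetneq E(H)$ by the definition $t^*_{(H,\psi)}(\mathbf{W}) = \sum_{\tvp} t^*_{(H,\tvp)}(\mathbf{W})$, where the sum ranges over all injective $\tvp:E(H)\to[k]$ extending $\psi$. Each such $\tvp$ is a rainbow coloring, so by the previous paragraph each term is unchanged when $\mathbf{W}$ is replaced by $\mathrm{span}(W_1,\ldots,W_k)$; summing over the same (finite) index set of extensions $\tvp$ gives the claim in general. The case $\mathbf{dom}(\psi)=\emptyset$ is the special instance where the sum is over all injective $\tvp:E(H)\to[k]$.

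\textbf{Main obstacle.} There is essentially no obstacle: the content is entirely the bookkeeping observation that the rainbow density of a \emph{simple} graph $H$ never probes $W_I$ for $|I|\ge 2$, combined with the definitional fact that $\mathrm{span}(W_1,\ldots,W_k)$ agrees with $\mathbf{W}$ on all $W_I$ with $|I|\le 1$. The only thing to be slightly careful about is to phrase the $E_{uv}=\emptyset$ case correctly (so that $W_{\psi(E_{uv})}=W_\emptyset\equiv 1$ in both systems, not causing a spurious mismatch) and to note that the index set of extensions $\tvp$ in the pre-colored case depends only on $\psi$ and $k$, not on $\mathbf{W}$, so the two sums are term-by-term equal.
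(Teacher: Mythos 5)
Your proof is correct, and since the paper states this as an observation without supplying a proof, your argument is exactly the natural justification the authors had in mind: for a simple graph $H$, the set $E_{uv}$ is always empty or a singleton, so the integrand in the definition of $t^*_{(H,\psi)}$ only ever references $W_\emptyset \equiv 1$ and the singleton graphons $W_{\{i\}} = W_i$, which coincide in $\mathbf{W}$ and $\mathrm{span}(W_1,\ldots,W_k)$; summing over rainbow extensions of $\psi$ extends this to pre-colorings.
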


\begin{definition}
    For a tree $T$ with a root $u \in V(T)$ and a rainbow coloring $\psi:E(T)\rightarrow [k]$, the \textit{rooted density} of $(T, u, \psi)$ on a tuple of graphons $\mathbf{W} = (W_1, \ldots, W_k)$ at $x \in [0, 1]$ is 
    $$rt^{*}_{(T, u, \psi)}(\mathbf{W}, x) = \int_{[0, 1]^{|V(T)|-1}} \prod_{vw \in E(H): u \neq v, w} W_{\psi(vw)}(x_v, x_w)  \times \prod_{uv \in E(H)} W_{\psi(uv)}(x, x_v) \prod_{v \in V(T), v \neq u} dx_v.$$
    When $\psi$ is a rainbow pre-coloring, define 
    $$rt^{*}_{(T, v, \psi)}(\mathbf{W}, x) =  \sum_{\tilde{\psi}} rt^{*}_{(T,v, \tilde{\psi})}(\mathbf{W}), $$ 
    where the sum is taken over all injective functions $\tilde{\psi}:E(T) \rightarrow [k]$ extending $\psi$.

    For a graphon $W$, we define the \textit{degree} of $W$ at $x$ by
    $$d_{W}(x) := \int_{[0, 1]} W(x, y) dy.$$
    The rooted density and the degree are defined almost everywhere by the Fubini theorem. 
\end{definition}

A function $f:\widetilde{W}^{(k)}_0 \to \mathbb{R}$ is said to be \emph{monotone} if $f(\mathbf{W}) \leq f(\mathbf{U})$ whenever $W_I \leq U_I$ almost everywhere for every $I \subseteq [k]$.
A function $f:\widetilde{W}^{(k)}_0 \to \mathbb{R}$ is said to be \emph{simple} if $f(\mathbf{W}) = f(\mathrm{span}(W_1, \ldots, W_k))$ for any graphon system $\mathbf{W}=(W_I)_{I \subseteq [k]}$ of order $k$. 

Recall that for a graphon system $\mathbf{W}$ and a partition $\mathcal{P}$ of $[0, 1]$, we define $\mathbf{W}_{\mathcal{P}} := ((W_I)_{\mathcal{P}})_{I \subseteq [k]}$.
A graphon system $\mathbf{W}$ with $\mathbf{W} \equiv \mathbf{W}_{\mathcal{P}}$ for some partition $\mathcal{P}$ of $[0,1]$ is called a \emph{step graphon} with steps in $\mathcal{P}$.
If $|\mathcal{P}|=m$, then we say $\mathbf{W}$ has at most $m$ steps.
\begin{theorem}\label{thm:structure}
    Let $T$ be a tree and $(T, \psi)$ be a rainbow pre-coloring tuple. Then there is a constant $m\leq 2^{2|V(T)|}$ satisfying the following:
    for every continuous monotone simple function $h:\widetilde{W}^{(k)}_0 \to \mathbb{R}$, there exists an admissible graphon system $\mathbf{W}$ such that
    \begin{enumerate}
        \item $t^{*}_{(T, \psi)}(\mathbf{W})=0$;
        \item $\mathbf{W}$ is a maximizer of $h$ among all admissible graphon systems of order $k$ with $(T, \psi)$-density zero;
        \item $\mathbf{W}$ is a step graphon system with at most $m$ steps and with value either $0$ or $1$ almost everywhere.
    \end{enumerate}
\end{theorem}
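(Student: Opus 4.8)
The plan is to prove Theorem~\ref{thm:structure} in three stages: first reduce to classical (``span'') graphon systems, then to step graphon systems with a bounded number of steps, and finally to $\{0,1\}$-valued step systems, all while preserving $(T,\psi)$-density zero and not decreasing $h$.

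\textbf{Step 1: reduce to classical systems.} Since $h$ is simple and $t^*_{(T,\psi)}$ is simple by Observation~\ref{lem:simple_graphs}, replacing any admissible maximizer $\mathbf{W}$ by $\mathrm{span}(W_1,\dots,W_k)$ changes neither $h(\mathbf{W})$ nor $t^*_{(T,\psi)}(\mathbf{W})$, and a classical system is admissible. So it suffices to work with a tuple $(W_1,\dots,W_k)$ of graphons, thinking of the relevant densities as functions of this tuple; existence of a maximizer follows from compactness (Theorem~\ref{thm:compact}) and continuity (Theorem~\ref{thm:counting_lemma}). Here one should record that $t^*_{(T,\psi)}(\mathrm{span}(W_1,\dots,W_k))$ is a sum over injective extensions $\tilde\psi$ of integrals of products of the $W_i$'s, hence multilinear in each $W_i$.

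\textbf{Step 2: symmetrization / averaging to get few steps.} This is the heart of the argument and where I expect the main obstacle. Fix an optimal tuple $(W_1,\dots,W_k)$. The idea is to show that on each ``type class'' of vertices one may average the graphons without decreasing $h$ and without creating a rainbow copy of $(T,\psi)$. Concretely: because $t^*_{(T,\psi)}$ vanishes and $T$ is a tree, rooting $T$ at a vertex $u$ shows (via the rooted density $rt^*_{(T,u,\psi)}$) that for almost every $x$, the rooted density at $x$ is zero, so for each way of attaching the root's subtree one of the ``downward'' integrals must vanish at $x$; this partitions $[0,1]$ (up to measure zero) according to which of finitely many rooted sub-densities vanish at $x$. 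The number of such sub-densities is bounded in terms of $|V(T)|$ and $k$, but we want the bound $2^{2|V(T)|}$, independent of $k$ — so the partition must be indexed by something of size at most $2^{2|V(T)|}$. The right invariant is, for each $x$, the set of rainbow-colored rooted subtrees $(T',u,\psi')$ of $(T,\psi)$ (with $T'$ obtained by deleting a subtree) whose rooted density at $x$ is positive; since $(T,\psi)$ has at most $2|V(T)|$ edges worth of data and the relevant combinatorial objects (sub-forests of $T$ rooted appropriately) number at most $2^{2|V(T)|}$, this gives the step bound. Within each part $P$ of this partition, I would argue that replacing each $W_i$ by its average value on $P\times P'$ (for all parts $P,P'$) — i.e.\ passing to $\mathbf{W}_{\mathcal{P}}$ — keeps $t^*_{(T,\psi)}=0$ (since which rooted sub-densities are positive is constant on each part, and vanishing of the full density is a statement that certain products integrate to zero, which is preserved under this coarsening by a multilinearity/Cauchy–Schwarz or direct-expansion argument) and does not change $h$ too much; combined with monotonicity, an appropriate rounding argument within each part recovers the exact value. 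Making the ``$t^*_{(T,\psi)}=0$ is preserved under averaging within parts'' claim precise is the delicate point, and likely requires carefully choosing the partition so that on each part the set of nonzero rooted densities is genuinely constant, then using that the density is a sum of nonnegative integrands each of which is supported off a positive-measure set.

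\textbf{Step 3: push step values to $\{0,1\}$.} Given a step system $\mathbf{W}_{\mathcal{P}}$ with $m$ steps achieving the optimum, consider varying the constant value $w_{ij}\in[0,1]$ of some $W_\ell$ on a block $P_i\times P_j$. By Step~1's multilinearity, $t^*_{(T,\psi)}$ is affine (degree $\le 1$) in each such $w_{ij}$; since it is also nonnegative and currently zero, it stays zero as long as we move $w_{ij}$ in a direction that keeps all the finitely many monomial-integral terms nonnegative — and since each term is a product of nonnegative factors, moving $w_{ij}$ toward whichever of $0$ or $1$ does not switch on a previously-zero term keeps $t^*_{(T,\psi)}=0$; if a term is currently zero because another factor vanishes, changing $w_{ij}$ cannot make it positive. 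Meanwhile $h$ is monotone, so we can always move $w_{ij}$ to the endpoint that does not decrease $h$ — but we must respect $t^*_{(T,\psi)}=0$, so we pick the endpoint in the allowed direction; if both directions are allowed, monotonicity picks $1$ unless that would create a rainbow tree, in which case it must be $0$. Iterating over all blocks and colors and using continuity of $h$ (so the limit of this finite process, or a single well-chosen extreme point of the polytope $\{w : t^*_{(T,\psi)}(\mathbf{W})=0, 0\le w\le 1\}$, still maximizes $h$) yields a $\{0,1\}$-valued step system with at most $m$ steps, $(T,\psi)$-density zero, and $h$-value at least the optimum — hence equal to it. One should note the number of steps does not increase in Steps~2--3, so $m\le 2^{2|V(T)|}$ throughout, completing the proof.
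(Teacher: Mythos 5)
Your proposal has the right ingredients (reduce to classical/span systems, partition $[0,1]$ by supports of rooted subtree densities, round to $\{0,1\}$), but it takes a genuinely different and, in its current form, gappy route from the paper. The paper does not average and then round: it fixes a maximizer of $h$ and, \emph{among those}, a further maximizer of the continuous tiebreak $\sum_{t=1}^k t_{K_2}(W_t)$, and then shows this object is already a $0$-$1$ step function on the partition by rooted-density supports, via a one-shot perturbation-to-contradiction. The perturbation modifies only one color $W_\ell$ on one block $(V_i\times V_j)\cup(V_j\times V_i)$ by pushing it to $1$. Crucially, for a rainbow extension $\hat\psi$ with $\hat\psi(uv)=\ell$, neither rooted density $rt^*_{(T^{uv}_v,v,\hat\psi^{uv}_v)}$ nor $rt^*_{(T^{uv}_u,u,\hat\psi^{uv}_u)}$ uses color $\ell$, so they are literally unchanged by the perturbation; combined with the fact that $t^*_{(T,\psi)}(\mathbf W)=0$ forces the product of these two rooted densities to vanish a.e.\ on the block whenever $W_\ell$ is not a.e.\ zero there, the perturbed system still has density $0$, while $\sum_t t_{K_2}$ strictly increases and $h$ does not decrease — contradicting the tiebreak.

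The gap in your argument is exactly where you flagged it: Step~2. You assert that replacing $\mathbf W$ by $\mathbf W_{\mathcal P}$ (averaging on the partition by rooted-density supports) preserves $t^*_{(T,\psi)}=0$, and hope a ``multilinearity/Cauchy--Schwarz or direct expansion'' argument delivers this. This is not automatic. Averaging modifies \emph{all} colors simultaneously, so the rooted densities of the averaged system are not the originals and need not vanish where the originals do. An inductive attempt to show $\mathrm{supp}\bigl(rt^*_{(T',v,\hat\psi')}(\mathbf W_{\mathcal P},\ast)\bigr)\subseteq \mathrm{supp}\bigl(rt^*_{(T',v,\hat\psi')}(\mathbf W,\ast)\bigr)$ runs into the following obstacle: when $v$ has several children $w_1,\dots,w_s$ in $T'$, the rooted density factors as a product $\prod_i g_i(x)$ with $g_i(x)=\int W_{c_i}(x,y)\,h_i(y)\,dy$. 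Your partition refines $\mathrm{supp}(h_i)$ (since each $h_i$ is the rooted density of $T^{vw_i}_{w_i}$, which is in the family), but it does \emph{not} refine $\mathrm{supp}(g_i)$ — the ``one-child-at-a-time'' partial rooted density is not among the one-edge-deletion subtrees. So after averaging, $(W_{c_i})_{\mathcal P}$ can be positive on a block $V_a\times V_b$ where $W_{c_i}$ was positive only on a slice that misses your specific $x$, and the support containment you need is not guaranteed. You would need either a finer partition (indexed by more refined combinatorial data, threatening your $2^{2|V(T)|}$ bound) or a different mechanism. The paper's tiebreak and single-color perturbation is precisely the mechanism that sidesteps this. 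As a secondary remark, your Step~3 ``iterating over all blocks and colors'' should instead push \emph{simultaneously} all fractional block-values to $1$: since $t^*=0$ forces every monomial term to vanish, each term has a factor equal to $0$, and pushing only the strictly fractional values to $1$ leaves those $0$-factors untouched, so $t^*$ stays $0$; an iterative push could change which factors are zero between steps. But even with this fix, Step~3 presupposes a step structure that Step~2 has not established.
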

\begin{proof}
    For each edge $e = uv \in E(T)$, $T-e$ is a disjoint union of two trees. Denote them by $T^e_\kappa$ where $\kappa \in T^e_\kappa$ for $\kappa \in \{ u,v \}$.
    For any injective map $\hat{\psi}:E(T) \rightarrow [k]$, let  $\hat{\psi}^e_v$ be the restriction of $\hat{\psi}$ to $E(T^e_v)$.

    Let $\mathcal{T}$ be the family of tuples $(T^e_v, v, \hat{\psi}^e_v)$ for all $v\in V(T)$, $e \in E(T)$ with $v \in e$ and $\hat{\psi}$ is a rainbow extension of $\psi$ to $E(T)$.

    As the set of admissible graphon systems is compact and $t^{*}_{(T, \psi)}$ is continuous, one can choose an admissible graphon system $\mathbf{W}$ that maximizes $h$ among all admissible graphon systems $\mathbf{W}$ with $t^{*}_{(T, \psi)}(\mathbf{W})=0$.
    Among all such maximizers, choose one that maximizes $\sum_{t=1}^k t_{K_2}(W_t)$. 
    For each tuple $(T', v, \hat{\psi}) \in \mathcal{T}$, consider the partition $\mathrm{supp}(rt^{*}_{(T', v, \hat{\psi})}(\mathbf{W}, \ast))$ and $[0, 1] \setminus \mathrm{supp}(rt^{*}_{(T', v, \hat{\psi})}(\mathbf{W}, \ast))$ of $[0, 1]$.
    Let $\mathcal{P}=\{V_1, \ldots, V_m\}$ be the common refinement of all such bipartitions over all choices of $(T', v, \hat{\psi})\in \mathcal{T}$, then $m\leq 2^{2|V(T)|}$.
    We claim that $\mathbf{W}$ is a step $0$-$1$ function with steps in $\mathcal{P}$ having at most $2^{2|V(T)|}$ steps.

    Suppose not. Then there are indices $\ell \in [k]$ and $i,j \in [m]$ such that $W_\ell$ is not constantly $1$ and not constantly $0$ almost everywhere on $V_i \times V_j$.
    As $t^{*}_{(T, \psi)}(\mathbf{W})=0$, we have
    $$\sum_{(uv, \hat{\psi})}\int_{(V_i \times V_j) \cup (V_j \times V_i)} rt^{*}_{(T^{uv}_v, v, \hat{\psi}^{uv}_v)}(\mathbf{W}, x) \cdot W_{\ell}(x, y) \cdot rt^{*}_{(T^{uv}_u, u, \hat{\psi}^{uv}_u)}(\mathbf{W}, y) dxdy=0,$$
    where the sum is taken over all $uv\in E(T)$ and all rainbow extensions 
    $\hat{\psi}:E(T)\rightarrow [|E(T)|]$ of $\psi$ with $\hat{\psi}(uv)=\ell$.

    Because of the definition of $\mathcal{P}$, if $rt^{*}_{(T^{uv}_v, v, \hat{\psi}^{uv}_v)}(\mathbf{W}, x) \cdot rt^{*}_{(T^{uv}_u, u, \hat{\psi}^{uv}_u)}(\mathbf{W}, y) > 0$ for a pair $(x,y) \in V_i \times V_j$, then the same holds for almost every $(x,y) \in V_i \times V_j$.
    Hence, as $W_{\ell}$ is not $0$ almost everywhere on $V_i \times V_j$, we have $rt^{*}_{(T^{uv}_v, v, \hat{\psi}^{uv}_v)}(\mathbf{W}, x) \cdot rt^{*}_{(T^{uv}_u, u, \hat{\psi}^{uv}_u)}(\mathbf{W}, y) = 0$ for almost every $(x,y) \in (V_i \times V_j) \cup (V_j \cup V_i)$.
    Define a new graphon system $\mathbf{W}' = \mathrm{span}(W_1, \ldots, W_\ell', \ldots, W_k)$ where $W_\ell'$ is equal to $W_\ell$ except that it has value $1$ on almost every point in $(V_i \times V_j) \cup (V_j \times V_i)$.
    Then for each rainbow extension $\hat{\psi}$ of $\psi$, if $\ell \notin \hat{\psi}(E(T))$, then $t^{*}_{(T, \hat{\psi})}(\mathbf{W}')=t^{*}_{(T, \hat{\psi})}(\mathbf{W})$;
    if $\ell \in \hat{\psi}(E(T))$, let $uv \in E(T)$ be the edge with $\hat{\psi}(uv)=\ell$.
    As two functions $\hat{\psi}^{uv}_v$ and $\hat{\psi}^{uv}_u$ does not use the color $\ell$, we have 
    $$rt^{*}_{(T^e_v, v, \hat{\psi}^e_v)}(\mathbf{W}, x) = rt^{*}_{(T^e_v, v, \hat{\psi}^e_v)}(\mathbf{W}', x) \text{ and } rt^{*}_{(T^e_u, u, \hat{\psi}^e_u)}(\mathbf{W}, y) = rt^{*}_{(T^e_u, u, \hat{\psi}^e_u)}(\mathbf{W}', y).$$
    Therefore, we have 
    \begin{align*}
        t^{*}_{(T, \hat{\psi})}(\mathbf{W}') & = \int_{[0, 1]^2} rt^{*}_{(T^{uv}_v, v, \hat{\psi}^{uv}_v)}(\mathbf{W}', x) \cdot W_{\ell}'(x, y) \cdot rt^{*}_{(T^{uv}_u, u, \hat{\psi}^{uv}_u)}(\mathbf{W}', y) dxdy \\
        & = \int_{[0, 1]^2 \setminus ((V_i \times V_j) \cup (V_j \times V_i))} rt^{*}_{(T^{uv}_v, v, \hat{\psi}^{uv}_v)}(\mathbf{W}, x) \cdot W_{\ell}(x, y) \cdot rt^{*}_{(T^{uv}_u, u, \hat{\psi}^{uv}_u)}(\mathbf{W}, y) dxdy  \\ &\qquad + \int_{(V_i \times V_j) \cup (V_j \times V_i)} rt^{*}_{(T^{uv}_v, v, \hat{\psi}^{uv}_v)}(\mathbf{W}, x) \cdot rt^{*}_{(T^{uv}_u, u, \hat{\psi}^{uv}_u)}(\mathbf{W}, y) dxdy  = 0.
    \end{align*}
    Therefore, $t^{*}_{(T, \psi)}(\mathbf{W}')=0$.
    As $h$ is monotone, $h(\mathbf{W})\leq h(\mathbf{W}')$ and $\sum_{t=1}^k t_{K_2}(W_t)<\sum_{t=1}^k t_{K_2}(W'_t)$, a contradiction.
    Therefore, $\mathbf{W}$ is a step graphon system, and it has at most $2^{|V(T)|}$ steps by definition.
\end{proof}

By using some facts from real algebraic geometry, we deduce \Cref{thm:algebraic_number}.
For the notions in the following proof, see Section~\ref{appendix:real_algebraic_geometry}.

\begin{corollary}
    For a rainbow pre-coloring tuple $(T, \psi)$ with $T$ a tree, $\pi_k^{\ast}(T, \psi)$ can be computed by solving finitely many polynomial optimization problems.
    In particular, $\pi_k^{\ast}(T, \psi)$ is an algebraic number.
\end{corollary}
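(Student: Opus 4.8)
The plan is to combine the structural theorem \Cref{thm:structure} with \Cref{thm:stronger_existence} to reduce the computation of $\pi_k^{\ast}(T,\psi)$ to finitely many polynomial optimization problems, and then invoke basic real algebraic geometry for the algebraicity. First I would apply \Cref{thm:structure} with the objective $h(\mathbf{W}) = \min_{1\le\ell\le k} t_{K_2}(W_\ell)$. This $h$ is continuous on $(\widetilde{\mathcal{W}}_0^{(k)},\delta_\square)$ by the Counting Lemma \Cref{thm:counting_lemma} (applied with $K_2$) and the fact that a minimum of continuous functions is continuous; it is monotone because $t_{K_2}(W_\ell)=\int_{[0,1]^2}W_{\{\ell\}}$ is monotone in $W_{\{\ell\}}$; and it is simple since it depends only on the singleton graphons. \Cref{thm:structure} then produces a constant $m\le 2^{2|V(T)|}$ and an admissible graphon system $\mathbf{W}^{\ast}$ with $t^{\ast}_{(T,\psi)}(\mathbf{W}^{\ast})=0$ that maximizes $h$ among all such systems, is a step graphon system with at most $m$ steps, and takes values in $\{0,1\}$ almost everywhere; by \Cref{thm:stronger_existence}, $h(\mathbf{W}^{\ast})=\pi_k^{\ast}(T,\psi)$. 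Replacing $\mathbf{W}^{\ast}$ by $\mathrm{span}(W_1^{\ast},\dots,W_k^{\ast})$ changes neither $h$ nor the constraint $t^{\ast}_{(T,\psi)}=0$ (both are simple, using \Cref{lem:simple_graphs}) and preserves the $0$--$1$ step structure, so we may assume $\mathbf{W}^{\ast}$ is classical.

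Next I would parametrize such systems. Fixing $m=2^{2|V(T)|}$, a classical $0$--$1$ step graphon system with step parts $V_1,\dots,V_m$ is determined by the mass vector $\alpha=(\alpha_1,\dots,\alpha_m)$ with $\alpha_i=\mu(V_i)\ge 0$ and $\sum_i\alpha_i=1$, together with a \emph{pattern} $P=(b^{\ell}_{ij})$, where $b^{\ell}_{ij}=W_\ell(V_i\times V_j)\in\{0,1\}$ for $i\le j$ in $[m]$ and $\ell\in[k]$; using fewer than $m$ steps is captured by allowing some $\alpha_i=0$, and there are only finitely many patterns. For a fixed $P$ one has $t_{K_2}(W_\ell)=\sum_{i,j\in[m]}\alpha_i\alpha_j b^{\ell}_{ij}$, and expanding the step integral,
\[
  t^{\ast}_{(T,\psi)}(\mathbf{W}) = \sum_{\widetilde{\psi}}\ \sum_{\phi\colon V(T)\to[m]}\Big(\prod_{v\in V(T)}\alpha_{\phi(v)}\Big)\prod_{uv\in E(T)} b^{\widetilde{\psi}(uv)}_{\phi(u)\phi(v)},
\]
the outer sum ranging over injective extensions $\widetilde{\psi}$ of $\psi$ to $E(T)$; this is a polynomial in $\alpha$ with nonnegative integer coefficients determined by $P$. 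Since a classical system is admissible and the displayed correspondence is onto the family guaranteed by \Cref{thm:structure}, combining with \Cref{thm:stronger_existence} gives
\[
  \pi_k^{\ast}(T,\psi) = \max_{P} v_P,
\]
where, with $\Delta_{m-1}=\{\alpha\in\mathbb{R}^m:\alpha_i\ge 0,\ \sum_i\alpha_i=1\}$,
\[
  v_P = \max\Big\{\, \min_{1\le\ell\le k}\ \sum_{i,j\in[m]}\alpha_i\alpha_j\, b^{\ell}_{ij} \;:\; \alpha\in\Delta_{m-1},\ \ t^{\ast}_{(T,\psi)}(\mathbf{W})=0 \,\Big\},
\]
and $v_P:=-\infty$ if the feasible set is empty. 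The all-zero pattern is feasible with $v_P=0$, so $\max_P v_P$ is finite, and this exhibits $\pi_k^{\ast}(T,\psi)$ as the maximum of the optimal values of finitely many polynomial optimization problems.

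For algebraicity, note that every polynomial above has rational (indeed integer) coefficients. Fix a pattern $P$ with nonempty feasible set, introduce a scalar $z$, and consider
\[
  A_P = \Big\{\, z\in\mathbb{R} \;:\; \exists\,\alpha\in\mathbb{R}^m,\ \alpha_i\ge 0,\ \textstyle\sum_i\alpha_i=1,\ t^{\ast}_{(T,\psi)}(\mathbf{W})=0,\ z\le\textstyle\sum_{i,j}\alpha_i\alpha_j b^{\ell}_{ij}\ \forall\ell \,\Big\},
\]
which is a semialgebraic subset of $\mathbb{R}$ defined over $\mathbb{Q}$; since the $\alpha$-feasible set is compact and the objective continuous, $v_P=\sup A_P=\max A_P$. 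By the Tarski--Seidenberg theorem (quantifier elimination over real closed fields, which underlies the Transfer Principle \Cref{thm:transfer_principle}), $A_P$ is a finite union of points and intervals whose endpoints are roots of polynomials over $\mathbb{Q}$; hence $v_P\in\mathbb{R}_{\mathrm{alg}}$. Finally $\pi_k^{\ast}(T,\psi)=\max_P v_P$ is a maximum of finitely many real algebraic numbers, hence algebraic. The substantive work all resides in \Cref{thm:structure} and \Cref{thm:stronger_existence}; the only points needing care here are the verification that $\min_{\ell} t_{K_2}(W_\ell)$ is continuous, monotone, and simple, and the passage from a polynomial optimum with rational data to an algebraic value — both routine — so I do not anticipate a genuine obstacle at this stage.
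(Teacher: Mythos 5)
Your proof is correct and follows essentially the same strategy as the paper: apply \Cref{thm:structure} with $h(\mathbf{W})=\min_{\ell}t_{K_2}(W_\ell)$ to reduce $\pi_k^{\ast}(T,\psi)$ to finitely many polynomial optimizations over the simplex, one per $0$--$1$ pattern, and then invoke real algebraic geometry over $\mathbb{Q}$. Your algebraicity step (observing that $A_P$ is a $\mathbb{Q}$-semialgebraic subset of $\mathbb{R}$, hence has algebraic endpoints) is a clean, standard repackaging of the paper's transfer-principle-plus-approximation argument; the underlying tool is the same quantifier elimination over real closed fields, so there is no substantive difference.
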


\begin{proof}
    Consider graph systems $\mathcal{G}=(G_1, \ldots, G_k)$ with $|V(\mathcal{G})| = m \leq 2^{|V(T)|}$, possibly with loops.
    Let $\mathbb{G}_T$ be the collection of all such graph systems with $t_{(T, \psi)}(\mathcal{G}) =0$.
    For each graph system $\mathcal{G} \in \mathbb{G}_T$, we associate classical graphon systems $\mathbf{W}$ with $m$ steps $I_1, \ldots, I_{m}$ of sizes $x_1, \dots, x_m \in [0,1]$ with $\sum_i x_i = 1$ such that $W_i \equiv 1$ on $I_{\ell} \times I_{\ell'}$ if and only if $\ell$ and $\ell'$ are adjacent in $G_i$, and $W_i \equiv 0$ on $I_{\ell} \times I_{\ell'}$ otherwise.
    By Theorem~\ref{thm:structure}, one of these graphon systems is a maximizer.

    For a fixed $\mathcal{G} \in \mathbb{G}_T$, the maximum possible value of $\min_{i \in [k]} t_{K_2}(W_i)$ among all associated graphon systems can be computed by solving a polynomial optimization problem
    $$
     \sup_{\substack{x_1 + \cdots + x_m = 1 \\ x_i \geq 0}} \min \{ f_1(x_1,\dots, x_m), \ldots, f_k(x_1,\dots,x_m) \},
    $$
    for certain (homogeneous) polynomials $f_1, \ldots, f_k$ of degree 2 with coefficients 0 or 1, where each $x_i$ represents the length of the $i$-th step.
    Here, each $f_i$ corresponds to the edge density of $W_i$.
    By taking the maximum of those numbers over all $\mathcal{G} \in \mathbb{G}_T$, we can compute $\pi_k^{\ast}(T, \psi)$.
    It suffices to show that the optimization problem results in an algebraic number.

    Let $\Sigma = \{ (x_i) \in \mathbb{R}^m \mid x_1+\cdots+x_m = 1,\> x_i \geq 0 \text{ for } i \in [m] \}$ be the constraint set. Let $f(x) = \min \{ f_1(x), \ldots, f_r(x) \}$. As $\Sigma$ is compact, the supremum can be attained at a point $x_0 \in \Sigma$. By the transfer principle (\Cref{thm:transfer_principle}), the supremum taken over the semialgebraic set $\mathbb{R}_{\mathrm{alg}}^m \cap \Sigma$ can be achieved at another point $x_1 \in \mathbb{R}_{\mathrm{alg}}^m \cap \Sigma$. It is clear that $f(x_0) \geq f(x_1)$. For any $n \in \mathbb{N}$, choose $p_n \in \mathbb{Q}^m$ with $|x_0-p_n| < \frac{1}{n}$. Again by the transfer principle, one can choose $y_n \in \mathbb{R}_{\mathrm{alg}}^m \cap \Sigma$ with $|y_n-p_n| < \frac{1}{n}$, so that $y_n$ converges to $x_0$.
    As $f(y_n)$ converges to $f(x_0)$, we have $f(x_0) \leq f(x_1)$.
    Therefore, $f(x_0) = f(x_1)$ and it is an algebraic number.
\end{proof}

\section{Proof of \Cref{thm:tree_less_star_2}}\label{sec:proof-tree-star}
We verify \Cref{thm:tree_less_star_2} in this section.

\begin{definition}\label{def:gamma}
    For a graphon $W$, define $\gamma(W) := 1 - \sqrt{t_{K_2}(W)}$.
\end{definition}
In fact, $\gamma(W)$ measures the possible maximum measure of a subset $A\subseteq [0,1]$ such that
there exists a graphon $W'$ with $t_{K_2}(W') = t_{K_2}(W)$ and $W'(x,y)=0$ for all $x\in A\times [0,1]$.
In terms of graphs, this is equivalent to `the maximum number of isolated vertices' of a graph with $|V(G)|$ vertices and $|E(G)|$ edges.
The following observation captures the above intepretation of $\gamma(W)$.

\begin{definition}
    Let $W$ be a graphon. Let $I(W)$ be the set of points $x\in [0, 1]$ at which $W$ has zero degree.
\end{definition}

\begin{observation}\label{obs:fraction-isolated}
    $\gamma(W) \geq \mu(I(W))$.
\end{observation}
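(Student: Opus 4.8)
The plan is to unwind the definition $\gamma(W) = 1 - \sqrt{t_{K_2}(W)}$ and show the equivalent inequality $t_{K_2}(W) \leq \bigl(1 - \mu(I(W))\bigr)^2$; taking square roots and rearranging then yields the claim, since both sides of the intermediate inequality are nonnegative and $\mu(I(W)) \le 1$.

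The key observation is that $W$ vanishes almost everywhere off the ``non-isolated'' square. Indeed, for each $x \in I(W)$ we have $d_W(x) = \int_{0}^{1} W(x,y)\,dy = 0$, and since $W \ge 0$ this forces $W(x, \cdot) \equiv 0$ almost everywhere. By Fubini's theorem, $\mu\bigl(\{(x,y) : x \in I(W),\ W(x,y) > 0\}\bigr) = \int_{I(W)} \mu\bigl(\{y : W(x,y) > 0\}\bigr)\,dx = 0$, and by symmetry of $W$ the same holds with the roles of $x$ and $y$ interchanged. Hence $W \equiv 0$ almost everywhere on $[0,1]^2 \setminus \bigl(([0,1]\setminus I(W))^2\bigr)$.

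Consequently, using $0 \le W \le 1$,
\[
    t_{K_2}(W) = \int_{([0,1]\setminus I(W))^2} W(x,y)\,dx\,dy \;\le\; \mu\bigl(([0,1]\setminus I(W))^2\bigr) = \bigl(1 - \mu(I(W))\bigr)^2 .
\]
Taking square roots gives $\sqrt{t_{K_2}(W)} \le 1 - \mu(I(W))$, i.e. $\gamma(W) = 1 - \sqrt{t_{K_2}(W)} \ge \mu(I(W))$, as desired.

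I do not anticipate a genuine obstacle here; the only point requiring a little care is the measurability/Fubini step used to pass from ``each horizontal slice over $I(W)$ is null'' to ``the planar set is null,'' together with invoking symmetry of $W$ so that vertical slices over $I(W)$ are handled as well.
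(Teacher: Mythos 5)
Your proof is correct and takes essentially the same route as the paper: bound $t_{K_2}(W)$ by the measure of the square over the non-isolated part and rearrange. You are in fact slightly more careful than the paper's display, which writes $t_{K_2}(W) \leq \int_{([0,1]\setminus I(W))^2} W$ where, since $W\ge 0$ and vanishes a.e.\ on the complement of that square, the correct relation is the equality you state.
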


\begin{proof}
    We have
     \[
      t_{K_2}(W) \leq \int_{([0, 1]\setminus I(W))^2} W(x, y)dxdy \leq (1 - \mu(I(W)))^2,
     \]
    and consequently $\mu(I(W)) \leq 1 - \sqrt{t_{K_2}(W)} = \gamma(W).$
\end{proof}

First, we determine the Tur\'an density of the star $K_{1,k}$.
\begin{lemma}\label{lem:star-isolated}
    Let $\mathbf{W} = \mathrm{span}(W_1, \dots, W_k)$ be a graphon system. If $\sum_{i\in [k]} \gamma(W_{i}) < 1$, then $t^{*}_{K_{1,k}}(\mathbf{W}) > 0$. As a consequence, for every positive integer $k$, the rainbow turan density of the $k$-edge star is $\pi^{\ast}_k(K_{1,k}) = \left(1 - \frac{1}{k}\right)^2.$
\end{lemma}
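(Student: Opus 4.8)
The plan is to evaluate the rainbow star density $t^*_{K_{1,k}}(\mathbf W)$ explicitly as a one–dimensional integral of a product of degree functions, and then to read off positivity from Observation~\ref{obs:fraction-isolated}; the statement about $\pi^*_k(K_{1,k})$ will follow by combining this with Theorem~\ref{thm:stronger_existence}.

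First I would unwind the definition. Label $K_{1,k}$ with center $u$ and leaves $v_1,\dots,v_k$. Since $|E(K_{1,k})|=k$ is exactly the number of colors, an injective $\widetilde\psi\colon E(K_{1,k})\to[k]$ is a bijection, so there are $k!$ of them, each using every color once. For a fixed such $\widetilde\psi$ the only adjacent pairs in $K_{1,k}$ are the $uv_i$ and $W_\varnothing\equiv1$, so by Fubini
$$t^*_{(K_{1,k},\widetilde\psi)}(\mathbf W)=\int_0^1\prod_{i=1}^k\Bigl(\int_0^1 W_{\widetilde\psi(uv_i)}(x,x_i)\,dx_i\Bigr)\,dx=\int_0^1\prod_{i=1}^k d_{W_{\widetilde\psi(uv_i)}}(x)\,dx=\int_0^1\prod_{i=1}^k d_{W_i}(x)\,dx,$$
the last equality because $\{\widetilde\psi(uv_1),\dots,\widetilde\psi(uv_k)\}=[k]$. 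Summing over the $k!$ colorings gives $t^*_{K_{1,k}}(\mathbf W)=k!\int_0^1\prod_{i=1}^k d_{W_i}(x)\,dx$. Now $\prod_i d_{W_i}(x)>0$ precisely for $x\notin\bigcup_{i\in[k]}I(W_i)$, and $\mu\bigl(\bigcup_i I(W_i)\bigr)\le\sum_i\mu(I(W_i))\le\sum_i\gamma(W_i)<1$ by Observation~\ref{obs:fraction-isolated}; hence there is a set $A$ of positive measure on which every $d_{W_i}$ is positive, and since a measurable function positive on a positive–measure set has positive integral there (split $A$ into the pieces where $\prod_i d_{W_i}>1/n$), we conclude $t^*_{K_{1,k}}(\mathbf W)>0$.

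For the value of $\pi^*_k(K_{1,k})$, the upper bound comes from Theorem~\ref{thm:stronger_existence}: if $\mathbf W$ is admissible with $t^*_{K_{1,k}}(\mathbf W)=0$, then $t^*_{K_{1,k}}(\mathrm{span}(W_1,\dots,W_k))=0$ by Observation~\ref{lem:simple_graphs}, so the contrapositive of the first part gives $\sum_i\bigl(1-\sqrt{t_{K_2}(W_i)}\bigr)=\sum_i\gamma(W_i)\ge1$; were $t_{K_2}(W_i)>(1-\tfrac1k)^2$ for every $i$, each summand would be $<\tfrac1k$ and the sum $<1$, a contradiction, so $\min_i t_{K_2}(W_i)\le(1-\tfrac1k)^2$. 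For the matching lower bound I would exhibit the extremal system: partition $[0,1]$ into $B_1,\dots,B_k$ of length $\tfrac1k$, put $A_i=[0,1]\setminus B_i$ and $W_i=\mathbbm 1_{A_i\times A_i}$; then $t_{K_2}(W_i)=(1-\tfrac1k)^2$, while $d_{W_i}\equiv(1-\tfrac1k)\mathbbm 1_{A_i}$ and hence $\prod_i d_{W_i}\equiv(1-\tfrac1k)^k\mathbbm 1_{\bigcap_i A_i}\equiv0$, so $\mathrm{span}(W_1,\dots,W_k)$ is an admissible system with vanishing rainbow star density and Theorem~\ref{thm:stronger_existence} yields $\pi^*_k(K_{1,k})\ge(1-\tfrac1k)^2$. (Alternatively one may cite the tightness construction given after Theorem~\ref{thm:tree_less_star_2} with $\alpha_i=(1-\tfrac1k)^2$.)

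I do not expect a genuine obstacle. The only points that need care are the bookkeeping in the displayed computation — verifying that the sum over injective extensions contributes just the harmless constant $k!$ and nothing color–dependent — and the elementary fact that a measurable function positive on a set of positive measure has positive integral on it. Everything else is a direct application of Observations~\ref{obs:fraction-isolated} and~\ref{lem:simple_graphs} together with Theorem~\ref{thm:stronger_existence}.
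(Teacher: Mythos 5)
Your proposal is correct and takes essentially the same route as the paper: reduce the rainbow star density to (a constant times) the integral of $\prod_i d_{W_i}$, use Observation~\ref{obs:fraction-isolated} to see this is positive on a set of positive measure, and then derive $\pi^*_k(K_{1,k})=(1-1/k)^2$ by the contrapositive via Theorem~\ref{thm:stronger_existence} together with the interval construction. The only cosmetic difference is that you compute the full sum over the $k!$ injections (getting $k!\int\prod d_{W_i}$), whereas the paper simply lower-bounds $t^*_{K_{1,k}}(\mathbf W)$ by a single fixed rainbow coloring; both are fine.
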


\begin{proof}
    Let $A = [0, 1]\setminus \bigcup_{i\in [k]} I(W_{i}).$ By \Cref{obs:fraction-isolated}, the inequality $\sum_{i\in [k]} \mu(I(W_{i})) \leq \sum_{i\in [k]} \gamma (W_{i}) < 1$ holds, which implies $\mu(A) > 0.$ Label the leaves of $K_{1,k}$ by $1, \ldots, k$ and the center by $k+1$. Let $\psi$ be a rainbow pre-coloring of $K_{1,k}$ that colors the edge $i(k+1)$ by $i$ for each $i\in [k].$
    Then 
    \begin{align*}
        t^{*}_{K_{1,k}}(\mathbf{W}) &\geq t^{*}_{(K_{1,k}, \psi)}(\mathbf{W})
        = \int_{[0,1]^{k+1}} \prod_{i\in [k]}W_{i}(x_i, x_{k+1}) \\
        &\geq \int_{[0, 1]} \prod_{i\in [k]}d_{W_{i}}(x_{k+1}) 
        \geq \int_A \prod_{i\in [k]}d_{W_{i}}(x_{k+1})
        > 0.
    \end{align*}
    The last inequality holds because $\mu(A) > 0$ and $d_{W_{i}} > 0$ on $A$ for each $i \in [k]$. This confirms the first statement.

    Assume that $\pi^{\ast}(K_{1,k}) > \left(1 - \frac{1}{k} \right)^2.$ By \Cref{thm:stronger_existence} and \Cref{lem:simple_graphs}, there is a graphon system $\mathbf{W}' = \mathrm{span}(W'_1, \dots, W'_k)$ such that $t^{*}_{K_{1,k}}(\mathbf{W}') = 0$ and $t_{K_2}(W'_{i}) > \left(1 - \frac{1}{k} \right)^2$ for each $i \in [k]$. Considering $\sum_{i\in [k]}\gamma (W'_{i}) < 1$, we have $t^{*}_{K_{1,k}}(\mathbf{W}') > 0$ by the first statement. This is a contradiction, and thus $\pi^{\ast}(K_{1,k}) \leq \left(1 - \frac{1}{k} \right)^2.$

    To complete the proof, it is enough to find a graphon system $\mathbf{W}' = \mathrm{span}(W'_1, \dots, W'_k)$ such that $t^{*}_{K_{1,k}}(\mathbf{W}') = 0$ and $t_{K_2}(W'_i) \geq \left(1 - \frac{1}{k} \right)^2$ for each $i\in [k]$. 
    Set $A_i =(0, 1) \setminus [\frac{i-1}{k}, \frac{i}{k}]$. Define a step graphon $W'_i$ by letting $W'_i(x, y) = 1$ if $(x, y)\in A_i^2$ and $0$ otherwise.
    Then we have $t_{K_2}(W'_i) = \mu(A_i^2) = \left(1 - \frac{1}{k} \right)^2.$ Let $\mathbf{W}' = \mathrm{span}(W'_1, \dots, W'_k).$ For any point $x\in [0, 1]$, there is an index $i\in [k]$ with $d_{W'_i}(x) = 0$, so $t^{*}_{K_{1,k}}(\mathbf{W}') = 0.$ Therefore, $\pi^{\ast}(K_{1,k}) = \left(1 - \frac{1}{k} \right)^2$. 
\end{proof}

Now we show that $\pi^*_k(T)\leq (\frac{k-1}{k})^2$ holds for all $k$-edge star $T$.
We intend to analyze how the rainbow Tur\'an density changes when we remove a star from a tree, ensuring that the remaining graph is also a tree.
For this, the subsequent concept will be beneficial.

\begin{definition}
    Let $T$ be a tree which is not a star. A \textit{leaf-star} $S$ of $T$ is a subgraph of $T$ that satisfies the following:
    \begin{enumerate}
        \item[$(1)$] $S$ is a star,
        \item[$(2)$] if $v$ is a center of $S$, then there is a vertex $u\in N_T(v) \setminus S$ such that every $u'\in N_T(v)\setminus \{u\}$ is a leaf of both $S$ and $T.$
    \end{enumerate}
    A leaf-star $S$ of $T$ has \textit{size} $\ell$ if $|N_T(v)\setminus \{u\}| = \ell.$
    If we want to emphasize $u$ and $v$, we call $S$ an \textit{$(u, v)$-leaf-star}.
\end{definition}
Later in the proof of \Cref{thm:tree_less_star_2}, we will handle two cases separately where $S$ has exactly $k-2$ edges or less than $k-2$ edges, where $|E(T)|=k$. The next two lemma will be encapsulate the proof of the former case where $T$ is a star with one edge subdivided.

\begin{lemma}\label{lem:P_3-first-leaf-star}
    Let $\mathbf{W} = \mathrm{span}(W_1, W_2, W_3)$ be a graphon system with $\gamma(W_1) + \gamma(W_2) + \gamma(W_3) < 1.$
    Let $P_3=wxyz$ be a $3$-edge path, and let $\psi:\{wx\}\rightarrow [1]$ be its rainbow pre-coloring.
    Then $t^{*}_{(P_3, \psi)}(\mathbf{W}) > 0.$    
\end{lemma}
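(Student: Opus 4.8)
I would make the rainbow $(P_3,\psi)$-density explicit, integrate away the two leaves of the path, and then argue by contradiction. Since $\psi$ fixes the colour of $wx$ to be $1$, the only rainbow extensions colour $(xy,yz)$ by $(2,3)$ or $(3,2)$, so
\[
t^*_{(P_3,\psi)}(\mathbf{W}) = \int_{[0,1]^2} d_{W_1}(x)\bigl(W_2(x,y)\,d_{W_3}(y) + W_3(x,y)\,d_{W_2}(y)\bigr)\,dx\,dy .
\]
Both summands are nonnegative; hence if $t^*_{(P_3,\psi)}(\mathbf{W})=0$ then, writing $A_i=\{x:d_{W_i}(x)>0\}$, we must have $W_2\equiv 0$ on $A_1\times A_3$ and $W_3\equiv 0$ on $A_1\times A_2$. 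Recall from \Cref{obs:fraction-isolated} that $\mu(A_i)\ge 1-\gamma(W_i)$, and note that each $W_i$ is supported on $A_i\times A_i$.

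\textbf{The key estimate.}
I would turn the two vanishing conditions into upper bounds on the edge densities. For $x\in A_1\cap A_2$ the slice $W_2(x,\cdot)$ is supported on $A_2\setminus A_3$ (it vanishes off $A_2$ and also on $A_3$), while for $x\in A_2\setminus A_1$ it is only supported on $A_2$; integrating $d_{W_2}$ and writing $a_i=\mu(A_i)$, $p=\mu(A_1\cap A_2)$, $q=\mu(A_2\cap A_3)$, $r=\mu(A_1\cap A_3)$ gives
\[
t_{K_2}(W_2)\le p(a_2-q)+(a_2-p)a_2 = a_2^2-pq,\qquad\text{and symmetrically}\qquad t_{K_2}(W_3)\le a_3^2-rq .
\]
Now set $s_i:=1-\gamma(W_i)=\sqrt{t_{K_2}(W_i)}$ and suppose, for contradiction, that $\gamma(W_1)+\gamma(W_2)+\gamma(W_3)<1$, i.e.\ $s_1+s_2+s_3>2$. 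Since each $s_i\le 1$, every pairwise sum $s_i+s_j$ exceeds $1$, so $a_i+a_j>1$ and therefore $p\ge a_1+a_2-1$, $q\ge a_2+a_3-1$, $r\ge a_1+a_3-1$ are all positive; combined with $s_1\le a_1$ and $a_i\le 1$ this yields
\[
s_2^2\le a_2^2-(a_1+a_2-1)(a_2+a_3-1),\qquad s_3^2\le a_3^2-(a_1+a_3-1)(a_2+a_3-1),\qquad s_1\le a_1 .
\]

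\textbf{The main obstacle.}
What remains is a purely real-variable claim: these constraints force $s_1+s_2+s_3\le 2$, contradicting the assumption and hence proving $t^*_{(P_3,\psi)}(\mathbf{W})>0$. I would prove it by bounding $a_1+\sqrt{a_2^2-(a_1+a_2-1)(a_2+a_3-1)}+\sqrt{a_3^2-(a_1+a_3-1)(a_2+a_3-1)}$ over $[0,1]^3$ in the region $a_i+a_j\ge 1$. On the face $a_1=1$ one has $p=a_2$, $r=a_3$, so the two square roots are at most $\sqrt{a_2(1-a_3)}$ and $\sqrt{a_3(1-a_2)}$, and Cauchy--Schwarz gives $\sqrt{a_2(1-a_3)}+\sqrt{a_3(1-a_2)}\le\sqrt{(a_2+a_3)(2-a_2-a_3)}\le 1$, so the expression is at most $2$; a short Lagrange-multiplier computation shows every interior critical point is strictly below $2$, so the maximum over the whole region is exactly $2$, attained only at degenerate points such as $(1,1,0)$ and $(0,1,1)$ — precisely the configurations where $\gamma(W_1)+\gamma(W_2)+\gamma(W_3)=1$, matching the sharpness after \Cref{lem:star-isolated}. (A slightly slicker alternative is to use $s_1\le a_1\le 1$ to reduce to proving $s_2+s_3\le 2-a_1$ directly from the two displayed inequalities.) Once this estimate is in place the contradiction is immediate; I would organise the write-up as the three steps above, with the bulk of the technical work being the final two- or three-variable optimization.
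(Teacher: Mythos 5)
Your approach is genuinely different from the paper's and, in my view, cleaner. Where the paper refines $[0,1]$ into the eight atoms $A_e$, $e\in\{0,1\}^3$, according to which degree functions $d_{W_i}$ are positive, and then analyzes a seven-variable minimization of $f_1+f_2+f_3$ through a sequence of ad hoc claims (``$a_6=0$'', ``$a_2,a_4>0$'', ``$a_7<1/3$'', then a boundary perturbation), you keep only the three support sets $A_i$ and parametrize everything by $(a_1,a_2,a_3,p,q,r)$ with the elementary observations $p\ge a_1+a_2-1$, $q\ge a_2+a_3-1$, $r\ge a_1+a_3-1$. After taking the worst case on $p,q,r$ this collapses to a three-variable constrained maximization, which is considerably more transparent. (For the record, your slice bound $t_{K_2}(W_2)\le a_2^2-pq$ is a little weaker than what the eight-cell picture yields, namely $a_2^2-2pq+\mu(A_1\cap A_2\cap A_3)^2$, which is exactly the paper's $(m_2+m_3)^2+2m_2(m_6+m_7)+m_6^2$; the looser estimate still suffices.) The reduction itself --- the explicit density formula, the deduction that $W_2\equiv 0$ on $A_1\times A_3$ and $W_3\equiv 0$ on $A_1\times A_2$, and the conversion into a real-variable inequality --- is correct.

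The gap is that you stop exactly where the real work begins. The statement ``a short Lagrange-multiplier computation shows every interior critical point is strictly below $2$'' is asserted, not proved, and it is not as short as you suggest: one must solve the system $\partial g/\partial a_i=0$ for $g=a_1+\sqrt{X}+\sqrt{Y}$ with $X,Y$ the two constrained quantities, which after some algebra forces $a_1+a_2+a_1+a_3=3$ and then $a_1+a_3-1=a_1+a_2-1=\tfrac12$, giving a unique interior critical value $1+\sqrt{3}/2<2$; this needs to be written out, and one must also rule out non-smoothness of the $\min$ in $s_1\le a_1$. More seriously, you only verify the face $a_1=1$, but the three coordinates do not play symmetric roles (the bounds on $s_2,s_3$ involve $q$ whereas $s_1$ does not), so the faces $a_2=1$, $a_3=1$, and $a_i+a_j=1$ each require their own (easy but nontrivial) check --- on $a_2=1$, for instance, one has to show $a_1+\sqrt{1-a_1a_3}+\sqrt{a_3(1-a_1)}\le 2$, which is not an application of the same Cauchy--Schwarz step. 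Until all of these cases are carried out, the argument is an outline rather than a proof; the paper's version, though uglier, does perform this case analysis in full.
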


\begin{proof}
    Suppose that we are given a graphon system $\mathbf{W} = \mathrm{span}(W_1, W_2, W_3)$ with $t^{*}_{(T, \psi)}(\mathbf{W}) = 0$.
    For each $e = (e_1, e_2, e_3) \in \{0, 1\}^3$, let $A_e$ be the set of points $v \in [0,1]$ such that $d_{W_i}(v) > 0$ precisely when $e_i = 1$ for each $i\in [3]$. Then $\{A_e: e\in \{0,1\}^3 \}$ forms a partition of the interval $[0,1]$ up to a measure zero set.
    Then we have
    $$W_i \leq \sum_{e: e_i = 1} \mathbbm{1}_{A_{e}\times A_{e}} + \sum_{\substack{e,e':e \neq e' \\ e_i = e'_i = 1}} \mathbbm{1}_{(A_{e}\times A_{e'}) \cup (A_{e'}\times A_{e})},$$
    almost everywhere.
    For simplicity, write $A_{e_1 + 2e_2 + 4e_3}$ to denote $A_{(e_1, e_2, e_3)}$. For instance, $A_5= A_{(1,0,1)}$.
This definition yields a structure on each graphon $W_i$ by forcing it to be zero on certain region $A_j\times A_\ell$. The structure is depicted in Figure~1 by omitting certain colors in some regions indicating that $W_i$ is zero there. In particular, this structure yields the following claim.
    
    \begin{claim}\label{certain-empty}
        $W_2 \equiv 0$ and $W_3 \equiv 0$ on $(A_7\times A_7) \cup (A_6\times A_7) \cup (A_7\times A_6)$. Also, $W_2 \equiv 0$ on $(A_3\times (A_6 \cup A_7)) \cup ((A_6 \cup A_7)\times A_3)$ and $W_3 \equiv 0$ on $(A_5\times (A_6 \cup A_7)) \cup ((A_6 \cup A_7)\times A_5).$
    \end{claim}

    \begin{claimproof}
        Let $X = (A_3 \cup A_7) \times (A_6 \cup A_7)$. Then
        $$0 = t^{*}_{(P_3, \psi)}(\mathbf{W}) \geq \int_{X} d_{W_1}(x)W_2(x, y)d_{W_3}(y) dx dy.$$
        Since $d_{W_1}(x)$ and $d_{W_3}(y)$ are strictly positive for $(x,y) \in X$, we have $W_2 \equiv 0$ on $X$. As $W_2$ is symmetric, the statement for $W_2$ follows. A similar argument also applies to $W_3$.
    \end{claimproof}
    
    \begin{figure}[ht]
        \includegraphics[height=5.5cm]{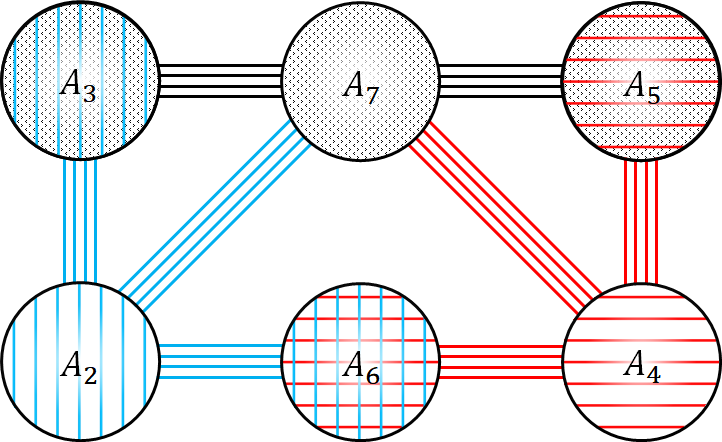}
        \centering
        \caption{The scheme of an extremal graphon system that is rainbow $(P_3,\psi)$-free. The ``edges'' with color 1, 2, and 3 are colored black, cyan, and red, respectively.}
    \end{figure}

    Let $m_i := \mu(A_i)$ for each $i \in \{0, 1, \dots, 7\}.$ Because
    \begin{align*}
        W_1 &\leq \mathbbm{1}_{(A_1 \cup A_3 \cup A_5 \cup A_7)\times (A_1 \cup A_3 \cup A_5 \cup A_7)}, \\
        W_2 &\leq \mathbbm{1}_{(A_2\cup A_3)\times (A_2\cup A_3)} + \mathbbm{1}_{(A_2\times A_6)\cup (A_6\times A_2)} + \mathbbm{1}_{(A_2\times A_7)\cup (A_7\times A_2)} + \mathbbm{1}_{A_6\times A_6},\\
        W_3 &\leq \mathbbm{1}_{(A_4\cup A_5)\times (A_5\cup A_4)} + \mathbbm{1}_{(A_4\times A_6)\cup (A_6\times A_4)} + \mathbbm{1}_{(A_4\times A_7)\cup (A_7\times A_4)} + \mathbbm{1}_{A_6\times A_6},
    \end{align*}
    we obtain the inequalities below.
    \begin{align}
        \gamma(W_1) &\geq f_1 := 1 - m_1 - m_3 - m_5 - m_7, \label{eq7}\\
        \gamma(W_2) &\geq f_2 := 1 - \sqrt{(m_2 + m_3)^2 + 2m_2(m_6 + m_7) + m_6^2}, \label{eq8}\\
        \gamma(W_3) &\geq f_3 := 1 - \sqrt{(m_4 + m_5)^2 + 2m_4(m_6 + m_7) + m_6^2}. \label{eq9}
    \end{align}

    Suppose on the contrary that there is a graphon system $\mathbf{W} = \mathrm{span}(W_1, W_2, W_3)$ such that $\gamma(W_1) + \gamma(W_2) + \gamma(W_3) < 1$ and $t^{*}_{(P_3,\psi)}(\mathbf{W}) = 0$. Our goal is to show that $F := f_1 + f_2 + f_3 \geq 1.$ Viewing $F$ as a (continuous) function of $m_1,\dots,m_7$, its domain is
    $$\left\{ (m_1, \dots, m_7) \in \mathbb{R}^7 \,\left\mid\, \sum_{i\in [7]} m_i \leq 1 \text{ and } m_i \geq 0 \right. \right\},$$
    which is a compact set. Henceforth, the minimum is attained, say at a point $(a_1, \dots, a_7)$. It is clear that $\sum_{i\in [7]} a_i = 1$ and $a_1 = 0.$ Let $\alpha$, $\beta$, and $\gamma$ be the values of $f_1$, $f_2$, and $f_3$ at the point $(a_1, \dots, a_7)$, respectively.
    By the assumption, $\alpha + \beta + \gamma < 1.$

    \begin{claim}\label{clm:a6=0}
        We may assume that $a_6 = 0.$
    \end{claim}

    \begin{claimproof}
        Assume $a_6 > 0.$ Putting $a_3' = a_3 + a_6$ and $a_6' = 0$, the difference between the values of $F$ is
        \begin{align*}
        F(a_1, a_2, a_3', a_4, a_5, a_6', a_7) - F(a_1, \dots, a_7) 
        & \leq -a_6 + \sqrt{(a_4 + a_5)^2 + 2a_4(a_6 + a_7) + a_6^2} - \sqrt{(a_4 + a_5)^2 + 2a_4a_7} \\
        & = -a_6\left(1 - \frac{a_6 + 2a_4}{\sqrt{(a_4 + a_5)^2 + 2a_4(a_6 + a_7) + a_6^2} + \sqrt{(a_4 + a_5)^2 + 2a_4a_7}} \right) \\
        & \leq -a_6\left(1 - \frac{a_6 + 2a_4}{(a_6+a_4) + a_4} \right) = 0,
        \end{align*}
        provided that $a_6+2a_4 \neq 0$. If $a_6+2a_4 = 0$, the difference is bounded by $-a_6 \leq 0$.
        By the minimality, we may assume that $a_6 = 0.$
    \end{claimproof}

    \begin{claim}\label{clm:a2>0}
        We may further assume that $a_2 , a_4 > 0.$
    \end{claim}

    \begin{claimproof}
        Assume $a_2 = 0.$ 
        Then we have $\gamma(W_1) \geq 1-a_3-a_5-a_7$, $\gamma(W_2) \geq 1-a_3 \geq a_4 + a_5 + a_7$ and $\gamma(W_3) \geq 1-a_4-a_5 \geq a_3.$ Thus, $\gamma(W_1) + \gamma(W_2) + \gamma(W_3) \geq 1+a_4 \geq 1$, a contradiction. 
        By symmetry, we would get a contradiction if $a_4 = 0$. 
        It follows that both $a_2$ and $a_4$ should be strictly positive.     
    \end{claimproof}

    \begin{claim}\label{clm:a_7<1/3}
        We may assume that $a_7 < \frac{1}{3}.$
    \end{claim}

    \begin{claimproof}
        Assume $a_7 \geq \frac{1}{3}.$ Then
        \begin{align*}
            2\left(1 - \frac{\beta + \gamma}{2} \right)^2
            &\leq (1 - \beta)^2 + (1 - \gamma)^2
            = (a_2+a_3)^2+2a_2a_7+(a_4+a_5)^2+2a_4a_7 \\
            &\leq (a_2+a_3+a_4+a_5)^2 + 2a_7\alpha
            = (1-a_7)^2+2a_7\alpha.
        \end{align*}
        Because $\alpha + \beta + \gamma < 1$, we have $\frac{(1 + \alpha)^2}{2} < 2\left(1 - \frac{\beta + \gamma}{2} \right)^2.$ 
        On the other hand, as $a_7 \geq \frac{1}{3}$, we infer that $(1 - a_7)^2 + 2a_7\alpha \leq \max\left\{ 2\alpha, \frac{4 + 6\alpha}{9} \right\}$.
        Therefore $\frac{(1+\alpha)^2}{2} < \max\{2\alpha, \frac{4 + 6\alpha}{9}\}$, a contradiction as $0 \leq \alpha \leq 1.$
    \end{claimproof}

    By the above claims, either $a_2 + a_3 > \frac{1}{3}$ or $a_4 + a_5 > \frac{1}{3}$ holds. Without loss of the generality, we assume $a_2 + a_3 > \frac{1}{3} > a_7.$
    Putting $a_3' = a_3 + a_2$ and $a_2' = 0$, the difference between the values of $F$ is
    \begin{align*}
        F(a_1,a_2',a_3',a_4,\dots,a_7) - F(a_1, \dots, a_7)
        & = -a_2 + \sqrt{(a_2 + a_3)^2 + 2a_2a_7} - (a_2 + a_3) \\
        & = -a_2 \left(1 - \frac{2a_7}{\sqrt{(a_2 + a_3)^2 + 2a_2a_7} + (a_2+a_3)} \right) \\
        & \leq -a_2\left(1 - \frac{2a_7}{2(a_2+a_3)} \right) < 0.
    \end{align*}
    The last inequality follows because $a_2 > 0$ and $a_2 + a_3 > a_7.$ This contradicts the minimality and verifies the lemma.
\end{proof}

Using the above lemma for the $3$-edge path, we can prove the following lemma dealing with the one-edge-subdivided star.

\begin{lemma}\label{lem:P3-second-leaf-star}
    Let $\mathbf{W} = \mathrm{span}(W_1, \dots, W_k)$ be a graphon system. Let $T$ be a $k$-edge tree that has an $(u, v)$-leaf-star $S$ of size $k-2$. Let $C$ be a subset of $[k]$ of size $k-2$ and let $\psi:E(S) \rightarrow C$ be a rainbow pre-coloring of $T$.
    If $\sum_{i\in [k]} \gamma(W_i) < 1$, then $t^{*}_{(T, \psi)}(\mathbf{W}) > 0.$
\end{lemma}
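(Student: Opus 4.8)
The plan is to reduce to \Cref{lem:P_3-first-leaf-star} by collapsing the pre-coloured star into a single auxiliary graphon. First I would pin down the shape of $T$. Since the $(u,v)$-leaf-star $S$ has size $k-2$, the centre $v$ has exactly $k-2$ pendant leaves $\ell_1,\dots,\ell_{k-2}$ of $T$ together with the neighbour $u$, which accounts for $k-1$ of the $k$ edges; the last edge is then forced to be incident to $u$ and to end at a leaf $w$. Hence $T$ is the tree $K_{1,k-1}$ with one edge subdivided, namely a leg $v$–$u$–$w$ of length two together with $k-2$ legs $v$–$\ell_i$, and $\psi$ colours the edges $v\ell_i$ bijectively with $C$. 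Writing $\{a,b\}=[k]\setminus C$, every injective extension of $\psi$ must assign $\{a,b\}$ to $\{vu,uw\}$, so summing over the two extensions and integrating out the leaves $\ell_i$ (using $\mathbf{W}=\mathrm{span}(W_1,\dots,W_k)$) gives
\[
t^{*}_{(T,\psi)}(\mathbf{W})=\int_{[0,1]^3}\Big(\prod_{i\in C}d_{W_i}(x)\Big)\big[W_a(x,y)W_b(y,z)+W_b(x,y)W_a(y,z)\big]\,dx\,dy\,dz .
\]

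Next I would introduce $A:=[0,1]\setminus\bigcup_{i\in C}I(W_i)$, so that $\prod_{i\in C}d_{W_i}(x)>0$ precisely for $x\in A$, and the auxiliary graphon $U:=\mathbbm{1}_{A\times A}$. By \Cref{obs:fraction-isolated}, $\mu([0,1]\setminus A)\le\sum_{i\in C}\mu(I(W_i))\le\sum_{i\in C}\gamma(W_i)<1$, so $\mu(A)>0$; since $t_{K_2}(U)=\mu(A)^2$ this yields $\gamma(U)=1-\mu(A)\le\sum_{i\in C}\gamma(W_i)$, and therefore $\gamma(U)+\gamma(W_a)+\gamma(W_b)\le\sum_{i\in[k]}\gamma(W_i)<1$. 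Applying \Cref{lem:P_3-first-leaf-star} to the graphon system $\mathrm{span}(U,W_a,W_b)$ with the one-edge pre-coloured $3$-edge path gives a strictly positive $(P_3,\psi')$-density, and because $d_U(x)=\mu(A)\,\mathbbm{1}_A(x)$ this is exactly
\[
\mu(A)\int_{A\times[0,1]^2}\big[W_a(x,y)W_b(y,z)+W_b(x,y)W_a(y,z)\big]\,dx\,dy\,dz>0 .
\]

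Finally I would transfer this back. The bracket is nonnegative and has positive integral over $A\times[0,1]^2$, hence is positive on a subset of positive measure of $A\times[0,1]^2$; on that subset $\prod_{i\in C}d_{W_i}(x)>0$ because $x\in A$, so the integrand in the displayed formula for $t^{*}_{(T,\psi)}(\mathbf{W})$ is positive on a set of positive measure, and thus $t^{*}_{(T,\psi)}(\mathbf{W})>0$. The only genuine content beyond bookkeeping is the passage to the single graphon $U$ together with the estimate $\gamma(U)\le\sum_{i\in C}\gamma(W_i)$: this is precisely what lets the ``budget'' $\sum_i\gamma(W_i)<1$ survive the reduction. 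Everything else is routine measure-theoretic manipulation plus the appeal to \Cref{lem:P_3-first-leaf-star}, where the real optimisation work has already been carried out, so I do not anticipate a serious obstacle here.
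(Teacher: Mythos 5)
Your argument is correct and follows essentially the same route as the paper's: introduce $A=\bigcap_{i\in C}\mathrm{supp}(d_{W_i})$, replace the coloured star by the auxiliary graphon $\mathbbm{1}_{A\times A}$, check that the budget $\sum_i\gamma(W_i)<1$ survives because $\gamma(\mathbbm{1}_{A\times A})\le\sum_{i\in C}\gamma(W_i)$, invoke Lemma~\ref{lem:P_3-first-leaf-star}, and transfer positivity back through the identity $\mathrm{supp}(\prod_{i\in C}d_{W_i})=A$. The one cosmetic difference is that the paper frames it as a contradiction and superfluously picks a maximizer of $\sum_i t_{K_2}(W_i)$ that is never used, whereas you argue directly — which is cleaner.
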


\begin{proof}
    Note that the induced graph $T[V(T)\setminus V(S)]$ is a single edge. Let $V(S) = \{v, x_1, x_2, \dots ,x_{k-2}\}.$ Suppose that there exists a graphon system such that the statement is not true.
    Among such graphon systems, we choose $\mathbf{W} = \mathrm{span}(W_1, \dots, W_k)$ at which $t^{*}_{K_2}(\mathbf{W}) = \sum_{i\in [k]} t_{K_2}(W_i)$ is maximized. Thus $\sum_{i \in [k]} \gamma(W_i) < 1$ but $t^{*}_{(T, \psi)}(\mathbf{W}) = 0.$
    
    We may assume $C = [k-2]$ and $\psi(vx_i) = i.$
    Let $\alpha_i := \gamma(W_i)$ for $i\in [k-2]$ and $\beta := \gamma(W_{k-1})$, $\gamma := \gamma(W_k)$.
    Let $A = \bigcap_{i \in [k-2]} \mathrm{supp}(d_{W_i})$ and let $W'_1 = \mathbbm{1}_{A \times A}$.
    Let $\mathbf{W'} = \mathrm{span}(W_1', W_2' := W_{k-1}, W_3':=W_k)$ be a graphon system and $P_3$ be a path $xyzw$ of length $3$.
    As $\mu(A) \leq 1 - \sum_{i \in [k-2]}\gamma(W_i)$, we have 
     \[
      \gamma(W_1') + \gamma(W_2') + \gamma(W_3') \leq \sum_{i \in [k]}\gamma(W_i) < 1.
     \]
    Thus $t^{*}_{(P_3, \psi')}(\mathbf{W}')>0$ by \Cref{lem:P_3-first-leaf-star}, where $\psi'$ is the pre-coloring of $P_3$ with $\mathbf{dom}(\psi') = xy$ and $\psi'(xy) = 1$.
    Since
    $$t^{*}_{(P_3, \psi')}(\mathbf{W}) = \int_{[0,1]^3} d_{W_1'}(x) (W_{k-1}(x, y)W_k(y, z)+W_{k}(x, y)W_{k-1}(y, z)) dxdydz>0,$$
    we infer that
    $$\int_{[0,1]^2} W_{k-1}(x, y)W_k(y, z)+W_{k}(x, y)W_{k-1}(y, z)dydz>0,$$
    on a subset of $A$ that has a positive measure.
    Therefore, 
    $$t^{*}_{(T, \psi)}(\mathbf{W}) = \int_{[0, 1]^3} \prod_{i \in [k-2]} d_{W_i}(x) (W_{k-1}(x, y)W_k(y, z)+W_{k}(x, y)W_{k-1}(y, z)) dxdydz>0,$$
    from the definition of $A$, which is a contradiction.
\end{proof}

As the previous lemmas handle the star and the one-edge-subdivided star, we now have to consider the other trees. For those threes, the 
following two observations will be helpful for us.
\begin{observation}\label{obs:twoleafstar}
    Let $T$ be a tree that is not a star. By taking the longest path in $T$, one can easily see that $T$ has at least two leaf-stars.
\end{observation}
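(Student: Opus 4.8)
The plan is to extract a longest path in $T$ and to read off a leaf-star from each of its two ends. First I would fix a path $P = v_0 v_1 \cdots v_d$ in $T$ of maximum length. Since a tree has diameter at most $2$ exactly when it is a star, and $T$ is not a star, we get $d \geq 3$; in particular $v_1 \neq v_{d-1}$, and both $v_0$ and $v_d$ are leaves of $T$, because an endpoint of $P$ of degree at least $2$ would admit a strictly longer path.

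Next I would check that $v_1$ is the center of a leaf-star. The point is that every neighbour $w$ of $v_1$ other than $v_2$ must be a leaf of $T$: otherwise $w$ has a neighbour $w' \neq v_1$, and since $T$ is acyclic and $v_0$ is already a leaf one checks that neither $w$ nor $w'$ lies on $P$, so $w'\, w\, v_1\, v_2 \cdots v_d$ is a path of length $d+1$, contradicting maximality of $P$. Granting this, I would take $S_1$ to be the star with center $v_1$ and leaf set $N_T(v_1) \setminus \{v_2\}$ (nonempty, since it contains $v_0$) and set $u = v_2$; then $u \in N_T(v_1) \setminus V(S_1)$ and every $u' \in N_T(v_1) \setminus \{v_2\}$ is a leaf of both $T$ (just shown) and $S_1$ (by construction), so $S_1$ is a leaf-star with center $v_1$. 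The identical argument applied to $P$ read backwards produces a leaf-star $S_{d-1}$ with center $v_{d-1}$.

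Finally I would argue $S_1 \neq S_{d-1}$, which is immediate: $v_1$ is an internal vertex of $P$ and hence has degree at least $2$ in $T$, so $v_1$ is not a leaf of $T$ and therefore not a leaf of $S_{d-1}$, while $v_1 \neq v_{d-1}$ is not the center of $S_{d-1}$ either; thus $v_1 \in V(S_1) \setminus V(S_{d-1})$ and the two leaf-stars are distinct. The only slightly delicate point in the whole argument is verifying in the path-extension step that $w$ and $w'$ avoid $V(P)$, so that the extended walk is genuinely a path of length $d+1$; this follows at once from acyclicity of $T$ together with the fact that $v_0$ is a leaf, and everything else is bookkeeping.
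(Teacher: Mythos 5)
Your proof is correct and follows the very approach the observation itself indicates (extracting a longest path and reading off a leaf‑star at each of its two penultimate vertices); the paper leaves all details implicit, and you supply them faithfully, including the two small points worth checking — that $d\ge 3$ forces $v_1\ne v_{d-1}$, and that in the path‑extension step $w,w'\notin V(P)$ by acyclicity and the fact that $v_0$ is a leaf. The closing distinctness argument (that $v_1$, being an internal vertex of $P$, is not a leaf of $T$ and hence cannot lie in $V(S_{d-1})$) is clean and exactly what is needed.
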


\begin{observation}\label{obs:structure-leafstar}
    Let $T$ be a tree with two distinct leaf-star $S$ and $S'$ where $T - S'$ has more than two edges.
    Then there are at least three edges which are not incident to leaves of $T$ or is contained in $S$.
\end{observation}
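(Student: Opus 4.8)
The plan is to argue by a short case analysis on $|N|$, where $N$ denotes the set of edges of $T$ that are not incident to any leaf of $T$. First I would record the basic feature of the definition that will drive everything: the centre of a leaf-star is a non-leaf vertex of $T$ all but one of whose neighbours are leaves of $T$, so \emph{every edge of a leaf-star is incident to a leaf of $T$}; in particular $E(S)\cap N=\emptyset$. Hence the set of edges named in the statement is the disjoint union $N\sqcup E(S)$, and it is enough to prove $|N|+|E(S)|\ge 3$. Since a leaf-star (being a nonempty star) has at least one edge, this is immediate once $|N|\ge 2$, so the entire content lies in the case $|N|\le 1$.

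For $|N|\le 1$, I would first note that, since $T$ has a leaf-star it is not a star, so $T$ has at least two non-leaf vertices; the induced subgraph $T^{\circ}$ of $T$ on the non-leaf vertices is then a tree with edge set exactly $N$ and at least one edge, forcing $|N|=1$ and $T^{\circ}$ to be a single edge $c_1c_2$, i.e.\ $T$ is a double star with centres $c_1,c_2$. I would then check that the only leaf-stars of a double star are $S_{c_1}$ and $S_{c_2}$, the stars consisting of all pendant edges at $c_1$, respectively at $c_2$: a leaf-star centre must be a non-leaf vertex with exactly one non-leaf neighbour, and in a double star only $c_1$ and $c_2$ qualify, each being the other's unique non-leaf neighbour. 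Since $S\ne S'$ this gives $\{S,S'\}=\{S_{c_1},S_{c_2}\}$, so I may take $S=S_{c_1}$ and $S'=S_{c_2}$; then $T-S'$ is obtained from $T$ by deleting the leaves of $S'$, leaving the star centred at $c_1$ on the pendant vertices of $c_1$ together with $c_2$, which has $|E(S)|+1$ edges. The hypothesis $|E(T-S')|>2$ then forces $|E(S)|\ge 2$, whence $|N|+|E(S)|=1+|E(S)|\ge 3$, completing the argument.

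I do not expect a genuine obstacle here; the whole thing is elementary tree bookkeeping. The step I would treat most carefully is the classification of the leaf-stars of a double star, since it rests on reading condition $(2)$ of the definition of a leaf-star correctly — in particular that a size-$1$ leaf-star is the single edge $K_2$ with its non-leaf endpoint taken as centre, and so still consists only of pendant edges. I would also keep in mind the tacit convention that a leaf-star, being a star, has at least one edge: without it one could take an ``empty leaf-star'' and the statement would already fail for $T=P_5$.
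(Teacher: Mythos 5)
Your proof is correct, and the paper states this as a bare observation with no argument of its own, so there is nothing in-paper to compare against. The bookkeeping you do — noting that every edge of a leaf-star is pendant so that $N$ and $E(S)$ are disjoint and the target is $|N|+|E(S)|\ge 3$, dismissing $|N|\ge 2$ outright, and reducing $|N|\le 1$ to the double-star case via the internal subtree $T^{\circ}$ with edge set $N$ — is exactly right, as is the classification of the leaf-stars of a double star as $S_{c_1},S_{c_2}$ under the (necessary) convention that a leaf-star has at least one edge. One small discrepancy worth flagging: elsewhere in the paper (see \Cref{lem:induction-structure}, where $T'=T-S$) the notation $T-S$ denotes the induced subtree on $V(T)\setminus V(S)$, i.e.\ the center is removed along with the pendants, under which $T-S_{c_2}$ has $|E(S)|$ edges rather than the $|E(S)|+1$ you wrote. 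This only strengthens the hypothesis (forcing $|E(S)|\ge 3$ rather than $\ge 2$), so your conclusion $1+|E(S)|\ge 3$ holds a fortiori; and in fact the weaker reading you adopted is the one needed where the observation is actually invoked in \Cref{lem:induction-leafstar}, since there one only knows $|E(T)|-|E(S')|\ge 3$. Either way the claimed inequality holds, as your case analysis shows.
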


We now prove that for a given graphon system $\mathbf{W}$ with $t^{*}_{(T, \psi)}(\mathbf{W}) = 0$ admits a structure depicted in Figure~2. In particular, this yields the next lemma.

\begin{lemma}\label{lem:induction-structure}
    Let $T$ be a $k$-edge tree that is not a star and $S$ be a leaf-star of $T$ that is centered at $v$ and has leaves $v_1, \dots, v_m$. Let $u$ be the non-leaf neighbor of $v$.
    Let $\psi$ be a rainbow pre-coloring of $T$ such that $\psi(vv_i)=i$ for each $i\in [m]$ and $\psi(uv)=k$.
    Let $T'=T-S$ and $\psi'$ be the restriction of $\psi$ to $E(T')$.
    Let $\mathbf{W}$ be a graphon system of order $k$ with $t^{*}_{(T, \psi)}(\mathbf{W}) = 0.$ Let $X = \bigcap_{i\in [m]} \mathrm{supp}(d_{W_i}) \subseteq [0, 1]$ and $Y = \mathrm{supp}(rt^{*}_{(T', u, \psi')}(\mathbf{W}, \ast))$.

    Let
     \[
      A = X \cap Y,\qquad B = X \setminus Y,\qquad
      C = Y \setminus X, \qquad
      D = [0,1] \setminus (A \cup B \cup C),
     \]
    and
    $$W'_k = \mathbbm{1}_{(B \cup D)^2 \cup (C \cup D)^2 \cup (A \times D) \cup (D \times A)}.$$
    Then we have the following:
    \begin{itemize}
        \item $W_k \leq W'_k$ almost everywhere,
        \item $t^{*}_{(T, \psi)}(\mathrm{span}(W_1, \dots, W_{k-1}, W'_k)) = 0.$
    \end{itemize}
\end{lemma}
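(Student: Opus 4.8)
By \Cref{lem:simple_graphs} everything in the statement refers only to the singleton graphons $W_1,\dots,W_k$, so we may work with $\mathrm{span}(W_1,\dots,W_k)$. The plan is to exploit the hypothesis $t^{*}_{(T,\psi)}(\mathbf{W})=0$ by decomposing $T$ along the cut edge $uv$: deleting $uv$ splits $T$ into the leaf-star $S$ on $\{v,v_1,\dots,v_m\}$, rooted at $v$, and the tree $T'=T-S$, rooted at $u$. Since $\psi$ colors $vv_i$ by $i$ and $uv$ by $k$, in every rainbow extension of $\psi$ the color $k$ is used only on $uv$, and the colors used on $S$ and on $T'$ are disjoint from one another. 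Hence the tree homomorphism density factorizes along this cut edge as
\[
 t^{*}_{(T,\psi)}(\mathbf{W})=\int_{[0,1]^2} rt^{*}_{(S,v,\psi|_{E(S)})}(\mathbf{W},s)\cdot W_k(s,t)\cdot rt^{*}_{(T',u,\psi')}(\mathbf{W},t)\,ds\,dt,
\]
where $rt^{*}_{(S,v,\psi|_{E(S)})}(\mathbf{W},s)=\prod_{i\in[m]} d_{W_i}(s)$ has support exactly $X$, and $rt^{*}_{(T',u,\psi')}(\mathbf{W},\cdot)$ has support exactly $Y$ by definition.

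For the first bullet: the integrand above is nonnegative and integrates to $0$, so it vanishes almost everywhere; since the two outer factors are positive precisely on $X$ and on $Y$ respectively, this forces $W_k\equiv 0$ on $X\times Y$, and by symmetry of $W_k$ also on $Y\times X$. It then remains to verify the set identity
\[
 [0,1]^2\setminus\big((X\times Y)\cup(Y\times X)\big)=(A\times A)\cup(A\times B)\cup(B\times A)\cup(A\times C)\cup(C\times A)\cup(B\times C)\cup(C\times B),
\]
whose right-hand side is exactly the zero-set of $W'_k$; expanding $X=A\cup B$ and $Y=A\cup C$ over the partition $\{A,B,C,D\}$ makes this a routine check. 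Hence $W_k\le W'_k$ almost everywhere.

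For the second bullet: because the color $k$ occurs only on $uv$, the density $rt^{*}_{(T',u,\psi')}(\mathbf{W},\cdot)$ does not involve $W_k$ and is unchanged when $W_k$ is replaced by $W'_k$. The same factorization then expresses $t^{*}_{(T,\psi)}\big(\mathrm{span}(W_1,\dots,W_{k-1},W'_k)\big)$ as the integral over $[0,1]^2$ of $rt^{*}_{(S,v,\psi|_{E(S)})}(\mathbf{W},s)\cdot W'_k(s,t)\cdot rt^{*}_{(T',u,\psi')}(\mathbf{W},t)$, whose integrand vanishes off $X\times Y$ (the outer factors are $0$ there) and also on $X\times Y$ itself (since $X\times Y\subseteq\{W'_k=0\}$). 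Therefore this density equals $0$.

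The step I expect to be the main obstacle is the bookkeeping behind the factorization: one must check that summing $rt^{*}_{(T',u,\widehat\psi|_{E(T')})}$ over all rainbow extensions $\widehat\psi$ of $\psi$ reassembles $rt^{*}_{(T',u,\psi')}(\mathbf{W},\cdot)$, so that $X$ and $Y$ are exactly the supports that appear, and that the color $k$ is genuinely confined to the single edge $uv$ in every extension. Everything else is the standard factorization of a tree homomorphism density along a cut edge, together with the explicit computation of the block structure of $W'_k$.
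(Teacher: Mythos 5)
Your proposal is the same argument as the paper's: factor the rainbow density along the cut edge $uv$, use nonnegativity to force the integrand to vanish, read off that $W_k$ must vanish where both rooted densities are positive, and check the block structure of $W'_k$ directly; the second bullet follows because the support of $W'_k$ is disjoint from $X\times Y$. One small slip: the set identity you write has a spurious complement. The right-hand side $(A\times A)\cup(A\times B)\cup(B\times A)\cup(A\times C)\cup(C\times A)\cup(B\times C)\cup(C\times B)$ is $(X\times Y)\cup(Y\times X)$ itself, and this set is the zero-set of $W'_k$; so the ``$[0,1]^2\setminus$'' on the left should not be there. Once that is removed, your conclusion $W_k\le W'_k$ is correct.

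The bookkeeping you flag at the end is in fact the one genuinely delicate point, and I would not dismiss it as routine. As defined in the paper, $rt^{*}_{(T',u,\psi')}(\mathbf{W},\cdot)$ sums over \emph{all} injective extensions of $\psi'$ to $E(T')$ with values in $[k]$, whereas the rainbow extensions $\hat\psi$ of $\psi$ that actually contribute to $t^{*}_{(T,\psi)}(\mathbf{W})$ restrict to bijections $E(T')\to[k]\setminus([m]\cup\{k\})$. Consequently the paper's displayed identity
\[
0=t^{*}_{(T,\psi)}(\mathbf{W})=\int_{[0,1]^2}\Bigl(\prod_{i\in[m]}d_{W_i}(x_v)\Bigr)W_k(x_v,x_u)\,rt^{*}_{(T',u,\psi')}(\mathbf{W},x_u)\,dx_v\,dx_u
\]
is in general only $\le$ rather than $=$, since the right-hand side also counts extensions that re-use the colours $1,\dots,m,k$ on $T'$; the vanishing of the left-hand side does not force the integrand on the right to vanish. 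What the factorization actually yields is that $W_k\equiv 0$ on $X\times\mathrm{supp}(\rho)$, where $\rho$ is the \emph{restricted} sum over extensions avoiding $[m]\cup\{k\}$, and $\mathrm{supp}(\rho)$ may be a proper subset of $Y=\mathrm{supp}(rt^{*}_{(T',u,\psi')})$. To make your proof (and the paper's) airtight, $Y$ should be taken to be $\mathrm{supp}(\rho)$; with that modification both bullets follow exactly by the argument you describe, and the later use of this lemma in \Cref{lem:induction-leafstar} only needs $W_k\le W'_k$ together with the block structure of $W'_k$ on $B$ and $D$, all of which survive unchanged.
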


\begin{figure}[ht]
    \includegraphics[height=5.5cm]{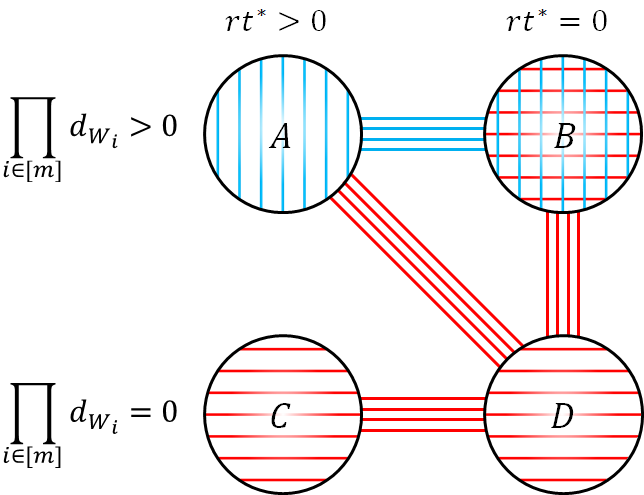}
    \centering
    \caption{The scheme of $(W_1, \dots, W_{k-1}, W_k')$.
    The ``edges'' corresponding to $W_k'$ are colored red, and the edges corresponding to $W_1, \dots, W_m$ are colored cyan.}
\end{figure}

\begin{proof}\label{lem:structure-graphon}
    Because $\psi$ is a rainbow pre-coloring, the image of $\psi'$ does not contain $[m]\cup \{k\}$.
    We have     
    \begin{align*}
        0=t^{*}_{(T, \psi)}(\mathbf{W}) &= \int_{[0,1]^{m+2}} \left(\prod_{i\in [m]}W_i(x_{v_i}, x_v)\right) W_k(x_{v}, x_u) rt^{*}_{(T', u, \psi')}(x_u) dx_vdx_{u} \prod_{i \in [m]} dx_{v_i}  \\
        & = \int_{[0,1]^2} \left(\prod_{i\in [m]} d_{W_i}(x_v)\right)W_k(x_{v}, x_u) rt^{*}_{(T', u, \psi')}(x_u) dx_vdx_{u},
    \end{align*}
    so the integrand is constantly zero almost everywhere.
    Observe that
    $$\left(\prod_{i\in [m]}d_{W_i}(x_v)\right)rt^{*}_{(T', v, \psi')}(x_u)>0,$$
    for $(x_v, x_u) \in (A \cup B) \times (A \cup C)$. As $W_k$ is symmetric, $W_k \equiv 0$ on $((A \cup B) \times (A \cup C)) \cup ((A \cup C) \times (A \cup B))$.
    Hence $W_k \leq  W'_k$ almost everywhere.
    On the other hand, if $W_k'(x, y) \neq 0$ for some $x,y \in [0,1]$, then either $d_{W_1}(x)=0$ or $rt^{*}_{(T', v, \psi')}(y)=0$ holds as $W_k' \equiv 0$ on $A \times A$.
    Therefore, 
     \[
      d_{W_1}(x_v) W_k'(x_v, x_u)rt^{*}_{(T', v, \psi')}(x_u) \equiv 0.
     \]
    This implies that $t^{*}_{(T, \psi)}(\mathrm{span}(W_1, \dots, W_{k-1}, W'_k)) = 0.$
\end{proof}

We are now ready to prove \Cref{thm:tree_less_star_2} in terms of a graphon system. 
Indeed, we prove the following stronger statement: 
\begin{lemma}\label{lem:induction-leafstar}
    Let $\mathbf{W} = \mathrm{span}(W_1, \dots, W_k)$ be a graphon system.
    Let $T$ be a tree with $k$ edges and $L = \{e_{1}, \dots, e_{\ell}\}$ be the set of all edges incident to leaves of $T$ except the edges of some leaf-star $S$ of $T$. Let $C \subseteq [k]$ be a subset of size $\ell.$ Let $\psi$ be a rainbow pre-coloring of $T$ such that $\mathbf{dom}(\psi) = L$ and $\psi(L) = C.$ If $\sum_{i\in [k]} \gamma(W_i) < 1$, then $t^{*}_{(T, \psi)}(\mathbf{W}) > 0.$
\end{lemma}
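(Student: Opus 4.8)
I would prove the statement by induction on $k=|E(T)|$, arguing by contradiction: suppose $t^{*}_{(T,\psi)}(\mathbf{W})=0$ for some $\mathbf{W}=\mathrm{span}(W_1,\dots,W_k)$ with $\sum_{i\in[k]}\gamma(W_i)<1$, and aim for a contradiction. The two base cases are the families already treated. If $T$ is a star, then $S=T$ and the colored set $L$ is empty, so the claim is \Cref{lem:star-isolated}. If $T$ has a leaf-star of size $k-2$, then choosing $S$ to be a size-one leaf-star of $T$ makes $L$ equal to the edge set of the size-$(k-2)$ leaf-star, so the claim is \Cref{lem:P3-second-leaf-star}; the complementary case, where the prescribed $S$ is the large leaf-star and $L$ is a single edge, is a short direct reduction to \Cref{lem:P_3-first-leaf-star} (merge the leaf-colours at the center of $S$ into a clique graphon $\mathbbm{1}_{A\times A}$ with $\gamma(\mathbbm{1}_{A\times A})\le\sum_{j}\gamma(W_j)$ and invoke the three-edge path with a pre-coloured end edge). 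For $k\le 2$ the tree is a star, so the induction genuinely starts.

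For the inductive step $T$ is neither of the above. By \Cref{obs:twoleafstar}, $T$ has a leaf-star $S^{\star}\neq S$; I would peel $S^{\star}$ — say centred at $v^{\star}$ with leaves $v^{\star}_1,\dots,v^{\star}_{m^{\star}}$ and non-leaf neighbour $u^{\star}$ — using \Cref{obs:structure-leafstar} (applied with the roles $S'=S^{\star}$) to ensure that $T_1:=T-S^{\star}$ has more than two edges and that $S$ survives as a leaf-star of $T_1$, while $v^{\star}$ becomes a leaf of $T_1$. After relabelling colours so that $\psi$ assigns $\{1,\dots,m^{\star}\}$ to $E(S^{\star})$, I extend $\psi$ to $\hat\psi$ by colouring the edge $u^{\star}v^{\star}$ with a colour (say $k$) avoiding $\{1,\dots,m^{\star}\}\cup\psi(L)$, which is possible since $T$ has an internal edge and hence $|\psi(L)|+m^{\star}<k$. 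Then $t^{*}_{(T,\hat\psi)}(\mathbf{W})\le t^{*}_{(T,\psi)}(\mathbf{W})=0$, so among all admissible graphon systems of $\gamma$-deficiency $<1$ with vanishing $(T,\hat\psi)$-density I take one, still called $\mathbf{W}$, maximizing $\sum_i t_{K_2}(W_i)$. The point of this choice of $\hat\psi$ is that $\hat\psi$ restricted to $E(T_1)$ colours exactly the leaf-edges of $T_1$ outside $S$, so it is a legitimate pre-coloring for the inductive hypothesis on the strictly smaller tree $T_1$, with $\{1,\dots,m^{\star}\}$ left free to re-attach $S^{\star}$ afterwards.

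Next I would apply \Cref{lem:induction-structure} to $\mathbf{W}$ with $S^{\star}$ and $\hat\psi$: this produces the partition $[0,1]=A\sqcup B\sqcup C_0\sqcup D$ coming from $X=\bigcap_{j\le m^{\star}}\mathrm{supp}(d_{W_j})$ and $Y=\mathrm{supp}\bigl(rt^{*}_{(T_1,u^{\star},\hat\psi|_{E(T_1)})}(\mathbf{W},\ast)\bigr)$, shows $W_k\le W'_k$ almost everywhere for the explicit $0$-$1$ step graphon $W'_k$, and guarantees $t^{*}_{(T,\hat\psi)}\bigl(\mathrm{span}(\dots,W'_k,\dots)\bigr)=0$; since replacing $W_k$ by $W'_k$ would strictly increase $\sum_i t_{K_2}(W_i)$ unless $W_k\equiv W'_k$, and since this replacement keeps the $(T,\hat\psi)$-density zero (because $W'_k$ exceeds $W_k$ only on blocks where one of the two rooted densities flanking the $u^{\star}v^{\star}$-edge already vanishes), maximality forces $W_k\equiv W'_k$. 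Thus $W_k$ is a $0$-$1$ step graphon that vanishes on $(X\times Y)\cup(Y\times X)$ and whose edge density is an explicit polynomial in $\mu(A),\mu(B),\mu(C_0),\mu(D)$. The remaining task is to combine three contributions to show $\sum_{i\in[k]}\gamma(W_i)\ge 1$: (i) $\sum_{j\le m^{\star}}\gamma(W_j)\ge\mu(X^{c})$, since the leaves of $S^{\star}$ can be re-attached only inside $X$; (ii) $\gamma(W_k)=1-\sqrt{t_{K_2}(W'_k)}$ in terms of the block sizes; and (iii) a bound on $\mu(Y^{c})$ obtained by peeling one more layer, namely $T_2:=T-V(S^{\star})$ with $\psi_2:=\hat\psi|_{E(T_2)}$, observing that $Y^{c}$ is contained in the set where the $T_2$-rooted density at $u^{\star}$ vanishes, and then invoking the inductive hypothesis on $T_2$ (still carrying the uncoloured leaf-star $S$, at least when $\deg_T(u^{\star})\ge3$) with a constrained graphon system of $\gamma$-deficiency $\le\sum_{i}\gamma(W_i)$. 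Assembling (i)–(iii) reduces to a polynomial inequality in the block sizes, to be verified in the spirit of \Cref{lem:P_3-first-leaf-star} and \Cref{lem:P3-second-leaf-star}.

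The hard part is step (iii): the inductive hypothesis, as stated, only asserts that \emph{some} rainbow copy of a residual subtree exists, whereas the block structure needs a \emph{measure} bound on the complement of the support of a rooted density — that is, a robust, rooted strengthening — and supplying this quantitative control is precisely the role of \Cref{lem:induction-structure} together with the careful choice (via \Cref{obs:twoleafstar} and \Cref{obs:structure-leafstar}) of which leaf-star to peel so that every residual instance genuinely fits the inductive framework. The attendant colour bookkeeping (which $m^{\star}$ colours to discard when passing to $T_1$ so that exactly $m^{\star}$ remain free to re-attach $S^{\star}$, and the side case $\deg_T(u^{\star})=2$, where $u^{\star}$ becomes a leaf of $T_2$ and the instance folds into a lower base case) and the final polynomial inequality are where I expect the bulk of the routine work, with the explicit optimizations in \Cref{lem:P_3-first-leaf-star}–\Cref{lem:P3-second-leaf-star} as the guide. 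Once the lemma is established, \Cref{thm:tree_less_star_2} follows since $t^{*}_{T}(\mathbf{W})\ge t^{*}_{(T,\psi)}(\mathbf{W})$ for the pre-coloring $\psi$ constructed here.
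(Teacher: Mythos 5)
The overall architecture of your plan — induction on $k$, the two base cases handled by \Cref{lem:star-isolated} and \Cref{lem:P3-second-leaf-star}, and the use of \Cref{lem:induction-structure} with the partition $[0,1]=A\sqcup B\sqcup C\sqcup D$ — matches the paper's proof of \Cref{lem:induction-leafstar}. But your step (iii), which is where the real work lies, is a genuine gap, and you have in fact identified the wrong tool for filling it.

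You want a quantitative bound on $\mu(Y^{c})=b+d$ (where $Y$ is the support of the $T_2$-rooted density at $u^{\star}$), and you propose to extract it from the inductive hypothesis applied to $T_2:=T-V(S^{\star})$. Two problems. First, the inductive hypothesis is qualitative: it says that when $\sum\gamma<1$ the $(T_2,\psi_2)$-density is \emph{strictly positive}, i.e.\ that $Y$ has positive measure; it does not bound $\mu(Y)$ or $\mu(Y^{c})$ by any function of the $\gamma$'s. Second, even a robust rooted strengthening (if one had it) would naturally give a \emph{lower} bound on $\mu(Y)$, i.e.\ an \emph{upper} bound on $b+d$ — but the bound you actually need, to make the clique graphon $\mathbbm{1}_{(B\cup D)^2}$ have small $\gamma$, is a \emph{lower} bound on $b+d$. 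The paper's Claim~\ref{clm:b+d} is exactly such a lower bound, $b+d\ge 1-\alpha-\beta$, and it is \emph{not} obtained from the inductive hypothesis at all. It is proved by a self-contained polynomial case analysis on the parameters $a,b,c,d$ (reducing to the two cases $b=0$ and $c=0$), using only the constraints $W_k\le W_k'$, $a+b\ge1-\alpha$, and — crucially — the extra pigeonhole-type normalisation $\beta=\gamma(W_k)<\tfrac{1-\alpha}{3}$. That last normalisation comes from having at least three free colours ($|L|\le k-3$, which is why the paper first disposes of the size-$(k-2)$ leaf-star case and invokes \Cref{obs:structure-leafstar}), and then relabelling so that the colour assigned to $u^{\star}v^{\star}$ is the free colour of smallest $\gamma$; your plan extends $\hat\psi$ by "a colour avoiding $\psi(L)$" but never exploits the freedom to pick the \emph{smallest}-$\gamma$ free colour, which the case analysis in Claim~\ref{clm:b+d} needs.

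The second divergence is how the contradiction is closed. You aim to assemble (i)--(iii) into a single polynomial inequality forcing $\sum_i\gamma(W_i)\ge1$, in the style of \Cref{lem:P_3-first-leaf-star}. The paper does not do this and it is not clear it can be done: after Claim~\ref{clm:b+d} one defines a fresh graphon $W_{k+1}=\mathbbm{1}_{(B\cup D)^2}$ with $\gamma(W_{k+1})\le\alpha+\beta$, then applies the inductive hypothesis to the tree $T'=T-(S'-v)$ (keeping the centre $v$ and the stub edge $uv$, coloured $k+1$) with the graphon system $\mathrm{span}(W_{m+1},\dots,W_{k-1},W_{k+1})$. The resulting positivity, combined with the structural information from \Cref{lem:induction-structure} about where the rooted density of the re-attached star $S''$ is positive (on $B$ and on $D$), shows $t^{*}_{(T,\psi^{\ast})}\bigl(\mathrm{span}(W_1,\dots,W_{k-1},W_k')\bigr)>0$, contradicting \Cref{lem:induction-structure} itself. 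So the inductive hypothesis \emph{is} applied to the smaller tree $T'$ that retains $v$, not to $T_2=T-V(S^{\star})$, and it enters after the algebraic Claim, not inside it. Your maximality argument forcing $W_k\equiv W_k'$ is a harmless extra (the paper is content with $W_k\le W_k'$), but the missing Claim~\ref{clm:b+d} machinery and the re-attachment via $W_{k+1}$ are the two essential ideas your plan does not supply.
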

\begin{proof}
    We will use induction on $k.$ Without loss of generality, let $C = [\ell]$ and let $\psi(e_i) = i$ for each $i \in [\ell].$
    By \Cref{lem:star-isolated}, we may assume that $T$ is not a star. Then by \Cref{obs:twoleafstar}, there is another leaf-star $S'$ of $T$ distinct from $S.$ Let $S'$ be an $(u,v)$-leaf star.
    Suppose that $S'$ has size $k-2.$
    Then \Cref{lem:P3-second-leaf-star} applied to $T$ and $S'$ yields $t_{(T,\psi)}^{\ast}(\mathbf{W}) > 0$.
    Hence we may assume that the size of $S'$ is not equal to $k-2.$
    Since there are two distinct leaf-stars $S$ and $S'$ of $T$, we have $|L| \leq k-3$ by \Cref{obs:structure-leafstar}.

    Suppose on the contrary that there is a graphon system $\mathbf{W} = \mathrm{span}(W_1, \dots, W_k)$ such that $t^{*}_{(T, \psi)}(\mathbf{W}) = 0$ but $\sum_{i\in [k]}\gamma(W_i) < 1.$
    Without loss of generality, let $E(S') = \{e_{1}, \dots, e_m\}$. Let $\alpha := \sum_{i\in [m]}\gamma(W_i) < 1.$
    Since we have at least three colors not in $C$, we may assume that $\beta:= \gamma(W_{k}) < \frac{1 -\alpha}{3}.$
    
    Let $\psi^{\ast}$ be a rainbow pre-coloring of $T$ extending $\psi$ such that $\mathbf{dom(\psi^{\ast})} = L\cup \{uv\}$ and $\psi^{\ast}(uv) = k.$ Then $t^{*}_{(T, \psi^{\ast})}(\mathbf{W}) \leq t^{*}_{(T, \psi)}(\mathbf{W}) = 0$, so $t^{*}_{(T, \psi^{\ast})}(\mathbf{W}) = 0.$
    By applying \Cref{lem:induction-structure} with $\psi = \psi^{\ast}$, one obtains a partition of the interval $[0, 1]$ into four parts $A, B, C, D$ and a step graphon $W'_k$ as described in \Cref{lem:induction-structure}.
    Because $W_k \leq W'_k$ almost everywhere, we have
     \[
      \sum_{i\in [k-1]}\gamma(W_i) + \gamma(W'_k) \leq \sum_{i\in [k]}\gamma(W_i) < 1,
     \]
    and $\gamma(W'_k) \leq \gamma(W_k) = \beta.$

    Let $a = \mu(A)$, $b = \mu(B)$, $c = \mu(C)$ and $d = \mu(D)$. Since
    $$1 - \alpha \leq \mu(A\cup B) = \mu\left( \bigcap_{i\in [m]}\mathrm{supp}(d_{W_i}) \right),$$
    we infer that
    \begin{equation}\label{eq:1-a}
        a+b \geq 1 - \alpha.
    \end{equation}
    By the definition of $W'_k$,
    $$\gamma(W'_k) = 1 - \sqrt{(b+d)^2 + 2d(a+c) + c^2} \leq \gamma(W_k) = \beta.$$
    Consequently, we have 
    \begin{equation}\label{eq:1-b}
        (b+d)^2 + 2d(a+c) + c^2 \geq (1-\beta)^2.
    \end{equation}

    \begin{claim}\label{clm:b+d}
        $b+d \geq 1 - \alpha - \beta$.
    \end{claim}

    \begin{claimproof}
        Suppose $b+d < 1-\alpha-\beta$.
        Let $f: \mathbb{R}^4 \to \mathbb{R}$ be the function defined by
        $$f(x_1, x_2, x_3, x_4) = (x_2 + x_4)^2 + 2x_4(x_1+x_3) + x_3^2.$$
        By Equations \eqref{eq:1-a} and \eqref{eq:1-b}, there exists a tuple $(a, b, c, d)$ such that
        \begin{enumerate}[(i)]
            \item $a+b+c+d = 1$ and $a, b, c, d\geq 0$,
            \item $a+b = 1 - \alpha$,
            \item $b + d = \mu(B) + \mu(D) < 1 - \alpha - \beta$,
            \item $f(a, b, c, d) \geq (1 - \beta)^2$,
            \item $(a,b,c,d)$ maximizes $f$ among all such tuples $(a, b, c, d)$.
        \end{enumerate}
        For (ii), putting $a' = a-t$, $b' = b-t'$, $c' = c+t$ and $d' = d+t'$ for some $t,t' \geq 0$, the tuple $(a',b',c',d')$ satisfies the conditions (i) and (iii).
        Considering
         \[
          f(a',b',c',d') - f(a,b,c,d) = 2t'(a+c)+2tc+t^2 \geq 0,
         \]
        we may decrease $a+b$ to obtain (ii).

        First, assume that $b$ and $c$ are both non-zero. Let $t := \min\{b, c\}.$ Putting $a' = a + t$, $b' = b - t$, $c' = c - t$ and $d' = d + t$, the tuple $(a', b', c', d')$ satisfies (i)-(iii). Since
         \[
          f(a', b', c', d') - f(a, b, c, d) = 2ta + t^2 > 0,
         \]
        this contradicts the maximality.
        Hence at least one of $b$ and $c$ is zero.
        
        \begin{case}
            $b = 0$.
        \end{case}

        In this case, we have
         \[
          (1 - \beta)^2 \leq f(a, b, c, d) = d^2 + 2d(a+c) + c^2 = \alpha^2 + 2d(1 - \alpha).
         \]
        Then we have $1 - \alpha - \beta > d \geq \frac{(1 - \beta)^2 - \alpha^2}{2(1 - \alpha)}$ and $d \leq \alpha$. From these, we infer
        \begin{equation}\label{eq:first}
            (1 - \beta)^2 \leq 2\alpha - \alpha^2,
        \end{equation}
        \begin{equation}\label{eq:second}
            (1 - \beta)^2 < 3\alpha^2 - 4\alpha + 2\alpha\beta - 2\beta + 2.
        \end{equation}
        By reformulating \eqref{eq:first}, we have $(1 - \alpha)^2 + (1 - \beta)^2 \leq 1.$ As $\beta < \frac{1 - \alpha}{3}$, the inequality $\left(1 - \alpha\right)^2 + \left(\frac{2 + \alpha}{3} \right)^2 \leq 1$ holds, which yields $\frac{2}{5} \leq \alpha \leq 1.$
        A reformulation of \eqref{eq:second} is $\beta(\beta - 2\alpha) < (3\alpha-1)(\alpha - 1).$
        Since $\beta < \frac{1 - \alpha}{3} \leq 2\alpha$, the left-hand side decreases as $\beta$ increases.
        For $\beta < \frac{1-\alpha}{3}$, the inequality $\frac{(\alpha - 1)(7\alpha - 1)}{9} < (3\alpha - 1)(\alpha - 1)$ holds.
        This yields $\alpha < \frac{2}{5}$, whence a contradiction.

        \begin{case}
            $c = 0.$
        \end{case}

        In this case, (i) and (ii) implies $d = \alpha$.
        Hence (iii) yields $0 \leq b < 1 - \alpha - \beta - d = 1 - 2\alpha - \beta.$ Thus
        \begin{equation}\label{eq:third}
            2\alpha + \beta < 1.
        \end{equation}
        Moreover, since
         \[
          f(a, b, c, d) = (\alpha + b)^2 + 2\alpha a = (\alpha + b)^2 + 2\alpha(1 - \alpha - b) = 1 - (1 - \alpha)^2 + b^2 \geq (1 - \beta)^2,
         \]
        we get
        \begin{equation}\label{eq:fourth}
            (1 - \alpha)^2 + (1 - \beta)^2 - 1\leq b^2 < (1 - 2\alpha - \beta)^2.
        \end{equation}
        By reformulating \eqref{eq:fourth}, we have $3\alpha^2 - 2\alpha + 4\alpha\beta > 0$, which gives $\alpha > 0$ and $3\alpha + 4\beta > 2$.
        Because $\beta < \frac{1 - \alpha}{3}$, by the equation \eqref{eq:third}, we have $3\alpha + 4\beta < 2$, whence a contradiction.
    \end{claimproof}

        Consider a graphon $W_{k+1} = \mathbbm{1}_{(B\cup D)^2}.$
        By \Cref{clm:b+d}, we have $\gamma(W_{k+1}) \leq \alpha + \beta.$
        Let
         \[
          \mathbf{W}' = \mathrm{span}(W_{m+1}, \dots, W_{k-1}, W_{k+1})
         \]
        be a graphon system.
        Then
         \[
          \sum_{m+1 \leq i\leq k-1}\gamma(W_i) + \gamma(W_{k+1}) < 1.
         \]
        Let $T' := T - (S'-v).$
        Let $\psi'$ be a rainbow pre-coloring of $T'$ such that $\mathbf{dom}(\psi') = (L\cap E(T')) \cup \{uv\}$ and $\psi'(e) = \psi(e)$ for every $e\in L\cap E(T')$ and $\psi'(uv) = k+1.$ By induction hypothesis,
        $$t^{*}_{(T', \psi')}(\mathbf{W}') = \int_{[0,1]} d_{W_{k+1}}(x)rt^{*}_{(T'-v, u, \psi'|_{T'-v})}(\mathbf{W}',x) dx>0.$$
        Thus $d_{W_{k+1}}(x)rt^{*}_{(T'-v, u, \psi'|_{T'-v})}(\mathbf{W}',x)>0$ on a set of positive measure. 
        Let $S''$ be the subtree of $T$ consisting of $v$ and all of its neighborhoods and $\psi''$ be the restriction of $\psi^{\ast}$ on $E(S'')$.
        Then
        $$rt^{*}_{(S'', u, \psi'')}(\mathrm{span}(W_1, \ldots, W_{k-1}, W'_k), x) = \int_{[0, 1]} \prod_{i=1}^m d_{W_i}(y) W'_k(y, x) dy.$$
        Recall that $B \subseteq \mathrm{supp}(\prod_{i=1}^m d_{W_i})$.
        Thus by \Cref{lem:induction-structure}, we have $\prod_{i=1}^m d_{W_i}(y) W'_k(y, x)>0$ whenever $x, y \in B$. 
        Hence $rt^{*}_{(S'', u, \psi'')}(x)>0$ almost everywhere on $B$.
        Similarly, as $A \cup B = \mathrm{supp}(\prod_{i=1}^m d_{W_i})$, we have $\prod_{i=1}^m d_{W_i}(y) W'_k(y, x)>0$ whenever $x \in D$ and $y \in A \cup B$. 
        The condition $\sum_{i \in [m]} \gamma(W_i)<1$ implies that the measure of $A \cup B$ is strictly positive, so $rt^{*}_{(S'', u, \psi'')}(x)>0$ almost everywhere on $D$.
        From these, we infer that
        $$rt^{*}_{(S'', u, \psi'')}(x)rt^{*}_{(T'-v, u, \psi'|_{T'-v})}(x)>0,$$
        on a set of positive measure. 
        Therefore, $t^{*}_{(T, \psi^{\ast})}(\mathrm{span}(W_1, \ldots, W_{k-1}, W'_k))> 0$, a contradiction.
        This completes the proof.
\end{proof}

\begin{proof}[Proof of \Cref{thm:tree_less_star}]
For a nonnegative reals $\alpha_1, \ldots, \alpha_k \in [0, 1]$, if $\min_{i \in [k]} \alpha_i > \left(\frac{k-1}{k}\right)^2$ for each $i \in [k]$, then $\sum_{i=1}^k (1-\sqrt{\alpha_i}) < 1$.
So by \Cref{thm:tree_less_star_2}, we obtain $\mathrm{ex}_k^*(T) \leq \left(\frac{k-1}{k}\right)^2$ for every $k$-edge tree $T$.
By \Cref{lem:star-isolated}, $\pi_k^*(K_{1, k}) = (\frac{k-1}{k})^2$ so it completes the proof.
\end{proof}

\section*{Acknowledgement} SI, JK, and HL are supported by the National Research Foundation of Korea (NRF) grant funded by the Korea government(MSIT) No. RS-2023-00210430. 
SI and HL are supported by the Institute for Basic Science (IBS-R029-C4).
HS is supported by Institute for Basic Science (IBS-R032-D1).


\appendix

\section{Proof of the regularity lemma and \Cref{thm:compact_more}}\label{appendix:easy_proofs}
Throughout the appendix, we again assume that all the subsets of $\mathbb{R}^n$ and the functions between them that we deal with are measurable.
We first remind the simple fact that the stepping operator is a contraction in the cut-norm.
\begin{lemma}[\cite{Lovasz2012}, Exercise~9.17]\label{lem:contraction}
    Let $W:[0, 1]^2 \rightarrow \mathbb{R}$ be a bounded symmetric function and $\mathcal{P}$ be a finite partition of $[0, 1]$ into sets.
    Then $\lVert W_{\mathcal{P}} \rVert_{\square} \leq \lVert W \rVert_{\square}$.
\end{lemma}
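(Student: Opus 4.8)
The plan is to reduce the cut-norm bound for the step function $W_{\mathcal{P}}$ to one for $W$ itself, using the fact that $W_{\mathcal{P}}$ is constant on each product cell $I_i\times I_j$ of the partition. First I would write $\mathcal{P}=(I_1,\dots,I_t)$ and note that the parts of measure zero contribute nothing (there $W_{\mathcal{P}}\equiv 0$ by convention), so one may assume every $\mu(I_i)>0$. Fixing measurable $S,T\subseteq[0,1]$ and denoting by $w_{ij}=\frac{1}{\mu(I_i)\mu(I_j)}\int_{I_i\times I_j}W$ the constant value of $W_{\mathcal{P}}$ on $I_i\times I_j$, the quantity $\int_{S\times T}W_{\mathcal{P}}$ becomes $\sum_{i,j}s_i t_j w_{ij}$ with $s_i=\mu(S\cap I_i)\in[0,\mu(I_i)]$ and $t_j=\mu(T\cap I_j)\in[0,\mu(I_j)]$; that is, a bilinear form in the overlap vectors $(s_i)$ and $(t_j)$ over the box $\prod_i[0,\mu(I_i)]\times\prod_j[0,\mu(I_j)]$.

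The main point is then the elementary observation that an affine function on a box attains the extreme values of its absolute value at a vertex: freezing $(t_j)$ and optimizing the linear function of $(s_i)$, then freezing the optimal $(s_i)$ and optimizing over $(t_j)$, shows that $\bigl|\int_{S\times T}W_{\mathcal{P}}\bigr|$ is at most $\bigl|\sum_{i,j}s_i^\ast t_j^\ast w_{ij}\bigr|$ for some $s_i^\ast\in\{0,\mu(I_i)\}$, $t_j^\ast\in\{0,\mu(I_j)\}$. For such a $0/1$-type choice, setting $A=\bigcup\{I_i:s_i^\ast=\mu(I_i)\}$ and $B=\bigcup\{I_j:t_j^\ast=\mu(I_j)\}$ collapses the weights $w_{ij}$ back into honest integrals, giving $\sum_{i,j}s_i^\ast t_j^\ast w_{ij}=\int_{A\times B}W$, whose modulus is at most $\lVert W\rVert_\square$. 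Taking the supremum over $S,T$ yields $\lVert W_{\mathcal{P}}\rVert_\square\le\lVert W\rVert_\square$.

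I do not expect a genuine obstacle; the only places needing care are a clean justification of the vertex reduction (that the bilinear form on the box is dominated in absolute value by its values at the relevant vertices) and the degenerate case of zero-measure parts. As an alternative, I could instead invoke the equivalent description $\lVert W\rVert_\square=\sup_{f,g:[0,1]\to[0,1]}\bigl|\int f(x)g(y)W(x,y)\,dx\,dy\bigr|$ and test $W$ against the step functions $f(x)=\mu(S\cap I_x)/\mu(I_x)$, $g(y)=\mu(T\cap I_y)/\mu(I_y)$ (where $I_x$ is the part containing $x$), which satisfy $\int f(x)g(y)W(x,y)=\int_{S\times T}W_{\mathcal{P}}$; this is the same idea phrased via test functions rather than vertices, and I would mention it only as a remark.
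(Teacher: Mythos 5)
Your argument is correct, and it is the standard proof of this fact. The paper does not actually reprove this lemma; it cites it directly as Exercise~9.17 from Lov\'asz's book, so there is no internal proof to compare with. Your bilinear-form-over-a-box reduction (move $(s_i)$ to a vertex, then $(t_j)$ to a vertex, and observe that a vertex corresponds to sets $A,B$ that are unions of parts, on which $\int_{A\times B}W_{\mathcal P}=\int_{A\times B}W$) is precisely the canonical argument for this exercise, and the alternative remark via test functions $f,g:[0,1]\to[0,1]$ is the same idea in the other standard phrasing. The handling of measure-zero parts is fine: their $s_i$ and $t_j$ are forced to be $0$, so they contribute nothing regardless of the convention for $W_{\mathcal P}$ there. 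No gaps.
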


\begin{corollary}
     Let $W:[0, 1]^2 \rightarrow \mathbb{R}$ be a bounded symmetric function, $\mathcal{P}$ a finite partition of $[0, 1]$ into sets, and $\mathcal{Q}$ a refinement of $\mathcal{P}$.
    Then $\lVert W_{\mathcal{Q}} \rVert_{\square} \leq \lVert W_{\mathcal{P}} \rVert_{\square}$.
\end{corollary}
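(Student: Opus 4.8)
The plan is to deduce the corollary directly from Lemma~\ref{lem:contraction} by recognizing $W_{\mathcal{Q}}$ as a single application of the stepping operator to $W_{\mathcal{P}}$. The key point is the semigroup behaviour of stepping under passage to a coarser partition: when the parts of $\mathcal{Q}$ are disjoint unions of parts of $\mathcal{P}$, one has the exact identity $(W_{\mathcal{P}})_{\mathcal{Q}} \equiv W_{\mathcal{Q}}$. Granting this, and noting that $W_{\mathcal{P}}$ is again a bounded symmetric function on $[0,1]^2$, Lemma~\ref{lem:contraction} applied to $W_{\mathcal{P}}$ and the partition $\mathcal{Q}$ gives at once
\[
\lVert W_{\mathcal{Q}} \rVert_{\square} \;=\; \lVert (W_{\mathcal{P}})_{\mathcal{Q}} \rVert_{\square} \;\leq\; \lVert W_{\mathcal{P}} \rVert_{\square},
\]
which is precisely the claimed inequality. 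So the whole proof reduces to checking the one identity $(W_{\mathcal{P}})_{\mathcal{Q}} \equiv W_{\mathcal{Q}}$.

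To verify it, fix parts $J, J'$ of $\mathcal{Q}$ of positive measure and write $J = \bigsqcup_a I_a$ and $J' = \bigsqcup_b I'_b$ with each $I_a, I'_b$ a part of $\mathcal{P}$. On $I_a \times I'_b$ the function $W_{\mathcal{P}}$ is constant, equal to $\tfrac{1}{\mu(I_a)\mu(I'_b)}\int_{I_a \times I'_b} W$ when $\mu(I_a)\mu(I'_b) > 0$ and equal to $0$ on the (finitely many, hence Lebesgue-null in $[0,1]^2$) blocks of measure zero. Summing over $a,b$,
\[
\int_{J \times J'} W_{\mathcal{P}} \;=\; \sum_{a,b} \mu(I_a)\mu(I'_b)\cdot\frac{1}{\mu(I_a)\mu(I'_b)}\int_{I_a \times I'_b} W \;=\; \sum_{a,b} \int_{I_a \times I'_b} W \;=\; \int_{J \times J'} W ,
\]
so dividing by $\mu(J)\mu(J')$ shows that $(W_{\mathcal{P}})_{\mathcal{Q}}$ and $W_{\mathcal{Q}}$ agree on $J \times J'$; on products involving a $\mathcal{Q}$-part of measure zero both are, by convention, $0$. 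Hence $(W_{\mathcal{P}})_{\mathcal{Q}} \equiv W_{\mathcal{Q}}$ almost everywhere.

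I do not expect a substantive obstacle: the argument is a two-line consequence of Lemma~\ref{lem:contraction} together with the elementary telescoping computation above. The only point requiring care is the direction of composition — it is stepping \emph{down} to a coarser partition that collapses, $(W_{\text{finer}})_{\text{coarser}} \equiv W_{\text{coarser}}$, whereas stepping a coarse step-function to a finer partition merely returns it unchanged — so one must keep straight which of $\mathcal{P}$ and $\mathcal{Q}$ is the coarser one in the identity $(W_{\mathcal{P}})_{\mathcal{Q}} \equiv W_{\mathcal{Q}}$; the bookkeeping for parts of measure zero is harmless since such parts form a null set.
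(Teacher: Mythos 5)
Your reduction---the tower identity for the stepping operator plus Lemma~\ref{lem:contraction}---is exactly the intended (and essentially only) argument here; the paper offers no separate proof and treats the corollary as immediate from the contraction lemma. But you have quietly reversed the refinement relation, and with it the content of the statement. Your key identity $(W_{\mathcal{P}})_{\mathcal{Q}} \equiv W_{\mathcal{Q}}$ is invoked \emph{when the parts of $\mathcal{Q}$ are disjoint unions of parts of $\mathcal{P}$}, i.e.\ when $\mathcal{Q}$ is the coarser partition. The hypothesis of the corollary is the opposite: $\mathcal{Q}$ is a refinement of $\mathcal{P}$, so each part of $\mathcal{P}$ is a union of parts of $\mathcal{Q}$. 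Under that hypothesis the identity you need fails---one has instead $(W_{\mathcal{P}})_{\mathcal{Q}} \equiv W_{\mathcal{P}}$ (stepping a $\mathcal{P}$-step function by a finer partition changes nothing) and $(W_{\mathcal{Q}})_{\mathcal{P}} \equiv W_{\mathcal{P}}$---so the deduction collapses. You flagged the direction issue yourself in your closing paragraph, but then resolved it against the stated hypothesis rather than with it.

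What Lemma~\ref{lem:contraction} actually yields under the stated hypothesis is the \emph{reverse} inequality: since $W_{\mathcal{P}} \equiv (W_{\mathcal{Q}})_{\mathcal{P}}$, applying the lemma to the function $W_{\mathcal{Q}}$ with the partition $\mathcal{P}$ gives $\lVert W_{\mathcal{P}} \rVert_{\square} \leq \lVert W_{\mathcal{Q}} \rVert_{\square}$; coarsening can only decrease the cut-norm of the stepped function. The inequality as printed, $\lVert W_{\mathcal{Q}} \rVert_{\square} \leq \lVert W_{\mathcal{P}} \rVert_{\square}$ with $\mathcal{Q}$ finer, is false in general: take $W \equiv 1$ on $\left( [0,\tfrac12]^2 \cup [\tfrac12,1]^2 \right)$ and $W \equiv -1$ elsewhere, with $\mathcal{P} = \{[0,1]\}$ and $\mathcal{Q} = \{[0,\tfrac12],[\tfrac12,1]\}$; then $W_{\mathcal{P}} \equiv 0$ while $\lVert W_{\mathcal{Q}} \rVert_{\square} = \lVert W \rVert_{\square} \geq \tfrac14$. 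So the roles of $\mathcal{P}$ and $\mathcal{Q}$ in the printed corollary are evidently swapped (a slip in the paper), and your argument in fact proves the corrected statement; as a proof of the statement as literally written it does not go through, because the hypothesis you use is the reverse of the one given, and no proof could establish the literal claim.
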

Together with the definition of the cut-norm of graphon systems (\Cref{def:cut_norm}), the above corollary straightforwardly generalizes to graphon systems.

\begin{lemma} [\cite{Lovasz2012}, Weak Regularity lemma]\label{lem:weak_reg}
    Let $W:[0, 1]^2 \rightarrow \mathbb{R}$ be a bounded symmetric function and $k \geq 1$ be an integer.
    Then there is a step function $U$ with at most $k$ steps such that 
    $$d_{\square} (W,U) \leq \frac{2}{\sqrt{\log k}} \lVert W \rVert_2.$$
\end{lemma}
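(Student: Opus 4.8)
The plan is to deduce the lemma from the energy-increment argument of Frieze and Kannan, phrased as the following partition statement: \emph{for every $\varepsilon>0$ there is a finite partition $\mathcal{P}$ of $[0,1]$ with at most $4^{\varepsilon^{-2}}$ parts such that $d_\square(W,W_{\mathcal{P}})\le\varepsilon\lVert W\rVert_2$.} Granting this, set $U:=W_{\mathcal{P}}$; since $W$ is symmetric, so is $W_{\mathcal{P}}$, and it is a step function with at most $4^{\varepsilon^{-2}}$ steps. Choosing $\varepsilon=2/\sqrt{\log k}$ gives $4^{\varepsilon^{-2}}=k^{(\log 2)/2}\le k$ for every $k\ge 2$, so $U$ has at most $k$ steps and $d_\square(W,U)\le(2/\sqrt{\log k})\lVert W\rVert_2$; the case $k=1$ is vacuous since the right-hand side is then infinite.

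To prove the partition statement I would first record the two facts that drive the argument. Write $\mathrm{proj}_{\mathcal{P}}$ for the $L^2$-orthogonal projection onto the subspace of functions that are constant on each box $P\times Q$ with $P,Q\in\mathcal{P}$; the definition of $W_{\mathcal{P}}$ shows $W_{\mathcal{P}}=\mathrm{proj}_{\mathcal{P}}W$. Hence the \emph{energy} $\mathcal{E}(\mathcal{P}):=\lVert W_{\mathcal{P}}\rVert_2^2$ satisfies $0\le\mathcal{E}(\mathcal{P})\le\lVert W\rVert_2^2$ (projections are contractions), and whenever $\mathcal{Q}$ refines $\mathcal{P}$ the Pythagorean theorem yields $\mathcal{E}(\mathcal{Q})-\mathcal{E}(\mathcal{P})=\lVert W_{\mathcal{Q}}-W_{\mathcal{P}}\rVert_2^2\ge 0$.

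Now the iteration. Start with $\mathcal{P}_0=\{[0,1]\}$. Given $\mathcal{P}_i$, stop if $d_\square(W,W_{\mathcal{P}_i})\le\varepsilon\lVert W\rVert_2$; otherwise pick $S,T\subseteq[0,1]$ with $\bigl|\int_{S\times T}(W-W_{\mathcal{P}_i})\bigr|>\varepsilon\lVert W\rVert_2$ and let $\mathcal{P}_{i+1}$ be the common refinement of $\mathcal{P}_i$, $\{S,[0,1]\setminus S\}$ and $\{T,[0,1]\setminus T\}$, so that $|\mathcal{P}_{i+1}|\le 4|\mathcal{P}_i|$. Since $S$ and $T$ are unions of parts of $\mathcal{P}_{i+1}$ and $W_{\mathcal{P}_{i+1}}$ has the same integral as $W$ over every box of $\mathcal{P}_{i+1}$, we get $\int_{S\times T}(W_{\mathcal{P}_{i+1}}-W_{\mathcal{P}_i})=\int_{S\times T}(W-W_{\mathcal{P}_i})$, which has absolute value exceeding $\varepsilon\lVert W\rVert_2$; Cauchy--Schwarz together with $\mu(S\times T)\le 1$ forces $\lVert W_{\mathcal{P}_{i+1}}-W_{\mathcal{P}_i}\rVert_2>\varepsilon\lVert W\rVert_2$, hence $\mathcal{E}(\mathcal{P}_{i+1})-\mathcal{E}(\mathcal{P}_i)>\varepsilon^2\lVert W\rVert_2^2$. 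As $\mathcal{E}$ never exceeds $\lVert W\rVert_2^2$, the process terminates after fewer than $\varepsilon^{-2}$ steps, and the final partition $\mathcal{P}$ has at most $4^{\varepsilon^{-2}}$ parts and $d_\square(W,W_{\mathcal{P}})\le\varepsilon\lVert W\rVert_2$, as claimed.

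There is no real obstacle here, as this is classical (see \cite{Lovasz2012}); the only points needing a moment's care are the identity $W_{\mathcal{P}}=\mathrm{proj}_{\mathcal{P}}W$ — which is precisely what makes $\mathcal{E}$ a monovariant bounded by $\lVert W\rVert_2^2$ — and the elementary bookkeeping that converts the $4^{\varepsilon^{-2}}$ bound into the stated constant $2$ in $2/\sqrt{\log k}$. One could instead apply the Frieze--Kannan Hilbert-space approximation lemma directly in $L^2([0,1]^2)$ to the family of cut indicators $\{\mathbbm{1}_{S\times T}\}$, but the refinement version above produces a symmetric step function of the form $W_{\mathcal{P}}$ directly, which is exactly the conclusion we want.
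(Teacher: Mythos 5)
Your proof is correct. The paper does not prove this lemma: it imports it verbatim from Lov\'asz's book (as the bracketed citation \cite{Lovasz2012} indicates), so there is no in-paper argument to compare against. Your energy-increment proof is the standard Frieze--Kannan route and every step checks out: $W_{\mathcal{P}}$ is indeed the $L^2$-orthogonal projection onto step functions over $\mathcal{P}\times\mathcal{P}$, the Pythagorean identity for nested partitions gives the energy increment, the computation $\int_{S\times T}(W_{\mathcal{P}_{i+1}}-W_{\mathcal{P}_i})=\int_{S\times T}(W-W_{\mathcal{P}_i})$ is justified because $S\times T$ is a union of $\mathcal{P}_{i+1}$-boxes on which $W_{\mathcal{P}_{i+1}}$ and $W$ have equal integrals, Cauchy--Schwarz together with $\mu(S\times T)\le 1$ converts the cut-norm witness into an $L^2$ lower bound, and the arithmetic $4^{\varepsilon^{-2}}=k^{(\log 2)/2}\le k$ (with $\varepsilon=2/\sqrt{\log k}$ and natural log) is right, with $k=1$ correctly dismissed as vacuous since $\log 1=0$. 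One small remark on how this relates to Lov\'asz's own route: his Lemma~9.9 is deduced from an abstract Hilbert-space approximation lemma applied to the cut indicators $\{\mathbbm{1}_{S\times T}\}$, followed by a passage to the common refinement; your iterated-refinement version produces the symmetric step function $W_{\mathcal{P}}$ directly and is slightly more self-contained, at the cost of being less modular. Both yield the same quantitative bound, and this is a matter of exposition rather than substance.
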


In fact, this lemma  together with the following lemma implies that there exists a partition $\mathcal{P}$ with at most $k$ parts with $d_{\square} (W,W_{\mathcal{P}}) \leq \frac{4}{\sqrt{\log k}} \lVert W \rVert_2$.
\begin{lemma}\label{lem:step_function}
    Let $\mathbf{W}=(W_1, \ldots, W_k)$ and $\mathbf{U}=(U_1, \ldots, U_k)$ be tuples of symmetric bounded functions. If each $U_i$ is a step function with steps in $\mathcal{P}$, then $d_{\square} (\mathbf{W},  \mathbf{W}_{\mathcal{P}}) \leq 2d_{\square}(\mathbf{W}, \mathbf{U})$.
\end{lemma}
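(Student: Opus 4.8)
The plan is to run the standard graphon argument (the scalar analogue of this statement appears in Lov\'asz's book alongside \Cref{lem:contraction}) componentwise, the only genuinely new point being that the stepping operator and the tuple cut-norm of \Cref{def:cut_norm} interact exactly as in the scalar case. First I would record two elementary facts. (a) The stepping operator $V \mapsto V_{\mathcal P}$ is linear — it is the conditional expectation onto the $\sigma$-algebra generated by $\mathcal P \times \mathcal P$ — and it acts on a tuple componentwise, so $\mathbf{W}_{\mathcal P} = ((W_i)_{\mathcal P})_{i \in [k]}$ and $\mathbf{W}_{\mathcal P} - \mathbf{U}_{\mathcal P} = (\mathbf{W} - \mathbf{U})_{\mathcal P}$. (b) Since each $U_i$ is a step function with steps in $\mathcal P$, it is constant on each block $I_a \times I_b$ of positive measure, hence its average over that block equals that constant; the blocks of measure zero form a Lebesgue-null subset of $[0,1]^2$; therefore $(U_i)_{\mathcal P} \equiv U_i$ for every $i$, i.e.\ $\mathbf{U}_{\mathcal P} \equiv \mathbf{U}$.

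With these in hand the estimate is a two-line computation. Write $\mathbf{W} - \mathbf{W}_{\mathcal P} = (\mathbf{W} - \mathbf{U}) + (\mathbf{U} - \mathbf{W}_{\mathcal P})$ and use fact (b) followed by fact (a) to get $\mathbf{U} - \mathbf{W}_{\mathcal P} \equiv \mathbf{U}_{\mathcal P} - \mathbf{W}_{\mathcal P} = (\mathbf{U} - \mathbf{W})_{\mathcal P}$. The cut-norm $\lVert\,\cdot\,\rVert_\square$ of \Cref{def:cut_norm} obeys the triangle inequality, immediately from $\lvert\int_{S\times T}(A_i+B_i)\rvert \leq \lvert\int_{S\times T}A_i\rvert + \lvert\int_{S\times T}B_i\rvert$, summation over $i$, and taking the supremum over $(S,T)$; hence $\lVert \mathbf{W} - \mathbf{W}_{\mathcal P}\rVert_\square \leq \lVert \mathbf{W} - \mathbf{U}\rVert_\square + \lVert (\mathbf{U} - \mathbf{W})_{\mathcal P}\rVert_\square$. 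Applying the contraction property of the stepping operator — \Cref{lem:contraction}, in the tuple form noted just after its corollary — to the (componentwise symmetric) tuple $\mathbf{U} - \mathbf{W}$ bounds the last term by $\lVert \mathbf{U} - \mathbf{W}\rVert_\square$, so $\lVert \mathbf{W} - \mathbf{W}_{\mathcal P}\rVert_\square \leq 2\lVert \mathbf{W} - \mathbf{U}\rVert_\square$, which is precisely $d_\square(\mathbf{W}, \mathbf{W}_{\mathcal P}) \leq 2\, d_\square(\mathbf{W}, \mathbf{U})$.

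The argument is essentially bookkeeping, so I do not expect a real obstacle; the one spot that deserves a sentence of care is the invocation of the tuple version of \Cref{lem:contraction}. In the scalar case one uses $\int_{S\times T} V_{\mathcal P} = \int_{[0,1]^2} (\mathbbm{1}_{S\times T})_{\mathcal P}\, V$ together with a convexity reduction of the fuzzy indicator $(\mathbbm{1}_{S\times T})_{\mathcal P}$ back to the indicator of a product set; what must be checked here is that in \Cref{def:cut_norm} a single common pair $(S,T)$ is used for all coordinates at once, so the scalar reduction applies coordinate by coordinate with that same $(S,T)$, and summing over coordinates preserves the bound. The degenerate case where $\mathcal P$ contains parts of measure zero only alters the functions on a null set and thus does not affect any cut-norm, so it can be dismissed in one line.
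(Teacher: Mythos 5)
Your proof is correct and takes essentially the same route as the paper's: triangle inequality for $d_\square$, the identity $\mathbf{U}\equiv\mathbf{U}_{\mathcal P}$, linearity of the stepping operator, and the (tuple) contraction property from \Cref{lem:contraction}. The extra care you give to justifying the tuple form of the contraction is sound but not needed at length, since the paper already records it as a one-line remark after the corollary to \Cref{lem:contraction}.
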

\begin{proof}
    Since $\mathbf{U}=\mathbf{U}_{\mathcal{P}}$, we have
    \[
        d_{\square} (\mathbf{W},\mathbf{W}_{\mathcal{P}}) \leq d_{\square} (\mathbf{W},\mathbf{U}) +  d_{\square} (\mathbf{U},\mathbf{W}_{\mathcal{P}}) = d_{\square} (\mathbf{W},\mathbf{U}) +  d_{\square} (\mathbf{U}_{\mathcal{P}},\mathbf{W}_{\mathcal{P}}) \leq 2 d_{\square} (\mathbf{W},\mathbf{U}). \qedhere
    \]
\end{proof}

We now generalize the weak regularity lemma to a tuple of graphons.
\begin{lemma}\label{lem:graphon_regular}
    $m_1, m_2 \in \mathbb{Z}_{>0}$ be integers with $m_2 \geq k m_1$. Let $\mathbf{W} = (W_1, \ldots, W_k)$ be a tuple of bounded symmetric functions and $\mathcal{Q}$ be a partition of $[0, 1]$ into $m_1$ parts. 
    Then there exists a refinement $\mathcal{P}$ of $\mathcal{Q}$ with at most $m_2$ parts such that 
    $$d_{\square} (\mathbf{W}, \mathbf{W}_{\mathcal{P}}) \leq k^{1/2} \frac{8 \sum_{i \in [k]} \lVert W_i \rVert_2}{\sqrt{\log (m_2/m_1)}}.$$
\end{lemma}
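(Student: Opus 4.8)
The plan is to reduce the statement to the single–graphon weak regularity lemma (\Cref{lem:weak_reg}) applied to each coordinate $W_i$ separately, then take a common refinement with $\mathcal{Q}$ and transfer the estimate to the stepping operator through \Cref{lem:step_function}. Before doing that I would peel off a degenerate range: if $\log(m_2/m_1)\le 16k$, then $\mathcal{P}=\mathcal{Q}$ (which refines $\mathcal{Q}$ and has $m_1\le m_2$ blocks) already works, because the stepping operator is a cut-norm contraction (\Cref{lem:contraction}, in its tuple form), $\lVert\mathbf{W}\rVert_\square\le\sum_i\lVert W_i\rVert_\square$, and $\lVert W_i\rVert_\square\le\lVert W_i\rVert_1\le\lVert W_i\rVert_2$, so
\[
 d_\square(\mathbf{W},\mathbf{W}_{\mathcal{Q}})\le\lVert\mathbf{W}\rVert_\square+\lVert\mathbf{W}_{\mathcal{Q}}\rVert_\square\le 2\lVert\mathbf{W}\rVert_\square\le 2\sum_{i\in[k]}\lVert W_i\rVert_2,
\]
which is at most the right-hand side of the asserted inequality throughout this range. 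Hence I may from now on assume $\log(m_2/m_1)>16k$ (and, trivially, $\sum_i\lVert W_i\rVert_2>0$).

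In the main range I would set $r:=\lfloor(m_2/m_1)^{1/k}\rfloor$, so that $r\ge 2$ and $r\ge\tfrac12(m_2/m_1)^{1/k}$, whence $\log r\ge\tfrac1k\log(m_2/m_1)-\log 2\ge\tfrac{15}{16}\cdot\tfrac1k\log(m_2/m_1)$ after absorbing $\log 2$ into the large quantity $\tfrac1k\log(m_2/m_1)>16$. Applying \Cref{lem:weak_reg} to each $W_i$ with parameter $r$ produces step functions $U_i$ with at most $r$ steps and $d_\square(W_i,U_i)\le\tfrac{2}{\sqrt{\log r}}\lVert W_i\rVert_2$. I would then let $\mathcal{P}$ be the common refinement of $\mathcal{Q}$ together with the step partitions of $U_1,\dots,U_k$: this has $|\mathcal{P}|\le m_1 r^k\le m_2$ blocks, and every $U_i$ is constant on them, so $\mathbf{U}=(U_1,\dots,U_k)$ is a step system with steps in $\mathcal{P}$. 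Using that the tuple cut-norm is subadditive over coordinates — a supremum of a finite sum is at most the sum of the individual suprema — one gets $d_\square(\mathbf{W},\mathbf{U})\le\sum_i d_\square(W_i,U_i)$, and \Cref{lem:step_function} finally yields
\[
 d_\square(\mathbf{W},\mathbf{W}_{\mathcal{P}})\le 2\,d_\square(\mathbf{W},\mathbf{U})\le\frac{4}{\sqrt{\log r}}\sum_{i\in[k]}\lVert W_i\rVert_2\le\frac{8k^{1/2}}{\sqrt{\log(m_2/m_1)}}\sum_{i\in[k]}\lVert W_i\rVert_2,
\]
where the last inequality uses the lower bound on $\log r$ (the honest constant is $16/\sqrt{15}<8$).

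I do not expect a genuine obstacle here; the only real constraint is the arithmetic in the main range. The size of $r$ must be chosen so that $m_1 r^k\le m_2$ — which caps $\log r$ at order $\tfrac1k\log(m_2/m_1)$ and is precisely where the $k^{1/2}$ loss enters — while still being large enough that $\log r>0$; keeping both requirements consistent is exactly why the small-$\log(m_2/m_1)$ regime has to be handled separately at the outset.
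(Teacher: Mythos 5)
Your proof is correct and follows essentially the same route as the paper: set $r=\lfloor(m_2/m_1)^{1/k}\rfloor$, apply the weak regularity lemma (\Cref{lem:weak_reg}) coordinate-wise with parameter $r$, take the common refinement with $\mathcal{Q}$, and transfer the estimate via \Cref{lem:step_function}. The one genuine difference is that you explicitly split off the regime $\log(m_2/m_1)\le 16k$, where $r$ could equal $1$ and the weak regularity bound degenerates, and there take $\mathcal{P}=\mathcal{Q}$ using the crude estimate $d_\square(\mathbf{W},\mathbf{W}_{\mathcal{Q}})\le 2\sum_i\lVert W_i\rVert_2$. The paper instead asserts $t\ge 2$ directly from $m_2\ge km_1$, which is not actually implied by that hypothesis (e.g.\ $k=4$, $m_2=4m_1$ gives $t=1$); it is implied only when $m_2/m_1\ge 2^k$, which is exactly what your case split guarantees in the main range. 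So your version is slightly more careful than the paper's at the same price in constants, and the approaches are otherwise identical.
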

\begin{proof}
    We choose the largest integer $t$ such that $t^{k} \leq m_2/m_1$. From the choice, we have 
    $$\log t \leq k^{-1} \log(m_2/m_1) \leq(t+1)$$ and $t \geq 2$.
    We now apply Lemma~\ref{lem:weak_reg} and Lemma~\ref{lem:step_function} for each $W_i$.
    Then we obtain a partition $\mathcal{P}_i$ with at most $t$ parts for each $i \in [k]$ such that
     \[
      d_{\square} (W_i, (W_i)_{\mathcal{P}_i}) \leq \frac{4}{\sqrt{\log t}} \lVert W_i \rVert_2.
     \]
    Let $\mathcal{P}$ be the common refinement of all such partitions $\mathcal{P}_i$'s and $\mathcal{Q}$, then it has at most $m_1 t^k \leq m_2$ parts.
    Then by Lemma~\ref{lem:contraction}, 
    $$d_{\square} (\mathbf{W}, \mathbf{W}_{\mathcal{P}})
    \leq \sum_{i \in [k]} d_{\square} (W_i, (W_i)_{\mathcal{P}})
    \leq  \frac{4 \sum_{i \in [k]} \lVert W_i \rVert_2}{\sqrt{\log t}}
    \leq k^{1/2} \frac{8 \sum_{i \in [k]} \lVert W_i \rVert_2}{\sqrt{k \log (t+1)}}
    \leq k^{1/2} \frac{8 \sum_{i \in [k]} \lVert W_i \rVert_2}{\sqrt{\log (m_2/m_1)}}. \qedhere$$
\end{proof}
Note that following the proof of Lemma~\ref{lem:weak_reg} leads to a better bound in Lemma~\ref{lem:graphon_regular}.
However, Lemma~\ref{lem:graphon_regular} is already adequate for establishing our main results.
 To prove Theorem~\ref{thm:compact_more}, we also need basic facts about martingale.
\begin{definition}
    A sequence of random variables $(X_n)_{n \geq 0}$ is called a \textit{martingale} if $\mathbb{E}[X_t \mid X_0, \ldots, X_{t-1}] = X_{t-1}$ for every $t \geq 1$.
\end{definition}
\begin{theorem}[\cite{Durrett}, Martingale convergence theorem]\label{thm:martingale_convergence}
    Let $(X_n)_{n \geq 0}$ be a martingale.
    If $\sup_n \mathbb{E}[|X_n|] < \infty$, then $(X_n)$ converges almost everywhere.
\end{theorem}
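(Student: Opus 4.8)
The statement is Doob's $L^1$ martingale convergence theorem, and the plan is the classical argument via the upcrossing inequality. Fix rationals $a < b$. For a finite segment $X_0, \ldots, X_n$, let $U_n[a,b]$ denote the number of completed upcrossings of the interval $[a,b]$, i.e.\ the number of disjoint pairs of times $s_1 < t_1 < s_2 < t_2 < \cdots \leq n$ with $X_{s_i} \leq a$ and $X_{t_i} \geq b$; formally these times are defined recursively as stopping times, alternately hitting $(-\infty, a]$ and then $[b, \infty)$. The crux of the proof is \emph{Doob's upcrossing inequality}
\[
    (b-a)\,\mathbb{E}\big[U_n[a,b]\big] \;\leq\; \mathbb{E}\big[(X_n - a)^-\big] \;\leq\; |a| + \mathbb{E}\big[|X_n|\big].
\]

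To establish it I would use the ``buy low, sell high'' gambling argument: let $H_m \in \{0,1\}$ be the predictable process equal to $1$ precisely at the time steps $m$ lying strictly inside an upcrossing attempt (between a visit to $(-\infty,a]$ and the next visit to $[b,\infty)$), and consider the martingale transform $Y_n = \sum_{m=1}^n H_m(X_m - X_{m-1})$. Since $H$ is bounded and predictable and $(X_n)$ is a martingale, $(Y_n)$ is a martingale with $\mathbb{E}[Y_n] = 0$; on the other hand, pathwise each completed upcrossing contributes at least $b-a$ to $Y_n$ while the final unfinished excursion (if any) contributes at least $-(X_n - a)^-$, so $Y_n \geq (b-a)U_n[a,b] - (X_n - a)^-$. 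Taking expectations yields the displayed inequality.

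Since $U_n[a,b]$ is nondecreasing in $n$, the monotone convergence theorem gives $\mathbb{E}[U_\infty[a,b]] \leq (b-a)^{-1}\big(|a| + \sup_n \mathbb{E}[|X_n|]\big) < \infty$ by the hypothesis, so $U_\infty[a,b] < \infty$ almost surely. Intersecting these full-measure events over the countable set of rational pairs $a < b$, almost surely $U_\infty[a,b] < \infty$ for every rational $a < b$; on this event one cannot have $\liminf_n X_n < a < b < \limsup_n X_n$ for any rationals $a<b$ (that would force infinitely many upcrossings of $[a,b]$), so $\liminf_n X_n = \limsup_n X_n$, i.e.\ $X_n$ converges almost surely to some $X_\infty$ with values in $[-\infty,\infty]$. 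Finally, Fatou's lemma gives $\mathbb{E}[|X_\infty|] \leq \liminf_n \mathbb{E}[|X_n|] \leq \sup_n \mathbb{E}[|X_n|] < \infty$, so $X_\infty$ is finite almost everywhere, which is the claimed conclusion.

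The only delicate point is the upcrossing inequality itself: one must verify carefully that the betting indicator $H$ is genuinely predictable (so that the transform $Y_n$ is again a martingale) and that the pathwise bound $Y_n \geq (b-a)U_n[a,b] - (X_n-a)^-$ holds on every sample path. Granting this, the rest is a routine combination of monotone convergence, a countable union, and Fatou's lemma.
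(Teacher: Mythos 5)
The paper does not prove this theorem; it is quoted as a known result with a citation to Durrett's textbook. Your proposal is the standard and correct proof of Doob's $L^1$ martingale convergence theorem via the upcrossing inequality, which is precisely the argument given in Durrett (and in essentially every graduate probability text), so it is the same approach as the cited source.
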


We are now ready to prove Theorem~\ref{thm:compact_more}. 
\begin{proof}[Proof of~\Cref{thm:compact_more}]
    It suffices to prove that $X$ is sequentially compact.
    Let $(\mathbf{W}^n)$ be a sequence of graphon systems in $X$, and let $W_{I}^i$ be the component of $\mathbf{W}^i$ corresponding to the subset $I \subseteq [k]$.
    
    For each $n$, we apply Lemma~\ref{lem:graphon_regular} iteratively to obtain partitions $\mathcal{P}_{1}^n = \{[0, 1]\}, \mathcal{P}_{2}^n, \ldots$ such that 
    \begin{enumerate}
        \item[(a)] $\mathcal{P}_{t+1}^n$ is a refinement of $\mathcal{P}_{t}^n$;
        \item[(b)] $d_{\square} (\mathbf{W}^n, (\mathbf{W}^n)_{\mathcal{P}_{t}^n}) < \frac{1}{t}$;
        \item[(c)] $\mathcal{P}^{n}_{t}$ has exactly $m_t$ parts, while this number $m_t$ only depends on $t$.
    \end{enumerate}
    By allowing an empty part in each $\mathcal{P}^{n}_{t}$, one can choose such partitions.

    Let $\mathbf{W}_{t}^n = (\mathbf{W}^n)_{\mathcal{P}_{t}^n} \in X$ and $W_{ t, I}^n$ be the component of $\mathbf{W}_{t}^n$ corresponding to $I \subseteq [k]$.
    By applying appropriate measure-preserving bijections to both $\mathcal{P}_{t}^n$ and $\mathbf{W}_{t}^n$ for each $n$ and ignoring sets of measure zero, we may assume that all the partitions $\mathcal{P}_t^n$ consist of intervals.
    Note that after replacing, the equality $(\mathbf{W}_{t}^n)_{\mathcal{P}_{t'}^n} = \mathbf{W}_{t'}^n$ still holds for $t'<t$.
    
    We claim that there exists a subsequence $(n_i)$ such that for any $t$, the sequence $(\mathbf{W}_{t}^{n_i})$ converges to some $\mathbf{U}_t$ almost everywhere.
    By the standard diagonalization argument, it suffices to show that for each fixed $t$, there exists a subsequence $(\mathbf{W}^{n_i})$ such that $(\mathbf{W}_{t}^{n_i})$ converges to some $\mathbf{U}_t$ almost everywhere.
    Fix $t$.
    Let $\ell_n(i)$ be the length of the $i$-th interval of $\mathcal{P}_{t}^n$.
    Let $f_{n}(i, j, I)$ be the value of $W_{t, I}^n$ on the square ($i$-th interval of $\mathcal{P}_{t}^n) \times (j$-th interval of $\mathcal{P}_{t}^n)$.
    If one of those intervals is empty, then take any value in $[0, 1]$.
    By compactness of $[0, 1]^{m_t} \times ([0, 1]^{m_t^2})^{2^k}$, there exists a subsequence $(a_s)$ such that $\ell_{a_s}(i)$ and $f_{a_s}(i, j, I)$ converge for every $i, j \in [m_k]$ and $I \subseteq [k]$.
    Then each sequence $(W_{t, I}^{a_s})_s$ converges to some step function $U_{t,I}$ almost everywhere and for each fixed $t$, so $(\mathbf{W}_{t}^{a_s})_s$ converges to some $\mathbf{U}_t$ almost everywhere.
    By passing to a subsequence, we may assume that for each fixed $t$, the sequence $(\mathbf{W}_{t}^n)$ converges to $\mathbf{U}_t$ almost everywhere.
    
    The pointwise limit $\mathbf{U}_t$ is a graphon system in $X$ by the second condition of Theorem~\ref{thm:compact_more}.
    Also, the limit of $\ell_{n_a}(i)$ defines a partition $\mathcal{P}_t$ consisting of steps of $\mathbf{U}_t$. As $(\mathbf{W}_{ t}^n)_{\mathcal{P}_{t'}^n} = \mathbf{W}_{t'}^n$ for $t'<t$, we infer that $(\mathbf{U}_{t})_{\mathcal{P}_{t'}} \equiv \mathbf{U}_{t'}$ for $t'<t$.

    We claim that the sequence $(\mathbf{U}_t)$ converges to a graphon system $\mathbf{U}$ almost everywhere.
    Fix $I \subseteq [k]$. Choose $(x, y) \in [0, 1]^2$ uniformly at random and consider a sequence $(U_{t, I}(x, y))_t$.
    Since $(\mathbf{U}_{t})_{\mathcal{P}_{t'}} \equiv \mathbf{U}_{t'}$ for $t'<t$, the sequence $(U_{t, I}(x, y))_t$ is a martingale.
    Clearly, its expectation is bounded by $1$, so the sequence $(U_{t, I}(x, y))_t$ converges almost everywhere by the martingale convergence theorem~\ref{thm:martingale_convergence}.
    Therefore, $(\mathbf{U}_t)$ converges to some $\mathbf{U}$ almost everywhere.
    Again, each $\mathbf{U}_t$ is in $X$, so is $\mathbf{U}$.
    By the bounded convergence theorem, the pointwise convergence implies the convergence in $L^1$, and thus the convergence in $\delta_\square$-norm. 

   Note that
    \[
     \delta_{\square}(\mathbf{W}^{n_i}, \mathbf{U}) \leq \delta_{\square}(\mathbf{W}^{n_i}, \mathbf{W}_{t}^{n_i}) + \delta_{\square}(\mathbf{W}_{t}^{n_i}, \mathbf{U}_{t}) + \delta_{\square}(\mathbf{U}_t, \mathbf{U}).
    \]
From this, we conclude that for any given $\varepsilon > 0$, there exist sufficiently large $t$ and $i$ such that $\delta_{\square}(\mathbf{W}^{n_i}, \mathbf{U})<\varepsilon$ by the condition (b) and the above claims.
    Therefore, $(\mathbf{W}^n)$ converges to a graphon system $\mathbf{U} \in X$, which completes the proof.
\end{proof}


\section{Proof of \Cref{thm:W_random_graph}}\label{sec:W_random_graph}
We need several lemmas below for the proof of \Cref{thm:W_random_graph}.
\begin{lemma}[\cite{Lovasz2012}, Lemma~10.6, First sampling lemma]\label{lem:first_sampling}
    Let $W:[0,1]^2 \rightarrow [-1,1]$ be a symmetric function. 
    Let $S$ be a set of random $n$ points in $[0, 1]$.
    Then with probability at least $1-4\exp(-\sqrt{n}/10)$, we have $$- \frac{3}{n} \leq \lVert W[S] \rVert_{\square}  - \lVert W \rVert_{\square} \leq \frac{8}{n^{1/4}}.$$
\end{lemma}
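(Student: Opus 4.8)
The plan is to prove the two inequalities separately, each holding with probability at least $1-2e^{-\sqrt n/10}$, so that a union bound yields the stated event. First I would fix a labeling $x_1,\dots,x_n$ of $S$ and regard $W[S]$ as the step graphon equal to $W(x_i,x_j)$ on $I_i\times I_j$, where $I_1,\dots,I_n$ partition $[0,1]$ into intervals of length $1/n$. Since the map $(A,B)\mapsto\int_{A\times B}W[S]$ is bilinear in the vectors $(\lambda(A\cap I_i))_i$ and $(\lambda(B\cap I_j))_j$, its supremum over measurable $A,B$ is attained at unions of the $I_i$'s, so
\[
 \lVert W[S]\rVert_{\square}=\frac1{n^2}\max_{P,Q\subseteq[n]}\Bigl|\,\sum_{i\in P,\,j\in Q}W(x_i,x_j)\,\Bigr|.
\]
This reduces everything to a statement about sums over random points; the resulting estimate is exactly \cite[Lemma~10.6]{Lovasz2012}, and I would follow that argument, sketched below for completeness.

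For the lower bound I would pick $A_0,B_0\subseteq[0,1]$ with $\int_{A_0\times B_0}W=\lVert W\rVert_{\square}$ (the case of value $-\lVert W\rVert_{\square}$ being symmetric), and set $P_0=\{i:x_i\in A_0\}$ and $Q_0=\{i:x_i\in B_0\}$. Then $\lVert W[S]\rVert_{\square}\ge\frac1{n^2}\sum_{i\in P_0,\,j\in Q_0}W(x_i,x_j)=\frac1{n^2}\sum_{i,j}h(x_i,x_j)$ with $h(x,y)=\mathbbm{1}_{A_0}(x)\mathbbm{1}_{B_0}(y)W(x,y)$ a fixed kernel. The diagonal terms contribute at most $1/n$ in absolute value, while the off-diagonal part has mean $\tfrac{n-1}{n}\int_{A_0\times B_0}W\ge\lVert W\rVert_{\square}-\tfrac1n$; since changing a single $x_i$ alters this average by $O(1/n)$ (it meets at most $2n$ summands, each of size at most $1$), a bounded-differences inequality gives concentration around the mean, which delivers $\lVert W[S]\rVert_{\square}\ge\lVert W\rVert_{\square}-\tfrac3n$ with the required probability.

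The upper bound is the hard part: a union bound over the $4^n$ pairs $(P,Q)$ is hopeless, because for a fixed pair the sum concentrates only at scale $n^{-1/2}$ with sub-Gaussian tail $e^{-cnt^2}$, and $4^n e^{-cnt^2}$ is useless as $t\to0$. The approach, following \cite{Lovasz2012}, is to optimise one side of the cut first: writing $\phi_P(y)=\frac1n\sum_{i\in P}W(x_i,y)$ one has $\frac1{n^2}\max_{P,Q}\sum_{i\in P,j\in Q}W(x_i,x_j)=\max_P\frac1n\sum_j\phi_P(x_j)^{+}$, and $\phi_P$ depends on $P$ only through the sub-measure $\mu_P=\frac1n\sum_{i\in P}\delta_{x_i}$ of the empirical measure $\widehat\mu_n$. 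One then controls how accurately $\widehat\mu_n$ reproduces averages of the \emph{fixed} function $W(x,\cdot)$ — using the Dvoretzky--Kiefer--Wolfowitz inequality for the one-dimensional empirical distribution, together with an exposure/martingale estimate that exploits that $\lVert W\rVert_{\square}$ bounds $\sup_B\int_{[0,1]\times B}W(x,\cdot)\,dx$ — and balances the error terms to obtain $\lVert W[S]\rVert_{\square}\le\lVert W\rVert_{\square}+\tfrac{8}{n^{1/4}}$ with probability at least $1-2e^{-\sqrt n/10}$. The crux I anticipate is exactly this: securing an $n^{-1/4}$-rate uniformly over all sub-measures $\mu_P$, that is, without a bound that degrades with the structural complexity of $W$.
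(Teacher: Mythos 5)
This lemma is not proved in the paper at all: it is imported verbatim as Lemma~10.6 of \cite{Lovasz2012}, so there is no internal argument to compare yours against, and simply invoking that reference (as the paper does) would have sufficed. Judged as a standalone proof, your write-up has one solid half and one genuine gap. The reduction of $\lVert W[S]\rVert_{\square}$ to $\frac{1}{n^2}\max_{P,Q\subseteq[n]}\bigl|\sum_{i\in P,\,j\in Q}W(x_i,x_j)\bigr|$ via bilinearity is correct, and your lower-bound argument (fix near-optimal $A_0,B_0$, restrict to $P_0=\{i:x_i\in A_0\}$, $Q_0=\{i:x_i\in B_0\}$, discard the $O(1/n)$ diagonal, and apply bounded differences to the fixed kernel $\mathbbm{1}_{A_0}(x)\mathbbm{1}_{B_0}(y)W(x,y)$) is essentially the standard one and gives the $-3/n$ side.

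The upper bound, however, is not established. You correctly identify that a union bound over all $4^n$ pairs $(P,Q)$ fails and that one should optimize one side first, writing the maximum as $\max_P\frac1n\sum_j\phi_P(x_j)^{+}$ with $\phi_P(y)=\frac1n\sum_{i\in P}W(x_i,y)$; but the mechanism you then gesture at (Dvoretzky--Kiefer--Wolfowitz for the empirical distribution plus an unspecified martingale estimate) is not the argument that closes this, and you yourself flag the crux --- uniformity over all sub-measures $\mu_P$ at rate $n^{-1/4}$ --- as unresolved. The actual proof (due to Alon, Fernandez de la Vega, Kannan and Karpinski, reproduced in \cite{Lovasz2012}) exploits that for a fixed index set $P$ the optimal $Q$ is obtained by thresholding $\phi_P$, a function determined by the $P$-points alone, and then combines a two-stage exposure (comparing $\frac1n\sum_j\phi_P(x_j)^{+}$ with $\int_0^1\phi_P(y)^{+}\,dy$ using the points outside $P$, at the cost of the $n^{-1/4}$ error) with Azuma's inequality applied to $S\mapsto\lVert W[S]\rVert_{\square}$, which changes by $O(1/n)$ when one sample point is replaced. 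Without this step your proposal is a citation of the same external result the paper cites, not a proof of it; if you intend to rely on \cite{Lovasz2012}, say so and drop the sketch of the upper bound, and if you intend to prove it, the thresholding-plus-exposure argument is the missing ingredient you need to supply.
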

\begin{lemma}[\cite{Lovasz2012}, Lemma~10.11]\label{lem:H_and_W}
    For any $\varepsilon>10/\sqrt{n}$ and any $n$-vertex weighted graph $H$ with edge weights in $[0,1]$, 
    $$\mathbb{P}(d_{\square} (\mathbb{G}(H), H) >\varepsilon)<\exp\,\left( -\frac{\varepsilon^2 n^2}{100} \right),$$
\end{lemma}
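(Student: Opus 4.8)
\textit{Proof proposal.} The plan is the standard ``concentration plus union bound over cuts'' argument, following the proof of \cite[Lemma~10.11]{Lovasz2012}. Write $A := \mathbb{G}(H)$ and view both $A$ and $H$ as step graphons on $[0,1]^2$ with the $n$ equal steps $I_1,\dots,I_n$, taking the values $A_{ij}\in\{0,1\}$ and $H_{ij}\in[0,1]$ on $I_i\times I_j$, respectively. Since $d_\square(A,H)=\lVert A-H\rVert_\square$, the first step is to reduce this supremum to a finite maximum over ``integer cuts''. For measurable $S,T\subseteq[0,1]$, put $\sigma_i:=n\,\mu(S\cap I_i)\in[0,1]$ and $\tau_j:=n\,\mu(T\cap I_j)\in[0,1]$; then $\int_{S\times T}(A-H)=\frac{1}{n^2}\sum_{i,j}\sigma_i\tau_j(A_{ij}-H_{ij})$, a bilinear form in $(\sigma,\tau)$ over the product of cubes $[0,1]^n\times[0,1]^n$, whose extreme values are attained at vertices. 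Hence
\[
d_\square(A,H)=\frac{1}{n^2}\max_{P,Q\subseteq[n]}\Bigl|\sum_{i\in P,\,j\in Q}(A_{ij}-H_{ij})\Bigr|.
\]

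Next, fix a pair $(P,Q)$ and set $X_{P,Q}:=\sum_{i\in P,\,j\in Q}(A_{ij}-H_{ij})$. Grouping the ordered pairs $(i,j)$ into unordered ones $\{i,j\}$ (we may assume $H$, and hence $A=\mathbb{G}(H)$, is loopless, so the diagonal terms vanish) gives $X_{P,Q}=\sum_{1\le i<j\le n}c_{ij}(A_{ij}-H_{ij})$ with $c_{ij}=\mathbbm{1}[\,i\in P,\,j\in Q\,]+\mathbbm{1}[\,j\in P,\,i\in Q\,]\in\{0,1,2\}$. Because the variables $A_{ij}$ ($i<j$) are independent $\mathrm{Bernoulli}(H_{ij})$ random variables, $X_{P,Q}$ is a sum of at most $\binom{n}{2}$ independent mean-zero terms, each lying in an interval of length $c_{ij}\le 2$; since $\sum_{i<j}c_{ij}^2\le 4\binom{n}{2}\le 2n^2$, Hoeffding's inequality yields, uniformly in $(P,Q)$,
\[
\mathbb{P}\bigl(|X_{P,Q}|>\varepsilon n^2\bigr)\le 2\exp\left(-\frac{2(\varepsilon n^2)^2}{2n^2}\right)=2\exp(-\varepsilon^2 n^2).
\]

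Finally, a union bound over the $4^n$ choices of $(P,Q)$, combined with the reduction above, gives
\[
\mathbb{P}\bigl(d_\square(A,H)>\varepsilon\bigr)\le 4^n\cdot 2\exp(-\varepsilon^2 n^2)=2\exp\bigl(n\ln 4-\varepsilon^2 n^2\bigr).
\]
Here the hypothesis $\varepsilon>10/\sqrt n$ enters crucially: it gives $\varepsilon^2 n^2>100n$, so $\ln 2+n\ln 4\le\frac{99}{100}\varepsilon^2 n^2$ for every $n\ge 1$, whence $\mathbb{P}(d_\square(A,H)>\varepsilon)\le\exp(-\varepsilon^2 n^2/100)$, as claimed. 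There is no deep obstacle here: the only points requiring care are the vertex-extremality step that reduces the cut-norm of a step graphon to a maximum over integer cuts, and tracking constants so that the crude factor $4^n$ from the union bound is comfortably absorbed by the bound $\varepsilon\ge 10/\sqrt n$ (the generous constant $100$ in the exponent leaves ample room).
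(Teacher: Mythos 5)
The paper cites this lemma directly from Lov\'asz's book without reproducing the proof, and your argument is a correct reconstruction of that standard proof: reduce $d_\square(\mathbb{G}(H),H)$ to a maximum over vertex cuts $(P,Q)$ via bilinearity, apply Hoeffding to each cut random variable, then take a union bound over the $4^n$ cuts, with the hypothesis $\varepsilon>10/\sqrt n$ comfortably absorbing the $4^n$ factor. All constants check out (including the slack $\ln 2 + n\ln 4 \le 99n \le \tfrac{99}{100}\varepsilon^2 n^2$), and the loopless assumption on $H$ matches how the lemma is used in the paper (on $\mathbb{H}(n,\mathbf{W})$, where loops are deleted by construction).
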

\begin{lemma}[Generalization of \cite{Lovasz2012}, Lemma~10.3]\label{lem:concentration}
    Let $f$ be a function from the set of tuples $\mathbf{H}=(H_I)_{\emptyset \subsetneq I \subseteq [k]}$ of weighted graphs to $\mathbb{R}$. 
    Suppose that $|f(\mathbf{H})-f(\mathbf{H}')| \leq 1$ whenever there exists a vertex $v$ such that for each $\varnothing \neq I \subseteq [k]$, $H_I$ and $H'_I$ differ only in edges incident to $v$.
    Then for every $t \geq 0$, we have $$\mathbb{P}(f(\mathbb{H}(n, \mathbf{W})) \geq \mathbb{E}[f(\mathbb{H}(n, \mathbf{W}))] + \sqrt{2tn}) \leq e^{-t}.$$
    When $f$ is defined on the set of graph systems of order $k$ (by letting $G_I = \cap_{i \in I} G_i$),
    $$\mathbb{P}(f(\mathbb{G}(n, \mathbf{W})) \geq \mathbb{E}[f(\mathbb{G}(n, \mathbf{W}))] + \sqrt{2tn}) \leq e^{-t}.$$
\end{lemma}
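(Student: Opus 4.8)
The plan is to follow the proof of Lemma~10.3 in \cite{Lovasz2012}: run a vertex-exposure Doob martingale and apply Azuma's inequality, the only new feature being that the whole tuple $(H_I)_{\varnothing \subsetneq I \subseteq [k]}$ (resp.\ $(G_I)$) is carried through the estimates. Throughout I assume, as holds in all our applications, that $f(\mathbb{H}(n,\mathbf{W}))$ and $f(\mathbb{G}(n,\mathbf{W}))$ are integrable.

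I first treat $\mathbb{H}(n,\mathbf{W}) = \mathbf{H}_S(n,\mathbf{W})$, whose randomness is the i.i.d.\ uniform sequence $S = (x_1,\dots,x_n)$ in $[0,1]^n$. Set $Z_i := \mathbb{E}\bigl[f(\mathbf{H}_S(n,\mathbf{W})) \mid x_1,\dots,x_i\bigr]$ for $0 \le i \le n$, so that $Z_0 = \mathbb{E}[f(\mathbb{H}(n,\mathbf{W}))]$, $Z_n = f(\mathbb{H}(n,\mathbf{W}))$, and $(Z_i)$ is a martingale with respect to the filtration generated by $x_1,\dots,x_i$. To bound $|Z_i - Z_{i-1}|$, let $S' = (x_1,\dots,x_{i-1},x_i',x_{i+1},\dots,x_n)$ with $x_i'$ an independent uniform copy of $x_i$; a standard computation gives $Z_i - Z_{i-1} = \mathbb{E}\bigl[f(\mathbf{H}_S(n,\mathbf{W})) - f(\mathbf{H}_{S'}(n,\mathbf{W})) \mid x_1,\dots,x_i\bigr]$. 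For every $I$, the edge weight of $uv$ in $(H_I)_S$ is $W_I(x_u,x_v)$, which agrees with its value in $(H_I)_{S'}$ unless $i \in \{u,v\}$; hence $\mathbf{H}_S(n,\mathbf{W})$ and $\mathbf{H}_{S'}(n,\mathbf{W})$ differ only on edges incident to vertex $i$, and the hypothesis on $f$ gives $|f(\mathbf{H}_S(n,\mathbf{W})) - f(\mathbf{H}_{S'}(n,\mathbf{W}))| \le 1$ pointwise, whence $|Z_i - Z_{i-1}| \le 1$. Azuma's inequality now yields $\mathbb{P}(Z_n - Z_0 \ge \lambda) \le \exp\bigl(-\lambda^2/(2n)\bigr)$, and taking $\lambda := \sqrt{2tn}$ gives the first inequality.

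For $\mathbb{G}(n,\mathbf{W}) = \mathbb{G}(\mathbb{H}(n,\mathbf{W}))$, I would first decouple the second layer of randomness: realize each $I_{uv}$ as a deterministic function of the edge weights $(\overline{H}_I(uv))_{I \subseteq [k]}$ of $\mathbf{H} = \mathbb{H}(n,\mathbf{W})$ and of an independent uniform variable $U_{uv} \in [0,1]$, so that $\mathbb{G}(n,\mathbf{W})$ becomes a deterministic function of the independent variables $x_1,\dots,x_n$ and $(U_{uv})_{1 \le u < v \le n}$. Grouping them as $Y_i := (x_i,\, (U_{ui})_{u < i})$ for $i \in [n]$ gives independent $Y_1,\dots,Y_n$, and changing a single $Y_i$ perturbs, in every $G_j$ and hence in every $G_I = \bigcap_{j \in I} G_j$, only edges incident to vertex $i$: altering $x_i$ moves the thresholds governing $U_{uv}$ only for pairs with $i \in \{u,v\}$, altering $U_{ui}$ (for $u<i$) affects only the pair $ui$, and the variables $U_{iv}$ with $v > i$ sit in $Y_v$ and are left untouched. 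Hence the Doob martingale $Z_i := \mathbb{E}[f(\mathbb{G}(n,\mathbf{W})) \mid Y_1,\dots,Y_i]$ has increments bounded by $1$ by the hypothesis on $f$, and Azuma's inequality again gives $\mathbb{P}(Z_n - Z_0 \ge \sqrt{2tn}) \le e^{-t}$.

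The main point requiring care is precisely this bookkeeping in the coupling step: one must verify that resampling the $i$-th block of randomness genuinely perturbs only the edges incident to vertex $i$, simultaneously across all $2^k - 1$ components, so that the single-vertex Lipschitz hypothesis on $f$ applies. Everything else is the standard bounded-difference / Azuma argument, identical to the $k = 1$ case in \cite{Lovasz2012}.
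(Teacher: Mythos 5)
Your proof is correct and is exactly the argument the paper has in mind — it explicitly omits the proof, saying only that it is "the standard application of Azuma's inequality." Both parts are handled properly: a vertex-exposure Doob martingale on the $x_i$'s for $\mathbb{H}(n,\mathbf{W})$, and for $\mathbb{G}(n,\mathbf{W})$ the inverse-transform coupling that packages $(x_i,(U_{ui})_{u<i})$ into one block $Y_i$ per vertex so that resampling $Y_i$ only perturbs edges incident to $i$ across all $2^k-1$ components simultaneously — which is precisely the bookkeeping needed to apply the single-vertex Lipschitz hypothesis and Azuma with $n$ (rather than $\binom{n}{2}$) martingale steps.
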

A partition $\mathcal{P}$ of $[0,1]$ is said to be \emph{equitable} if all the parts have the same measure.
\begin{restatable}
{lemma}{equitable}\label{lem:equitable_partition}
    Let $\mathbf{W} = (W_I)_{I \subseteq [k]}$ be a graphon system of order $k$.
    For any integer $m \in \mathbb{Z}_{> 1}$, there exists an equitable partition $\mathcal{P}$ of $[0, 1]$ into $m$ parts such that
    $$d_{\square} (\mathbf{W}, \mathbf{W}_{\mathcal{P}}) \leq 2^k \left( \frac{50 \times 4^k}{\sqrt{\log m}} + \frac{2k}{\sqrt{m}} \right).$$
\end{restatable}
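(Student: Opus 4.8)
The plan is to transplant the standard proof of the equitable weak regularity lemma for a single graphon, using the tuple weak regularity lemma (\Cref{lem:graphon_regular}) and the near-contraction estimate for stepping (\Cref{lem:step_function}) in place of their scalar counterparts. Concretely: produce a weak regularity partition $\mathcal Q$ with $q \asymp \sqrt m$ parts, then \emph{round} $\mathcal Q$ into an equitable partition $\mathcal P$ with exactly $m$ parts, and bound separately the weak regularity error and the rounding error.

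First a reduction. Every component $W_I$ takes values in $[0,1]$ and $W_\emptyset \equiv (W_\emptyset)_{\mathcal P}$, so $d_\square(\mathbf W, \mathbf W_{\mathcal P}) < 2^k$ for every partition $\mathcal P$; a crude estimate shows that when $m$ is small relative to $2^k$ (say $m < 4^{k+2}$) the right-hand side of the asserted bound already exceeds $2^k$, and we are done by taking the equipartition of $[0,1]$ into $m$ equal intervals. So assume $m$ is large enough; then $q := \lceil \sqrt m/2 \rceil$ exceeds $2^k$, and \Cref{lem:graphon_regular}, applied to the $2^k$ graphons $(W_I)_{I \subseteq [k]}$ with base partition $\{[0,1]\}$ and target $q$ parts — using $\sum_I \|W_I\|_2 \le 2^k$ and $\log q \ge \tfrac13\log m$ — produces a partition $\mathcal Q$ with at most $q$ parts and $d_\square(\mathbf W, \mathbf W_{\mathcal Q}) \le \tfrac12\cdot\tfrac{2^k\cdot 50\cdot 4^k}{\sqrt{\log m}}$, the constant being deliberately generous.

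Next I round $\mathcal Q = \{Q_1,\dots,Q_{q'}\}$. In each $Q_i$, of measure $a_i$, carve off $\lfloor m a_i\rfloor$ disjoint measurable pieces of measure exactly $1/m$, leaving a remainder $R_i \subseteq Q_i$ with $\mu(R_i) = \{m a_i\}/m$. Set $R := \bigcup_i R_i$; then $m\mu(R) = \sum_i \{m a_i\} = m - \sum_i \lfloor m a_i\rfloor$ is a nonnegative integer $s < q' \le q$, so $R$ splits into $s$ measurable pieces of measure $1/m$. Together with the carved-off pieces this is an equitable partition $\mathcal P$ with exactly $m$ parts, and $\mu(R) < q/m \le 1/\sqrt m$. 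Every part of $\mathcal P$ lying outside $R$ is contained in a single $Q_i$, so the common refinement $\mathcal P^{\ast} := \mathcal P \vee \mathcal Q$ coincides with $\mathcal P$ off $R$; hence each component of $\mathbf W_{\mathcal P^{\ast}} - \mathbf W_{\mathcal P} = \mathbf W_{\mathcal P^{\ast}} - (\mathbf W_{\mathcal P^{\ast}})_{\mathcal P}$ is supported on $(R\times[0,1])\cup([0,1]\times R)$, giving $d_\square(\mathbf W_{\mathcal P^{\ast}}, \mathbf W_{\mathcal P}) \le 2^{k+1}\mu(R)$. On the other hand $\mathbf W_{\mathcal Q}$ has steps in $\mathcal Q$, hence in its refinement $\mathcal P^{\ast}$, so \Cref{lem:step_function} gives $d_\square(\mathbf W, \mathbf W_{\mathcal P^{\ast}}) \le 2\,d_\square(\mathbf W, \mathbf W_{\mathcal Q})$. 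Putting these together,
$$d_\square(\mathbf W, \mathbf W_{\mathcal P}) \le d_\square(\mathbf W, \mathbf W_{\mathcal P^{\ast}}) + d_\square(\mathbf W_{\mathcal P^{\ast}}, \mathbf W_{\mathcal P}) \le 2\,d_\square(\mathbf W, \mathbf W_{\mathcal Q}) + 2^{k+1}\mu(R) \le \frac{2^k \cdot 50 \cdot 4^k}{\sqrt{\log m}} + \frac{2^k \cdot 2k}{\sqrt m},$$
which is the claimed estimate.

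The steps above are essentially routine; the only delicate point is the calibration of $q$ (and of the cutoff on $m$) so that the weak regularity error $\asymp 2^{3k/2}/\sqrt{\log q}$ and the rounding error $\asymp 2^{k+1}q/m$ fit simultaneously into the two summands of the target bound, while the $k^{1/2}$ from \Cref{lem:graphon_regular} and the factor $2^k$ incurred when passing from per-component to per-system cut-norms are absorbed. I do not expect any conceptual obstacle beyond this bookkeeping, since the argument is a direct lift of the single-graphon case.
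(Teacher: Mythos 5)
Your proof is correct and follows the same route as the paper's: first apply the tuple weak regularity lemma (\Cref{lem:graphon_regular}) with target roughly $\sqrt m$ parts, then round the resulting partition into an equitable one with exactly $m$ parts, bounding the rounding error via the contraction property of stepping (\Cref{lem:step_function}). The only difference is that you prove the rounding step inline, whereas the paper isolates it as a separate lemma (\Cref{lem:equitable_partition_2}) and simply cites it; the carve-off-and-pool-remainders construction you describe is precisely the one used there. Your explicit handling of the small-$m$ regime (so that the hypothesis $m_2 \geq k m_1$ of \Cref{lem:graphon_regular} is met) is a detail the paper glosses over but which your version spells out correctly.
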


The proof of \Cref{lem:concentration} is the standard application of Azuma's inequality, so we omit the proof. The following lemma will be useful to prove  \Cref{lem:equitable_partition}. 
\begin{lemma}\label{lem:equitable_partition_2}
    Let $\mathbf{W} = (W_1, \ldots, W_k)$ be a tuple of bounded symmetric functions and $\mathcal{Q}$ be a partition of $[0, 1]$ into $m$ parts. 
    Then for given $m'>m$, there exists an equitable partition $\mathcal{P}$ into $m'$ parts such that 
    $$d_{\square}(\mathbf{W}, \mathbf{W}_{\mathcal{P}}) \leq 2d_{\square}(\mathbf{W}, \mathbf{W}_{\mathcal{Q}}) + \frac{2km}{m'}.$$
\end{lemma}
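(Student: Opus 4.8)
The plan is to build the equitable partition $\mathcal{P}$ by slicing each part of $\mathcal{Q}$ into as many blocks of measure exactly $1/m'$ as possible, sweeping the small leftover pieces of all the parts together into the remaining blocks, and then to bound $d_{\square}(\mathbf{W},\mathbf{W}_{\mathcal{P}})$ by routing through the common refinement $\mathcal{R}:=\mathcal{P}\vee\mathcal{Q}$, which refines both $\mathcal{P}$ and $\mathcal{Q}$. The point of $\mathcal{R}$ is that $\mathbf{W}_{\mathcal{R}}$ is close to $\mathbf{W}$ because $\mathcal{R}$ refines $\mathcal{Q}$, and close to $\mathbf{W}_{\mathcal{P}}$ because the two averaged functions differ only on the small region occupied by the leftover pieces.

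First I would set up the construction. Write $\mathcal{Q}=\{Q_1,\dots,Q_m\}$ and, for each $i$, put $a_i=\lfloor m'\mu(Q_i)\rfloor$ and $r_i=m'\mu(Q_i)-a_i\in[0,1)$. Since Lebesgue measure restricted to $Q_i$ is atomless, $Q_i$ can be split measurably into $a_i$ sets of measure $1/m'$ together with a leftover set $Q_i'$ of measure $r_i/m'$. Set $M:=\bigcup_{i=1}^m Q_i'$; then $\mu(M)=b/m'$, where $b:=m'-\sum_i a_i=\sum_i r_i$ is a nonnegative integer, and since each $r_i<1$ and there are $m$ parts we get $b\le m-1$, hence $\mu(M)\le (m-1)/m'$. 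Splitting $M$ into $b$ further sets of measure $1/m'$ produces an equitable partition $\mathcal{P}$ of $[0,1]$ into $m'$ parts; note that the ``pure'' parts of $\mathcal{P}$ — those contained in a single $Q_i$ — cover exactly $[0,1]\setminus M$.

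Next I would estimate $d_{\square}(\mathbf{W},\mathbf{W}_{\mathcal{P}})\le d_{\square}(\mathbf{W},\mathbf{W}_{\mathcal{R}})+d_{\square}(\mathbf{W}_{\mathcal{R}},\mathbf{W}_{\mathcal{P}})$ term by term. For the first term, since $\mathcal{R}$ refines $\mathcal{Q}$ we have $(\mathbf{W}_{\mathcal{Q}})_{\mathcal{R}}=\mathbf{W}_{\mathcal{Q}}$, so by linearity of the stepping operator
\[
\mathbf{W}-\mathbf{W}_{\mathcal{R}}=(\mathbf{W}-\mathbf{W}_{\mathcal{Q}})-(\mathbf{W}-\mathbf{W}_{\mathcal{Q}})_{\mathcal{R}},
\]
and then the triangle inequality together with the contraction property of stepping in the cut-norm (\Cref{lem:contraction}, in its extension to tuples of functions noted after its corollary) yields $d_{\square}(\mathbf{W},\mathbf{W}_{\mathcal{R}})\le 2\,d_{\square}(\mathbf{W},\mathbf{W}_{\mathcal{Q}})$. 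For the second term, I would observe that if $x,y\notin M$ then the $\mathcal{P}$-parts of $x$ and of $y$ are pure, hence each is a single $\mathcal{R}$-part, so $\mathbf{W}_{\mathcal{R}}$ and $\mathbf{W}_{\mathcal{P}}$ agree at $(x,y)$; thus $\mathbf{W}_{\mathcal{R}}-\mathbf{W}_{\mathcal{P}}$ is supported on $(M\times[0,1])\cup([0,1]\times M)$, a set of measure at most $2\mu(M)$, and each of its $k$ components, being a difference of two averages of a $[0,1]$-valued function, is bounded by $1$ in absolute value. Hence
\[
d_{\square}(\mathbf{W}_{\mathcal{R}},\mathbf{W}_{\mathcal{P}})\le 2k\mu(M)\le \frac{2k(m-1)}{m'}<\frac{2km}{m'},
\]
and adding the two bounds gives the claimed inequality.

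The argument has no essential difficulty; the only points requiring a little care are the measurability of the subdivisions of the $Q_i$ and of $M$ (which follows from atomlessness of Lebesgue measure, exactly as in the standard construction of equitable partitions), and the bookkeeping forcing the total leftover mass $\mu(M)$ to be at most $(m-1)/m'$ — this is precisely what produces the error term $2km/m'$. One should also make sure to invoke the correct (tuple-valued) form of the contraction property rather than the componentwise one, since summing the cut-norms of the $k$ components in the first step would lose the needed sharpness.
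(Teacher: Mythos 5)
Your proof is correct and follows essentially the same route as the paper's: you build $\mathcal{P}$ by chopping each $Q_i$ into $1/m'$-blocks plus a leftover, pool the leftovers, pass through the common refinement $\mathcal{R}=\mathcal{P}\vee\mathcal{Q}$, bound $d_\square(\mathbf{W}_\mathcal{R},\mathbf{W}_\mathcal{P})$ by the measure of the leftover region, and bound $d_\square(\mathbf{W},\mathbf{W}_\mathcal{R})$ by $2d_\square(\mathbf{W},\mathbf{W}_\mathcal{Q})$ using the contraction of the stepping operator (the paper invokes its Lemma~\ref{lem:step_function} directly rather than rederiving it from Lemma~\ref{lem:contraction}, but this is the same estimate). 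Your accounting $\mu(M)\le (m-1)/m'$ even slightly improves on the paper's crude count of ``at most $m$ exceptional parts,'' though both land on the stated bound.
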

\begin{proof}
     We partition each part of $\mathcal{Q}$ into parts of measure $\frac{1}{m'}$ except at most one part.
     We collect all the exceptional parts and partition their union into parts of measure $\frac{1}{m'}$.
     Let $\mathcal{P}$ be the resulting partition.
     Let $\mathcal{R}$ be the common refinement of $\mathcal{P}$ and $\mathcal{Q}$.
     Then $\mathbf{W}_{\mathcal{R}}$ and $\mathbf{W}_{\mathcal{P}}$ differ only in the exceptional parts and there are at most $m$ exceptional parts.
     Hence we have $d_{\square} (\mathbf{W}_{\mathcal{R}}, \mathbf{W}_{\mathcal{P}}) \leq 2k\frac{m}{m'}$ and $d_{\square} (\mathbf{W}_{\mathcal{P}}, \mathbf{W}) \leq d_{\square} (\mathbf{W}_{\mathcal{R}}, \mathbf{W}) + \frac{2km}{m'}$.
     From Lemma~\ref{lem:step_function}, we have $d_{\square} (\mathbf{W}_{\mathcal{R}}, \mathbf{W}) \leq 2d_{\square} (\mathbf{W}_{\mathcal{Q}}, \mathbf{W})$, which concludes the proof.
\end{proof}
\begin{proof}[Proof of \Cref{lem:equitable_partition}]
    Let $m' = \lfloor \sqrt{m} \rfloor$ and $\mathbf{W}=(W_I)_{I \subseteq [k]}$ be a graphon system of order $k$.
    We first apply the Lemma~\ref{lem:graphon_regular} with $m_1 = 1$ and $m_2 = m'$ to get a partition $\mathcal{Q}$ of $[0,1]$ into at most $m'$ parts with $d_{\square} (\mathbf{W},  \mathbf{W}_{\mathcal{Q}}) \leq 2^{5k/2} \frac{8}{\sqrt{\log m'}}$. 
    Then by applying Lemma~\ref{lem:equitable_partition_2} to $\mathbf{W}$ and $\mathcal{Q}$, we get a desired equitable partition $\mathcal{P}$ into $m$ parts.
\end{proof}

The final ingredient for the proof of \Cref{thm:W_random_graph} is the following version of the sampling lemma.
\begin{lemma}\label{lem:step_function2}
    Let $\mathcal{P}$ be an equitable partition with $m$ parts and $\mathbf{W} = (W_I)_{I \subseteq [k]}$ be a graphon system of order $k$ such that every $W_I$ is a step graphon with steps in $\mathcal{P}$. 
    Let $S$ be a tuple in $[0, 1]^n$ chosen uniformly at random.
    Then $\mathbb{E}[\delta_{\square}(\mathbf{W}, \mathbf{W}[S])] \leq \frac{2^{k+2}\sqrt{m}}{\sqrt{n}}$.
\end{lemma}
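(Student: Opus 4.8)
The plan is to observe that, after a suitable measure-preserving relabelling of $[0,1]$, the sampled weighted graph system $\mathbf{W}[S]$ is again a step graphon system with the \emph{same pattern} as $\mathbf{W}$, the only change being that the step sizes are perturbed by the sampling fluctuations. Write $\mathcal{P} = \{P_1,\dots,P_m\}$ with $\mu(P_i) = 1/m$, and for each $I \subseteq [k]$ let $a^{(I)}_{ij}$ denote the constant value of $W_I$ on $P_i \times P_j$. For a sample $S = (x_1,\dots,x_n)$ put $n_i = |\{\, t : x_t \in P_i \,\}|$. Since each $W_I$ is constant on $P_i \times P_j$, the value $W_I(x_s,x_t)$ depends only on the indices $i,j$ with $x_s \in P_i$ and $x_t \in P_j$; hence, grouping the $n$ sampling subintervals of $\mathbf{W}[S]$ according to which part of $\mathcal{P}$ their representative lies in, $\mathbf{W}[S]$ is, up to a measure-preserving bijection of $[0,1]$, the step graphon system $\mathbf{W}'$ with parts $Q_1,\dots,Q_m$ of measures $\mu(Q_i) = n_i/n$ and with $W'_I \equiv a^{(I)}_{ij}$ on $Q_i \times Q_j$ (this is the step analogue of the sampling lemma for graphons).

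Next I would compare $\mathbf{W}$ and $\mathbf{W}'$ via an explicit measure-preserving bijection $\varphi$. Setting $r_i := \min\{1/m,\, n_i/n\}$, let $\varphi$ identify an $r_i$-fraction of $P_i$ with an $r_i$-fraction of $Q_i$ for every $i$, and map the leftover portions of the $P_i$'s (of total measure $\beta := \sum_i (1/m - n_i/n)^+ = \tfrac12 \sum_i |1/m - n_i/n|$) onto the leftover portions of the $Q_i$'s, which have the same total measure because both partitions have total measure $1$, by an arbitrary measure-preserving map. Off the set $(B \times [0,1]) \cup ([0,1] \times B)$, where $B \subseteq [0,1]$ is the union of the leftover portions of the $P_i$'s and $\mu(B) = \beta$, one has $W_I^{\varphi}[S] \equiv W_I$; on that set the two functions differ by at most $1$ in absolute value. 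As the $I = \emptyset$ coordinate equals $1$ in both systems, this yields
$$
\delta_{\square}(\mathbf{W},\mathbf{W}[S]) \;\le\; \lVert \mathbf{W} - \mathbf{W}^{\varphi}[S] \rVert_{\square} \;\le\; \sum_{\emptyset \neq I \subseteq [k]} \lVert W_I - W_I^{\varphi}[S] \rVert_{1} \;\le\; (2^{k}-1)\cdot 2\mu(B) \;\le\; 2^{k} \sum_{i=1}^{m} \left| \frac{1}{m} - \frac{n_i}{n} \right|.
$$

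Finally I would take expectations: each $n_i$ is $\mathrm{Binomial}(n,1/m)$-distributed, so $\mathbb{E}\,|n_i - n/m| \le \sqrt{\mathrm{Var}(n_i)} = \sqrt{n\,m^{-1}(1-m^{-1})} \le \sqrt{n/m}$, and by linearity of expectation
$$
\mathbb{E}\big[\delta_{\square}(\mathbf{W},\mathbf{W}[S])\big] \;\le\; 2^{k} \sum_{i=1}^{m} \frac{1}{n}\,\mathbb{E}\,|n_i - n/m| \;\le\; 2^{k}\, m\, \frac{1}{n}\sqrt{n/m} \;=\; \frac{2^{k}\sqrt{m}}{\sqrt{n}} \;\le\; \frac{2^{k+2}\sqrt{m}}{\sqrt{n}},
$$
which is the claimed bound (in fact with a better constant). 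The step that needs care is the structural observation in the first paragraph — that $\mathbf{W}[S]$ genuinely is a step graphon system of the claimed form, which uses crucially that all coordinates $W_I$ have their steps in the \emph{common} partition $\mathcal{P}$ — together with the fact that the cut-distance between graphon systems, although defined through a single simultaneous relabelling of all $2^k$ coordinates, is controlled by the single $\varphi$ constructed above; the probabilistic estimate is then routine.
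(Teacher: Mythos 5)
Your proof is correct and follows essentially the same route as the paper's argument: view $\mathbf{W}[S]$, up to a permutation of the $n$ sampling intervals, as a step graphon system on the \emph{same} index set but with step sizes $n_i/n$ perturbed from $1/m$ by binomial fluctuations; construct a single measure-preserving bijection of $[0,1]$ aligning the two step patterns; bound the coordinate-wise $L^1$ distances by (twice) the total mass of the misaligned leftover; and finish with $\mathbb{E}|n_i - n/m| \le \sqrt{\mathrm{Var}(n_i)}$. The only difference is in bookkeeping: the paper builds an explicit reordering $\pi = \pi'' \circ \pi'$ of the sample coordinates and bounds the disagreement block-by-block over $P_i \times P_j$, while you work with the ``good'' $r_i = \min\{1/m, n_i/n\}$ fractions and a leftover set $B$ of measure $\beta = \tfrac12\sum_i|1/m - n_i/n|$ directly, which gives the same estimate a bit more cleanly and with a slightly better constant ($2^k$ rather than $2^{k+2}$). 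Two small cosmetic points: the notation $\mathbf{W}^\varphi[S]$ should be $(\mathbf{W}[S])^{\varphi}$ (the relabelling acts on the sampled system), and it is worth stating explicitly that the bijection built from $S$ and $\mathcal{P}$ is applied simultaneously to all $2^k$ coordinates — which you do flag at the end, and which is exactly the point that makes the common partition $\mathcal{P}$ essential.
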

\begin{proof}
    Let $P_i$ be the $i$-th part of $\mathcal{P}$.
    Note that we can consider the weighted graph $W_I[S]$ as a step graphon by regarding vertices in $[n]$ as disjoint subsets of $[0,1]$. 
    This allows us to view both $\mathbf{W}$ and $\mathbf{W}[S]$ as step graphons with $m$ steps. 
    Each step of $\mathbf{W}$ has measure exactly $\frac{1}{m}$, while each step of $\mathbf{W}[S]$ has length $\ell_i = \frac{1}{n} |S \cap P_i|$.
    Observe that $n \cdot \ell_i$ is identical with the binomial random variable $\mathrm{Binom}(n, \frac{1}{m})$. 

    Write $S = (x_1, \dots, x_n)$ as a tuple.
    Choose a measure preserving bijection $\varphi:[0,1] \rightarrow [0,1]$ such that the parts of $\mathbf{W}^{\varphi}$ are intervals.
    Define a reordering $\pi:[n] \rightarrow [n]$ as follows.
    First, there exists a reordering $\pi':[n] \rightarrow [n]$ such that
     \[
      S \cap P_i = \left\{ x_{\pi'^{-1}(n(\ell_1 + \dots + \ell_i) + 1)}, \dots, x_{\pi'^{-1}(n(\ell_1 + \dots + \ell_i + \ell_{i+1}))} \right\}.
     \]
    Second, we define $\pi'':[n] \rightarrow [n]$ as follows: if $n\ell_i \leq \lfloor \frac{im}{n} \rfloor - \lfloor \frac{(i-1)m}{n} \rfloor$, define
     \[
      \pi''(n(\ell_1 + \dots + \ell_i) + j) = \left\lfloor \frac{(i-1)m}{n} \right\rfloor + j,
     \]
    for $1 \leq j \leq n\ell_i$; otherwise, define
     \[
      \pi''(n(\ell_1 + \dots + \ell_i) + j) = \left\lfloor \frac{(i-1)m}{n} \right\rfloor + j,
     \]
    for $1 \leq j \leq \lfloor \frac{im}{n} \rfloor - \lfloor \frac{(i-1)m}{n} \rfloor$, and send all the other remaining indices arbitrarily to the indices not mapped to yet.
    Set $\pi = \pi'' \circ \pi'$.
    Let $\varphi':[0,1] \rightarrow [0,1]$ be a measure preserving bijection such that $\mathbf{W}[S]^{\varphi'} = \mathbf{W}[S^{\pi}]$, where $S^{\pi} = (x_{\pi(1)}, \dots, x_{\pi(n)})$.
    Then the set
     \[
      \left\{ x \in P_i \times P_j: W_I^{\varphi}(x) \neq (W_I[S])^{\varphi'}(x) \right\}
     \]
    has measure at most $|\ell_i - \frac{1}{m}| + |\ell_j - \frac{1}{m}|$ for each pair $i, j \in [m]$.
    Because the random variable $\ell_i$ are independent and identically distributed, we infer that
    $$\mathbb{E}\bigr[\lVert W_{I} - (W_I[S])^{\varphi^{-1} \circ \varphi'} \rVert_{1}\bigr] \leq 4\mathbb{E} \left[ \sum_{i=1}^m \left| \ell_i - \frac{1}{m} \right| \right]
    = 4m \mathbb{E} \left[ \left| \ell_1 - \frac{1}{m} \right| \right]
    \leq 4m \sqrt{\mathbb{E} \left[ \left| \ell_1 - \frac{1}{m} \right|^2 \right]}
    = 4 \sqrt{\frac{m-1}{n}}
    \leq \frac{4\sqrt{m}}{\sqrt{n}}.$$
    As $\lVert W \rVert_\square \leq \lVert W \rVert_1$ for every bounded symmetric function $W$, this concludes the proof.
\end{proof}
\begin{proof}[Proof of \Cref{thm:W_random_graph}]
    First, we aim to bound the expectation of the distance. Then we will apply Lemma~\ref{lem:concentration}. 
    We may assume that $n$ is sufficiently large.
    Let $\mathbf{W}$ be a graphon system of order $k$.
    Let $m = \lceil n^{1/4} \rceil$.
    By Lemma~\ref{lem:equitable_partition}, there exists an equitable partition $\mathcal{P}$ of $[0, 1]$ into $m$ parts such that 
    \begin{align}\label{eq: W-WP}
        d_{\square} (\mathbf{W}, \mathbf{W}_{\mathcal{P}}) \leq \frac{200 \times 8^k}{\sqrt{\log n}}.
    \end{align}
    
    Let $S$ be a tuple in $[0,1]^n$ chosen uniformly at random. Note that $\mathbf{H}_S(n, \mathbf{W})$ and $\mathbf{W}[S]$ have the same distribution except for the weights of loops. 
    Thus, $\mathbb{E}[d_{\square} (\mathbf{H}_S(n, \mathbf{W}), \mathbf{W}[S])] \leq \frac{2^k}{n}$.
    Then by Lemma~\ref{lem:first_sampling}, with probability at least $1-4\exp(-n^{1/8}/20)$, the following holds for each $I\subseteq [k]$.
    $$\big| d_{\square} (W_I[S], (W_I)_{\mathcal{P}}[S]) - d_{\square} (W_I, (W_I)_{\mathcal{P}}) \big| \leq  \frac{8}{n^{1/4}}.$$
    Thus we have
    $$\mathbb{E}\Bigr[\big| d_{\square} (W_I[S], (W_I)_{\mathcal{P}}[S]) - d_{\square} (W_I, (W_I)_{\mathcal{P}}) \big|\Bigr] \leq \left( 1-4\exp\left(-\frac{n^{1/8}}{20} \right) \right) \cdot \frac{8}{n^{1/4}} + 4\exp\left( -\frac{n^{1/8}}{20} \right) \leq   \frac{10}{n^{1/4}}.$$
    These inequalities together with \eqref{eq: W-WP} imply that
    \begin{align*}
        \mathbb{E}\bigr[d_{\square} (\mathbf{W}[S], \mathbf{W}_{\mathcal{P}}[S]) \bigr]
        & \leq \mathbb{E}\Bigr[\big| d_{\square} (\mathbf{W}[S], \mathbf{W}_{\mathcal{P}}[S]) - d_{\square} (\mathbf{W}, \mathbf{W}_{\mathcal{P}}) \big|\Bigr] + d_{\square} (\mathbf{W}, \mathbf{W}_{\mathcal{P}}) \\
        & \leq \sum_{I \subseteq [k]}\mathbb{E}\Bigr[\big|d_{\square} (W_I[S], (W_I)_{\mathcal{P}}[S]) - d_{\square} (W_I, (W_I)_{\mathcal{P}}) \big|\Bigr] + d_{\square} (\mathbf{W}, \mathbf{W}_{\mathcal{P}}) \leq \frac{300 \times 8^k}{\sqrt{\log n}}.
    \end{align*}
    Finally, by Lemma~\ref{lem:step_function2}, we have
     \[
      \mathbb{E}[\delta_{\square}(\mathbf{W}_{\mathcal{P}}, \mathbf{W}_{\mathcal{P}}[S])] \leq \frac{2^{k+2}}{n^{3/4}} \leq \frac{100 \times 2^k}{\sqrt{\log n}}.
     \]
    Therefore,
    \begin{align*}
        \mathbb{E}\bigr[\delta_{\square}(\mathbf{W}, \mathbf{W}[S])\bigr] & \leq \delta_{\square}(\mathbf{W}, \mathbf{W}_{\mathcal{P}}) + \mathbb{E}\bigr[\delta_\square( \mathbf{W}[S] , \mathbf{W}_{\mathcal{P}}[S])\bigr] + \mathbb{E}\bigr[\delta_{\square}(\mathbf{W}_{\mathcal{P}}, \mathbf{W}_{\mathcal{P}}[S])\bigr]  \leq \frac{400 \times 8^k}{\sqrt{\log n}}.
    \end{align*}
    To estimate $\mathbb{E}[\delta_{\square}(\mathbf{W}, \mathbb{G}(n, \mathbf{W}))]$, observe that for a random sample $\mathbf{H}$ of $\mathbb{H}(n, \mathbf{W})$ and $i \neq j \in [n]$, the event that $ij$ is an edge of $G_I = \bigcap_{t \in I} G_t$ have probability exactly the weight of the edge $ij$ of $H_I$.
    Also, it is independent from the event that $i'j'$ is an edge of $G_I$ whenever $\{i', j'\} \neq \{i, j\}$.
    Thus by Lemma~\ref{lem:H_and_W} applied to each index $I \subseteq [k]$, the union bound yields that $d_{\square} (\mathbb{G}(\mathbf{H}), \mathbf{H}) \leq \frac{1}{\sqrt{\log n}}$ with probability at least $1-2^k \cdot \exp(-n^2/(100\times 2^{2k}\log n))$. We also have that $d_{\square} (\mathbb{G}(\mathbf{H}), \mathbf{H}) \leq 2^k$ for any case.
    Therefore, $$\mathbb{E}\bigr[d_{\square} (\mathbb{G}(\mathbf{H}), \mathbf{H}) \bigr] \leq \left( 1-2^k \cdot \exp \left(-\frac{n^2}{100\times 2^{2k}\log n} \right) \right) \times \frac{1}{\sqrt{\log n}} + \left( 2^k \cdot \exp \left( -\frac{n^2}{100\times 2^{2k}\log n} \right) \right) \times 2^k \leq \frac{2}{\sqrt{\log n}}.$$
    By the law of total probability, we have
     \[
      \mathbb{E}\bigr[d_{\square} (\mathbb{G}(\mathbb{H}(n, \mathbf{W})), \mathbb{H}(n, \mathbf{W}))\bigr] \leq \frac{2}{\sqrt{\log n}}.
     \] 
    Finally, the triangle inequality together with the above inequalities imply $$\mathbb{E}\bigr[\delta_{\square}(\mathbf{W}, \mathbb{G}(n, \mathbf{W}))\bigr] \leq \frac{500 \times 8^k}{\sqrt{\log n}}.$$
    One can easily check that the function $f(\mathbf{H}):=n\delta_{\square}(\mathbf{H}, \mathbf{W})/2^{k+1}$ defined on the tuple of weighted graphs satisfies the condition of Lemma~\ref{lem:concentration}. Therefore Lemma~\ref{lem:concentration} finishes the proof. 
\end{proof}
We finally note that by applying the common refinement argument in the proof of \Cref{lem:weak_reg} and \ref{lem:equitable_partition} to Lemma~9.3 of \cite{Lovasz2012}, we can prove the following version of the multi-color graph regularity lemma for graph systems.
Note that a partition $X_1, \ldots, X_m$ of a finite set $X$ is called \emph{equitable} if $||X_i|-|X_j|| \leq 1$ for every $1 \leq i<j \leq m$.
\begin{lemma}\label{lem:weak_reg_graph}
    Let $\mathcal{G}$ be a weighted graph system of order $k$ and $m$ be a positive integer.
    Then there exists an equitable partition $\mathcal{P}$ of vertices into $m$ parts such that 
    $$\delta_{\square}(\mathrm{span}(\mathcal{G}), \mathrm{span}(\mathcal{G})_{\mathcal{P}}) \leq \frac{50 \times 2^{3k}}{\sqrt{\log m}}.$$
\end{lemma}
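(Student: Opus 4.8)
The plan is to carry out, in the discrete (weighted-graph) setting, the same common-refinement argument that underlies the graphon-level statements \Cref{lem:weak_reg} and \Cref{lem:equitable_partition}, starting from the weak regularity lemma for graphs, Lemma~9.3 of~\cite{Lovasz2012}: for every weighted graph $G$ on a finite vertex set and every integer $t\ge 2$ there is a vertex partition $\mathcal{Q}$ of $G$ into at most $t$ parts with $d_\square(G,G_{\mathcal{Q}})\le\frac{2}{\sqrt{\log t}}$. The stepping operator is still a contraction in the cut-norm, so the discrete analogues of \Cref{lem:contraction}, \Cref{lem:step_function} and \Cref{lem:equitable_partition_2} hold with identical proofs; I would record these first. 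Write $\mathrm{span}(\mathcal{G})=(G_I)_{I\subseteq[k]}$ with $G_\emptyset\equiv 1$, so that each of the $2^k-1$ nonempty components $G_I=\prod_{i\in I}G_i$ is again a weighted graph on $V(\mathcal{G})$, and recall $\lVert\mathbf{W}-\mathbf{U}\rVert_\square\le\sum_{I}\lVert W_I-U_I\rVert_\square$.

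First I would handle small $m$ trivially: whenever $\log m\le 2500\cdot 2^{4k}$, the bound $\frac{50\cdot 2^{3k}}{\sqrt{\log m}}\ge 2^k$ holds, and $\delta_\square(\mathrm{span}(\mathcal{G}),\mathrm{span}(\mathcal{G})_{\mathcal{P}})\le 2^k$ for any partition $\mathcal{P}$ by the triangle inequality and the cut-norm estimate above, so there is nothing to do. Otherwise $m$ is large; I would put $m':=\lfloor\sqrt m\rfloor$ and let $t$ be the largest integer with $t^{2^k-1}\le m'$, so that $\log t$ exceeds a constant of order $2^{-k}$ times $\log m$. Applying Lemma~9.3 of~\cite{Lovasz2012} to each nonempty component $G_I$ with parameter $t$ gives a vertex partition $\mathcal{P}_I$ into at most $t$ parts with $d_\square(G_I,(G_I)_{\mathcal{P}_I})\le\frac{2}{\sqrt{\log t}}$.

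I would then take the common refinement $\mathcal{Q}$ of the $\mathcal{P}_I$, which has at most $t^{2^k-1}\le m'$ parts. Since $(G_I)_{\mathcal{P}_I}$ is a step function with steps in $\mathcal{Q}$, the discrete analogue of \Cref{lem:step_function} gives $d_\square(G_I,(G_I)_{\mathcal{Q}})\le\frac{4}{\sqrt{\log t}}$ for each $I$; summing over the $2^k-1$ nonempty $I$ yields $d_\square(\mathrm{span}(\mathcal{G}),\mathrm{span}(\mathcal{G})_{\mathcal{Q}})\le\frac{4\cdot 2^k}{\sqrt{\log t}}=\frac{2^{O(k)}}{\sqrt{\log m}}$. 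Finally, applying the discrete analogue of \Cref{lem:equitable_partition_2} to the partition $\mathcal{Q}$ produces an equitable partition $\mathcal{P}$ into exactly $m$ parts with $d_\square(\mathrm{span}(\mathcal{G}),\mathrm{span}(\mathcal{G})_{\mathcal{P}})\le 2\,d_\square(\mathrm{span}(\mathcal{G}),\mathrm{span}(\mathcal{G})_{\mathcal{Q}})+\frac{2\cdot 2^k|\mathcal{Q}|}{m}\le\frac{2^{O(k)}}{\sqrt{\log m}}+\frac{2^{k+1}}{\sqrt m}$. Since $\sqrt m\ge\sqrt{\log m}$ and $\delta_\square\le d_\square$, this is a bound of the form $\frac{C\cdot 2^{O(k)}}{\sqrt{\log m}}$, and a generous tracking of the implied constants — exactly as in the proof of \Cref{lem:equitable_partition} — shows it is at most $\frac{50\cdot 2^{3k}}{\sqrt{\log m}}$.

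The only real work is this bookkeeping of constants: one has to check that the factor $2^k$ from summing over all $2^k-1$ components of $\mathrm{span}(\mathcal{G})$, the factor lost because $\mathcal{Q}$ is a $(2^k-1)$-fold common refinement (so $\log t$ is only about $2^{-k}\log m$), and the additive $\frac{2^{k+1}}{\sqrt m}$ from passing to an equitable partition together remain below the stated $\frac{50\cdot 2^{3k}}{\sqrt{\log m}}$; the generous constant $50\cdot 2^{3k}$ is precisely what makes this routine. Beyond this accounting there is no genuine obstacle — every step transcribes verbatim from the graphon-level proofs of \Cref{lem:weak_reg} and \Cref{lem:equitable_partition}.
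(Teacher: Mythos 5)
Your proposal is correct and follows exactly the strategy the paper itself sketches in the remark preceding the lemma: apply the weak regularity lemma for weighted graphs (Lemma~9.3 of~\cite{Lovasz2012}) to each of the $2^k-1$ nonempty components $G_I$ of $\mathrm{span}(\mathcal{G})$, take the common refinement as in the proof of \Cref{lem:graphon_regular}, and then equilibrate via the discrete analogue of \Cref{lem:equitable_partition_2}, with the trivial bound $\delta_\square\le 2^k$ absorbing the small-$m$ regime. The constant bookkeeping you defer is indeed routine: one finds a bound of roughly $\frac{20\cdot 2^{3k/2}}{\sqrt{\log m}}+\frac{2\cdot 2^k}{\sqrt m}$, comfortably below the stated $\frac{50\cdot 2^{3k}}{\sqrt{\log m}}$ once $\sqrt m\ge\sqrt{\log m}$ is used to fold in the equilibration term.
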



\section{Proof of \Cref{thm:H_removal}}
\label{sec:H-removal}
 
In order to prove \Cref{thm:H_removal}, we need the following two lemmas. 
\begin{lemma}[\cite{Lovasz2012}, Lemma~8.22]\label{lem:innerproduct_convergence}
   For each $n\in \mathbb{N}$,
    let $W^n:[0, 1]^2 \rightarrow [-1, 1]$ be a  bounded symmetric function.
    If $\lVert W^n \rVert_{\square} \rightarrow 0$ as $n \rightarrow \infty$, then for every $Z \in L^1([0, 1])$, we have $\lVert Z W^n \rVert_{\square} \rightarrow 0$.
    In particular, $\int_{[0, 1]^2} Z W^n dxdy \rightarrow 0$ and $\int_{S} W^n dxdy \rightarrow 0$ for every set $S \subseteq [0, 1]^2$.
\end{lemma}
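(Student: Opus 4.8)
The plan is to reduce to the case where $Z$ is a \emph{step function}, i.e.\ a finite linear combination $Z = \sum_{i,j} c_{ij}\,\mathbbm 1_{P_i \times Q_j}$ of indicators of measurable ``rectangles'', where $\{P_i\}$ and $\{Q_j\}$ are finite partitions of $[0,1]$; for such $Z$ the desired bound follows essentially from the definition of the cut-norm, and the general case is then obtained by an $L^1$-approximation argument, using crucially that $\lVert W^n \rVert_\infty \le 1$ for every $n$. (The statement is phrased for $Z \in L^1([0,1])$, but the same proof applies verbatim to $Z \in L^1([0,1]^2)$, which is the form used in the ``in particular'' clause; I will write it in that generality.)

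First I would record the elementary step-function bound. If $Z = \sum_{i,j} c_{ij}\,\mathbbm 1_{P_i \times Q_j}$, then for any measurable $S,T \subseteq [0,1]$,
\[
\left| \int_{S\times T} Z\, W^n \right|
= \left| \sum_{i,j} c_{ij} \int_{(S\cap P_i)\times(T\cap Q_j)} W^n \right|
\le \Big( \sum_{i,j} |c_{ij}| \Big)\, \lVert W^n \rVert_{\square},
\]
since each $(S\cap P_i)\times(T\cap Q_j)$ is a product set, so $\big|\int_{(S\cap P_i)\times(T\cap Q_j)} W^n\big| \le \lVert W^n \rVert_{\square}$ by definition of the cut-norm. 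Taking the supremum over $S,T$ yields $\lVert Z W^n \rVert_{\square} \le M_Z \,\lVert W^n \rVert_{\square}$ with $M_Z := \sum_{i,j}|c_{ij}|$ a fixed finite constant, hence $\lVert Z W^n \rVert_{\square} \to 0$ whenever $Z$ is a step function.

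Next I would pass to a general $Z \in L^1$: fix $\varepsilon > 0$ and choose a step function $Z'$ with $\lVert Z - Z' \rVert_1 < \varepsilon$, which is possible since step functions are dense in $L^1$. Because $|W^n| \le 1$, for all $S,T$ we have $\big|\int_{S\times T}(Z - Z')W^n\big| \le \lVert Z - Z'\rVert_1 < \varepsilon$, so $\lVert Z W^n \rVert_{\square} \le \varepsilon + \lVert Z' W^n \rVert_{\square} \le \varepsilon + M_{Z'}\,\lVert W^n\rVert_{\square}$; letting $n \to \infty$ gives $\limsup_n \lVert Z W^n \rVert_{\square} \le \varepsilon$, and since $\varepsilon$ was arbitrary, $\lVert Z W^n \rVert_{\square} \to 0$. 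The ``in particular'' assertions are then immediate special cases: $\int_{[0,1]^2} Z W^n$ equals $\int_{S\times T} Z W^n$ at $S=T=[0,1]$ and is therefore $\le \lVert Z W^n \rVert_\square$ in absolute value, and $\int_S W^n = \int_{[0,1]^2}\mathbbm 1_S\, W^n$ is covered by applying the result to $Z = \mathbbm 1_S \in L^1([0,1]^2)$. I do not expect a real obstacle here; the only subtlety is interchanging the two limiting processes — the index $n\to\infty$ and the refinement of the step-function approximation — which is handled by the two-$\varepsilon$ argument above, the point being that $M_{Z'}$ depends on $Z'$ only and not on $n$, while the uniform bound $|W^n|\le 1$ makes the approximation error uniform over $S,T$ and over $n$.
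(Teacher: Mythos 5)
Your proof is correct. The paper does not actually prove this lemma; it is cited directly from Lov\'asz's book, and your argument---a direct cut-norm bound for product step functions $\sum_{i,j} c_{ij}\,\mathbbm{1}_{P_i\times Q_j}$, followed by $L^1$-approximation, where the uniform bound $|W^n|\le 1$ is what makes the approximation error independent of $n$ and of the test sets $S,T$---is the standard argument one would find there. Your observation that the hypothesis should read $Z\in L^1([0,1]^2)$ rather than $L^1([0,1])$ (so that the ``in particular'' clause about subsets $S\subseteq[0,1]^2$ makes sense) is also correct and matches how the lemma is invoked in the appendix of the paper.
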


Recall that we identify a graph system $\mathcal{G}$ with the graphon system $\mathrm{span}(\mathcal{G})$.

\begin{lemma}[Generalization of Theorem~11.59 of \cite{Lovasz2012}]\label{lem:reordering}
    Let $\mathcal{G}^n$ be a sequence of graph system such that $\mathrm{span}(\mathcal{G}^n) \rightarrow \mathbf{W}$ as $n \rightarrow \infty$ for a graphon system $\mathbf{W}$ and $|V(\mathcal{G}^n)| \rightarrow \infty$. 
    Then there exists a reordering of vertices on $\mathcal{G}^n$ such that $d_{\square} (\mathrm{span}(\mathcal{G}^n), \mathbf{W}) \rightarrow 0$.
\end{lemma}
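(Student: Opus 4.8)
The plan is to upgrade the hypothesis, which only gives convergence in the cut-metric $\delta_\square$, to convergence in the (labelled) cut-norm $d_\square$ after reordering the vertices of each $\mathcal{G}^n$; this is the graphon-system analogue of Theorem~11.59 of~\cite{Lovasz2012}, and the argument follows the same multiscale strategy. Since $d_\square(\mathrm{span}(\mathcal{G}^n),\mathbf{W})\to 0$ follows from the statement ``for every $\varepsilon>0$ there is $n_0(\varepsilon)$ such that every $n\ge n_0(\varepsilon)$ admits a reordering $\sigma_n$ of $V(\mathcal{G}^n)$ with $d_\square(\mathrm{span}(\mathcal{G}^n)^{\sigma_n},\mathbf{W})\le C_k\varepsilon$'' (one then takes $\varepsilon=1/j$ on consecutive ranges of $n$ and diagonalizes), it suffices to produce such a reordering, which I would do by matching $\mathcal{G}^n$ to $\mathbf{W}$ scale by scale. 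Throughout one should read $\mathbf{W}$ as a convenient representative of its $\delta_\square$-class — the one singled out by the weak-regularity partitions below — which is harmless for the applications, where $\mathbf{W}$ itself arises as a $\delta_\square$-limit.

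First I would fix a fast-growing sequence $m_1<m_2<\cdots$ and build two families of nested equitable partitions. On the side of $\mathbf{W}$: iterating \Cref{lem:equitable_partition} (refining each current part) yields nested equitable partitions $\mathcal{Q}^{(1)},\mathcal{Q}^{(2)},\dots$ of $[0,1]$, each refining the previous, with $|\mathcal{Q}^{(t)}|=m_t$ and $d_\square(\mathbf{W},\mathbf{W}_{\mathcal{Q}^{(t)}})\to 0$; after a measure-preserving bijection (compatibly for all $t$, as in the proof of \Cref{thm:compact_more}) we may take the parts of $\mathcal{Q}^{(t)}$ to be consecutive intervals of length $1/m_t$. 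On the side of $\mathcal{G}^n$: iterating \Cref{lem:weak_reg_graph} similarly gives, for each $n$, nested equitable vertex-partitions $\mathcal{P}^{(1)}_n,\mathcal{P}^{(2)}_n,\dots$ of $V(\mathcal{G}^n)$, each refining the previous, with $|\mathcal{P}^{(t)}_n|=m_t$ and $\delta_\square(\mathrm{span}(\mathcal{G}^n),\mathrm{span}(\mathcal{G}^n)_{\mathcal{P}^{(t)}_n})\to 0$ uniformly in $n$ (the bound depends only on $m_t$); after a vertex reordering we may assume the parts of $\mathcal{P}^{(t)}_n$ are blocks of consecutive vertices, so that $\mathbf{U}^n_t:=\mathrm{span}(\mathcal{G}^n)_{\mathcal{P}^{(t)}_n}$ is a step graphon system with $m_t$ steps, each of measure $1/m_t+O(1/|V(\mathcal{G}^n)|)$.

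The core step is to choose these reorderings so that, for each fixed $t$, $\|\mathbf{U}^n_t-\mathbf{W}_{\mathcal{Q}^{(t)}}\|_\square\to 0$ as $n\to\infty$. For this I need a \emph{rounding lemma}: if $\mathbf{U},\mathbf{V}$ are step graphon systems of order $k$, each with $m$ steps of (essentially) equal size, and $\delta_\square(\mathbf{U},\mathbf{V})\le\eta$, then there is a permutation $\pi\in S_m$ of the steps with $\|\mathbf{U}-\mathbf{V}^\pi\|_\square\le C_k\bigl(\eta+(\log m)^{-1/2}\bigr)$; equivalently, for equipartitioned step systems the unlabelled and labelled cut distances differ only by the weak-regularity error. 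I would prove this by taking a measure-preserving bijection $\varphi$ realizing $\delta_\square(\mathbf{U},\mathbf{V})$ up to $\eta$, passing to the induced coupling of the two part-sets (a scaled doubly-stochastic $m\times m$ matrix), and rounding it to a permutation matrix; the change in cut-norm is controlled via the contraction property (\Cref{lem:contraction}) and \Cref{lem:step_function}, while the extra $(\log m)^{-1/2}$ term absorbs the discrepancy between the arbitrary equipartition $\varphi^{-1}\mathcal{Q}^{(t)}$ and a weak-regularity-optimal one. Granting this, one applies it at every scale $t$ with $\eta=o_n(1)$ (coming from $\mathrm{span}(\mathcal{G}^n)\to\mathbf{W}$ together with the two regularization errors) and $m=m_t$, and — using that $\mathcal{P}^{(t+1)}_n$ refines $\mathcal{P}^{(t)}_n$, so the alignment at scale $t+1$ can be taken to refine the alignment at scale $t$ — assembles the scale-wise part-permutations into a single vertex reordering $\sigma_n$ of $\mathcal{G}^n$.

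Finally I would assemble by the triangle inequality: for fixed $t$,
\begin{align*}
  d_\square\bigl(\mathrm{span}(\mathcal{G}^n)^{\sigma_n},\mathbf{W}\bigr)
  &\le d_\square\bigl(\mathrm{span}(\mathcal{G}^n)^{\sigma_n},\mathbf{U}^n_t\bigr)+\bigl\|\mathbf{U}^n_t-\mathbf{W}_{\mathcal{Q}^{(t)}}\bigr\|_\square+d_\square\bigl(\mathbf{W}_{\mathcal{Q}^{(t)}},\mathbf{W}\bigr),
\end{align*}
where the first term is small by the choice of $\mathcal{P}^{(t)}_n$ (weak regularity for $\mathcal{G}^n$), the second is small by the core step once $n$ is large, and the third is small by the construction of $\mathcal{Q}^{(t)}$; choosing $t$ large, then $n$ large, and diagonalizing over $t$ and $n$ yields $d_\square(\mathrm{span}(\mathcal{G}^n)^{\sigma_n},\mathbf{W})\to 0$. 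The main obstacle is precisely the rounding lemma — that a measure-preserving bijection which nearly realizes the cut-metric distance between two equipartitioned step systems can be replaced, at negligible cost in cut-norm, by an honest permutation of the parts; once that is in hand, the rest is bookkeeping with \Cref{lem:contraction}, \Cref{lem:step_function}, \Cref{lem:equitable_partition}, and \Cref{lem:weak_reg_graph} already established in the appendix.
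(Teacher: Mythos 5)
Your general strategy — weak regularity on both sides, then round a coupling between the resulting partitions into an honest vertex reordering — is the right idea and matches the spirit of the paper's proof. But the way you organize it introduces a genuine gap that the paper's proof does not have.

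The paper proves the claim at a \emph{single} scale $m = |V(\mathcal{G}^n)|^{1/3}$ that grows with $n$. After applying \Cref{lem:weak_reg_graph} to both sides and picking a near-optimal measure-preserving $\varphi_n$, it forms the coupling $X_{ij}=\mu(\varphi_n(P_i)\cap Q_j)$ and then rounds this coupling \emph{at the vertex level}: within each part $P_i$ it assigns $\lfloor X_{ij}|V(\mathcal{G}^n)|\rfloor$ vertices to $Q_j$, and the remaining $O(m)$ vertices per part are matched arbitrarily. Because each part contains $\approx n/m = n^{2/3}\to\infty$ vertices, the $L^1$ cost of this discretization is at most $m^2/n = n^{-1/3}\to 0$; no permutation of parts is ever needed. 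Your version instead fixes scales $m_t$ independent of $n$ and wants to round the coupling to a \emph{permutation of the $m_t$ parts}. That is your ``rounding lemma'': if $\mathbf{U},\mathbf{V}$ are equipartitioned step systems with $m$ steps and $\delta_\square(\mathbf{U},\mathbf{V})\le\eta$, then some part-permutation $\pi\in S_m$ gives $\|\mathbf{U}-\mathbf{V}^\pi\|_\square\le C_k(\eta+(\log m)^{-1/2})$. You flag it yourself as the main obstacle, but the sketch you give — average $\varphi$ to a doubly-stochastic matrix $X$ and ``round $X$ to a permutation'' — does not go through: the action of $X$ on a step function is quadratic in $X$, so a Birkhoff decomposition $X=\sum_\sigma c_\sigma P_\sigma$ yields $\mathbf{V}^X=\sum_{\sigma,\tau}c_\sigma c_\tau \mathbf{V}^{\sigma,\tau}$, and the cross terms $\sigma\ne\tau$ are not of the form $\mathbf{V}^\pi$ (they are not even symmetric). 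So the claimed rounding, and the provenance of the $(\log m)^{-1/2}$ term, are not established.

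There is a second loose end in the multi-scale assembly: you need the alignment (permutation) chosen at scale $t+1$ to refine the one chosen at scale $t$ in order to stack them into a single $\sigma_n$, but nothing in the rounding-lemma approach forces this — the near-optimal $\varphi_n$ and the resulting coupling at scale $t+1$ need not be compatible with those at scale $t$. The paper avoids both issues by fixing a single scale $m(n)\to\infty$ with $m(n)\ll n$, so the vertex-level rounding error $m^2/n$ vanishes and no cross-scale compatibility is required. If you want to keep a fixed-scale picture, the cleanest fix is to do exactly what the paper does within each scale $t$: instead of a permutation of the $m_t$ parts, take the vertex-level discretization of the coupling, whose error is $m_t^2/|V(\mathcal{G}^n)|\to 0$ for each fixed $t$; this makes your ``rounding lemma'' superfluous and also lets you build the reorderings scale by scale so that they are nested.
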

\begin{proof}
    We claim that if $(\mathcal{G}^n)$ and $(\mathcal{H}^n)$ are two sequences of systems of (weighted) graphs with $|V(\mathcal{G}^n)| = |V(\mathcal{H}^n)| \rightarrow \infty$ and $\delta_{\square}(\mathcal{G}^n, \mathrm{span}(\mathcal{H}^n)) \rightarrow 0$, there exists a reordering $\mathcal{G}'^n$ of the vertices of $\mathcal{G}^n$ such that $d_{\square} (\mathcal{G}'^n, \mathcal{H}^n ) \rightarrow 0$.
    Indeed, by \Cref{lem:step_function}, there exists a reordering $\mathcal{G}'^n$ of $\mathcal{G}^n$ such that 
    $$d_{\square} (\mathcal{G}'^n, \mathbf{W}) \leq d_{\square} (\mathcal{G}'^n, \mathbf{W}_{\mathcal{P}_n}) + d_{\square} (\mathbf{W}_{\mathcal{P}_n}, \mathbf{W}) \leq  3d_{\square} (\mathcal{G}'^n, \mathbf{W}_{\mathcal{P}_n}) \rightarrow 0,$$
    where $\mathcal{P}_n$ is the partition of $[0, 1]$ corresponding to $\mathcal{G}'^n$.

    To prove the claim, we apply \Cref{lem:weak_reg_graph} to $\mathcal{G}^n$ and $\mathcal{H}^n$ with $m=|V(\mathcal{G}^n)|^{1/3}$.
    Then we have equitable partitions $\mathcal{P}_n$ and $\mathcal{Q}_n$ of vertices into $m$ parts such that $d_{\square}(\mathcal{G}^n, (\mathcal{G}^n)_{\mathcal{P}_n}) \leq \frac{50 \times 8^k}{\sqrt{\log m}}$ and $d_{\square}(\mathcal{H}^n, (\mathcal{H}^n)_{\mathcal{Q}_n}) \leq \frac{50 \times 8^k}{\sqrt{\log m}}$.
    By the triangle inequality, 
    $$d_{\square} ((\mathcal{G}^n)_{\mathcal{P}_n}, (\mathcal{H}^n)_{\mathcal{Q}_n}) \leq d_{\square} (\mathcal{G}^n, \mathcal{H}^n) + \frac{100 \times 8^k}{\sqrt{\log m}}.$$
    Choose a measure preserving bijection $\varphi_n:[0, 1] \rightarrow [0, 1]$ such that
    $$ d_{\square} (((\mathcal{G}^n)_{\mathcal{P}_n})^{\varphi_n}, (\mathcal{H}^n)_{\mathcal{Q}_n}) \leq \delta_{\square}((\mathcal{G}^n)_{\mathcal{P}_n}, (\mathcal{H}^n)_{\mathcal{Q}_n}) + \frac{1}{\sqrt{\log m}}. $$
    Let $P_i$ be the subset of $[0, 1]$ that corresponds to the $i$-th part of $\mathcal{P}_n$, and similarly define $Q_i$ for $i \in [m]$.
    For each $i, j \in [m]$, let $X_{i, j}$ be the measure of $\varphi_n(P_i) \cap Q_j$.
    We now define a reordering of the vertices of $\mathcal{G}^n$ and $\mathcal{H}^n$ as follows.
    For each $i, j \in [m]$, associate vertices of $j$-th part of $\mathcal{Q}_n$ one-to-one to $\lfloor X_{ij} |V(\mathcal{G}^n)|\rfloor$ vertices of the $i$-th part of $\mathcal{P}_n$.
    For the remaining vertices, we arbitrarily associate them to the remaining unmatched vertices of $\mathcal{Q}_n$.
    It is possible as $\sum_{j \in [m]} X_{ij} |V(\mathcal{G}^n)|$ is the size of $i$-th part $P_i$.
    Let $\varphi'_n$ be a measure-preserving map that corresponds to this vertex reordering.
    Then as the difference between $\lfloor X_{ij} |V(\mathcal{G}^n)|\rfloor/|V(\mathcal{G}^n)|$ and $X_{ij}$ is at most $1/|V(\mathcal{G}^n)|$ for each $i, j \in [m]$, we have 
    $$\lVert((\mathcal{G}^n)_{\mathcal{P}_n}^{\varphi'_n})_I - ((\mathcal{G}^n)_{\mathcal{P}_n}^{\varphi_n})_I \rVert_1 \leq \frac{m^2}{|V(\mathcal{G}^n)|},$$
    for every $I \subseteq [k]$.
    Therefore, we have 
    \begin{align*}
        d_{\square} ((\mathcal{G}^n)^{\varphi'_n}, \mathrm{span}(\mathcal{H}^n))
        & \leq d_{\square} (((\mathcal{G}^n)_{\mathcal{P}_n})^{\varphi'_n},  (\mathcal{H}^n)_{\mathcal{Q}_n}) + \frac{100 \times 8^k}{\sqrt{\log m}}\\
        & \leq d_{\square} (((\mathcal{G}^n)_{\mathcal{P}_n})^{\varphi_n}, (\mathcal{H}^n)_{\mathcal{Q}_n}) \\
        &\qquad + d_{\square} (((\mathcal{G}^n)_{\mathcal{P}_n})^{\varphi'_n}, ((\mathcal{G}^n)_{\mathcal{P}_n})^{\varphi_n}) + \frac{100 \times 8^k}{\sqrt{\log m}} \\
        & \leq \delta_{\square}((\mathcal{G}^n)_{\mathcal{P}_n}, (\mathcal{G}^n)_{\mathcal{Q}_n}) + \frac{200 \times 8^k}{\sqrt{\log m}} \rightarrow 0,
    \end{align*}
    as $n \rightarrow \infty$.
\end{proof}

\begin{proof}[Proof of \Cref{thm:H_removal}]
    We may assume that $(H,\psi)$ is a coloring tuple, $H$ has no isolated vertices, and $|E(H)| \geq 2$.
    When $H$ is a graph with two vertices and parallel edges, then $\varepsilon = \delta$ works by deleting one edge from each rainbow copy of $(H, \psi)$.
    So we can assume that $t = |V(H)| \geq 3$.
    
    Suppose that the statement is false.
    There exists $\varepsilon>0$ such that for every $i>0$, there is a graph system $\mathcal{G}^i = (G_{1}^i, \ldots, G_{k}^i)$ with at most $\frac{1}{i} |V(\mathcal{G}^i)|^{t}$ rainbow copies of $(H, \psi)$ but cannot be made rainbow $(H, \psi)$-free by deleting at most $\varepsilon |V(\mathcal{G}^i)|^2$ edges.
    Let $n_i = |V(\mathcal{G}^i)|$. Since $\frac{1}{i} n_i^{t} \geq \varepsilon n_i^2$ and $t \geq 3$, we have $n_i \rightarrow \infty$ as $i \rightarrow \infty$.
    As $t^{*}_{(H, \psi)}(\mathcal{G}^i) \leq \frac{1}{i} + O(\frac{1}{n_i})$, we have $t^{*}_{(H, \psi)}(\mathcal{G}^i) \rightarrow 0$ as $i \rightarrow \infty$.
    
    By passing to a subsequence, we may assume that $(\mathcal{G}^i)_{i \geq 1}$ converges to an admissible graphon system $\mathbf{W}$ as $i \rightarrow \infty$.
    Then $t^{*}_{(H, \psi)}(\mathbf{W}) = 0$ by continuity.
    By \Cref{lem:reordering}, we may assume that $d_{\square} (\mathcal{G}^i, \mathbf{W}) \rightarrow 0$. Let $S_I = \{ (x, y) \in [0, 1]^2 \mid W_I(x,y)>0\}$.
    Then by Lemma~\ref{lem:innerproduct_convergence}, for each $I \subseteq [k]$, we have
    $$
    \lim_{i \to \infty} \int_{[0, 1]^2} (1-\mathbbm{1}_{S_I})G_{I}^i dxdy = \int_{[0, 1]^2} (1-\mathbbm{1}_{S_I}) W_I dxdy =0,
    $$
    where $G_{I}^i = \prod_{j \in I} G_j^i$ for $I \neq \emptyset$ and $G_{\emptyset}^i \equiv 1$.
    Therefore, we can choose an index $i$ such that
    $$
    \int_{[0, 1]^2} (1-\mathbbm{1}_{S_I})G_{I}^i dxdy = \int_{[0, 1]^2} (1-\mathbbm{1}_{S_I})G_{I}^i dxdy < \frac{\varepsilon}{2^{k+2}|E(H)|},
    $$
    for every $I \subseteq [k]$.

    Now, we delete edges from $\mathcal{G}^i$ to make it rainbow $(H,\psi)$-free.
    For each vertex $v$ of $\mathcal{G}^i$, let $J_v \subset [0, 1]$ be the interval corresponding to $v$.
    For each $I \subseteq [k]$, if there exists an edge $uv \in \bigcap_{j \in I} G_{j}^i$ such that
     \[
      \mu(S_I \cap (J_u \times J_v)) < \frac{1}{n_i^2} \left( 1-\frac{1}{4|E(H)|} \right),
     \]
    then delete $uv$ from $G_{j}^i$ for some $j \in I$.

    We first claim that this deletion yields a $(H,\psi)$-free graph system.
    Suppose that $\mathcal{G}^i$ contains a rainbow copy of $(H, \psi)$ after deleting edges.
    Let $v_1, \ldots, v_t$ be the vertices of $H$ and for each $p\in [t]$, let $u_p$ be the vertex of $\mathcal{G}^i$ that corresponds to $v_p$ in the copy of $(H, \psi)$.
    Then for each $v_pv_q \in E(\mathrm{sim}(H))$, we have
     \[
      \mu(S_I \cap (J_{u_p} \times J_{u_q})) \geq \frac{1}{n_i^2}(1-\frac{1}{4|E(H)|}),
     \]
    where $I = \psi(E_{v_pv_q})$. Thus
    \begin{align*}
     \int_{[0, 1]^t} \prod_{v_pv_q \in E(\mathrm{sim}(H))}\mathbbm{1}_{S_{\psi(E_{v_pv_q})}}(x_p,x_q)\prod_{p\in [t]} dx_p
     & \geq \int_{J_{u_1} \times \cdots \times J_{u_t}} \prod_{v_pv_q \in E(\mathrm{sim}(H))}\mathbbm{1}_{S_{\psi(E_{v_pv_q})}}(x_p,x_q)\prod_{p\in [t]} dx_p\\
     & \geq \frac{1}{n_i^t} - \sum_{v_pv_q \in E(\mathrm{sim}(H))} \frac{1}{n_i^{t-2}} \mu((J_{u_p} \times J_{u_q})\setminus S_{\psi(E_{v_pv_q})}) \\
     & \geq \frac{1}{n_i^t} - |E(H)| \cdot \frac{1}{n_i^t}\cdot \frac{1}{4|E(H)|} = \frac{3}{4n_i^t} >0.
    \end{align*}
    On the other hand, we have $t^{*}_{(H, \psi)}(\mathbf{W})=0$.
    This implies that
     \[
      \prod_{v_pv_q \in E(\mathrm{sim}(H))} W_{\psi(E_{v_pv_q})}(x_p, x_q) = 0,
     \]
    for almost every $(x_1, \ldots, x_t) \in [0, 1]^t$.
    Hence $$\int_{[0, 1]^t} \prod_{v_pv_q \in E(\mathrm{sim}(H))}\mathbbm{1}_{S_{\psi(E_{v_pv_q})}}(x_p,x_q) \prod_{p\in [t]} dx_p=0,$$ which is a contradiction.
    Therefore, $\mathcal{G}^i$ becomes $(H, \psi)$-free after deletion.

    We now show that there are at most $\varepsilon n_i^2$ edges deleted.
    Let $e_I$ be the number of edges $uv \in \bigcap_{j \in I} G_{j}^i$ such that
     \[
      \mu(S_I \cap (J_u \times J_v)) < \frac{1}{n_i^2} \left( 1-\frac{1}{4|E(H)|} \right).
     \]
    Then
    $$
    \frac{\varepsilon}{2^{k+2}|E(H)|} > \int_{[0, 1]^2} (1-\mathbbm{1}_{S_I})G_{I}^i dxdy \geq e_I \frac{1}{n_i^2}\frac{1}{4|E(H)|},
    $$
    which gives $e_I < \varepsilon n_i^2 / 2^{k}$.
    Therefore, summing over all choices of $I\subseteq [k]$, we have deleted at most $\varepsilon n_i^2$ edges.
    This contradicts the choice of $\mathcal{G}^i$, which completes the proof.
\end{proof}


\section{Induced density}\label{sec:induced_density}

In this section, we introduce the induced density of graphon systems and prove the inverse counting lemma (\Cref{thm:inverse_counting}). 
We note that results in this section are not necessary for our main results but they give additional reasoning as to why introducing a set index is a more natural way to define the graphon system. 

\begin{definition}
    For a graph system $\mathcal{G}=(G_1, \ldots, G_k)$ on $V$ and a coloring tuple $(H, \psi)$, an \emph{induced copy} of $(H, \psi)$ in $\mathcal{G}$ is a graph $H'$ on a vertex set $V(H')\subseteq V$ together with a bijection $\phi:V(H) \rightarrow V(H')$ such that $\phi(u)\phi(v) \in E(G_i)$ if and only if $i \in \psi(E_{uv})$.
\end{definition}
Note that the induced copy of $(H, \psi)$ is only defined for coloring tuples, i.e., $\textbf{dom}(\psi)=E(H)$.
We define the induced homomorphism density for a graphon system.
This generalizes the definition in \cite{Lovasz2012} as $\overline{W}_{1}$ and $\overline{W}_{\emptyset}$ correspond to $W$ and $(1-W)$, respectively, when the graphon system is a graphon, i.e. it has the order $1$.
\begin{definition}
    For a coloring tuple $(H, \psi)$ with $|V(H)|=t$ and a graphon system $\mathbf{W}=(W_I)_{I \subseteq [k]}$ of order $k$, the \textit{induced homomorphism density} of $(H, \psi)$ is defined by 
    $$t^{*}_{\mathrm{ind}, (H, \psi)}(\mathbf{W}) = \int_{[0, 1]^t} \prod_{1 \leq i < j \leq t} \overline{W}_{\psi(E_{ij})}(x_i,x_j) \prod_{i\in [t]} dx_i.$$
\end{definition}
Since each $\overline{W}_I$ can be written as a linear combination of $W_J$, the continuity of this function comes from \Cref{thm:counting_lemma_for_decorated}.
\begin{lemma}\label{lem:induced_density}
    For every coloring tuple $(H, \psi)$, we have 
    $$t^{*}_{\mathrm{ind}, (H, \psi)}(\mathbf{W}) = t^{*}_{ (H, \psi)}(\mathbf{W}) - \sum_{(H', \psi')} t^{*}_{\mathrm{ind}, (H', \psi')}(\mathbf{W}),$$
    where the sum is taken over all coloring tuples $(H', \psi')$ satisfying 
    $E(H') \supsetneq E(H), V(H')=V(H)$ and $\psi'|_{E(H)} = \psi$.
\end{lemma}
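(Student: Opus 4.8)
The plan is to unfold the definition of the non-induced density $t^{*}_{(H,\psi)}(\mathbf{W})$ via the Möbius-type relation $W_I = \sum_{J \supseteq I} \overline{W}_J$ — which follows from \Cref{def: barW} by downward induction on $|I|$ and is already used in \Cref{sec:applications} — and then match the resulting terms with induced densities of coloring tuples obtained from $(H,\psi)$ by adding edges. This is the colored, multigraph analogue of the classical identity ``subgraph density $=$ sum of induced densities over supergraphs'' from \cite{Lovasz2012}.

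Concretely, write $t = |V(H)|$ and substitute $W_{\psi(E_{ij})}(x_i,x_j) = \sum_{J_{ij} \supseteq \psi(E_{ij})} \overline{W}_{J_{ij}}(x_i,x_j)$ into
\[
t^{*}_{(H,\psi)}(\mathbf{W}) = \int_{[0,1]^{t}} \prod_{1 \le i < j \le t} W_{\psi(E_{ij})}(x_i,x_j)\, \prod_{i} dx_i .
\]
All the sums involved are finite, so the product of sums expands into a finite sum of products indexed by the assignments $(J_{ij})_{1\le i<j\le t}$ that attach to each pair a set $\psi(E_{ij}) \subseteq J_{ij} \subseteq [k]$, and the integral passes through the finite sum, giving
\[
t^{*}_{(H,\psi)}(\mathbf{W}) = \sum_{(J_{ij})} \int_{[0,1]^{t}} \prod_{1\le i<j\le t} \overline{W}_{J_{ij}}(x_i,x_j)\, \prod_{i} dx_i .
\]

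Next I would set up the correspondence between the index set $\{(J_{ij})\}$ and the coloring tuples in the statement. Given $(J_{ij})$, let $(H',\psi')$ be the coloring tuple with $V(H') = V(H)$ in which, for every pair $\{i,j\}$, the edges between $i$ and $j$ are exactly the edges of $H$ between $i$ and $j$ (colored by $\psi(E_{ij})$) together with further edges colored bijectively by $J_{ij} \setminus \psi(E_{ij})$; then $\psi'$ is injective on each $E_{ij}$, so $(H',\psi')$ is a coloring tuple with $E(H') \supseteq E(H)$, $V(H') = V(H)$, $\psi'|_{E(H)} = \psi$, and $\psi'(E_{ij}) = J_{ij}$, whence $\int \prod_{i<j} \overline{W}_{J_{ij}} = t^{*}_{\mathrm{ind},(H',\psi')}(\mathbf{W})$. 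Conversely, any such $(H',\psi')$ arises from $J_{ij} := \psi'(E_{ij})$; since both $t^{*}_{(\cdot)}$ and $t^{*}_{\mathrm{ind},(\cdot)}$ depend only on the family $(\psi(E_{ij}))_{i<j}$, coloring tuples differing only by a color-preserving relabeling of edges fixing $V(H)$ and $E(H)$ may be identified, and under this identification $(J_{ij}) \leftrightarrow (H',\psi')$ is a bijection.

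Finally I would isolate the term with $J_{ij} = \psi(E_{ij})$ for every pair: it gives back $(H',\psi') = (H,\psi)$ and contributes exactly $t^{*}_{\mathrm{ind},(H,\psi)}(\mathbf{W})$, while every other assignment has $J_{ij} \supsetneq \psi(E_{ij})$ for some pair, i.e.\ $E(H') \supsetneq E(H)$. Hence
\[
t^{*}_{(H,\psi)}(\mathbf{W}) = t^{*}_{\mathrm{ind},(H,\psi)}(\mathbf{W}) + \sum_{(H',\psi')} t^{*}_{\mathrm{ind},(H',\psi')}(\mathbf{W}),
\]
the sum ranging over coloring tuples with $E(H') \supsetneq E(H)$, $V(H') = V(H)$, $\psi'|_{E(H)} = \psi$, which is the claim after rearranging. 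The only genuine obstacle beyond the routine expansion is the bookkeeping of the previous paragraph: checking that the combinatorial index set of the expansion matches the set of coloring tuples in the statement, each counted with the correct multiplicity.
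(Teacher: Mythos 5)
Your proposal is correct and follows essentially the same route as the paper: expand each $W_{\psi(E_{ij})}$ via the M\"obius relation $W_I = \sum_{J\supseteq I}\overline{W}_J$, distribute over the product, identify the assignment $(J_{ij})_{i<j}$ with the coloring tuple $(H',\psi')$ having $\psi'(E(H')_{ij})=J_{ij}$, and isolate the $J_{ij}=\psi(E_{ij})$ term as $t^{*}_{\mathrm{ind},(H,\psi)}$. In fact you are a bit more explicit than the paper on the last step --- the paper's displayed chain silently absorbs the $(H',\psi')=(H,\psi)$ term while its closing sentence restates the $\supsetneq$ index set from the lemma --- and your remark identifying coloring tuples up to color-preserving relabeling of added edges is precisely the bookkeeping needed to make the bijection with $(J_{ij})$ honest.
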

\begin{proof}
    We have
    \begin{align*}
        t^{*}_{ (H, \psi)}(\mathbf{W}) & = \int_{[0, 1]^{|V(H)|}} \prod_{ uv\in \binom{V(H)}{2}} W_{\psi(E_{uv})}(x_u,x_v) \prod_{v\in V(H)} dx_v \\
        & = \int_{[0, 1]^{|V(H)|}} \prod_{ uv\in \binom{V(H)}{2}}  \sum_{J \supseteq \psi(E_{uv})} \overline{W}_{J}(x_u,x_v) \prod_{v \in V(H)} dx_v \\
        & = \sum_{(H', \psi')} \int_{[0, 1]^{|V(H)|}}\prod_{ uv\in \binom{V(H)}{2}} \overline{W}_{\psi'(E(H')_{uv})}(x_u,x_v) \prod_{v\in V(H)} dx_v \\
        & = \sum_{(H', \psi')}  t^{*}_{\mathrm{ind}, (H', \psi')}(\mathbf{W}). 
    \end{align*} 
    Again, the sum is taken over all coloring tuples $(H', \psi')$ satisfying 
    $E(H') \supsetneq E(H), V(H')=V(H)$ and $\psi'|_{E(H)} = \psi$. This proves the lemma.
\end{proof}

We now prove the inverse counting lemma using the notion of induced density.
\begin{theorem}[Inverse Counting Lemma]\label{thm:inverse_counting}
    For a sufficiently large $n$ and two systems of graphons $\mathbf{W}=(W_I)_{I \subseteq [k]}$ and $\mathbf{U} = (U_I)_{I \subseteq [k]}$, if the inequality $|t^{\ast}_{(F, \psi)}(\mathbf{W})-t^{*}_{(F, \psi)}(\mathbf{U})| < 2^{-n^2k}$ holds for every coloring tuple $(F, \psi)$ with $|V(F)|=n$, then 
    $\delta_{\square}(\mathbf{W}, \mathbf{U}) < \frac{2000 \times 8^k}{\sqrt{\log n}}$.
\end{theorem}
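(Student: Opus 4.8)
The plan is to follow the classical route to an inverse counting lemma via random samples (cf.~\cite{Lovasz2012}), adapted to graphon systems. Fix the sufficiently large integer $n$. By \Cref{thm:W_random_graph}, the random labelled graph systems $\mathbb{G}(n,\mathbf{W})$ and $\mathbb{G}(n,\mathbf{U})$ on $[n]$ satisfy $\delta_{\square}(\mathbf{W},\mathbb{G}(n,\mathbf{W}))\le\frac{10^3\times 8^k}{\sqrt{\log n}}$ and $\delta_{\square}(\mathbf{U},\mathbb{G}(n,\mathbf{U}))\le\frac{10^3\times 8^k}{\sqrt{\log n}}$, each with probability $1-o(1)$. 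The strategy is to show that the laws of these two random graph systems on $[n]$ have total variation distance bounded away from $1$; then a maximal coupling of them, together with the two $o(1)$-probability ``large distance'' events (which remain $o(1)$ under the coupling, being functions of the respective marginals), leaves a positive-probability event on which $\mathbb{G}(n,\mathbf{W})=\mathbb{G}(n,\mathbf{U})$ as labelled graph systems \emph{and} both are within $\frac{10^3\times 8^k}{\sqrt{\log n}}$ of their graphon systems. On that event the triangle inequality for $\delta_{\square}$ yields $\delta_{\square}(\mathbf{W},\mathbf{U})\le\frac{2000\times 8^k}{\sqrt{\log n}}$, and since the left-hand side is deterministic this proves the theorem.

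To control the total variation distance, the key identity is that for any labelled graph system $\mathcal{F}=(F_1,\dots,F_k)$ on $[n]$, encoded as a coloring tuple $(F,\psi)$ with $\psi(E_{uv})=\{i:uv\in E(F_i)\}$ for each pair $uv$,
\[
  \mathbb{P}\bigl[\mathbb{G}(n,\mathbf{W})=\mathcal{F}\bigr]=\int_{[0,1]^{n}}\ \prod_{uv\in\binom{[n]}{2}}\overline{W}_{\psi(E_{uv})}(x_u,x_v)\ \prod_{v\in[n]}dx_v=t^{*}_{\mathrm{ind},(F,\psi)}(\mathbf{W}),
\]
which is immediate from the construction of $\mathbb{G}(\mathbf{H})$: conditioned on the sample $S=(x_1,\dots,x_n)$, the colour patterns on distinct pairs are independent, and the pattern on $uv$ equals a given set $I$ with probability $\overline{W}_I(x_u,x_v)$. (This is exactly where the set-indexed formalism is needed: the family $(\overline{W}_I)_{I\subseteq[k]}$ is a genuine probability distribution over colour patterns.) Möbius inversion of \Cref{def: barW} gives $\overline{W}_I=\sum_{J\supseteq I}(-1)^{|J\setminus I|}W_J$, so expanding the product over all pairs writes $t^{*}_{\mathrm{ind},(F,\psi)}(\mathbf{W})$ as a $\pm1$-combination of at most $2^{k\binom{n}{2}}$ homomorphism densities $t^{*}_{(F',\psi')}(\mathbf{W})$ of coloring tuples on $n$ vertices (this is \Cref{lem:induced_density} unwound). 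By the hypothesis each of these changes by less than $2^{-n^2k}$ when $\mathbf{W}$ is replaced by $\mathbf{U}$, so $\bigl|t^{*}_{\mathrm{ind},(F,\psi)}(\mathbf{W})-t^{*}_{\mathrm{ind},(F,\psi)}(\mathbf{U})\bigr|<2^{k\binom{n}{2}}\cdot 2^{-n^2k}$.

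Summing this over all $2^{k\binom{n}{2}}$ labelled graph systems $\mathcal{F}$ on $[n]$ bounds the total variation distance by $\tfrac12\cdot 2^{2k\binom{n}{2}}\cdot 2^{-n^2k}=\tfrac12\,2^{-kn}$, which is far below $1$ once $n$ is large; combined with the $o(1)$ error terms from \Cref{thm:W_random_graph}, the coupling argument of the first paragraph goes through. The only genuinely new ingredient compared with the classical inverse counting lemma --- and the step I expect to require the most care --- is establishing the identity $\mathbb{P}[\mathbb{G}(n,\mathbf{W})=\mathcal{F}]=t^{*}_{\mathrm{ind},(F,\psi)}(\mathbf{W})$ and then checking that the doubly-exponential factor $2^{2k\binom{n}{2}}$, arising from expanding each induced density into homomorphism densities and from summing over all graph systems, is still absorbed by the hypothesis $2^{-n^2k}$; the remainder is a transcription of the standard argument together with \Cref{thm:W_random_graph}.
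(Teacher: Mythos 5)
Your proposal is correct and uses the same three essential ingredients as the paper's proof: the probabilistic identity $\mathbb{P}[\mathbb{G}(n,\mathbf{W})=(F,\psi)]=t^{*}_{\mathrm{ind},(F,\psi)}(\mathbf{W})$, the expansion of induced densities into homomorphism densities via \Cref{lem:induced_density} (whence the hypothesis on $t^{*}_{(F,\psi)}$ propagates to $t^{*}_{\mathrm{ind},(F,\psi)}$ up to a factor $2^{k\binom{n}{2}}$), and \Cref{thm:W_random_graph}. The only difference is bookkeeping at the very end: you bound the total variation distance between the laws of $\mathbb{G}(n,\mathbf{W})$ and $\mathbb{G}(n,\mathbf{U})$ by $\tfrac12\,2^{-kn}$ and then invoke a maximal coupling, whereas the paper argues by contradiction --- it fixes the families $\mathcal{F}_{\mathbf{W}}$, $\mathcal{F}_{\mathbf{U}}$ of $n$-vertex coloring tuples close (in $\delta_\square$) to $\mathbf{W}$ and $\mathbf{U}$ respectively, shows that if they were disjoint then $\sum_{\mathcal{F}_{\mathbf{W}}}\bigl(t^{*}_{\mathrm{ind}}(\mathbf{W})-t^{*}_{\mathrm{ind}}(\mathbf{U})\bigr)\geq\tfrac12$, and extracts a single $(F,\psi)$ with induced-density gap $\geq 2^{-k\binom{n}{2}-1}$ by pigeonhole, contradicting the bound $2^{-k\binom{n+1}{2}}$. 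Your TV-plus-coupling phrasing is more probabilistic and arguably cleaner conceptually; the paper's pigeonhole version avoids the explicit mention of couplings and keeps the argument more self-contained. Both yield the same constant $\frac{2000\times 8^k}{\sqrt{\log n}}$ via the triangle inequality through the sampled graph system.
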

\begin{proof}
    By repeatedly applying Lemma~\ref{lem:induced_density}, we can express $t^{*}_{\mathrm{ind}, (F, \psi)}(\mathbf{W})$ as a sum of at most $(2^k)^{{n \choose 2}}$ terms of $t^{*}_{(F', \psi')}(\mathbf{W})$ for some coloring tuples $(F', \psi')$ extending $(F, \psi)$. 
    So, we have
    $$|t^{*}_{\mathrm{ind}, (F, \psi)}(\mathbf{W})-t^{*}_{\mathrm{ind}, (F, \psi)}(\mathbf{U})| < 2^{-n^2k} \times (2^k)^{{n \choose 2}} = 2^{-k{n+1 \choose 2}},$$
    for every coloring tuple $(F, \psi)$ with $|V(F)|=n$.

    Consider a coloring tuple $(F, \psi)$ as a graph system of order $k$ on $n$ vertices in the natural way.
    From this point of view, define
    $$\mathcal{F}_{\mathbf{W}} = \left\{(F, \psi) \,\left\mid\, |V(F)|=n,\, \delta_{\square}(\mathbf{W}, (F, \psi))< \frac{1000 \times 8^k}{\sqrt{\log k}} \right.\right\},$$
    and similiarly define $\mathcal{F}_{\mathbf{U}}$.
    If $\mathcal{F}_{\mathbf{W}} \cap \mathcal{F}_{\mathbf{U}} \neq \emptyset$, we are done by the triangle inequality.
    Suppose on the contrary that $\mathcal{F}_{\mathbf{W}} \cap \mathcal{F}_{\mathbf{U}} = \emptyset$.
    Then by \Cref{thm:W_random_graph},
    $$\sum_{(F, \psi) \in \mathcal{F}_{\mathbf{W}}} t^{*}_{\mathrm{ind}, (F, \psi)}(\mathbf{W}) = \sum_{(F, \psi) \in \mathcal{F}_{\mathbf{W}}} \mathbb{P}(\mathbb{G}(n, \mathbf{W}) = (F, \psi)) \geq 1-o(1),$$
    and
    $$\sum_{(F, \psi) \in \mathcal{F}_{\mathbf{W}}} t^{*}_{\mathrm{ind}, (F, \psi)}(\mathbf{U}) \leq \sum_{(F, \psi) \notin \mathcal{F}_{\mathbf{U}}} t^{*}_{\mathrm{ind}, (F, \psi)}(\mathbf{U}) \leq o(1).$$
    Thus for sufficiently large $n$,
    $$\sum_{(F, \psi) \in \mathcal{F}_{\mathbf{W}}} \left( t^{*}_{\mathrm{ind}, (F, \psi)}(\mathbf{W}) - t^{*}_{\mathrm{ind}, (F, \psi)}(\mathbf{U}) \right) \geq \frac{1}{2}.$$
    As a consequence, there exists $(F, \psi) \in \mathcal{F}_{\mathbf{W}}$ such that
    $$t^{*}_{\mathrm{ind}, (F, \psi)}(\mathbf{W}) - t^{*}_{\mathrm{ind}, (F, \psi)}(\mathbf{U}) \geq \frac{1}{2|\mathcal{F}_{\mathbf{W}}| }\geq 2^{-k{n \choose 2}-1} >  2^{-k{n+1 \choose 2}},$$
    which is a contradiction.
\end{proof}
Combining \Cref{thm:inverse_counting} with \Cref{thm:counting_lemma}, one can conclude that a sequence of graphon systems $\mathbf{W}^n$ converges to $\mathbf{W}$ in $\delta_{\square}$-norm if and only if $t_{(F, \psi)}(\mathbf{W}^n)$ converges to $t_{(F, \psi)}(\mathbf{W})$ for every pre-coloring tuple $(F, \psi)$.


\end{document}